\newskip\stdskip
\newcommand{\C}{\mathbb C}
\newcommand{\R}{\mathbb R}
\newcommand{\N}{\mathbb N}
\newcommand{\Z}{\mathbb Z}
\newcommand{\op}{\operatorname}
\newcommand{\bs}{\boldsymbol}
\newtheorem{thm}{Theorem}[section]
\newtheorem{lemma}[thm]{Lemma}
\newtheorem{cor}[thm]{Corollary}
\newtheorem{prop}[thm]{Proposition}
\newtheorem{conj}[thm]{Conjecture}
\newtheorem{assumption}[thm]{Assumption}
\newtheorem{conv}[thm]{Convention}
\theoremstyle{definition}
\newtheorem{dfn}[thm]{Definition}
\newtheorem{rem}[thm]{Remark}
\DeclareMathAlphabet{\mathpzc}{OT1}{pzc}{m}{it}
\newcommand{\hh}{\mathpzc{h}}
\begin{document}
%\title{Equivalence of Heegaard Floer homology and embedded contact homology: an abridged exposition}
\title{An exposition of the equivalence of Heegaard Floer homology and embedded contact homology}
\author[1]{Vincent Colin}
\author[2]{Paolo Ghiggini}
\author[3]{Ko Honda}
\affil[1]{Laboratoire de Math\'ematiques Jean Leray, Universit\'e de Nantes, \texttt{vincent.colin@univ-nantes.fr}}
\affil[2]{Laboratoire de Math\'ematiques Jean Leray, Universit\'e de Nantes and CNRS, \texttt{paolo.ghiggini@univ-nantes.fr}}
\affil[3]{University of California, Los Angeles, Los Angeles, CA 90095, \texttt{honda@math.ucla.edu}}

\maketitle
\begin{abstract}\noindent
This article is a survey on the authors' proof of the isomorphism between Heegaard Floer homology and embedded contact homology appeared in \cite{binding, I, II,  III}. 
\end{abstract}
\section{Introduction}
In the search for gluing formulas for gauge theoretic invariants of four-manifolds (Donaldson's invariants and Seiberg-Witten invariants) it was soon realised that the object which should be associated to a three-manifold is a Floer homology. The first to be constructed was {\em instanton Floer homology} in \cite{floer} by Floer himself, although only for integer homology spheres. On the other hand, the development of a Seiberg-Witten Floer homology languished for several years, until Kronheimer and Mrowka defined {\em monopole Floer homology} in \cite{KMbook}. 

In the meantime the deep connection between gauge theory and symplectic geometry became clear. For example, Atiyah proposed a conjectural reinterpretation of instanton Floer homology as a Lagrangian intersection Floer homology, now known as the ``Atiyah-Floer conjecture'', in \cite{A} and Taubes proved in \cite{SW=GrBook} that the Seiberg-Witten invariants of a symplectic four-manifold are equivalent to a count of $J$-holomorphic curves which, in most cases, are embedded. This result is usually called ``SW=Gr''.

Ozsv\'ath and Szab\'o defined {\em Heegaard Floer homology} in \cite{OSz1, OSz2} motivated by the Atiyah-Floer conjecture. Instead of the original definition, in this article we will use an equivalent one given by Lipshitz in \cite{Li}. The starting point of its construction is a {\em pointed Heegaard diagram} $(\Sigma, \boldsymbol{\alpha}, \boldsymbol{\beta}, z)$ which describes a three-manifold $M$. Here $\Sigma$ is a Heegaard surface of genus $h>0$ associated to some self-indexing Morse function with a unique maximum and a unique minimum, $\boldsymbol{\alpha}$ is the collection of the attaching circles for the index one critical points, $\boldsymbol{\beta}$  is the collection of the attaching circles for the index two critical points, and $z$ is a basepoint in the complement of $\boldsymbol{\alpha}$ and $\boldsymbol{\beta}$. The Heegaard Floer complexes are  generated by $h$-tuples of intersection points between the $\boldsymbol{\alpha}$-curves and the $\boldsymbol{\beta}$-curves and the differential counts certain $J$-holomorphic curves in $\R \times [0,1] \times \Sigma$ with boundary on $\R \times \{ 0 \} \times \boldsymbol{\beta}$ and $\R \times \{ 1 \} \times \boldsymbol{\alpha}$. We refer to Section \ref{sec: HF} for an overview of Heegaard Floer homology in its cylindrical reformulation.

Hutchings, partially in collaboration with Taubes, defined {\em embedded contact homology} in \cite{Hu1, HT1, HT2} motivated by SW=Gr and by Eliashberg, Givental and Hofer's symplectic field theory \cite{SFT}. The starting point for embedded contact homology is a contact form $\alpha$ on $M$. The contact form determines the {\em Reeb vector field} $R$ by
$$\iota_R d \alpha =0 \quad \text{and} \quad \alpha(R)=1.$$
The embedded contact homology complex is generated by finite sets of simple Reeb orbits with finite multiplicities (called orbit sets) and its differential counts certain $J$-holomorphic
curves in the symplectisation $(\R \times M, d(e^s \alpha))$. We refer to Section \ref{sec: ECH} for an overview of embedded contact homology.

Thus by 2009 we had three Floer homology theories for three-manifolds, each with its own strengths and weaknesses, which share many formal properties: among others a $U$-map, a contact invariant and a decomposition as direct sum of subgroups indexed by $\op{Spin}^c$ structures in the case of monopole and Heegaard Floer homology and by first homology classes in the case of embeded contact homology. The relative advantage of monopole Floer homology is its link with geometry, that of embedded contact homology is its link with Reeb dynamics and that of Heegaard Floer homology is that it needs less sophisticated analytical tools than the previous two, which makes it relatively computable and easier to develop. 

These three Floer homology theories were expected to be isomorphic according to some big picture, and the proofs of the isomorphisms, which appeared in the last ten years, span several hundred pages of very technical arguments. The isomorphism between monopole Floer homology and embedded contact homology is due to Taubes and appeared in \cite{T2.1, T2.2, T2.3, T2.4, T2.5}. The isomorphism between monopole Floer homology and Heegaard Floer homology is due to Kutluhan, Lee and Taubes and appeared in \cite{KLT1, KLT2, KLT3, KLT4, KLT5}. The isomorphism between Heegaard Floer homology and embedded contact homology, which appeared in \cite{binding, I, II, III}, is the topic of this survey. On the other hand, the relation of these three theories with instanton Floer homology is still to be clarified.

These isomorphisms have already had various applications. The most striking ones are the proofs, for contact three manifolds, of the Weinstein conjecture by Taubes \cite{weinsein} and of the Arnold's chord conjecture by Hutchings and Taubes \cite{HT3, HT4}. 

The picture, however, is not yet complete for several reasons. First of all, the definitions of the isomorphisms require some choices, but it is not clear at what extent the result depends on them. Second, we do not know if composing two of the isomorphisms we obtain the third one. Of course this question will make sense only after the previous one has been answered. Finally, monopole Floer homology and Heegaard Floer homology are ``almost'' $3+1$ topological quantum field theories, but it is not known if the isomorphisms commute with maps induced by cobordisms.

The isomorphism between Heegaard Floer homology and embedded contact homology is expressed in a more precise way by the following theorem.
\begin{thm}\label{main}
Let $M$ be a closed three manifold and $\xi$ a contact structure on $M$. Then there are isomorphisms
\begin{align*}
\widehat{\Phi}_* & \colon \widehat{HF}(-M) \to \widehat{ECH}(M), \\
\Phi^+_* & \colon HF^+(-M) \to ECH(M)
\end{align*}
such that the diagram
\begin{equation}\label{commutativity of isomorphisms}
\xymatrix{
\ldots \ar[r] & \widehat{HF}(-M) \ar[r] \ar[d]_{\widehat{\Phi}_*} & HF^+(-M) \ar[r]^U \ar[d]_{\Phi^+_*} & HF^+(-M) \ar[r] \ar[d]_{\Phi^+_*} & \ldots \\
\ldots \ar[r] & \widehat{ECH}(M) \ar[r] & ECH(M) \ar[r]^U & ECH(M) \ar[r] & \ldots
}
\end{equation}
commutes. Moreover the isomorphisms map the contact class to the contact class and match the splitting according to Spin$^c$-structures in Heegaard Floer homology to the splitting according to first homology classes in embedded contact homology.
\end{thm}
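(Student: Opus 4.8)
We outline the strategy of the proof; the analytic details occupy the several hundred pages of \cite{binding, I, II, III}, and only the skeleton is described here.

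\emph{Reduction to an open book.} By the Giroux correspondence, $\xi$ is supported by an open book decomposition $(S,\psi)$ of $M$: here $S$ is a compact surface with non-empty boundary and $\psi\colon S\to S$ is a diffeomorphism equal to the identity near $\partial S$. One fixes a contact form $\alpha$ with $\ker\alpha=\xi$ that is \emph{adapted} to $(S,\psi)$, so that away from a small neighbourhood of the binding $B=\partial S$ the Reeb vector field of $\alpha$ generates the suspension flow of $\psi$ on the mapping torus of the interior of $S$, while near $B$ the Reeb orbits are precisely the components of $B$ and their iterates, with prescribed small rotation numbers; the interior Reeb orbits of $\alpha$ are then in bijection with the periodic points of $\psi$. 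Since $\widehat{ECH}(M)$ and $ECH(M)$ are invariants of $M$ alone, it suffices to compute them from this $\alpha$ together with a suitable almost complex structure on $\R\times M$. From the \emph{same} open book one also builds a pointed Heegaard diagram $(\Sigma,\boldsymbol{\alpha},\boldsymbol{\beta},z)$ for $-M$, of the type used by Honda--Kazez--Mati\'c to describe the contact class: $\Sigma$ is the union of two pages of the open book glued along $B$, the $\boldsymbol{\alpha}$-curves arise from a collection of disjoint properly embedded arcs cutting a page into a disk, the $\boldsymbol{\beta}$-curves from suitable pushoffs of those arcs (so that the combinatorics of $\boldsymbol{\alpha}\cap\boldsymbol{\beta}$ encodes $\psi$), and the basepoint $z$ lies on $B$. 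In Lipshitz's cylindrical formulation \cite{Li} this diagram computes $\widehat{HF}(-M)$ and $HF^+(-M)$, the differentials counting holomorphic curves in $\R\times[0,1]\times\Sigma$. The geometric heart of the matter is that, through the page of the open book, the $J$-holomorphic curves relevant to the two theories — those in $\R\times[0,1]\times\Sigma$ on the one side and those in $\R\times M$ on the other — can be put in direct correspondence.

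\emph{The chain maps and why they are isomorphisms.} Exploiting this correspondence one constructs a chain map $\widehat\Phi\colon\widehat{CF}(-M)\to\widehat{ECC}(M)$ from the Heegaard Floer complex to the hat embedded contact homology chain complex of $\alpha$. On generators $\widehat\Phi$ is governed by a dictionary in which the distinguished generator $\mathbf x_0$ of $\widehat{CF}$, all of whose coordinates lie on $B$, corresponds to the empty orbit set; the way an arbitrary generator differs from $\mathbf x_0$ encodes a collection of periodic points of $\psi$, hence an orbit set. That $\widehat\Phi$ is a chain map, and that $\widehat\Phi_*$ is an isomorphism, both reduce to a bijective correspondence between the relevant moduli spaces of index-one holomorphic curves on the two sides, which is the technical core of \cite{I, II, III}. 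For the plus version one uses that $CF^+$ and the full embedded contact homology chain complex $ECC$ carry increasing filtrations whose successive subquotients recover $\widehat{CF}$ and $\widehat{ECC}$ — the standard manner in which the $+$-flavour is built from the hat-flavour; refining the curve count so as to record also the intersections with the basepoint $z$ produces a filtered chain map $\Phi^+\colon CF^+(-M)\to ECC(M)$ inducing $\widehat\Phi$ on the graded pieces, so the isomorphism $\widehat\Phi_*$ propagates level by level, and then to the colimit, showing $\Phi^+_*$ to be an isomorphism as well. Because in this model the $U$-maps on the two sides are realised by one and the same geometric operation, the square involving $U$ in \eqref{commutativity of isomorphisms} commutes at the chain level, and the remaining squares commute by the compatibility of $\Phi^+$ with the filtrations.

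\emph{Contact classes, $\op{Spin}^c$ structures, and independence of choices.} The decomposition of Heegaard Floer homology over $\op{Spin}^c$ structures is matched with the decomposition of embedded contact homology over first homology classes because, under the correspondence of moduli spaces, the relative homology classes carried by the holomorphic curves on the two sides are identified via Poincar\'e duality. Since $\mathbf x_0$ represents the Heegaard Floer contact class of $\xi$ and the empty orbit set represents the embedded contact homology contact class, and $\widehat\Phi$ matches them, the contact classes correspond as well; this last point requires its own argument, carried out in \cite{binding}. Finally, one verifies that the isomorphisms do not depend on the auxiliary data — the supporting open book, two such for $\xi$ differing by positive stabilisations under which both theories behave naturally, and the adapted contact form $\alpha$, the almost complex structures and the abstract perturbations — by the usual continuation-map arguments.

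\emph{The main difficulty.} The homological-algebra part of the scheme is routine; everything of substance is concentrated in the correspondence of moduli spaces used in the second step. One must choose the almost complex structures so that both moduli problems become transverse; establish an SFT-type compactness theorem controlling degenerations in the non-compact region near the binding; exclude limit buildings having no counterpart on the other side; handle the multiply covered and branched curve components that occur near $B$; match the $ECH$ index with the Fredholm index, via Hutchings's index inequality and the relative adjunction formula; and prove a gluing theorem that reconstructs each embedded contact homology curve from the corresponding Heegaard Floer curve. It is this analytic package, rather than the bookkeeping, that accounts for the length of \cite{I, II, III}.
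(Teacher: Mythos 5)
Your outline captures the ambient setting (an open book supporting $\xi$, the Honda--Kazez--Mati\'c diagram, holomorphic curves on both sides), but the mechanism you propose for proving that the maps are isomorphisms is not the one that works, and at two points it relies on structures that do not exist. There is no ``dictionary'' identifying generators of $\widehat{CF}$ with orbit sets, and $\widehat\Phi_*$ is not obtained from a bijection of index-one moduli spaces: the generators on the two sides are of different natures (tuples of intersection points versus orbit sets of Reeb orbits) and are not in bijection. The map is instead defined by counting $I=0$ curves in a symplectic cobordism $W_+$ from $[0,1]\times S$ to the mapping torus $N$; its homotopy inverse is a second cobordism map $\Psi$ defined on a compactified, upside-down cobordism $\overline{W}_-$ with a point constraint, and the proof that $\Phi$ and $\Psi$ invert each other proceeds by degenerating the glued cobordisms in two ways and evaluating relative Gromov--Taubes invariants. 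You also omit entirely the passage from $\widehat{ECH}(M)$ to the mapping torus: one must prove $\widehat{ECH}(M)\cong\widehat{ECH}(N,\partial N)=\varinjlim ECH_i(N)$ (the content of \cite{binding}) and then that $ECH_{2g}(N)\to\widehat{ECH}(N,\partial N)$ is an isomorphism (by stabilising the open book); without these steps the target of $\Phi$ is not $\widehat{ECH}(M)$ at all.

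Your argument for $\Phi^+_*$ has a genuine gap: $ECC(M,\alpha)$ does not carry an increasing filtration whose successive subquotients are copies of $\widehat{ECC}(M,\alpha)$. The hat flavour of ECH is the mapping cone of the $U$-map, so the two flavours are related by an exact triangle, not by a filtration with $\widehat{ECC}$ as graded pieces, and there is no ``level-by-level propagation'' of the isomorphism followed by a colimit. The actual argument constructs $\Phi^+$ on a separate, non-exact cobordism $X_+$ by additionally recording intersections with a symplectic surface $S_z$, proves commutativity of \eqref{commutativity of isomorphisms} up to chain homotopy (which already requires replacing the algebraic $U$-map on $CF^+$ by a geometric count $U_{\mathbf z}$ and a nontrivial computation that a certain relative Gromov--Witten number equals one), and then deduces that $\Phi^+_*$ is an isomorphism from the fact that $\widehat\Phi_*$ is one by an algebraic argument with the two exact triangles, generalising \cite[Proposition 2.1]{OSz2}. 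Relatedly, the $U$-square commutes only up to homotopy, not on the nose, and independence of the isomorphisms from the auxiliary choices is not established in \cite{binding, I, II, III} (nor is it needed for the statement).
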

Vinicius Ramos in \cite{Vini} proved that $\widehat{\Phi}_*$ and $\Phi^+_*$ respect also the grading by homotopy classes of plane fields which exists in both theories. For simplicity we will not talk about Spin$^c$-structures or gradings in this survey.

A natural setting for relating Heegaard Floer homology and embedded contact homology is that of {\em open book decompositions} (see Definition~\ref{dfn: open book}) because an open book decomposition determines both a Heegaard splitting and a contact structure. The Heegaard splitting is obtained by taking as Heegaard surface the union of two opposite pages. The contact structure is provided by a construction of Thurston and Winkelnkemper from \cite{TW}.

Fix an open book decomposition $(S, \hh)$ for $M$ where $S$ has genus $g>0$ and connected boundary. The first step in the definition of the isomorphisms is to adapt the definitions of $\widehat{HF}(-M)$ and $\widehat{ECH}(M)$ to the open book decomposition $(S,\hh)$. For Heegaard Floer homology this is achieved by pushing all interesting intersection points between the $\boldsymbol{\alpha}$- and $\boldsymbol{\beta}$-curves to one side of the Heegaard surface obtained from $(S, \hh)$; see Subsection \ref{HFhat on a page}.

For embedded contact homology, this is achieved in \cite{binding} and reviewed in Subsections \ref{subsec: ECH with boundary} and \ref{subsec: proof of binding}. There we introduce the group $\widehat{ECH}(N,\partial N)$ for the mapping torus $N$ of $(S, \hh)$ and prove that $\widehat{ECH}(M)\cong \widehat{ECH}(N,\partial N)$. This group is defined as a direct limit
$$\widehat{ECH}(N,\partial N)=\varinjlim ECH_i(N),$$
where $i$ is the number of intersections of an orbit set in $N$ with a fibre and the direct limit is taken with respect to maps $ECH_i(N)\to ECH_{j+1}(N)$
defined by increasing the multiplicity of an elliptic orbit on $\partial N$. This orbit
can be regarded intuitively as a receptacle for the $J$-holomorphic curves in
$\R \times M$ which intersect the cylinder over the binding.

Chain maps between Floer complexes are often defined by counting holomorphic curves in some symplectic cobordisms. Here, we use the open book $(S, \hh)$ to build a symplectic cobordism $W_+$ from $[0,1] \times S$ to $N$ and a symplectic cobordism $X_+$ from $[0,1] \times \Sigma$ to $M$. Here $\Sigma$ is the union of two opposite pages of the open book decomposition, and therefore is a closed surface of genus $2g$. Our convention is that symplectic cobordisms have cylindrical ends and we say that they go {\em from} the convex (positive) end {\em to} the concave (negative) end.

Counting holomorphic curves in $W_+$ and $X_+$ with suitable asymptotics and Lagrangian boundary conditions gives maps 
$\Phi_* \colon \widehat{HF}(-M) \to ECH_{2g}(N)$ and $ \Phi^+_* \colon HF^+(-M) \to ECH(M)$. 
After composing $\Phi_*$ with the natural map $ECH_{2g}(N) \to \widehat{ECH}(N, \partial N)$, we obtain the map $\widehat{\Phi}_*$. The maps $\Phi_*$ and $\widehat{\Phi}_*$ are defined in \cite{I} (with a slightly different notation), while $\Phi^+_*$ is defined in \cite{III}. Both are reviewed in Section \ref{sec: OC}. Strictly speaking, in the construction of $\Phi$ we replace the embedded Floer homology groups of $N$ with isomorphic periodic Floer homology groups; see Subsection \ref{subsec: PFH}.

In \cite{I} we also define a map $\Psi_* \colon ECH_{2g}(N) \to \widehat{HF}(-M)$ by counting holomorphic curves in a symplectic cobordism $\overline{W}_-$ passing through a base point. The cobordism $\overline{W}_-$ is defined as a compactification of $W_+$ turned upside-down. The Lagrangian boundary condition on $\overline{W}_-$ is singular, and this leads to many more potential degenerations of holomorphic curves. For this reason the analysis of the $J$-holomorphic curves defining $\Psi_*$ is longer and more difficult than the analysis of the $J$-holomorphic curves defining $\Phi_*$. The construction of $\Psi_*$ is reviewed in Section \ref{sec: CO}.

Then, in \cite{II} we prove that $\Phi_*$ and $\Psi_*$ are inverses of each other by composing the two cobordisms and degenerating them in a different way.
The proof that $\Phi_* \circ \Psi_*$ and $\Psi_* \circ \Phi_*$ are isomorphisms is thus
reduced to a computation of some relative Gromov-Taubes invariants. This step is briefly described in Section \ref{sec: homotopies}.

Finally we prove that the natural map $ECH_{2g}(N) \to \widehat{ECH}(N, \partial N)$ is an isomorphism by an argument based on stabilising the open book decomposition. This last step is described in Section \ref{sec: stabilisation}. This finishes the proof that $\widehat{\Phi}_*$ is an isomorphism. A simple algebraic argument based on the commutativity of the diagram \eqref{commutativity of isomorphisms} and the properties of the map $U$ in both theories shows that $\Phi^+_*$ is also an isomorphism.

\begin{conv}
Throughout the paper we will follow the convention that all three-manifolds are connected, oriented and compact. If $X$ is a manifold with boundary, we will denote $\op{int}(X)=X \setminus \partial X$.
\end{conv}

We finish this introduction with a warning to the reader: the articles \cite{binding,I,II,III} are still under review at the time of writing this survey, and therefore the numbering of the references to those articles is likely to become obsolete.
\subsection*{Acknowledgements}
We are indebted to Michael Hutchings for many helpful conversations. We also thank Denis Auroux, Thomas Brown, Tobias Ekholm, Dusa McDuff, Ivan Smith, Jean-Yves Welschinger and Chris Wendl for illuminating exchanges. Finally we are grateful to the organisers of the Boyerfest for giving us the opportunity of writing this survey. Since the beginning of the project VC and PG have been partially supported by  ANR grant ANR-08-BLAN-0291-01 ``Floer Power'', ERC grant 278246 ``GEODYCON'' and ANR grant ANR- 16-CE40-017 ``Quantact''.  KH was supported by NSF Grants DMS-0805352, DMS-1105432, DMS-1406564, and DMS-1549147. An important part of the work leading to \cite{binding, I, II, III} was carried out when PG and KH were visiting MSRI in Spring 2010; we are grateful to that institution for the hospitality.

\section{Moduli spaces of $J$-holomorphic curves}
In this section we  review some generalities about moduli spaces of holomorphic curves with the goal of fixing notations and conventions. The reader should keep in mind that we are not trying to give a one-size-fits-all definition of the various moduli space we will use: each one will be defined in due course; here we introduce the terminology that we will use in their definitions.

 Let ${\mathcal Y}$ be a compact, connected three-manifold. A {\em stable Hamiltonian structure} on ${\mathcal Y}$ is a pair $(\alpha, \omega)$ where $\alpha$ is a $1$-form, and $\omega$ is a closed $2$-form which satisfy $\alpha \wedge \omega >0$ and $\ker d \alpha \subset \ker \omega$. Stable Hamiltonian structures in this article will satisfy also one of the following conditions:
\begin{itemize}
\item[(i)] $d \alpha = \omega$, or
\item[(ii)] $d \alpha =0$.
\end{itemize}
Clearly when (i) is satisfied $\alpha$ is a contact form on ${\mathcal Y}$. Fibrations over $S^1$ are the main source of stable Hamiltonian structures satisfying (ii). In fact, if $\pi \colon {\mathcal Y} \to S^1$ is a locally trivial fibration with fibre ${\mathcal S}$, it is always possible to find a representative of the monodromy $\phi \colon {\mathcal S} \to {\mathcal S}$ preserving an area form $\omega$, and therefore to regard $\omega$ as a $2$-form on ${\mathcal Y}$. If $dt$ is a length form on $S^1$ and we define $\alpha= \pi^* dt$, then $(\alpha, \omega)$ will be called the stable Hamiltonian structure {\em induced by the fibration $\pi \colon {\mathcal Y} \to S^1$}. Stable Hamiltonian structures satisfying (ii) will arise, in the same way, also from fibrations $\pi \colon {\mathcal Y} \to I$ over a closed interval.

A stable Hamiltonian structure determines a Reeb vector field\footnote{Also called {\em Hamiltonian vector field}, especially when $\alpha$ is not a contact form.} $R$ on ${\mathcal Y}$, which is the vector field defined by the equations
$$\begin{cases}
\alpha(R)=1, \\
\iota_R \omega =0.
\end{cases}$$
An almost complex structure $J$ on $\R \times {\mathcal Y}$ is {\em compatible} with the stable Hamiltonian structure $(\alpha, \omega)$, or with the contact form $\alpha$ when (i) is satisfied, if
\begin{itemize}
\item $J$ is invariant under translations in the $\R$ direction,
\item $J(\partial_s)=R$, where $s$ denotes the coordinate on $\R$,
\item $J(\xi)=\xi$, where $\xi= \ker \alpha$, and
\item $\omega( \cdot, J \cdot)$ is an Euclidean metric on $\xi$.
\end{itemize}

Let $({\mathcal X}, \Omega_{\mathcal X})$ be a symplectic four-manifold, possibly with boundary and cylindrical ends (in the latter case we also call  $({\mathcal X}, \Omega_{\mathcal X})$ a {\em symplectic cobordism}). 
The positive ends are identified with $(0, +\infty) \times {\mathcal Y}_+$ and the negative ends are identified with $(-\infty, 0) \times {\mathcal Y}_-$ where ${\mathcal Y}_+$ and ${\mathcal Y}_-$ are three-manifolds which are  endowed with a stable Hamiltonian structure. On each end $\Omega_{\mathcal X}= d(e^s \alpha)$ if the stable Hamiltonian structure satisfies (i), or $\Omega_{\mathcal X}= ds \wedge \alpha + \omega$ if it satisfies (ii).
The boundary of ${\mathcal X}$ has a  vertical part which is foliated by symplectic submanifolds and a horizontal part which is convex. 
 The vertical boundary of ${\mathcal X}$, if nonempty, is equipped with a Lagrangian submanifold ${\mathcal L}$ such that the intersection of ${\mathcal L}$ with an end of ${\mathcal X}$ is a half cylinder over a collection of properly embedded curves in $\partial {\mathcal Y}$ which are tangent to $\xi$. Those curves
are called the boundary at infinity\footnote{This term will be used only in this section.} of ${\mathcal L}$. The prototype is ${\mathcal X}= \R \times [0,1] \times S$ with a product symplectic form $\Omega_{\mathcal X}= ds \wedge dt + \omega$, where $S$ is a surface with boundary, endowed with the Lagrangian submanifold ${\mathcal L}=  (\R \times \{ 0 \} \times  \mathbf{a}_0) \cup  (\R \times \{ 1 \} \times  \mathbf{a}_1)$, where $\mathbf{a}_0$ and $\mathbf{a}_1$ are properly embedded one-dimensional submanifolds with boundary of $S$.

On ${\mathcal X}$ we choose an almost complex structure $J_{\mathcal X}$ which is compatible with $\Omega_{\mathcal X}$ and with the stable Hamiltonian structures
on the ends. In order to have SFT compactness for $J_{\mathcal X}$-holomorphic curves, we assume that the vertical part of $\partial {\mathcal X}$ is foliated by $J_{\mathcal X}$-holomorphic submanifolds which form a barrier to $J_{\mathcal X}$-holomorphic curves and that the horizontal boundary is $J_{\mathcal X}$-convex, so that a maximum principle for $J_{\mathcal X}$-holomorphic curves holds. Note that in most cases ${\mathcal X}$ is the total space of a symplectic fibration ${\mathcal X} \xrightarrow{\varpi} {\mathcal B}$ and the condition on $J_{\mathcal X}$ at the vertical boundary is obtained by asking that $\varpi$ should be holomorphic, at least near $\partial {\mathcal X}$. We often require more properties from the almost complex structures, but in the text we will recall only those which are relevant for a first reading.

\begin{dfn}
A {\em $J_{\mathcal X}$-holomorphic curve} in ${\mathcal X}$ with boundary on ${\mathcal L}$ is a triple $(F, j, u)$ such that $(F, j)$ is a smooth Riemann surface, possibly with boundary and punctures, and $u \colon F \to {\mathcal X}$ is a proper map satisfying $du \circ j = J_{\mathcal X} \circ du$ and mapping each connected component of $\partial F$ to a distinct connected component of ${\mathcal L}$. If $F$ is connected, then $(F, j, u)$ is called {\em irreducible}. If $F_0 \subset F$ is a connected component, the restriction $(F_0, j|_{F_0}, u|_{F_0})$ will be called an {\em irreducible component} of $(F, j, u)$.
\end{dfn}
 We will simply say a {\em holomorphic curve} when the almost complex structure is clear from the context. In Section \ref{sec: CO} we will need to consider holomorphic curves with boundary on a singular Lagrangian ${\mathcal L}$. In that case we will assume that each connected component of $\partial F$ is sent to a distinct connected component of the smooth part of ${\mathcal L}$.

When ${\mathcal X}$ is  the total space of a symplectic fibration $\varpi \colon {\mathcal X} \to {\mathcal B}$ and $J_{\mathcal X}$ is compatible with the fibration, which means that $\varpi$ is holomorphic, we say that a holomorphic curve $(F, j, u)$ is a {\em degree $d$ multisection} of ${\mathcal X}$ if $\varpi \circ u \colon F \to {\mathcal B}$ is a branched cover of degree $d$.

 The punctures of $F$ are divided into positive and negative punctures so that $u$ maps a punctured neighbourhood of a positive puncture to a positive end of ${\mathcal X}$ and a punctured neighbourhood of a negative puncture to a negative end. Around each positive puncture we choose coordinates $(s,t) \in (0, + \infty) \times [0,1]$ for a boundary puncture and $(s,t) \in (0, + \infty) \times S^1$ for an interior puncture. Around each negative puncture we choose similar coordinates with $(0, + \infty)$ replaced by $(- \infty, 0)$. If $e$ is either a Reeb orbit in ${\mathcal Y}$ reparametrised so that it has period one, or a Reeb chord connecting two points of the boundary at infinity of ${\mathcal L}$, we say that $u$ is {\em positively} (resp. {\em negatively}) {\em asymptotic} to $e$ if, in the neighbourhood of a positive (resp. negative) puncture, $\lim \limits_{s \to \pm\infty} u(s,t)=e(t)$ (with positive sign for positive punctures and negative sign for negative punctures). We call an {\em end} of a holomorphic curve the restriction of $u$ which is mapped to an end of ${\mathcal X}$.
\begin{dfn}
If ${\mathcal X}= \R \times {\mathcal Y}$ and $J_{\mathcal X}$ is adapted to a stable Hamiltonian structure on ${\mathcal Y}$, a $J_{\mathcal X}$-holomorphic curve in ${\mathcal X}$ is either a {\em trivial cylinder} or {\em strip} if it parametrises $\R \times e$ where $e$ is, respectively, a closed Reeb orbit or a Reeb chord. In a general symplectic manifold ${\mathcal X}$ we say that an end of a $J_{\mathcal X}$-holomorphic curve is {\em trivial} if it coincides with a portion of a trivial cylinder or strip.
\end{dfn}

We say that two holomorphic curves $(F, j, u)$ and $(F', j', u')$ are equivalent, and write $(F, j, u) \sim (F', j', u')$, if there exists an orientation-preserving diffeomorphism $\phi \colon F \to F'$ extending smoothly over the punctures such that $\phi_*j=j'$ and $u= u' \circ \phi$. % The automorphism group of $(F, j, u)$, denoted by $Aut(F, j, u)$, is the subgroup of $Diff^+(F)$ whose elements give auto-equivalences of $(F, j, u)$.
Given a set $\mathbf{e}$ of chords and orbits in the various ends, the moduli space\footnote{In the next sections we will distinguish chords and orbits belonging to different ends of ${\mathcal X}$. Here, for simplicity, we don't.}  ${\mathcal M}_{\mathcal X}(\mathbf{e})$ is the quotient by the equivalence relation $\sim$ of the space of $J_{\mathcal X}$-holomorphic curves in ${\mathcal X}$ with boundary on ${\mathcal L}$ which are asymptotic to the chords and orbits in $\mathbf{e}$. In the definition of ${\mathcal M}_{\mathcal X}(\mathbf{e})$ the topology of $F$ is {\em not} fixed, and therefore different $J_{\mathcal X}$-holomorphic curves can have different genera, number of punctures and number of connected components. Moreover, multiple orbits are treated in the embedded contact homology way, which means that only their total multiplicity in $\mathbf{e}$ counts. For example, if  $\mathbf{e}$ contains an orbit $\gamma$ with multiplicity two, then holomorphic curves\footnote{We will always call the elements of the moduli spaces ``holomorphic curves'' even if, strictly speaking, they are equivalence classes of holomorphic curves.} in ${\mathcal M}_{\mathcal X}(\mathbf{e})$ can have either one end at a double cover of $\gamma$ or two ends at $\gamma$. See \cite{Hu1} or Subsection \ref{subsec: ECH} for more details.

Let ${\mathcal T}(F)$ be the Teichm\"{u}ller space of complex structures on $F$ and $\operatorname{Aut}(F, j)$ the group of automorphisms of the Riemann surface $(F, j)$. To a $J_{\mathcal X}$-holomorphic curve $(F, j, u)$ we associate a formal deformation complex
$$0 \to T_{id}\operatorname{Aut}(F, j) \xrightarrow{\mathfrak{L}} T_{[j]}{\mathcal T}(F) \oplus \Omega^0(F; u^*T{\mathcal X}) \xrightarrow{D_{j,u}} \Omega^{0,1}(S; u^*T{\mathcal X}) \to 0$$
where $D_{j,u}$ is the formal linearised Cauchy-Riemann operator where $j$ is also considered as a variable, and $\mathfrak{L}$ is the formal linearisation of the action of $\operatorname{Aut}(F, j)$. If the ends of $u$ are nondegenerate, a suitable Sobolev completion of the formal deformation complex is an elliptic complex. Let ${\mathcal H}^i$
(for $i=0,1,2$) be its homology groups, which are finite dimensional because of the Fredholm property. We define the {\em Fredholm index} of a $J_{\mathcal X}$-holomorphic curve $(F, j, u)$ as
$$\op{ind}(u)= - \dim {\mathcal H}^0 + \dim {\mathcal H}^1 - \dim {\mathcal H}^2$$
If $u$ is not constant (and it will never be constant in this article), then ${\mathcal H}^0=0$. We say that $(F, j, u)$ is {\em regular} if the linearised operator $D_{j,u}$ is surjective in the appropriated Sobolev completion, i.e.\ if ${\mathcal H}^2=0$. If  $(F, j, u)$ is nonconstant and regular, then its equivalence class has a neighbourhood in the moduli spaces ${\mathcal M}_{\mathcal X}(\mathbf{e})$ that is diffeomorphic to a ball of dimension $\op{ind}(u)$. This is all standard holomorphic curves theory, and the reader can find the details in the many expositions of the topic, e.g.\ \cite[Lecture 7]{We-sft} and \cite[Section 13a]{Seidel-book}.

In the article we will more often use another index, which is specific to holomorphic curves in four-dimensional symplectic manifolds: the ECH-type index $I(u)$. For its definition we refer to the original articles (\cite{Hu1, Hu2}, where it was first introduced for embedded contact homology, and \cite{I, II, III} for its extension to the various symplectic cobordisms used in the proof of Theorem \ref{main}). Here we will only state its main properties:
\begin{enumerate}
\item {\em homology invariance}: $I(u)$ depends only on the relative homology class defined by $u$ (after a suitable compactification);
\item  {\em concatenation}: if two holomorphic curves $(F_+, j_+, u_+)$ and $(F_-, j_-, u_-)$ such that the positive ends of $u_-$ match the negative ends of $u_+$ are glued along the matching ends to form a holomorphic curve $(F, j, u)$, then $I(u)=I(u_-) + I(u_+)$; and
\item {\em index inequality}: if $\delta(u)$ is an algebraic count of singularities of $u$ as in \cite[Appendix E]{MS}, then
\begin{equation}\label{eqn: index inequality}
\op{ind}(u) + 2\delta(u) \le I(u).
\end{equation}
Moreover there is equality if $u$ has no end which is asymptotic to a closed Reeb orbit.
\end{enumerate}
The index inequality was first proved by McDuff for closed holomorphic curves as an adjunction inequality (see \cite[Appendix E]{MS} for a more modern exposition) and reinterpreted by Taubes as an index inequality (see \cite{T4}). For punctured holomorphic curves it was proved by Hutchings in \cite{Hu1} (see also \cite{Hu2}) in order to define embedded contact homology. The extension to other settings is straightforward and is done in \cite{I}.

The index inequality has the following important consequence: if $I(u)<2$ (and, in some cases, also if $I(u)=2$) then $u$ is embedded and $I(u)=\op{ind}(u)$. Embeddedness in particular implies somewhere injectivity, and therefore for a generic almost complex structure $J_{\mathcal X}$ all holomorphic curves of low ECH-type index are regular by standard techniques; see in \cite[Section 3.2]{MS}. A similar result holds for holomorphic curves of higher index satisfying constraints of sufficiently high codimension. For this reason in \cite{I, II, III} we do not need to worry about the regularity issues which trouble many parts of symplectic topology.

Given a property $\bigstar$ (e.g.\ $I=0$), we denote the subset of $J_{\mathcal X}$-holomorphic curves in ${\mathcal M}_{\mathcal X}(\mathbf{e})$ satisfying $\bigstar$ by ${\mathcal M}_{\mathcal X}^\star(\mathbf{e})$. This convention will be used throughout the paper.

For sake of brevity, in the next sections we will always denote a holomorphic curve $(F, j, u)$ simply by $u$. In view of the embeddedness of the holomorphic curves of low ECH-type index, we will also often identify a holomorphic curve with its image.
\section{Heegaard Floer homology}\label{sec: HF}

\subsection{A review of Heegaard Floer homology}
In this section we briefly review the Heegaard Floer homology groups associated to a closed $3$-manifold $M$. We will work with Lipshitz's ``cylindrical reformulation'' from \cite{Li} instead of with the original definition from \cite{OSz1}.

Every closed three-manifold can be encoded by a {\em pointed Heegaard diagram} $(\Sigma, \bs{\alpha}, \bs{\beta}, z)$ associated to a Heegaard decomposition of $M$. Here $\Sigma$ is a closed, oriented, connected surface of genus $h$ which divides $M$ into two handlebodies $H_\alpha$ and $H_\beta$, and $\bs{\alpha} =\{\alpha_1,\dots,\alpha_{h}\}$ and   $\bs{\beta} =\{\beta_1,\dots,\beta_{h}\}$ are  collections of $h$ pairwise disjoint simple closed curves in $\Sigma$ which bound discs in $H_\alpha$ and $H_\beta$ respectively and are linearly independent in $H_1(\Sigma;\Z)$.  Finally $z\in\Sigma-\bs{\alpha}-\bs{\beta}$ is a base point, which is not needed to describe $M$, but is a crucial ingredient in the definition of the Heegaard Floer chain complexes.

After fixing an area form $\omega$ on $\Sigma$, we obtain a stable Hamiltonian structure $(dt, \omega)$ on $[0,1] \times \Sigma$ with Reeb vector field $R= \partial_t$. The submanifolds $L_{\bs{\alpha}}= \R \times \{ 1 \} \times \bs{\alpha}$ and $L_{\bs{\beta}} = \R \times \{ 0 \} \times \bs{\beta}$ of $\R \times [0,1] \times \Sigma$ are Lagrangian for the symplectic form $\Omega_X= ds \wedge dt + \omega$. We choose an almost complex structure $J$ on $X$ which is compatible with the stable Hamiltonian structure $(dt, \omega)$ and such that the surface $\R \times [0,1] \times \{ z \}$ is holomorphic.

We denote by ${\mathcal S}_{\bs{\alpha}, \bs{\beta}}$ the set of unordered $h$-tuples of intersection points $\mathbf{y}= \{ y_1, \ldots, y_h \}$ between $\alpha$- and $\beta$-curves for which there is a permutation $\sigma \in \mathfrak{S}_h$ such that $y_i \in \alpha_i \cap \beta_{\sigma(i)}$.
Given $\mathbf{y}_+, \mathbf{y}_- \in {\mathcal S}_{\alpha, \beta}$, we consider the moduli
space\footnote{The same moduli spaces are denoted by $\mathcal{M}^X_J(\mathbf{y}, \mathbf{y}')$ in \cite{I}.} ${\mathcal M}_X(\mathbf{y}_+, \mathbf{y}_-)$ of  $J$-holomorphic curves in $X$ with boundary on $L_{\bs{\alpha}} \cup L_{\bs{\beta}}$ and asymptotic to chords $[0,1] \times \mathbf{y}_\pm$ for $s \to \pm \infty$ ({\em $HF$-curves} in the following). Translations in the $\R$ direction act on ${\mathcal M}_X(\mathbf{y}_+, \mathbf{y}_-)$  and we denote the quotient by $\widehat{\mathcal M}_X(\mathbf{y}_+, \mathbf{y}_-)$.

To a $J$-holomorphic curve $u \in {\mathcal M}_X(\mathbf{y}_+, \mathbf{y}_-)$ we associate two topological quantities: an ``intersection number'' $n_z(u)$, defined as the algebraic intersection of the image of $u$ with $[-1,1] \times [0,1] \times \{ z \}$ (see \cite[Appendix E]{MS} for the intersection number between two $J$-holomorphic curves), and the ECH-type index\footnote{denoted by $I_{HF}$ in \cite{I}.} $I(u)$ (\cite[Equation 4.5.4]{I}), which in this context is a reformulation of Lipshitz's index formula \cite[Corollary 4.3]{Li} (see also \cite{Li2}). The index inequality is proved in \cite[Theorem 4.5.13]{I}. 
By positivity of intersection for $J$-holomorphic curves in dimension four, $n_z(u) \ge 0$. Moreover, for a generic almost complex structure, $I(u) \ge 0$ for each HF-curve $u$. 

The chain complex $\widehat{CF}(\Sigma, \bs{\alpha}, \bs{\beta}, z)$ is freely generated by ${\mathcal S}_{\bs{\alpha}, \bs{\beta}}$ as a vector space over $\Z/2\Z$.  The differential is
$$\widehat{\partial} \mathbf{y}_+ = \sum_{\mathbf{y}_- \in {\mathcal S}_{\bs{\alpha}, \bs{\beta}}} \# \widehat{\mathcal M}_X^{I=1, n_z=0}(\mathbf{y}_+, \mathbf{y}_-) \mathbf{y}_-.$$
The chain complex $CF^+(\Sigma, \bs{\alpha}, \bs{\beta}, z)$ is freely generated, as a vector space over $\Z/2\Z$, by the pairs $[\mathbf{y}, i]$ where $\mathbf{y} \in {\mathcal S}_{\bs{\alpha}, \bs{\beta}}$ and $i \in \N$ (our convention is that $0 \in \N$). The differential is
$$\partial^+([\mathbf{y}_+, i])= \sum_{\mathbf{y}_- \in {\mathcal S}_{\bs{\alpha}, \bs{\beta}}} \sum_{0 \le j \le i} \# \widehat{\mathcal M}_X^{I=1, n_z=j}(\mathbf{y}_+, \mathbf{y}_-)[\mathbf{y}_-, i-j].$$
In order to have finite sums we need to assume {\em weak admissibility}, which can be rephrased by asking that the curves $\bs{\alpha}$ and $\bs{\beta}$ be exact Lagrangian submanifolds for some primitive of $\omega$ on $\Sigma \setminus \{ z \}$.
Weak admissibility can always be achieved up to isotopy of the $\alpha$- and $\beta$-curves and deformation of $\omega$. See \cite[Section~4.2.2]{OSz1} or \cite[Section 5]{Li}.

The homologies of $\widehat{CF}(\Sigma, \bs{\alpha}, \bs{\beta}, z)$ and $CF^+(\Sigma, \bs{\alpha}, \bs{\beta}, z)$ are denoted by $\widehat{HF}(M)$ and $HF^+(M)$ respectively.
The fact that they are well defined invariants of three-manifolds up to diffeomorphism was proved in \cite{OSz1} and reproved in \cite{Li} for the cylindrical reformulation we are using in this article. Naturality is addressed in \cite{naturality}.

The map $U \colon CF^+(\Sigma, \bs{\alpha}, \bs{\beta}, z) \to  CF^+(\Sigma, \bs{\alpha}, \bs{\beta}, z)$ defined as
$$ U([\mathbf{y}, i])= \begin{cases}
[\mathbf{y}, i-1] & \text{if } i \ge 1, \\
0 &  \text{if } i=0 \end{cases}$$
is a chain map. The short exact sequence
$$0 \longrightarrow \widehat{CF}(\Sigma, \bs{\alpha}, \bs{\beta}, z) \longrightarrow CF^+(\Sigma, \bs{\alpha}, \bs{\beta}, z) \stackrel{U} \longrightarrow CF^+(\Sigma, \bs{\alpha}, \bs{\beta}, z) \longrightarrow 0$$
induces the exact triangle
\begin{equation}\label{successione tautologica}
\xymatrix{
HF^+(M) \ar[rr]^{U} & & HF^+(M) \ar[ld] \\
& \widehat{HF}(M) \ar[ul]
}
\end{equation}
in homology. Heegaard Floer homology decomposes as a direct sum of groups indexed by Spin$^c$ structures and the triangle \eqref{successione tautologica} holds for every summand. For simplicity we will not discuss this decomposition.

\subsection{A geometric interpretation of the $U$-map}\label{subsec: geometric U}
This subsection is taken from \cite[Section 3]{III}, but some of the arguments are slightly modified.  In \cite{OSz1,Li}, the $U$-map
$$U : CF^+ (\Sigma,\bs\alpha,\bs\beta,z)\rightarrow CF^+ (\Sigma,\bs\alpha,\bs\beta,z)$$
is defined algebraically as $U([{\bf y},i])=[{\bf y}, i-1]$, but we need to redefine it geometrically to be able to compare it with the $U$-map in ECH.

Let us fix\footnote{What is called $z$ and $\mathbf{z}$ here is called $z^f$ and $z$ respectively in \cite{III}.} $\mathbf{z} = (0, \frac 12 ,z)\in X= \R \times [0,1] \times \Sigma$, and let $J^\Diamond$ be a generic small perturbation of $J$ supported near $\mathbf{z}$ such that $\R \times [0,1] \times \{ z \}$ remains holomorphic. This perturbation is needed so that $J^\Diamond$-holomorphic curves do not have a closed irreducible component passing through $\mathbf{z}$ .

Given $\mathbf{y}_+, \mathbf{y}_- \in {\mathcal S}_{\bs{\alpha}, \bs{\beta}}$ we denote by
${\mathcal M}_X(\mathbf{y}_+, \mathbf{y}_-; \mathbf{z})$ the moduli space of
$J^\Diamond$-holomorphic curves in $X$ with boundary on $L_{\bs{\alpha}} \cup L_{\bs{\beta}}$
which are positively asymptotic to $[0,1] \times \mathbf{y}_+$, negatively
asymptotic to $[0,1] \times \mathbf{y}_-$ and pass through $\mathbf{z}$.
Note that $n_z(u) \ge 1$ for $u \in {\mathcal M}_X(\mathbf{y}_+, \mathbf{y}_-;
\mathbf{z})$.
\begin{dfn}\label{geometric U}
The {\em geometric $U$-map} with respect to the point $\mathbf{z}$ is the map
$$U_{\mathbf{z}}([\mathbf{y}_+,i])= \sum \limits_{\mathbf{y}_- \in
{\mathcal S}_{\bs{\alpha}, \bs{\beta}}} \sum \limits_{j=1}^i \# {\mathcal M}_X^{I=2,
n_z=j}(\mathbf{y}_+, \mathbf{y}_-; \mathbf{z}) [\mathbf{y}_-, i-j].$$
\end{dfn}

Standard arguments in symplectic geometry based on
the compactness and gluing results of \cite{Li} show that the geometric $U$-map
$$U_{\mathbf{z}} \colon CF^+(\Sigma, \bs{\alpha}, \bs{\beta}, z) \to
CF^+(\Sigma, \bs{\alpha}, \bs{\beta}, z)$$
is a chain map. The main result of the subsection is the following.

\begin{thm}\label{thm: U-map}
There exists a chain homotopy
$$H:  CF^+ (\Sigma,\bs\alpha,\bs\beta,z)\rightarrow CF^+ (\Sigma,\bs\alpha,\bs\beta,z)$$
such that
\begin{equation} \label{eqn: Us}
  U_{\mathbf{z}} -U = H\circ \partial^+ + \partial^+ \circ H
\end{equation}
and $H([{\bf y} ,0])=0$ for all ${\bf y} \in \mathcal{S}_{\bs{\alpha}, \bs{\beta}}$.
\end{thm}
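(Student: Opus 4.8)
The plan is to obtain $H$ as a count of holomorphic curves living in a one-parameter family of moduli problems interpolating between the moduli spaces $\mathcal M_X^{I=2,n_z=j}(\mathbf y_+,\mathbf y_-;\mathbf z)$ that define $U_{\mathbf z}$ and a degenerate configuration from which the algebraic shift $[\mathbf y,i]\mapsto[\mathbf y,i-1]$ can be read off. It is worth isolating a bookkeeping point first. Any curve contributing to $U_{\mathbf z}$ meets the holomorphic barrier $\R\times[0,1]\times\{z\}$ at $\mathbf z$, hence has $n_z\ge 1$, so $U_{\mathbf z}$ — exactly like $U$ — strictly lowers the grading $i$; the curves that will build $H$ do the same, which is why $H([\mathbf y,0])=0$ comes out automatically. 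Decomposing everything over $\F[U]$ — $\partial^+=\sum_{j\ge 0}\partial_j U^j$ on $\widehat{CF}$ with $\partial_0=\widehat\partial$, $U_{\mathbf z}=\sum_{j\ge 1}c_j U^j$, $H=\sum_{j\ge 1}h_j U^j$, and $U$ acting as $U^1$ with coefficient $\operatorname{id}$ — equation \eqref{eqn: Us} is equivalent to a finite system of identities among the $c_j$, $\partial_j$, $h_j$ on $\widehat{CF}$, whose bottom line, the coefficient of $U^1$, is $c_1=\operatorname{id}+\widehat\partial h_1+h_1\widehat\partial$. Thus the real content is that $c_1$, the count of $I=2$, $n_z=1$ curves through $\mathbf z$, is chain homotopic to the identity of $\widehat{CF}$; the higher identities involve the $c_j$ with $j\ge 2$ and are satisfied by the same $H$.

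To construct $H$ I would run the usual cobordism-of-moduli-spaces argument. Fix a generic path $(J^\Diamond_\lambda,\mathbf z_\lambda)$, $\lambda\in[0,1]$, with $(J^\Diamond_1,\mathbf z_1)=(J^\Diamond,\mathbf z)$ and with a degeneration at $\lambda=0$ adapted to the holomorphic cylinder over $z$ — for instance $\mathbf z_\lambda$ sliding to the positive end along $\R\times[0,1]\times\{z\}$, or $X$ being stretched near that cylinder. For each $\mathbf y_\pm$ and each $j\ge 1$ the parametrized moduli space $\bigcup_\lambda\mathcal M_X^{I=1,n_z=j}(\mathbf y_+,\mathbf y_-;\mathbf z_\lambda)$ has expected dimension $1-2+1=0$ and is a finite set; $H$ is defined by its $\Z/2$-count, shifting $i$ by $j$. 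All the analysis needed is already in place: Lipshitz's compactness and gluing theorems \cite{Li}; the fact that $\R\times[0,1]\times\{z\}$ stays $J^\Diamond_\lambda$-holomorphic, so it is a barrier, $n_z\ge 0$ is preserved in limits, and no closed component hits $\mathbf z$; the absence of disc and sphere bubbling in $X$ (the $\bs\alpha$- and $\bs\beta$-curves bound no disc in $\Sigma$, and $(s,t)\circ u$ is holomorphic, hence obeys the maximum principle), so that every degeneration is an SFT breaking; and the embeddedness of low-$I$ curves coming from \eqref{eqn: index inequality}, which makes the relevant moduli spaces regular for generic data.

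The homotopy relation then comes from the boundary of the compact $1$-manifold $\bigcup_\lambda\mathcal M_X^{I=2,n_z=j}(\mathbf y_+,\mathbf y_-;\mathbf z_\lambda)$, summed over $j$. The $\lambda=1$ end is $U_{\mathbf z}$. A codimension-one breaking over an interior $\lambda$ splits the curve, in the two possible orders, into a rigid index-$1$ HF-curve (a term of $\partial^+$) and an index-$1$ curve through the still-free point $\mathbf z_\lambda$ (a term of $H$), so these strata sum to $\partial^+ H+H\partial^+$. The $\lambda=0$ end is the substantive one: there a thin part of the curve should converge onto (a cover of) the holomorphic cylinder over $z$ while the rest converges to an honest HF-curve making one fewer passage over $z$, and a gluing analysis along that cylinder — whose leading configuration is the trivial strip over $\mathbf y$ together with one copy of the $z$-cylinder — should identify this end with the algebraic shift $U$, any remaining $\lambda=0$ configurations being absorbed into $\partial^+ H+H\partial^+$. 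Combining the three contributions, and using $\partial^+\circ\partial^+=0$ to reorganize, gives $U_{\mathbf z}-U=\partial^+ H+H\partial^+$, with $H([\mathbf y,0])=0$ as noted above.

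I expect the $\lambda=0$ degeneration to be the main obstacle: one must prove the curves there degenerate precisely into [index-$1$ HF-curve]$\,\sqcup\,$[(cover of) the $z$-cylinder] and nothing else, that this degeneration is transverse, and that the accompanying gluing is a bijection in the relevant index range; and for the higher $U^j$-coefficients one must in addition rule out multiply-covered pieces of the $z$-cylinder — excluded on closed components by the perturbation $J^\Diamond$, and otherwise by the barrier property together with \eqref{eqn: index inequality}. This is essentially the content of \cite[Section 3]{III}; everything else is the standard compactness and gluing package together with the $\F[U]$-bookkeeping above.
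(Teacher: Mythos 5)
Your scaffolding --- a one-parameter family of point constraints, the boundary strata of the parametrized $I=2$ moduli spaces producing the homotopy relation, and $H([\mathbf{y},0])=0$ because every curve through the point has $n_z\ge 1$ --- matches the paper's proof, and the $\mathbb{F}[U]$ bookkeeping is harmless. The gap is exactly at the step you flag as the main obstacle: the choice of degeneration at the far end and its identification with the algebraic $U$. The limit configuration you propose, an HF-curve with one fewer passage over $z$ together with a (cover of the) surface $\R\times[0,1]\times\{z\}$, cannot arise: that surface is neither closed nor asymptotic to chords over points of $\bs{\alpha}\cap\bs{\beta}$, so it is not a component of any SFT limit building for these moduli spaces. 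Pushing $\mathbf{z}_\lambda$ to the positive end does not help either: the level of the limit building containing the point constraint must have $I\ge 2$, so that end reproduces $U_{\mathbf{z}}$ (for the limiting data) rather than $U$, proving only that the geometric $U$-map is independent of the chosen point. And even granting some splitting of the kind you describe, you give no mechanism forcing the limit map to be diagonal in $\mathbf{y}$ --- which is what the algebraic $U$ is --- nor forcing the count of degenerate configurations to be exactly $1$.

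The paper's route is different in precisely these respects. The point $\mathbf{z}_\tau=(0,\tfrac{1-\tau}{2},z)$ is moved toward the Lagrangian boundary $\{t=0\}$, so that at $\tau\to 1$ a copy of $D\times\Sigma$ with boundary condition $\partial D\times\bs{\beta}$ bubbles off from the base $B=\R\times[0,1]$ at a boundary point, and the constraint escapes into the bubble. The index computation \eqref{Index in D times Sigma} combined with $I(u_B)+I(u_D)=2-h$ forces the remaining curve $u_B$ in $B\times\Sigma$ to have $I=0$, hence to be a union of trivial strips; this is what makes the limit diagonal ($\mathbf{y}_+=\mathbf{y}_-$) and forces $n_z=1$, so the degenerate end contributes only to the $U^1$ coefficient. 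What remains is the relative Gromov--Taubes count $G(h)=\#{\mathcal M}_{D\times\Sigma}(A_{h,1};\mathbf{z},\mathbf{y})$, and proving $G(h)=1$ takes two further degenerations (bubbling an $S^2$ off $D$, then degenerating $\Sigma$ into a chain of tori and finally spheres). None of this is supplied by ``a gluing analysis along the $z$-cylinder,'' so as written the proposal does not establish the theorem.
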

\begin{proof}
We will define a chain homotopy between $U_{\mathbf{z}}$ and $U$ by moving $\mathbf{z}$
towards the boundary and identifying a count of holomorphic curves in the limit configuration
with the map $U$. More precisely, for $\tau \in [0, 1)$ we define $\mathbf{z}_\tau = \{ 0 \} \times \{ \frac{1- \tau}{2} \} \times \{z \}$, so that  $\mathbf{z}_0 = \mathbf{z}$, and consider a generic smooth family of almost complex structures $J^\Diamond_\tau$, each obtained by a small perturbation of $J$ supported in a  neighbourhood of $\mathbf{z}_\tau$,  such that $J_0^\Diamond=J^\Diamond$ and $\R \times [0,1] \times \{ z \}$ remains holomorphic for all $\tau$.

\begin{figure} \centering
% \psfragscanon
% \psfrag{a}{\tiny $\tau\to 1$}
% \psfrag{b}{\tiny $z^b_\tau$}
% \psfrag{c}{\tiny $z^b_1$}
% \psfrag{d}{\tiny $w^b$}
\begin{overpic}[width=6cm]{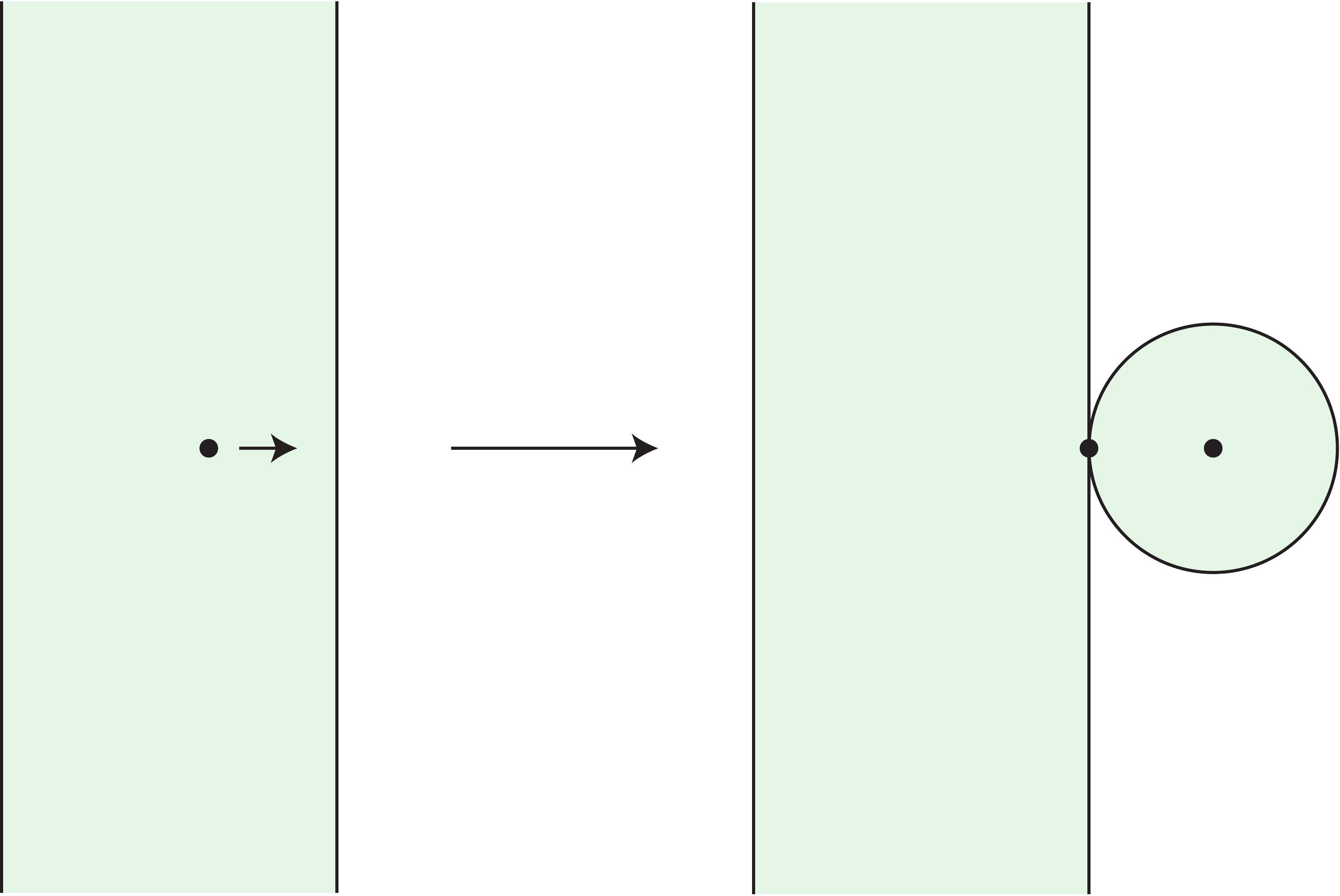}
\put(7, 27){\tiny $\left (0, \frac{1-\tau}{2} \right )$}
\put(35, 35){\tiny $\tau \to 1$}
\end{overpic}
\caption{Moving the basepoint toward the boundary.}
\label{figure: bubbling}
\end{figure}

Let ${\mathcal M}_X(\mathbf{y}_+, \mathbf{y}_-; \mathbf{z}_*)$ be the moduli space of
$J^\Diamond_\tau$-holomorphic curves in $X$ with boundary on $L_{\bs{\alpha}} \cup L_{\bs{\beta}}$ which are positively asymptotic to $[0,1] \times \mathbf{y}_+$, negatively
asymptotic to $[0,1] \times \mathbf{y}_-$ and pass through $\mathbf{z}_\tau$ for some $\tau \in (0,1)$. The map $H$ is defined as
$$H([\mathbf{y}_+, i])= \sum \limits_{\mathbf{y}_- \in
{\mathcal S}_{\bs{\alpha}, \bs{\beta}}} \sum \limits_{j=1}^i \# {\mathcal M}_X^{I=1,
n_z=j}(\mathbf{y}_+, \mathbf{y}_-; \mathbf{z}_*) [\mathbf{y}_-, i-j].$$

The structure of the boundary of the compactification of the moduli spaces ${\mathcal M}_X^{I=2}(\mathbf{y}_+, \mathbf{y}_-; \mathbf{z}_*)$ implies that $H$ is a chain homotopy between $U_{\mathbf{z}}$ and a map defined by counting certain broken holomorphic curves in the limit $\tau \to 1$. To finish the proof we need to identify this map with $U$. This will be done in the rest of the section.
\end{proof}
In the limit $\tau \to 1$ we obtain $\widetilde{X} = \widetilde{B}\times \Sigma$ where
 $\widetilde{B}=(B \sqcup D)/ \! \! \sim$ with $B= \R \times [0,1]$, $D= \{ |x| \le 1 \} \subset \C$ and $\sim$ identifies $(0,0) \in B$ with $-1 \in D$.  See Figure \ref{figure: bubbling}. For an appropriate choice of $J_\tau^\Diamond$, the limit almost complex structure is $J$ on $B \times \Sigma$ and a small perturbation $J^\Diamond_D$ of a product almost complex structure $J_D$ on $D \times \Sigma$ such that $J_D^\Diamond$ and $J_D$ coincide outside of a small neighbourhood of $\mathbf{z}_1= (0, z) \in D \times \Sigma$. From now on we write $\mathbf{z}_1= \mathbf{z}$. 

We define the ECH-type index $I$ --- in fact a relative version of Taubes's index from \cite{T4} in this case --- for a homology class $A \in H_2(D \times \Sigma, \partial D \times \bs{\beta})$ which admits a representative $F$ such that each component of $\partial F$ is mapped to a distinct component of $\partial D \times \bs{\beta}$.  Let  $(\tau, \tau')$ be the trivialisation of $T(D \times \Sigma)$ along $\partial D \times \bs{\beta}$ such that $\tau$ is induced by a radial and outward-pointing vector field along $\partial D$ tangent to $D$,  and $\tau'$ is induced by a vector field along $\bs{\beta}$ tangent to $\Sigma$ and transverse to $\bs{\beta}$.
Let $Q_{(\tau, \tau')}(A)$ be the intersection number between $F$ and a push-off of $F$, where
$\partial F$ is pushed along $\tau'$. We define\footnote{The formula in \cite[Definition 3.2.1]{III} has an extra term $\mu_{(\tau, \tau')}(\partial A)$, which vanishes here by our choice of trivialisation.}
\begin{equation}\label{relative Taubes index}
I(A)= c_1(T(D \times \Sigma)|_A, (\tau, \tau')) + Q_{(\tau, \tau')}(A).
\end{equation}

If $u$ is a holomorphic curve in $D \times \Sigma$ with boundary on $\partial D \times \bs{\beta}$ representing a relative homology class $A$ as above, then the index inequality gives
\begin{equation}\label{adjunction in D times Sigma}
I(A)= \op{ind}(u)+ 2\delta(u).
\end{equation}
In particular $u$ is an embedding if and only if $\op{ind}(u)=I(A)$. See \cite[Theorem 4.5.13]{I} for the proof of a similar result. Let $A_{k_0, k_1}= k_0 [D \times \{ pt \}] + k_1 [\{ pt \} \times \Sigma]$. If $k_0 \le h$, where $h$ is the genus of $\Sigma$, an easy computation gives
\begin{equation}\label{Index in D times Sigma}
I(A_{k_0, k_1})= k_0+ k_1(2-2h)+2k_0k_1.
\end{equation}
In particular, $I(A_{h, k_1})= 2 k_1 +h$.

We fix points\footnote{These points are called $w_i$ in \cite{III}.} $y_i \in \beta_i$ for $i=1, \ldots, h$ and denote by ${\mathcal M}_{D \times \Sigma}(A_{k_0, k_1}; \mathbf{z}, \mathbf{y})$ the moduli space of $J_D^\Diamond$-holomorphic curves in $D \times \Sigma$ with boundary on $\partial D \times \bs{\beta}$ representing the homology class $A_{k_0, k_1}$ and passing through $(0,z)$ and
$k_0$ points of the form $(-1, y_i)$.
\begin{rem}\label{rem: empty for k_1 le 0}
If $k_1 \le 0$, a standard compactness argument shows that  the moduli spaces ${\mathcal M}_{D \times \Sigma}(A_{k_0, k_1}; \mathbf{z}, \mathbf{y})$ are empty, provided that $J_D^\Diamond$ is close enough to a product almost complex structure.
\end{rem}
\begin{lemma}\label{compactmess for widetilde X}
Let $u_{\tau_i}$, for $\tau_i \to 1$ as $i \to \infty$, be a sequence of $J_{\tau_i}^\Diamond$-holomorphic curves in ${\mathcal M}_X^{I=2, n_z=j}(\mathbf{y}_+, \mathbf{y}_-; z_{\tau_i})$ for some fixed $\mathbf{y}_\pm \in {\mathcal S}_{\bs{\alpha}, \bs{\beta}}$ and $j >0$.  Then, up to passing to a subsequence, $u_{\tau_i}$ converges to a pair of holomorphic curves $(u_B, u_D)$ where $u_B$ is a union of trivial strips $B \times \{ \mathbf{y} \}$ in $B \times \Sigma$ over an intersection point $\mathbf{y} \in {\mathcal S}_{\bs{\alpha}, \bs{\beta}}$ and $u_D \in {\mathcal M}_{D \times \Sigma}(A_{h,1}; \mathbf{z}, \mathbf{y})$. In particular
$\mathbf{y}_+ = \mathbf{y}_- = \mathbf{y}$ and $j=1$.
\end{lemma}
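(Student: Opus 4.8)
The plan is to run the SFT compactness machinery of \cite{Li} for the family $J^\Diamond_\tau$ as $\tau \to 1$, track the ECH-type index through the degeneration, and then use the index computations \eqref{Index in D times Sigma} together with Remark~\ref{rem: empty for k_1 le 0} to pin down the homology class of the limit on the $D$-side and force triviality on the $B$-side.

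First I would apply SFT compactness to the sequence $u_{\tau_i}$. Since the basepoints $z_{\tau_i}$ approach the point where $B$ and $D$ are glued (see Figure~\ref{figure: bubbling}), the target $X$ degenerates to $\widetilde X = \widetilde B \times \Sigma$, and the limit is a holomorphic building with a main component $u_B$ in $B \times \Sigma$ and a bubble component $u_D$ in $D \times \Sigma$, possibly with intermediate levels in $\R \times [0,1] \times \Sigma$. The basepoint constraint $n_z = j > 0$ together with the requirement that $u_{\tau_i}$ passes through $z_{\tau_i}$ forces the intersection with $\R \times [0,1] \times \{z\}$ to be entirely captured on the $D$-side, so $u_D$ passes through $\mathbf{z} = (0,z)$ and $u_B$ is disjoint from $B \times \{z\}$; in particular, by positivity of intersection, $n_z(u_B) = 0$. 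The key accounting step is the concatenation property of $I$: writing the limit building with its levels, $I$ is additive, and since $I(u_{\tau_i}) = 2$ for all $i$ while every nonconstant component and every nontrivial intermediate level contributes a nonnegative amount (a positive amount after quotienting by $\R$-translation for intermediate levels), the budget of $2$ must be distributed. Because $u_D$ must pass through $\mathbf{z}$ and through the $h$ points $(-1, y_i)$ — the latter coming from the boundary Lagrangian condition on $\partial D \times \bs\beta$ combined with the fact that the strip ends must limit to the intersection points $\mathbf{y}$ — its homology class is of the form $A_{h, k_1}$ with $k_1 \ge 1$ by Remark~\ref{rem: empty for k_1 le 0}, so $I(u_D) = I(A_{h,k_1}) = 2k_1 + h \ge h + 2$. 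Hmm — this forces $h$ to be accounted for as well, which is absorbed by the $h$ point constraints on the $D$-side; more precisely the constrained moduli space ${\mathcal M}_{D \times \Sigma}(A_{h,k_1}; \mathbf{z}, \mathbf{y})$ has expected dimension $I(A_{h,k_1}) - 2 - 2h = 2k_1 - 2$ after imposing the point constraints (one of codimension $2$ for $\mathbf{z}$ and $h$ of codimension $2$ collectively... ), so for this to be nonnegative and for the total to match $I = 2$ one needs $k_1 = 1$, hence $I(u_D) = h + 2$ and the $D$-component alone consumes all of the index after the constraints, leaving $u_B$ with $\op{ind} = 0$ and no intermediate levels.

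Once $k_1 = 1$ and no intermediate levels survive, $u_B$ is an $\R$-invariant building of index $0$ in $B \times \Sigma$ with boundary on $L_{\bs\alpha} \cup L_{\bs\beta}$ and with $n_z = 0$; by the index inequality \eqref{eqn: index inequality} such a curve (being a finite-energy curve in a symplectisation with vanishing index and trivial $z$-intersection) must be a union of trivial strips $B \times \{\mathbf{y}\}$ over a single $\mathbf{y} \in {\mathcal S}_{\bs\alpha, \bs\beta}$. Matching the asymptotics of the building then forces $\mathbf{y}_+ = \mathbf{y}_- = \mathbf{y}$, since a trivial strip has the same positive and negative end, and $n_z(u_D) = 1$ gives $j = 1$. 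Finally, $u_D \in {\mathcal M}_{D \times \Sigma}(A_{h,1}; \mathbf{z}, \mathbf{y})$ as claimed.

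The main obstacle I expect is the index bookkeeping across the degeneration: one must be careful that the ECH-type index behaves well in the limit — that is, that $I$ is indeed additive over the levels of the broken configuration in this mixed setting (cylindrical ends on the $B$-side and the relative Taubes index \eqref{relative Taubes index} on the $D$-side), and that no index is ``lost'' to intermediate symplectisation levels or to a component of $u_B$ that is a multiply-covered trivial strip. Establishing the additivity and the fact that every surviving intermediate level or nontrivial $B$-component contributes strictly positively — so that the index $2$ budget is genuinely exhausted by the constrained $D$-component — is the technical heart of the argument; the remaining steps (triviality of $u_B$ via the index inequality, matching of asymptotics, and ruling out $k_1 \le 0$ by Remark~\ref{rem: empty for k_1 le 0}) are then comparatively routine.
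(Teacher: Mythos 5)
Your proposal is correct and follows essentially the same route as the paper: Gromov compactness producing the pair $(u_B,u_D)$, the homology class $A_{h,k_1}$ with $k_1\ge 1$ from Remark~\ref{rem: empty for k_1 le 0}, and the index budget $I(u_B)+2k_1=2$ together with $I(u_B)\ge 0$ forcing $k_1=1$ and $u_B$ trivial. The $h$-discrepancy you worry about (the point constraints where the strips meet the disc bubble) is exactly what the paper absorbs into its ``almost additivity'' relation \eqref{almost additivity of the index}, so your accounting matches the intended argument.
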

\begin{proof}
By Gromov compactness a subsequence of $u_{\tau_i}$ converges to a pair $(u_B, u_D)$ where $u_B$ is a $J$-holomorphic curve in ${\mathcal M}_X(\mathbf{y}_+, \mathbf{y}_-)$ and $u_D$ is a $J_D^\Diamond$-holomorphic curve in ${\mathcal M}_{D \times \Sigma}(A_{h, k_1}; \mathbf{z}, \mathbf{y})$ for some $\mathbf{y}=(y_1, \ldots, y_h)$. A simple computation shows that
\begin{equation}\label{almost additivity of the index}
I(u_B)+ I(u_D)=2 - h.
\end{equation}

By Equation\eqref{Index in D times Sigma} we can rewrite Equation \eqref{almost additivity of the index} as $I(u_B)+2k_1=2$.
Since $k_1 \ge 1$ by Remark \ref{rem: empty for k_1 le 0} and $I(u_B) \ge 0$, this implies that $I(u_B)=0$ and $k_1=1$; in particular $u_B$ is a union of trivial strips and
therefore $\mathbf{y}_\pm = \mathbf{y}$ and $j=1$.
\end{proof}

The moduli spaces ${\mathcal M}_{D \times \Sigma}(A_{h,1}; \mathbf{z}, \mathbf{y})$ are zero-dimensional and consist of embedded $J^\Diamond_D$-holomorphic curves by Equations \eqref{adjunction in D times Sigma} and \eqref{Index in D times Sigma}. We denote
$$G(h) = \# {\mathcal M}_{D \times \Sigma}(A_{h,1}; \mathbf{z}, \mathbf{y}).$$
Thus, by Lemma \ref{compactmess for widetilde X}, $H$ is a homotopy between $U_{\mathbf{z}}$ and $G(h)U$. In order to compute $G(h)$ we further degenerate $D \times \Sigma$. The first step is to degenerate $D$ into $D \cup S^2$, where $0 \in D$ is identified with $\infty \in S^2 \cong \C \cup \{ \infty \}$ and $\mathbf{z} = (0, z) \in S^2 \times \Sigma$. Let $J_S^\Diamond$ denote the limit almost complex structure on $S^2 \times \Sigma$, which we assume to be a small perturbation of a product almost complex structure $J_S$ in a small neighbourhood of $\mathbf{z}$.

Holomorphic curves in ${\mathcal M}_{D \times \Sigma}(A_{h,1}; \mathbf{z}, \mathbf{y})$ degenerate into pairs of curves consisting in the trivial multisection $D \times \{ \mathbf{y} \}$ in $D \times \Sigma$ and a $J_S^\Diamond$-holomorphic curve in $S^2 \times \Sigma$ representing the homology class $B_{h,1}= h[S^2] + [\Sigma]$ passing through $\mathbf{z}=(0,z)$ and $(\infty, y_1), \ldots, (\infty, y_h)$. We denote the moduli space of such curves by ${\mathcal M}_{S^2 \times \Sigma}(B_{h,1}; \mathbf{z}, \mathbf{y})$.

Holomorphic curves in ${\mathcal M}_{S^2 \times \Sigma}(B_{h,1}; \mathbf{z}, \mathbf{y})$ can be reducible and only the irreducible component passing through $(0,z)$ has to be regular. We denote the subset of ${\mathcal M}_{S^2 \times \Sigma}(B_{h,1}; \mathbf{z}, \mathbf{y})$ consisting of irreducible $J_S^\Diamond$-holomorphic curves by  ${\mathcal M}_{S^2 \times \Sigma}^{irr}(B_{h,1}; \mathbf{z}, \mathbf{y})$. Simple index considerations taking into account the homological and point constraints imply that the elements in ${\mathcal M}_{S^2 \times \Sigma}(B_{g,1}; \mathbf{z}, \mathbf{y}) \setminus {\mathcal M}_{S^2 \times \Sigma}^{irr}(B_{g,1}; \mathbf{z}, \mathbf{y})$ consist of $\{ \infty \} \times \Sigma$ with $h$ spheres in the class $[S^2]$, one of which passes through $(0, z)$. However this irreducible configuration cannot appear in the limit of a sequence of curves in ${\mathcal M}_{D \times \Sigma}(A_{h,1}; \mathbf{z}, \mathbf{y})$ as $D$ degenerates into $D \cup S^2$. This can be proved by a careful analysis of the limit in the SFT sense, but it can also be seen intuitively as follows: if $J_D^\Diamond$ is close enough to the product almost complex structure $J_D$, then the $J_D^\Diamond$-holomorphic curves in ${\mathcal M}_{D \times \Sigma}(A_{h,1}; \mathbf{z}, \mathbf{y})$ are $C^0$-close to $\{ 0 \} \times \Sigma \cup D \times \{ y_1 \} \cup \ldots \cup D \times \{ y_h \}$, and therefore the $J_S^\Diamond$-holomorphic component of the limit is $C^0$-close to $\{ 0 \} \times \Sigma \cup S^2 \times \{ y_1 \} \cup \ldots \cup S^2 \times \{ y_h \}$. Thus we have proved that
$$G(h)= \# {\mathcal M}_{S^2 \times \Sigma}^{irr}(B_{h,1}; \mathbf{z}, \mathbf{y}).$$
The following lemma completes the proof of Theorem \ref{thm: U-map}.
\begin{lemma} $G(h)=1$ for all $h \ge 1$.
\end{lemma}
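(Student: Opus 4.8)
The plan is to first upgrade $G(h)$ to a deformation‑invariant count, and then to evaluate it by degenerating $\Sigma$ until the count becomes transparent.

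\textbf{Step 1: regularity and a structural description.} Every $u\in{\mathcal M}^{irr}_{S^2\times\Sigma}(B_{h,1};\mathbf z,\mathbf y)$ is embedded (by the index inequality, as in the $D\times\Sigma$ case), and by the adjunction formula in $S^2\times\Sigma$ its domain has genus $h$. Its unconstrained Fredholm index equals $I(B_{h,1})=2h+2$ --- so that the moduli space, which carries $h+1$ point constraints, is zero‑dimensional, as noted above --- and $2h+2>2h-2=2g(u)-2$; hence by automatic transversality in dimension four (see e.g.\ \cite{We-sft}) every such $u$ is regular for \emph{every} admissible $J_S^\Diamond$. Together with Gromov compactness and the exclusion, already established, of limit configurations containing $\{\infty\}\times\Sigma$, this shows that $G(h)\in\Z/2$ is well defined and unchanged under admissible homotopies of $J_S^\Diamond$, so it may be computed using any further degeneration we find convenient. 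It is useful to record that, since $J_S^\Diamond$ agrees with the product structure $J_S$ away from a small ball $B$ about $\mathbf z$, outside $B$ the curve $u$ is $J_S$‑holomorphic and, being of degree $1$ over $\Sigma$, is a graph; hence globally $u$ is the graph over $\Sigma\setminus\pi_\Sigma(B)$ of a holomorphic map with a simple pole at each $y_i$, capped over $\pi_\Sigma(B)$ by a perturbed disc through $(0,z)$.

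\textbf{Step 2: the inductive computation.} We induct on $h$. For $h\ge 2$, degenerate $\Sigma$ so as to lower its genus by pinching a separating circle, presenting $\Sigma$ as a smoothing of the nodal surface obtained by gluing a genus $h-1$ surface $\Sigma_{h-1}$ to a torus $\Sigma_1$ at a point $p$, and keeping the pinching locus disjoint from $z$ and from all the $y_i$ (which we may reposition, since $G(h)$ is invariant). Then $S^2\times\Sigma$ degenerates to $(S^2\times\Sigma_{h-1})\cup_{S^2\times\{p\}}(S^2\times\Sigma_1)$, and an SFT compactness‑and‑gluing argument writes $G(h)$ as a sum, over admissible ways of distributing the homology class $B_{h,1}$ and the $h+1$ point constraints between the two sides, of products of the corresponding relative counts. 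The index formula \eqref{Index in D times Sigma} and its $S^2\times\Sigma_g$‑analogue, the requirement that each piece have degree $1$ over its base surface, the analogue of Remark~\ref{rem: empty for k_1 le 0}, and the matching condition at $p$ are then used to show that exactly one distribution contributes, and that on it the $S^2\times\Sigma_{h-1}$‑side is a $G(h-1)$‑type moduli space while the $S^2\times\Sigma_1$‑side is a rigid moduli space of count $1$. Hence $G(h)=G(h-1)$.

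\textbf{Step 3: base case and the main difficulty.} For $h=1$ one checks $G(1)=1$ by hand. By Step~1 a contributing curve is the graph over $T^2$ minus a disc of a holomorphic function with a single simple pole (at $y_1$), capped by a perturbed disc through $(0,z)$; since no such function extends meromorphically over $T^2$, the count is genuinely analytic, and a Riemann--Hilbert argument --- or, after one further degeneration of $T^2$ to a nodal sphere, an explicit count of curves of bidegree $(1,1)$ in $S^2\times S^2$ through the prescribed points --- yields exactly one curve. The crux of the whole argument is the compactness‑and‑gluing analysis accompanying the degeneration of $\Sigma$: one must rule out sphere bubbles in $S^2\times\Sigma$ in the class $[S^2]$, multiply covered or purely interior neck components, and components escaping entirely into $S^2\times(\text{nbhd of }p)$ --- exactly the type of analysis already carried out for the $D\to D\cup S^2$ degeneration --- and then verify that only a single term of the gluing sum survives. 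Keeping the perturbation of the almost complex structure supported near $\mathbf z$, away from the pinching locus, is what prevents the two degenerations from interacting.
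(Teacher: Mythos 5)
Your argument is correct and takes essentially the same route as the paper: both degenerate $\Sigma$ along separating curves so that the count localises on genus-one pieces and is then evaluated by an explicit nodal-sphere/complex-analytic computation, the only difference being that you split off one torus at a time ($G(h)=G(h-1)$) while the paper pinches all $h-1$ curves simultaneously ($G(h)=G(1)^h$), an inessential bookkeeping change since in either version the factors not containing $\mathbf{z}$ carry a node-matching constraint in place of the $\mathbf{z}$-constraint and are handled by the same $G(1)$-type count. One small caveat, which the paper records in its remark and your Step~1 invariance claim glosses over in the same way: because the section $\{\infty\}\times\Sigma$ is holomorphic but non-regular for all admissible $J$, the count is only a relative Gromov--Witten invariant, so ``deformation invariance for every admissible $J$'' should be read as invariance along the specific degenerations used, where the $C^0$-control keeps limit curves away from configurations containing that section.
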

The proof will follow by combining the two lemmas below.
\begin{lemma}
For every $h \ge 1$ we have $G(h)=G(1)^h$.
\end{lemma}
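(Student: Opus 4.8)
The plan is to prove $G(h) = G(1)^h$ by a further degeneration of $S^2 \times \Sigma$ that splits off the genus of $\Sigma$ one handle at a time, so that the single curve in the class $B_{h,1} = h[S^2] + [\Sigma]$ breaks into pieces living over the separate summands of $\Sigma$.

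More precisely, write $\Sigma$ as a connected sum $\Sigma_1 \# \cdots \# \Sigma_h$ where each $\Sigma_i$ is a genus-$1$ surface, and realise this by degenerating $S^2 \times \Sigma$ along $h-1$ separating spheres into a nodal configuration $\bigcup_i (S^2 \times \Sigma_i)$ glued along copies of $S^2 \times (\text{point})$. One can arrange that the $\beta$-curves, the point constraints $(\infty, y_i)$, and the basepoint constraint $\mathbf{z} = (0,z)$ distribute so that exactly one $\beta_i$, one $y_i$, and the full constraint $\mathbf{z}$ end up in the $i$-th piece (or rather $\mathbf{z}$ in whichever piece carries the relevant handle; by a homological/index bookkeeping argument the irreducible component through $(0,z)$ must meet every summand). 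Under SFT/Gromov compactness, an irreducible curve in ${\mathcal M}^{irr}_{S^2 \times \Sigma}(B_{h,1}; \mathbf{z}, \mathbf{y})$ limits to a configuration whose restriction to each $S^2 \times \Sigma_i$ represents the class $[S^2] + [\Sigma_i]$ (plus possibly trivial sphere components, which are ruled out by the same $C^0$-closeness argument already used in the excerpt), and this restriction is exactly counted by $G(1)$. Conversely, gluing $h$ such genus-$1$ pieces back together produces a curve in the original moduli space, and the gluing is a bijection because all the curves involved are embedded (hence regular) of index zero by Equations \eqref{adjunction in D times Sigma} and \eqref{Index in D times Sigma}. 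Counting mod $2$ over $\Z/2\Z$, this yields $G(h) = G(1)^h$.

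The key steps, in order, are: (1) set up the degeneration of $\Sigma$ into $h$ genus-one summands and the corresponding degeneration of $S^2 \times \Sigma$, tracking how the homology class $B_{h,1}$, the Lagrangian boundary $\partial D \times \bs\beta$, and the point constraints split; (2) run the SFT compactness argument to show the limit configuration has one index-zero embedded component over each $S^2 \times \Sigma_i$ in class $[S^2] + [\Sigma_i]$, using positivity of intersection and the index formula \eqref{Index in D times Sigma} adapted to genus one to exclude bubbling of extra sphere or $\Sigma_i$ components, exactly as in the argument already given in the excerpt for excluding the reducible configuration; (3) establish the gluing theorem in the reverse direction, which is unobstructed since every component is embedded of index zero; (4) conclude the count is multiplicative.

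The main obstacle I expect is step (2): controlling which summand the basepoint constraint $\mathbf{z}$ lands in and showing no curve components escape into the neck regions $S^2 \times (\text{point})$ as nontrivial pieces. One must show that the relative homology class cannot redistribute — e.g.\ that a component in class $[\Sigma_i]$ in one piece cannot be compensated by a component of class $-[\Sigma_j]+\cdots$ elsewhere — which forces a careful argument using the nonnegativity of the ECH-type index on each piece together with the $C^0$-proximity of the curves to the product configuration $\{0\} \times \Sigma \cup \bigcup_i D \times \{y_i\}$, just as in the excerpt. Once the limit is pinned down, the gluing and the bijection are routine given embeddedness.
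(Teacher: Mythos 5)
Your strategy is the same as the paper's: degenerate $\Sigma$ along $h-1$ separating curves into a nodal surface whose irreducible components are tori, arrange one point $y_i$ per component, and deduce multiplicativity of the count from SFT compactness plus gluing. The one genuine gap is that you never address the almost complex structure, and with the $J_S^\Diamond$ you start from the argument literally fails. Recall that $J_S^\Diamond$ is a perturbation of a product structure supported only near $\mathbf{z}$; after the degeneration, every component of $S^2\times\widetilde{\Sigma}$ except the one containing $\mathbf{z}$ therefore carries an honest product almost complex structure. For a product structure on $S^2\times T^2$ there are \emph{no} irreducible curves in the class $[S^2]+[T^2]$ at all (an irreducible representative would project with degree one to the torus, hence be a graph of a degree-one holomorphic map $T^2\to S^2$, which does not exist), so the naive count on those components is $0$, not $G(1)=1$; the curves contributing to $G(1)$ only appear after a generic perturbation smooths the nodal configuration $\{0\}\times\Sigma_i\cup S^2\times\{y_i\}$. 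Relatedly, your claim that the glued pieces are ``embedded (hence regular)'' is only valid for generic $J$, which the product structure is not. This is exactly why the paper first replaces $J_S^\Diamond$ by a generic fibration-compatible $J_S^\heartsuit$ (keeping $\{\infty\}\times\Sigma$ holomorphic) and checks by a continuation argument that the count is unchanged, \emph{before} degenerating $\Sigma$. With that modification inserted, the rest of your outline — tracking the splitting of $B_{h,1}$, excluding components escaping into the necks via $C^0$-proximity and the index formulas analogous to \eqref{adjunction in D times Sigma} and \eqref{Index in D times Sigma}, and gluing back — matches the paper's argument.
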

\begin{proof}
  In the proof we will degenerate $\Sigma$ along $h-1$ separating curves in order to obtain a nodal surface $\widetilde{\Sigma}$ whose irreducible components are tori. We choose the curves so that each irreducible component contains exactly one of the points $y_i$. Since the basepoint $\mathbf{z}$ remains in one component, the almost complex structure on $S^2 \times \widetilde{\Sigma}$ is a product almost complex structure in all but one of the irreducible components of $S^2 \times \widetilde{\Sigma}$. Since a product almost complex structure is not generic enough, before degenerating $\Sigma$ we need to modify $J_S^\Diamond$. To this aim we introduce a generic almost complex structure $J_S^\heartsuit$ on $D \times \Sigma$ among those making the projection $S^2 \times \Sigma \to \Sigma$ into a holomorphic map and keeping the section $\{ \infty \} \times \Sigma$ holomorphic. By a standard continuation argument the cardinality of the set ${\mathcal M}_{S^2 \times \Sigma}^{irr}(B_{h,1}; \mathbf{z}, \mathbf{y})$ is the same for $J_S^\Diamond$ and $J_S^\heartsuit$; from now on we will work with the latter almost complex structure.

As $\Sigma$ degenerates towards $\widetilde{\Sigma}$, holomorphic curves in ${\mathcal M}_{S^2 \times \Sigma}^{irr}(B_{1,h}; \mathbf{z}, \mathbf{y})$ degenerate into holomorphic curves in $S^2 \times \widetilde{\Sigma}$, with the same point constraints, which are irreducible in each irreducible component of  $S^2 \times \widetilde{\Sigma}$, and such that two irreducible components meet at one point when two irreducible components of $S^2 \times \widetilde{\Sigma}$ meet. This shows that $G(h)=G(1)^h$.
\end{proof}
\begin{rem}
The section $\{ \infty \} \times \Sigma$ is not regular, and thus neither $J_S^\Diamond$ nor $J_S^\heartsuit$ are generic almost complex structures. What we are computing here is a simple instance of {\em relative Gromov-Witten invariant} in the sense of \cite{IP1}.
\end{rem}
\begin{lemma} $G(1)=1$.
\end{lemma}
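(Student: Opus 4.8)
The plan is to pin down the geometry of the curves counted by $G(1)$, recognise the count as a relative Gromov--Witten-type invariant of $S^2\times\Sigma$ with $\Sigma$ of genus one, and compute it by degenerating to a product almost complex structure.

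\emph{Step 1 (the curves are sections).} Exactly as in the previous reductions, since $J_S^\heartsuit$ makes $\pi_2\colon S^2\times\Sigma\to\Sigma$ holomorphic, any curve in $\mathcal{M}^{irr}_{S^2\times\Sigma}(B_{1,1};\mathbf z,\mathbf y)$ is a degree-one branched cover of $\Sigma$, hence a biholomorphism onto $\Sigma$; so it is the graph of a degree-one map $f\colon\Sigma\to S^2$, i.e.\ a section of the $\mathbb{CP}^1$-bundle $\pi_2$ in class $B_{1,1}$. Its normal bundle has degree $\langle c_1(TX)-\pi_2^*c_1(T\Sigma),B_{1,1}\rangle=2>2g(\Sigma)-2=0$, so every such section is automatically regular, independently of the choice of $J_S^\heartsuit$; hence $G(1)$ is a deformation invariant and one is free to work with any convenient $J_S^\heartsuit$ making $\pi_2$ and $\{\infty\}\times\Sigma$ (and, if desired, $\{0\}\times\Sigma$) holomorphic.

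\emph{Step 2 (constrain the sections).} By positivity of intersection with the $J_S^\heartsuit$-holomorphic hypersurface $\{\infty\}\times\Sigma$, which meets our section in $B_{1,1}\cdot[\Sigma]=1$ point, the map $f$ has a single, simple pole; the constraint $\mathbf y=(\infty,y_1)$ puts it at $y_1$. (If one also makes $\{0\}\times\Sigma$ holomorphic, the same argument shows $f$ has a single simple zero, placed at $z$ by the constraint $\mathbf z=(0,z)$.) So $G(1)$ counts $J_S^\heartsuit$-holomorphic degree-one sections with prescribed simple zero at $z$ and simple pole at $y_1$.

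\emph{Step 3 (compute by degeneration).} Degenerate $J_S^\heartsuit$ to the product structure $J_S=j_{S^2}\times j_\Sigma$ and read off $G(1)$ as a relative Gromov--Witten invariant in the sense of \cite{IP1}. A sequence of sections as in Step~2 Gromov-converges to a $J_S$-holomorphic stable map in class $B_{1,1}$ through the two points; since the only $J_S$-holomorphic curves in $S^2\times\Sigma$ carrying a non-zero class $\le B_{1,1}$ are the fibres $\{p\}\times\Sigma$ (class $[\Sigma]$) and $S^2\times\{q\}$ (class $[S^2]$), the only such stable maps are the nodal curves $(\{0\}\times\Sigma)\cup(S^2\times\{y_1\})$ and $(\{\infty\}\times\Sigma)\cup(S^2\times\{z\})$. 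A standard gluing/obstruction-bundle analysis at the transverse node of these configurations --- the one-dimensional cokernel is carried by the $[\Sigma]$-component, where $h^1(\mathcal{O}_\Sigma)=1$, but it is absorbed on gluing into the class $B_{1,1}$, whose normal bundle has positive degree by Step~1 --- shows that exactly one configuration contributes, with contribution $1$. Hence $G(1)=1$. (Alternatively one can degenerate $\Sigma$ itself to a nodal rational curve and reduce the count to a genus-zero enumerative number in $\mathbb{CP}^1\times\mathbb{CP}^1$.)

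\emph{The main obstacle} is Step~3: the product almost complex structure fails to be regular on the subclasses $[\Sigma]$, so the node has to be glued in the presence of a genuine cokernel and the resulting obstruction-class computation must be carried out with care; one must also rule out contributions from broken, multiply-covered, or contracted-bubble configurations, which follows from $B_{1,1}$ being minimal and from the degree bookkeeping of Step~1. Everything else is routine.
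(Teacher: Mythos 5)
Your Steps 1 and 2 are correct, and the automatic-regularity observation (the section has normal bundle of degree $2>2g(\Sigma)-2=0$) is a genuinely nice point that the paper does not make explicit. The gap is in Step 3, and it is exactly where you flag it. For the product structure $J_S$ the class $B_{1,1}$ has no irreducible representatives (there is no degree-one holomorphic map $T^2\to S^2$), so the whole count is carried by the nodal configurations; and at the configuration $(\{0\}\times\Sigma)\cup(S^2\times\{y_1\})$ the vertical component is a torus with trivial normal bundle, so $h^1(\mathcal{O}_\Sigma)=1$ and the product structure is maximally non-regular precisely where the number lives. The contribution of such a configuration is the signed count of zeros of an obstruction section over the space of gluing parameters; without computing its leading term the answer could a priori be $0$, $\pm1$, or something else entirely (compare the delicate torus contributions in Taubes's Gromov invariant, where tori with trivial normal bundle produce nontrivial weights). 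Saying that the cokernel ``is absorbed on gluing into a class with positive normal degree'' is not an argument, so the sentence ``exactly one configuration contributes, with contribution $1$'' is a restatement of the lemma rather than a proof of it. You also never justify why the second configuration $(\{\infty\}\times\Sigma)\cup(S^2\times\{z\})$ contributes zero: since its vertical component lies in the relative divisor $\{\infty\}\times\Sigma$, you would need either the relative stable map compactification of \cite{IP1} (such components are disallowed) or, as the paper does, the fact that the curves being counted are $C^0$-close to $\{0\}\times\Sigma\cup S^2\times\{y_1\}$ because they arise as limits of curves in $D\times\Sigma$.

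The route that actually closes this gap is the one you relegate to a parenthesis: the paper degenerates the target torus $\Sigma$ into a cycle of two spheres $\Sigma_0\cup\Sigma_1$ separating $z$ from $y_1$, rather than degenerating $J$. After that degeneration every component of the limit curve is rational: one component is the graph of a degree-one map $\Sigma_0\to S^2$ (a M\"obius transformation, uniquely determined by its value at $z$ and the matching conditions at the two nodes) and the other is the graph of a constant map $\Sigma_1\to S^2$ forced to equal $\infty$ by the point constraint. No torus component survives, no $h^1$ appears, and the count reduces to elementary one-variable complex analysis. If you want to keep your Step 3 as written, you must either carry out the obstruction-bundle computation at the torus component explicitly or replace it by the nodal degeneration of $\Sigma$; as it stands the key enumeration is asserted, not proved.
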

\begin{proof}
The lemma follows from \cite[Example 8.6.12]{MS}, but one can also argue more explicitly 
by degenerating $\Sigma = T^2$ into two spheres touching each other in two points, one of which contains $z$ and the other one $y_1$. We call $\Sigma_0$ the component containing $z$ and $\Sigma_1$ the component containing $y_1$. We also call $0$ and $\infty$ the points where $\Sigma_0$ and $\Sigma_1$ meet.

% This implies that, as
As we degenerate $\Sigma$, the $J^\heartsuit_S$-holomorphic curves in   ${\mathcal M}_{S^2 \times \Sigma}^{irr}(B_{1,1}; \mathbf{z}, \mathbf{y})$ converge to pairs of holomorphic curves $(u_0, u_1)$, where $u_0$ takes values in $S^2 \times \Sigma_0$ and passes through $(z, 0)$, and $u_1$ takes values in $S^2 \times \Sigma_1$ and passes through $(y_1, \infty)$. 
Since $J^\heartsuit_S$ is close to a product almost complex structure, $J^\heartsuit_S$-holomorphic curves in   ${\mathcal M}_{S^2 \times \Sigma}^{irr}(B_{1,1}; \mathbf{z}, \mathbf{y})$ are $C^0$-close to $\{ 0 \} \times \Sigma \cup S^2 \times \{ y_1 \}$. This implies that the curve $u_0$ represents the homology class $[S^2] + [\Sigma_0]$, while the  curve $u_1$ represents the homology class $[\Sigma_0]$. Moreover the images of $u_0$ and $u_1$ match at $S^2\times  \{ 0, \infty \}$. The image of $u_1$ is a small perturbation of the graph of a degree zero holomorphic map $\Sigma_1 \to S^2$ and the image of $u_0$ is a small perturbation of the graph of a degree one holomorphic map $\Sigma_0 \to S^2$. Then elementary complex analysis implies that there is a unique choice for $u_1$, while the choice for $u_0$ becomes unique once the intersection of its image with $S^2\times \{ 0, \infty \}$ is fixed. This implies that $G(1)=1$.
% When $h=1$, there are two (reducible) $J_S$-holomorphic curves (i.e.\ for a product almost complex structure) passing through $(0, z)$ and $(\infty, y)$: one with image $S^2 \times \{ z \} \cup \{ \infty \} \times \Sigma$ and the other one with image $\{ 0 \} \times \Sigma \cup S^2 \times \{ y \}$. As stated in \cite[Example 8.6.12]{MS}, the latter deforms into an irreducible $J_S^\heartsuit$-holomorphic curve passing trough $(0, z)$ and $(\infty, y)$ and giving $G(1)=1$.
\end{proof}

\subsection{Adapting $\widehat{HF}$ to an open book decomposition}
\label{HFhat on a page}
Let $M$ be a closed $3$-manifold, $B \subset M$ a link, $S$ a compact, oriented, connected surface with nonempty boundary, and $\hh \colon S \rightarrow S$ an orientation-preserving diffeomorphism such that $\hh|_{\partial S}=id$.
\begin{dfn}\label{dfn: open book}
An {\em open book decomposition} of $M$ with {\em binding} $B$, {\em page} $S$ and {\em monodromy} $\hh$ is a locally trivial fibration
$\pi \colon M \setminus B \to S^1$ with fibre $\op{int}(S)$ and monodromy $\hh|_{\op{int}(S)}$ such that the closure of any fibre is a surface with boundary whose boundary is $B$. The pair $(S, \hh)$ is called an {\em abstract open book decomposition}.
\end{dfn}
We will call a page not only the abstract surface $S$, but also all surfaces $S_t=\overline{\pi^{-1}(t)}$ for $t \in S^1$.

In this section we explain how to associate a pointed Heegaard diagram to an open book decomposition and compute $\widehat{HF}(-M)$ from the page and the monodromy, using a construction from \cite{HKM}. From now on we assume that $\partial S$ is connected and $S$ has genus $g$.

We identify $S^1\cong [0,2]/0 \sim 2$. An open book decomposition with page $S$ and monodromy $\hh$ gives rise to a Heegaard decomposition $M=H_{\bs{\alpha}} \cup H_{\bs{\beta}}$, where $H_{\bs{\alpha}} =\overline{\pi^{-1}([0,1])}$, $H_{\bs{\beta}} =\overline{\pi^{-1}([1,2])}$, and the Heegaard surface $\Sigma =S_1 \cup -S_0$ is the union of two pages glued along the binding.

A {\it basis of arcs} for $S$ is a collection of $2g$ pairwise disjoint properly embedded arcs ${\bf a} =\{a_1,\dots,a_{2g}\}$ in $S$ such that $S \setminus {\bf a}$ is a connected polygon. Starting from a basis ${\bf a}$ for $S$, we can construct $\alpha$- and $\beta$-curves for $\Sigma$ as follows:
$$\bs{\alpha} = ({\bf a} \times \{ 1 \}) \cup ({\bf a} \times \{ 0\}) \quad  \text{and} \quad \bs{\beta} = ({\bf b} \times \{ 1 \}) \cup (\hh({\bf a} )\times \{ 0\}).$$
Here ${\bf b}$ is a small deformation of ${\bf a}$ relative to its endpoints, so that each pair $a_i$ and $b_i$ intersects each other transversely at three points: two of the intersections are their endpoints $x_i$ and $x_i'$ on $\partial S$ and the third intersection is an interior point $x_i''$; see Figure~\ref{darkside}.\footnote{\cite[Figure 1]{I} is flipped with rispect to Figure~\ref{darkside} because it shows $S_1$ instead of $-S_1$.} We assume that $\hh$ is chosen so that the $\alpha$- and $\beta$-curves are smooth and intersect transversely. These conditions can be easily achieved by an isotopy of $\hh$ relative to the boundary.
\begin{figure}\centering
\begin{overpic}[width=6cm, grid=false]{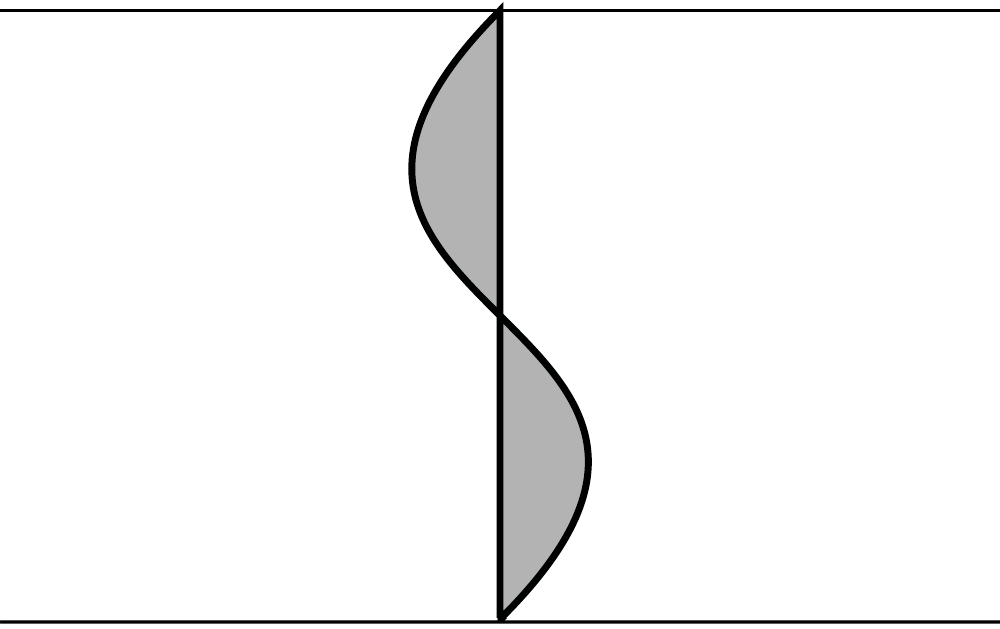}
\put(36.4,45){\tiny $b_i$} \put(51,45) {\tiny $a_i$} \put(52,32.5)
{\tiny $x_i''$} \put(44.7,2.4){\tiny $x_i$} \put(52,57.3){\tiny
$x_i'$}
\end{overpic}
\caption{A portion of $-S_1$. The shaded regions are the ``thin strips'' $D_i$ and
$D_i'$ which connect $x_i''$ to $x_i$ or $x_i'$.} \label{darkside}
\end{figure}

All the intersection points of $\bs{\alpha}$ and $\bs{\beta}$ lie in $S_0$, with the exception of the points $x_i''=x_i''\times \{ 1\}$.  We place the basepoint $z$ on
$S_1$, away from the ``thin strips'' $D_i$ and $D_i'$, $i=1,\dots,2g$, given in
Figure~\ref{darkside}. The positioning of $z$ prevents holomorphic curves involved in
the differential for $\widehat{CF} (-\Sigma, \bs{\alpha}, \bs{\beta}, z)$ --- besides the ones corresponding to the thin strips --- from entering $-S_1$.  Hence all of the nontrivial holomorphic curve information is concentrated on $S_0$. The homology of $\widehat{CF}(-\Sigma, \bs{\alpha}, \bs{\beta},z)$ is isomorphic to $\widehat{HF}(-M)$ since we reversed the orientation of $\Sigma$.

Let $\mathcal{S}_{{\bf a}, \hh({\bf a})}\subset \mathcal{S}_{\bs{\alpha}, \bs{\beta}}$ consist of the $2g$-tuples of intersection points $\mathbf{y}$ all of whose components are in $S_0$. We define\footnote{The same chain complex is called $\widehat{CF}^\prime(S, \mathbf{a}, \hh(\mathbf{a}))$ in \cite{I}.} $\overline{CF}(S, {\bf a}, \hh({\bf a}))$ as the chain complex generated by $\mathcal{S}_{{\bf a},\hh({\bf a})}$ and whose differential counts HF-curves  in $\R\times [0,1]\times S$. Then we define $\widehat{CF}(S, {\bf a}, \hh({\bf a}))$ as the quotient of $\overline{CF}(S, {\bf a}, \hh({\bf a}))$ modulo the identifications $\{ x_i\}\cup \mathbf{y}_0 \sim\{x_i'\}\cup \mathbf{y}_0$ for all $(2g-1)$-tuples of chords $\mathbf{y}_0$. The quotient inherits a differential because no holomorphic curve in $\R \times [0,1] \times S$ can have a positive end at one of the chords over $x_i, x_i'$ unless it has an irreducible component which is a trivial strip over that chord by \cite[Claim 4.9.2]{I}, and therefore the elements of the form $\{ x_i\}\cup \mathbf{y}_0 + \{x_i'\}\cup \mathbf{y}_0$ generate a subcomplex.

The identifications above are the algebraic consequence of the thin strips $D_i$ and $D_i'$. The following proposition was proved in \cite[Theorem 4.9.4]{I}:

\begin{prop}\label{prop:hf}
$\widehat{HF}(S,\mathbf{a},\hh(\mathbf{a}),z)\cong \widehat{HF}(-\Sigma, \bs{\alpha}, \bs{\beta}, z)$.\footnote{We consider $\widehat{HF}(-\Sigma, \bs{\alpha}, \bs{\beta}, z)$ here and $\widehat{HF}(\Sigma, \bs{\beta}, \bs{\alpha}, z)$ in \cite{I}. The two groups are isomorphic.}
\end{prop}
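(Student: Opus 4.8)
The plan is to exhibit an explicit chain map between $\widehat{CF}(S,\mathbf{a},\hh(\mathbf{a}),z)$ and $\widehat{CF}(-\Sigma,\bs\alpha,\bs\beta,z)$ and show it is a quasi-isomorphism by filtering out the ``thin strip'' part of the second complex. First I would recall that the generating set $\mathcal{S}_{\bs\alpha,\bs\beta}$ of $\widehat{CF}(-\Sigma,\bs\alpha,\bs\beta,z)$ splits according to, for each index $i\in\{1,\dots,2g\}$, which of the three intersection points $x_i,x_i',x_i''$ of $a_i$ with $b_i$ (if the $i$-th component uses the pair $(a_i,b_i)$ on $S_1$) is occupied — but in fact, because $\bs\alpha$ and $\bs\beta$ each consist of $2g$ curves on $S_1$ and $2g$ on $-S_0$ and a generator is a $2g$-tuple matching $\alpha$'s to $\beta$'s bijectively, one shows combinatorially that every generator either has all components in $S_0$ (these are exactly $\mathcal{S}_{\mathbf{a},\hh(\mathbf{a})}$) or has exactly one component at some $x_i''\in S_1$ with the remaining $2g-1$ components in $S_0$ forming a tuple $\mathbf{y}_0$. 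So $\mathcal{S}_{\bs\alpha,\bs\beta}=\mathcal{S}_{\mathbf{a},\hh(\mathbf{a})}\sqcup\coprod_i\{\{x_i''\}\cup\mathbf{y}_0\}$.

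Next I would set up the algebraic skeleton. Define a filtration (or just a direct-sum decomposition as a $\Z/2\Z$-vector space) $\widehat{CF}(-\Sigma,\bs\alpha,\bs\beta,z)=C_{\mathrm{low}}\oplus C_{\mathrm{high}}$, where $C_{\mathrm{low}}$ is spanned by the generators supported in $S_0$ and $C_{\mathrm{high}}$ by the $x_i''$-generators. The key geometric input is \cite[Claim 4.9.2]{I}, already invoked in the text: a holomorphic curve in $\R\times[0,1]\times S$ asymptotic at $+\infty$ to a chord over $x_i$ or $x_i'$ must contain a trivial strip over that chord. The positioning of $z$ off the thin strips $D_i,D_i'$ forces, via $n_z=0$, that every curve counted in $\widehat\partial$ either is supported over $S_0$ or is (a disjoint union involving) one of the thin strips $D_i$ connecting $x_i''$ to $x_i$, or $D_i'$ connecting $x_i''$ to $x_i'$. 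Consequently $\widehat\partial$ is block upper-triangular with respect to the splitting: $\widehat\partial(C_{\mathrm{low}})\subset C_{\mathrm{low}}$ and $\widehat\partial(C_{\mathrm{high}})\subset C_{\mathrm{high}}\oplus C_{\mathrm{low}}$, with the $C_{\mathrm{high}}\to C_{\mathrm{low}}$ component given precisely by $\{x_i''\}\cup\mathbf{y}_0\mapsto \{x_i\}\cup\mathbf{y}_0+\{x_i'\}\cup\mathbf{y}_0$ (counting the two thin strips). I would then invoke the standard algebraic fact that if a complex $C$ has a subcomplex $K$ (here the span of the elements $\{x_i\}\cup\mathbf{y}_0+\{x_i'\}\cup\mathbf{y}_0$, which is acyclic because it is identified via $\widehat\partial$ with a shift of $C_{\mathrm{high}}$) whose quotient is $\widehat{CF}(S,\mathbf{a},\hh(\mathbf{a}),z)$ by definition, and the sub and its ``partner'' cancel, then $H_*(C)\cong H_*(C/K)$. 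More precisely: the projection $C\to C/K$ is a chain map onto $\widehat{CF}(S,\mathbf{a},\hh(\mathbf{a}),z)$, its kernel $K$ is acyclic (the map $\widehat\partial$ restricts to an isomorphism $C_{\mathrm{high}}\xrightarrow{\ \sim\ }K$ after quotienting by the $C_{\mathrm{high}}$-internal part, which is nilpotent), hence the projection is a quasi-isomorphism.

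To make the acyclicity of the relevant sub/quotient rigorous I would argue as follows: inside $C_{\mathrm{high}}\oplus K$ the differential sends $\{x_i''\}\cup\mathbf{y}_0$ to $(\{x_i\}\cup\mathbf{y}_0+\{x_i'\}\cup\mathbf{y}_0)+(\text{terms in }C_{\mathrm{high}})$; equipping $C_{\mathrm{high}}$ with the energy/area filtration, the map ``$x_i''$-generator $\mapsto$ its thin-strip image in $K$'' is a filtered isomorphism onto $K$ up to strictly higher-filtration corrections, hence an isomorphism, so $C_{\mathrm{high}}\oplus K$ is acyclic. Since $\widehat{CF}(-\Sigma,\bs\alpha,\bs\beta,z)=(C_{\mathrm{high}}\oplus K)\oplus'\,\widehat{CF}(S,\mathbf{a},\hh(\mathbf{a}),z)$ as complexes up to this triangular correction (equivalently, a Gaussian-elimination / cancellation lemma applies to the pairs $(\{x_i''\}\cup\mathbf{y}_0,\ \{x_i\}\cup\mathbf{y}_0+\{x_i'\}\cup\mathbf{y}_0)$), we conclude $\widehat{HF}(-\Sigma,\bs\alpha,\bs\beta,z)\cong\widehat{HF}(S,\mathbf{a},\hh(\mathbf{a}),z)$. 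The main obstacle is purely the geometric control of holomorphic curves that keeps the differential block-triangular: one must know that no curve counted in $\widehat\partial$ with $n_z=0$ connects an $S_0$-generator up to an $x_i''$-generator, nor produces a curve over $x_i,x_i'$ other than the two thin strips — precisely the content imported from \cite[Claim 4.9.2]{I} and the choice of $z$. Granting that, the rest is the standard cancellation/acyclic-subcomplex argument, and I would cite \cite[Theorem 4.9.4]{I} for the full details rather than reproduce the holomorphic curve analysis here.
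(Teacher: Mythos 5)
Your overall strategy --- isolate the thin-strip part of the differential using the placement of $z$ and \cite[Claim 4.9.2]{I}, then cancel the $S_1$-generators against the sums $\{x_i\}\cup\mathbf{y}_0+\{x_i'\}\cup\mathbf{y}_0$ --- is the right one and is exactly what the reference to \cite[Theorem 4.9.4]{I} is encoding. But there is a genuine error in the combinatorial claim on which your decomposition rests: it is \emph{not} true that a generator of $\widehat{CF}(-\Sigma,\bs{\alpha},\bs{\beta},z)$ has at most one component at an $x_i''$. Each $\alpha_i$ (resp.\ $\beta_i$) is a single closed curve traversing both pages, namely $(a_i\times\{1\})\cup(a_i\times\{0\})$, not two separate curves on $S_1$ and $-S_0$; since $x_i''\in\alpha_i\cap\beta_i$, any subset $T\subset\{1,\dots,2g\}$ can be realised by a generator whose $i$-th component is $x_i''$ for $i\in T$ and lies in $S_0$ for $i\notin T$ (e.g.\ $\{x_1'',\dots,x_{2g}''\}$ is a generator). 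Consequently your splitting $\mathcal{S}_{\bs{\alpha},\bs{\beta}}=\mathcal{S}_{\mathbf{a},\hh(\mathbf{a})}\sqcup\coprod_i\{\{x_i''\}\cup\mathbf{y}_0\}$ is false, $C_{\mathrm{high}}$ is much larger than you describe, and the one-step Gaussian elimination pairing each $x_i''$-generator with a single element of $K$ does not go through as written: the differential of a generator with $k$ components of type $x''$ has $2k$ thin-strip terms, each with $k-1$ such components, so the cancellation is not a bijection between two blocks.

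The repair is to upgrade your two-block decomposition to a filtration by the number of $x''$-components (which the differential does not increase, and which the thin strips decrease by exactly one since an $I=1$ curve can contain only one thin strip). On the associated graded complex the surviving differential is the tensor product over $i$ of the local three-generator complexes $x_i''\mapsto x_i+x_i'$, each of which has one-dimensional homology generated by $[x_i]=[x_i']$; this is precisely why the correct answer is the quotient of $\overline{CF}(S,\mathbf{a},\hh(\mathbf{a}))$ by the identifications $\{x_i\}\cup\mathbf{y}_0\sim\{x_i'\}\cup\mathbf{y}_0$ rather than a subcomplex supported on single-$x''$ generators. With that modification (and the geometric inputs you correctly identified: $n_z=0$, the positioning of $z$ away from the thin strips, and the positivity statement of \cite[Claim 4.9.2]{I} ensuring the differential is triangular with respect to the filtration), the argument closes up and agrees with the proof in \cite[Theorem 4.9.4]{I}.
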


\begin{rem}
$\mathbf{x} =\{x_1, \ldots, x_{2g}\}$ is a cycle and its homology class $[\mathbf{x}]
\in \widehat{HF} (-M)$ is the contact invariant of the contact structure supported by the open book decomposition; see~\cite{HKM}.
\end{rem}
\section{Embedded contact homology}\label{sec: ECH}
\subsection{A review of embedded contact homology} \label{subsec: ECH}
In this section we briefly review the embedded contact homology groups associated to a closed $3$-manifold $M$.  Embedded contact homology was defined by Hutchings~\cite{Hu1}, partially in collaboration with Taubes~\cite{HT1,HT2}, and is intimately connected with the dynamics of a Reeb vector field. 

Let $\alpha$ be a contact form on $M$. We assume that every closed Reeb orbit of $\alpha$ is {\em nondegenerate}, which means that its linearised first return map does not have $1$ as an eigenvalue. Contact forms satisfying this property are called {\em nondegenerate}.
The linearised first return map is a symplectic transformation, and therefore its eigenvalues are $\{ \lambda, \lambda^{-1} \}$, where $\lambda$ is either real or in the unit circle. A closed Reeb orbit is:
\begin{itemize}
\item {\em hyperbolic} if the eigenvalues of its linearised first return map are real, or
\item {\em elliptic} if they lie on the unit circle.
\end{itemize}
The chain complex $ECC(M, \alpha)$ is generated by finite  sets $\bs{\gamma}=\{(\gamma_i,m_i)\}$, called {\em orbit sets}, where:
\begin{itemize}
\item $\gamma_i$ is a simple closed Reeb orbit,
\item $m_i$ is a positive integer, and
\item if $\gamma_i$ is a hyperbolic orbit, then $m_i=1$.
\end{itemize}
We will denote the set of orbit sets by ${\mathcal O}$. An orbit set $\bs{\gamma}$ will also be written multiplicatively as $\prod \gamma_i^{m_i}$, with the convention that
$\gamma_i^2 = 0$ whenever $\gamma_i$ is hyperbolic. The empty orbit set
$\varnothing$ will be written multiplicatively as $1$.

We choose an almost complex structure $J$ on $\R\times M$ compatible with $\alpha$.
Let $\bs{\gamma}_+ = \{ (\gamma_i^+, m_i^+) \}$ and $\bs{\gamma}_- = \{ (\gamma_i^-, m_i^-) \}$ be orbit sets. We denote by ${\mathcal M}_{\R \times M}(\bs{\gamma}_+, \bs{\gamma}_-)$ the moduli space of $J$-holomorphic curves in $\R \times M$ which are positively asymptotic to covers of the closed Reeb orbits $\gamma_i^+$ with total multiplicity $m_i^+$ as $s \to + \infty$ and negatively asymptotic to covers of the closed Reeb orbits $\gamma_i^-$ with total multiplicity $m_i^-$ as $s \to - \infty$. Moreover, we consider equivalent two $J$-holomorphic curves which differ only by (branched) covers of trivial cylinders with the same multiplicity. Such curves are called {\em connectors}.
Translations in the $\R$ direction acts on the moduli spaces; we denote the quotient by $\widehat{\mathcal M}_{\R \times M}(\bs{\gamma}_+, \bs{\gamma}_-)$.

The ECH index for $J$-holomorphic curve in $\R \times M$ is defined in \cite[Definition~1.5]{Hu1}. The following lemma is a consequence of the index inequality proved in \cite[Theorem 4.15]{Hu2}.
\begin{lemma}[{\cite[Proposition 7.15]{HT1}}]
Let $J$ be a generic almost complex structure on $\R \times M$ compatible with $\alpha$. Then:
\begin{enumerate}
\item A $J$-holomorphic curve $u$ with $I(u)=0$ is a union of branched covers of trivial cylinders over simple closed Reeb orbits (i.e. a connector).
\item A $J$-holomorphic curve $u$ with $I(u)=1$ (respectively $I(u)=2$) is a disjoint union of a connector and an embedded $J$-holomorphic curve $u'$ with $I(u')=\op{ind}(u')=1$ (respectively $I(u')=\op{ind}(u')=2$).
\end{enumerate}
\end{lemma}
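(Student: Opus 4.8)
The plan is to deduce the statement from the index inequality \eqref{eqn: index inequality} together with the structural results already recalled for the ECH index. First I would recall from \cite[Theorem 4.15]{Hu2} (or \cite[Proposition 7.15]{HT1}) that for any somewhere injective $J$-holomorphic curve $u'$ in $\R\times M$ one has $\op{ind}(u')+2\delta(u')\le I(u')$, with equality exactly when $u'$ is embedded and its ends satisfy the ECH partition conditions. Given an arbitrary $J$-holomorphic curve $u$ with $I(u)\le 2$, I would decompose its image into irreducible components and sort them into two groups: the components $u_0$ that are (branched) covers of trivial cylinders — i.e. connectors, which have $I=0$ — and the remaining components $u'$, each of which is then necessarily somewhere injective over a nontrivial Reeb orbit (or a union of such). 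The concatenation/additivity property of the ECH index recalled in the excerpt, together with the fact that distinct components can only contribute nonnegatively to the total index (by the index inequality applied componentwise and by $I$ of a connector being $0$, plus nonnegativity of the ``linking'' contributions in Hutchings' index — this is the content of the relevant part of \cite{Hu2}), gives $I(u)=I(u_0)+I(u')+(\text{crossing terms})\ge I(u')$.

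Next I would push this down to the components. For a generic $J$, each somewhere injective curve is regular, so $\op{ind}\ge 0$ for the nontrivial part, and $\op{ind}$ is additive over components up to the asymptotic contributions. If $I(u)=0$, then there is no room for any nontrivial component: any such component $u'$ would have $I(u')\ge 1$ by the index inequality ($\op{ind}(u')\ge 1$ for a nonempty somewhere injective curve with $\op{ind}\ge 0$, since $\op{ind}=0$ would force $u'$ to be a trivial cylinder), contradicting $I(u)\ge I(u')\ge 1$; hence $u$ is a connector, proving (1). If $I(u)=1$ (resp.\ $2$), the same bookkeeping forces exactly one nontrivial component $u'$ with $I(u')=1$ (resp.\ $2$) and all crossing terms zero; then the equality case of the index inequality, $\op{ind}(u')+2\delta(u')=I(u')\le 2$, forces $\delta(u')=0$, i.e.\ $u'$ is embedded, and $\op{ind}(u')=I(u')$. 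This is exactly statement (2).

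The step I expect to be the main obstacle is the careful accounting of the crossing/linking terms in Hutchings' ECH index when the curve is \emph{not} connected and has multiply covered components — i.e.\ justifying that $I$ of a disjoint union is at least the sum of the $I$'s of the non-connector pieces, and that connectors contribute $0$ and do not create negative cross terms. This is genuinely where the ECH index differs from a naive Fredholm count, and it is the heart of why ECH is ``embedded'': it relies on the writhe bounds and the relative adjunction formula from \cite{Hu1, Hu2}. In the write-up I would quote the precise additivity-with-nonnegativity statement from \cite[Section 4]{Hu2} rather than reprove it, since the excerpt already grants us the index inequality and its equality case, and restrict my own argument to the elementary combinatorics of combining the components once those inequalities are in hand.
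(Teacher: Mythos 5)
The paper does not prove this lemma at all: it is quoted verbatim from \cite[Proposition 7.15]{HT1} and the text only remarks that it is ``a consequence of the index inequality'', so the relevant comparison is with the proof in the cited references. Your plan follows exactly that standard route (split off the connector components, use superadditivity of $I$ with nonnegative crossing terms from positivity of intersection, then the index inequality plus $\op{ind}\ge 1$ for nontrivial somewhere injective components under generic $J$), and the combinatorial bookkeeping you describe for $I=0,1,2$ is the right one.

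There is, however, one assertion in the body of your argument that is false as stated and happens to sit exactly on the crux of the lemma: a non-connector irreducible component is \emph{not} ``necessarily somewhere injective'' --- it can be a $d$-fold multiple cover of a nontrivial simple curve, and for such components the inequality $\op{ind}+2\delta\le I$ in the form \eqref{eqn: index inequality} does not apply. The actual content of \cite[Proposition 7.15]{HT1} (and of \cite[Theorem 5.1]{Hu2}) is the extension of the index inequality to curves of the form $\sum_a d_a C_a$: one gets roughly $I\ge\sum_a\bigl(d_a\op{ind}(C_a)+2\binom{d_a}{2}\bigr)+2\sum_{a<b}d_ad_b\,C_a\cdot C_b+2\delta$, and it is the quadratic term $2\binom{d_a}{2}$ that forces $I\ge 4$ whenever some nontrivial component has $d_a\ge 2$, thereby excluding multiple covers when $I\le 2$. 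You do identify this as ``the main obstacle'' in your last paragraph and propose to quote the precise statement rather than reprove it, which is consistent with what the paper itself does; but you should then delete the ``necessarily somewhere injective'' claim, since the quoted inequality is what replaces it, not what justifies it. Two smaller points to fold in: the disjointness of the connector from $u'$ in part (2) comes from the vanishing of the crossing term $2\,u_0\cdot u_1$ in the equality case, and the $I=2$ statement in \cite{HT1} additionally requires the orbit sets to be ECH generators (hyperbolic orbits with multiplicity one), which is implicit in the paper's setup but worth recording.
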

The ends of a $J$-holomorphic curve $u$ in ${\mathcal M}_{\R \times M}(\bs{\gamma}_+, \bs{\gamma}_-)$ without connector components determine partitions of the multiplicities of the elliptic orbits in $\bs{\gamma}_+$ and $\bs{\gamma}_-$. It turns out that, when $I(u)=1$ or $I(u)=2$, these partitions must coincide with preferred partitions called the {\em outgoing} and {\em incoming} partitions for positive and negative ends, respectively.
The incoming and outgoing partitions can be computed from the dynamics of the linearised Reeb flow. For their definition see \cite[Section 4.1]{Hu1} or \cite[Definition 4.14]{Hu2}. For the relation between these partitions and the ECH index see \cite[Theorem 4.15]{Hu2}, for example. In this article we will not need the precise definition of those partitions, except for the following fact, which is a direct consequence of \cite[Definition 4.14]{Hu2}.

\begin{lemma}\label{basic fact about partitions}
Let $\gamma$ be a simple elliptic orbit and suppose that its linearised Reeb flow is conjugated to a rotation by an angle $2 \pi \theta$. If $0< \theta < {1\over m}$, then the incoming partition of $(\gamma, m)$ is $(m)$ and the outgoing partition is $(\underbrace{1, \ldots, 1}_{m})$. On the other hand, if $- {1\over m} < \theta < 0$, then the incoming partition of $(\gamma, m)$ is $(\underbrace{1, \ldots, 1}_m)$ and the outgoing partition is $(m)$.
\end{lemma}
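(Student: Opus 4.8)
The plan is to unwind the definition of the incoming and outgoing partitions from \cite[Definition 4.14]{Hu2} in the particular case of a single elliptic orbit $(\gamma, m)$ whose linearised return map is a rotation by $2\pi\theta$, and observe that the combinatorial recipe degenerates completely when $\theta$ is constrained to a short interval. Recall that these partitions are defined via the quantities $\lfloor k\theta \rfloor$ (or the analogous ceilings) for $1 \le k \le m$, which encode how the rotation number interacts with the multiplicities. The key arithmetic input is that $0 < \theta < 1/m$ forces $0 < k\theta < 1$, hence $\lfloor k\theta \rfloor = 0$, for every $k = 1, \ldots, m$; and symmetrically $-1/m < \theta < 0$ forces $-1 < k\theta < 0$, hence $\lceil k\theta \rceil = 0$ (equivalently $\lfloor k\theta \rfloor = -1$ for $k \ge 1$), for every such $k$.

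First I would recall precisely the definition: given $(\gamma, m)$ with rotation number $\theta$, the \emph{incoming} partition $p_\gamma^{\mathrm{in}}(m)$ is built by taking the maximal-slope concave (or the appropriate convex) lattice path under the graph of $k \mapsto \lceil k\theta \rceil$ from $(0,0)$ to $(m, \lceil m\theta\rceil)$ (and dually for the outgoing partition with $\lfloor \cdot \rfloor$), with the parts of the partition being the horizontal increments between successive lattice points on that path. Then I would plug in the hypothesis. In the case $0 < \theta < 1/m$: the outgoing partition is read off from $k \mapsto \lfloor k\theta \rfloor \equiv 0$ on $\{0,1,\dots,m\}$, so the relevant lattice path from $(0,0)$ to $(m,0)$ is the constant path, whose horizontal increments are all $1$; this gives the outgoing partition $(1,\dots,1)$ with $m$ ones. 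Meanwhile the incoming partition is read off from $k \mapsto \lceil k\theta \rceil$, which equals $0$ at $k=0$ and $1$ at each $k = 1, \dots, m$; the extremal path from $(0,0)$ to $(m,1)$ staying below this step function is the single segment, whose one horizontal increment is $m$, giving the incoming partition $(m)$. The case $-1/m < \theta < 0$ is the mirror image: now $\lceil k\theta\rceil \equiv 0$ gives incoming partition $(1,\dots,1)$, while $\lfloor k\theta\rfloor$ jumps from $0$ to $-1$ immediately, giving outgoing partition $(m)$. I would present both cases in parallel, noting the symmetry $\theta \leftrightarrow -\theta$ that swaps incoming and outgoing, so only one genuine computation is needed.

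The main (and essentially only) obstacle is bookkeeping: the literature has several sign and floor-versus-ceiling conventions for these partitions (Hutchings uses a specific one in \cite{Hu1, Hu2}, and one must make sure the ``incoming''/``outgoing'' labels and the $\pm$ in the rotation angle line up with it), and one must correctly identify which of the two ``edge path'' constructions (the one maximising versus minimising slope) produces $p^{\mathrm{in}}$ and which produces $p^{\mathrm{out}}$. Once the conventions are pinned down, the proof is immediate because $k\theta$ never crosses an integer other than trivially at the endpoints in this range, so each edge path is forced. I would therefore write the proof as: (1) recall the definition and fix conventions with a pointer to \cite[Definition 4.14]{Hu2}; (2) observe $\lfloor k\theta\rfloor$ and $\lceil k\theta\rceil$ are constant on $\{1,\dots,m\}$ under the hypothesis; (3) conclude each partition by inspection of the (now trivial) lattice path; (4) invoke the $\theta \mapsto -\theta$ symmetry for the second case. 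No serious analysis is involved — this is purely a combinatorial lemma extracted from the definition.
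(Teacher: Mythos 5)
Your proposal is correct and matches the paper's treatment: the paper offers no proof at all, simply asserting that the lemma ``is a direct consequence of [Definition 4.14]{Hu2}'', and your computation is exactly the unwinding of that definition (indeed it reproduces Hutchings's own standard example). The only quibble is a slight looseness in whether the extremal lattice paths lie above or below the line $y=\theta x$ versus the associated step functions, but you flag this convention issue yourself and the arithmetic core --- that $k\theta$ never crosses a nonzero integer for $1\le k\le m$, forcing each path to be either the horizontal path (all increments $1$) or a single primitive segment (one increment $m$) --- is exactly right.
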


The differential on $ECC(M, \alpha)$ is defined as
$$\partial \bs{\gamma}_+ = \sum \limits_{\bs{\gamma}_-\in {\mathcal O}} \# \widehat{\mathcal M}_{\R \times M}^{I=1}(\bs{\gamma}_+, \bs{\gamma}_-) \bs{\gamma}_-.$$
The map $\partial$ was shown to satisfy $\partial^2=0$ by Hutchings  and Taubes in \cite{HT1,HT2}. The homology of
$ECC(M,\alpha)$ is the {\em embedded contact homology}
group $ECH(M)$. Independence of the choice of the contact
form $\alpha$, the  contact structure $\xi$, and the compatible almost
complex structure $J$ was shown by Taubes in \cite{T2.1, T2.2, T2.3, T2.4, T2.5} by proving an isomorphism between embedded contact homology and monopole Floer homology.
\begin{rem}
The moduli spaces ${\mathcal M}(\bs{\gamma}_+, \bs{\gamma}_-)$ can be nonempty only if $\bs{\gamma}_+$ and $\bs{\gamma}_-$ define the same homology class. Thus $ECC(M, \alpha)$ decomposes as a direct sum of subcomplexes indexed by classes in $H_1(M; \Z)$. This decomposition is analogous to the decomposition of the Heegaard Floer (or monopole) chain complexes according to Spin$^c$-structures. The induced decomposition in $ECH(M)$ is independent of all choices, except for the indexing by elements in $H_1(M; \Z)$ which depends on the contact structure only through its Euler class.
\end{rem}

Given a point $\mathbf{z} \in \R \times M$, we denote by ${\mathcal M}(\bs{\gamma}_+, \bs{\gamma_-}; \mathbf{z})$ the moduli space of $J$-holomorphic curves asymptotic to $\bs{\gamma}_\pm$ and passing through $\mathbf{z}$ where, as before, we identify curves which differ by a connector component.  For a generic point $\mathbf{z}$, the map\footnote{This map is called $U$ in \cite{binding} and $U'$ in \cite{III}.} $U_\mathbf{z} \colon ECC(M, \alpha) \to ECC(M, \alpha)$ is defined as
$$U_{\mathbf{z}} \bs{\gamma}_+ = \sum \limits_{\bs{\gamma}_- \in {\mathcal O}} \# {\mathcal M}^{I=2}_{\R \times M}(\bs{\gamma}_+, \bs{\gamma}_-; \mathbf{z}) \bs{\gamma}_-.$$
The same techniques used to show that $\partial^2=0$ also show that $U$ is a chain map; see \cite[Section 2.5]{HT5} for more details. Then $\widehat{ECC}(M, \alpha)$ is defined as the cone of $U_{\mathbf{z}}$ and $\widehat{ECH}(M)$ is its homology.
As before, the work of Taubes in \cite{T2.1} and the following articles shows that $\widehat{ECH}(M)$ is invariant of all choices.

\subsection{Morse-Bott theory for embedded contact homology}
In several steps of the proof of Theorem \ref{main} we will use Morse-Bott techniques extensively. Here we give a brief description of those techniques and refer the reader to \cite{Bo2} and \cite{BEHWZ} for a general treatment and to \cite[Section 4]{binding} for one which is more adapted to embedded contact homology. For the purpose of this paper, a contact form is {\em Morse-Bott} if every closed orbit of its Reeb vector field is either isolated and nondegenerate, or belongs to an $S^1$-family and is nondegenerate in the normal direction.\footnote{In general, there is also the case where the Reeb orbits come in two-dimensional families; however this will not occur here.} We denote a Morse-Bott $S^1$-family of simple closed Reeb orbits by $\mathcal{N}$ and the Morse-Bott torus corresponding to $\mathcal{N}$ by $T_{\mathcal{N}}= \cup_{x\in \mathcal{N}} x$.

Let $\{v_1,v_2\}$ be an oriented basis for $\xi$ at some point $p \in T_{\mathcal{N}}$ so that $v_1$ is transverse to $T_{\mathcal{N}}$ and $v_2$ is tangent to $T_{\mathcal{N}}$.
The linearised first return map of the Reeb flow on $\xi_p$ is given, in the basis $\{v_1,v_2\}$, by the matrix
$\begin{pmatrix} 1 & 0
\\ a & 1 \end{pmatrix}$
with $a \ne 0$.

\begin{dfn} \label{defn: positive negative torus}
$T_{\mathcal N}$ is called a {\em positive} Morse-Bott torus if $a>0$ and a {\em negative} Morse-Bott torus if $a<0$.
\end{dfn}
A Morse-Bott contact form $\alpha$ can be perturbed into nondegenerate forms $\alpha_\epsilon$, for $\epsilon >0$ sufficiently small, which depend on the choice of a Morse function on each Morse-Bott $S^1$-family. The close Reeb orbits of the forms $\alpha_\epsilon$ will be the nondegenerate closed Reeb orbits of $\alpha$ together with the closed Reeb orbits in the Morse-Bott tori corresponding to the critical points of the Morse functions on the Morse-Bott families. We will sweep under the carpet the actual construction of the contact forms $\alpha_\epsilon$ and the fact that they are nondegenerate only up to orbits of some action (i.e.\ period) $L_\epsilon$, with $L_\epsilon \to +\infty$ as $\epsilon \to 0$, so that a limiting procedure is involved in computing $ECH(M)$ from a Morse-Bott contact form.

If we choose a family of almost complex structures $J_\epsilon$ compatible with $\alpha_\epsilon$ and converging to an almost complex structure $J$ compatible with $\alpha$ as $\epsilon \to 0$, a sequence of $J_\epsilon$-holomorphic curves converges to a ``cascade'' of $J$-holomorphic curves some of whose ends are connected to negative gradient trajectories in the Morse-Bott families. The negative gradient trajectories which can appear are of three types:
\begin{itemize}
\item flow-lines between critical points,
\item semi-infinite trajectories between a critical point and an end of a holomorphic piece of the cascade, and
\item finite trajectories joining a positive and a negative end of different holomorphic pieces.
\end{itemize}
See \cite[Section 11.2]{BEHWZ} for a precise statement.

The converse is more delicate: a cascade of $J$-holomorphic curves whose ends are joined by negative gradient flow trajectories in general cannot be deformed to a $J_\epsilon$-holomorphic curve without using some abstract perturbation theory which has not been rigorously established yet. The main problem is that a family of simply covered $J_\epsilon$-holomorphic curves --- which are thus regular for a generic $J_\epsilon$ --- could degenerate into a cascade in which some of the holomorphic curves are multiply covered and could even have negative Fredholm index.

We will therefore restrict our attention to a special class of Morse-Bott cascades in which finite negative gradient trajectories are not allowed and at most one of the holomorphic pieces is not a cover of a trivial cylinder. Those cascades are called {\em very nice Morse-Bott buildings} in \cite{binding}.
Let $\bs{\gamma}_\pm = \{ (\gamma_i^\pm, m_i^\pm) \}$ be orbit sets where each $\gamma_i^\pm$ is either a nondegenerate closed Reeb orbit of $\alpha$ or a closed Reeb orbit corresponding to a critical point of the Morse function on a Morse-Bott family. We denote by\footnote{In \cite{binding} the same moduli spaces are denoted  by ${\mathcal M}^{MB, vn}(\bs{\gamma}_+, \bs{\gamma}_-)$ because they consist of very nice Morse-Bott buildings.} ${\mathcal M}^{MB}_{\R \times M}(\bs{\gamma}_+, \bs{\gamma}_-)$ the moduli space of very nice Morse-Bott buildings in $\R \times M$ with positive ends at $\bs{\gamma}_+$ and negative ends at $\bs{\gamma}_-$.
We denote by $\widehat{\mathcal M}^{MB}_{\R \times M}(\bs{\gamma}_+, \bs{\gamma}_-)$ the quotient by translations and by ${\mathcal M}^{MB}_{\R \times M}(\bs{\gamma}_+, \bs{\gamma}_-; \mathbf{z})$ the subspace of those cascades passing through a generic point $\mathbf{z} \in \R \times M$. In the Appendix to \cite{binding} we prove the following.
\begin{thm}
For a generic almost complex structure $J$ compatible with $\alpha$ and $\epsilon$ sufficiently small there are bijections
\begin{align*}
\widehat{\mathcal M}^{MB, I=1}_{\R \times M}(\bs{\gamma}_+, \bs{\gamma}_-) & \cong \widehat{\mathcal M}^{I=1}_{\R \times M}(\bs{\gamma}_+, \bs{\gamma}_-) \quad \text{and} \\
{\mathcal M}^{MB, I=2}_{\R \times M}(\bs{\gamma}_+, \bs{\gamma}_-; \mathbf{z}) & \cong {\mathcal M}^{I=2}_{\R \times M}(\bs{\gamma}_+, \bs{\gamma}_-; \mathbf{z})
\end{align*}
where the moduli spaces on the left-hand side are defined using the almost complex structure $J$ and the moduli spaces on the right-hand side are defined using a perturbed almost complex structure $J_\epsilon$.
\end{thm}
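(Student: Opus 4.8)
The plan is to prove both bijections by pairing an SFT‑compactness statement, which in the limit $\epsilon\to 0$ extracts a very nice Morse‑Bott building from a sequence of genuine $J_\epsilon$‑holomorphic curves of low ECH index, with a Morse‑Bott gluing theorem, which runs the construction backwards. I would carry out the $I=1$ bijection in detail; the $I=2$ bijection with the point constraint $\mathbf{z}$ is formally the same, and I indicate the (minor) changes at the end.

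\emph{Compactness direction.} First I would take a sequence $u_k$ of $J_{\epsilon_k}$‑holomorphic curves in $\mathcal{M}^{I=1}_{\R\times M}(\bs{\gamma}_+,\bs{\gamma}_-)$ with $\epsilon_k\to 0$ and apply the version of SFT compactness adapted to Morse‑Bott stable Hamiltonian structures (\cite[Section 11.2]{BEHWZ}, \cite{Bo2}) to extract a subsequential limit, which is a Morse‑Bott building for $J$. The crucial point, and the only place $I=1$ is used, is that this limit is \emph{very nice}. The ECH index of a building is defined so as to equal $I$ of any $J_\epsilon$‑curve glued from it, namely $1$, and it decomposes as a sum of contributions of its holomorphic pieces and of its negative gradient trajectories; by an index accounting using the index inequality \eqref{eqn: index inequality}, the structure of Morse‑Bott limits, and Conley–Zehnder index computations for the Morse‑Bott tori, these contributions are all non‑negative, a holomorphic piece contributing $0$ being a connector and one contributing $1$ being embedded and regular for generic $J$ (compare \cite[Proposition 7.15]{HT1}). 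Hence at most one piece is a genuine curve, of ECH index $1$, and a finite gradient trajectory joining two genuine holomorphic pieces — which would require two of them — cannot occur; in particular the pathology flagged above, a multiply covered non‑trivial piece of negative Fredholm index, is excluded in this index range. This step both makes the limit assignment well defined and shows that the gluing map constructed next is surjective.

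\emph{Gluing direction.} Conversely, given a very nice building of ECH index $1$ — one embedded, regular $J$‑holomorphic curve $u$, some covers of trivial cylinders, and a collection of semi‑infinite negative gradient trajectories of the auxiliary Morse functions joining the ends of $u$ that lie on Morse‑Bott tori $T_{\mathcal{N}}$ to the critical‑point orbits occurring in $\bs{\gamma}_\pm$ — I would build a genuine $J_\epsilon$‑curve for $\epsilon$ small by gluing. Since $\alpha_\epsilon$ agrees with $\alpha$ outside thin neighbourhoods of the $T_{\mathcal{N}}$, and $J_\epsilon$‑holomorphic curves in those neighbourhoods are, after Bourgeois, modelled on negative gradient trajectories times small holomorphic corrections, I would form an approximately holomorphic pre‑glued curve by inserting into each relevant end of $u$ the thin holomorphic piece over the prescribed gradient trajectory, the covering multiplicities distributing along the ends according to the incoming and outgoing partitions of Lemma \ref{basic fact about partitions} — which is precisely the matching condition imposed on both sides of the bijection. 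Because $u$ is embedded and regular and every other piece is a cover of a trivial cylinder, the linearised operator at the pre‑glued curve is surjective and has trivial kernel after quotienting by $\R$‑translation, so a Newton iteration corrects it to a genuine $J_\epsilon$‑curve, unique in a neighbourhood. The ``very nice'' hypothesis is exactly what keeps this honest: one never has to glue two multiply covered non‑trivial curves, so the analysis reduces to standard gluing of embedded curves plus the gluing of a (possibly multiply wrapped) holomorphic end to a semi‑infinite Morse gradient trajectory.

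\emph{Conclusion, the $I=2$ case, and the main obstacle.} To finish I would check that the two constructions are mutually inverse: the compactness step shows every low‑index $J_\epsilon$‑curve is $C^0$‑close to, hence glued from, a unique very nice building (surjectivity), while local uniqueness in the Newton iteration together with the fact that $\widehat{\mathcal{M}}^{MB,I=1}_{\R\times M}(\bs{\gamma}_+,\bs{\gamma}_-)$ is a finite set of isolated points gives injectivity; the ECH index is preserved by the definition of the Morse‑Bott index, and the equivalence relations ($\R$‑translation and identification up to connectors) correspond. For ${\mathcal{M}}^{MB,I=2}_{\R\times M}(\bs{\gamma}_+,\bs{\gamma}_-;\mathbf{z})\cong{\mathcal{M}}^{I=2}_{\R\times M}(\bs{\gamma}_+,\bs{\gamma}_-;\mathbf{z})$ I would run the same argument without quotienting by translations (the constraint of passing through the fixed point $\mathbf{z}$ breaks that symmetry), noting that this is a codimension‑two condition, transverse for generic $\mathbf{z}$, so the moduli spaces are again finite, and that $\mathbf{z}$, being disjoint from the perturbation regions, must lie on the unique genuine holomorphic piece, forcing its ECH index to equal $2$ and hence it to be embedded. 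The hard part is the gluing step, and within it the analysis of gluing a holomorphic end asymptotic to a Morse‑Bott torus — with an arbitrary covering multiplicity, so that its asymptotics are governed by the partitions of Lemma \ref{basic fact about partitions} — to a semi‑infinite negative gradient trajectory, together with the proof that the count is exact: that the glued family never leaves the very nice regime and that gluing neither loses nor creates curves. This is what I expect to occupy the bulk of the argument; it is carried out in the Appendix to \cite{binding}.
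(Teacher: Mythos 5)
Your two-way plan --- SFT/Morse--Bott compactness in the limit $\epsilon\to 0$ to show that low-ECH-index $J_\epsilon$-curves degenerate only to very nice buildings, followed by a gluing construction running the degeneration backwards --- is exactly the strategy of the proof, which this survey does not reproduce but delegates to the Appendix of \cite{binding}. The index accounting you describe (additivity of $I$ over the levels, $I\ge 0$ for each level with equality only for connectors, and the $\R$-translation argument excluding two non-connector components and hence finite gradient segments) is the right mechanism for establishing very niceness, and your treatment of the point constraint in the $I=2$ case is correct.

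The one step that would fail as written is the assertion that, because $u$ is embedded and regular and every other piece is a cover of a trivial cylinder, the linearised operator at the pre-glued curve is automatically surjective. Branched covers of trivial cylinders are in general \emph{not} regular --- their Fredholm index can be negative and their linearised operators can have cokernel --- which is precisely why Hutchings and Taubes had to develop obstruction bundle gluing in \cite{HT1, HT2}. In the present setting you must either verify, using the partition conditions of Lemma \ref{basic fact about partitions} and the choice of rotation angles at the perturbed elliptic orbits, that no nontrivially branched connector level is forced between the genuine holomorphic piece and the asymptotic orbit sets in the $I=1,2$ range, or else carry out an obstruction-bundle analysis and show that the resulting count of glued curves is exactly one. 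This is not a cosmetic point: it is where the ``exactness of the count'' you defer to the Appendix actually lives, and presenting it as an automatic transversality consequence hides the main analytic content of the theorem.
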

\subsection{Embedded contact homology of manifolds with torus boundary}\label{subsec: ECH with boundary}
In this subsection we define several flavours of embedded contact homology for manifolds with
torus boundary; see \cite[Section 7]{binding} for more details. Let $N$ be a
three-manifold with $\partial N \cong T^2$ and let $\alpha$ be a contact form on $N$ such
that $\partial N$ is foliated by Reeb orbits. We assume that the foliation is linear for some choice of coordinates in $\partial N$. Then, we say that $\alpha$ is {\em rational} if the Reeb orbits on $\partial N$ are closed, and {\em irrational} if they are dense. If $\alpha$ is rational, we assume that $\partial N$ is a Morse-Bott torus. We choose a Morse function with unique maximum and minimum on the corresponding Morse-Bott $S^1$-family. After perturbing $\alpha$ using this Morse function, the Morse-Bott family of orbits foliating $\partial N$ is replaced by a pair of orbits: one elliptic called $e$ and one hyperbolic called $h$. To perform the perturbation it is convenient to enlarge $N$ slightly; we will ignore this technical point. If $\partial N$ is a positive Morse-Bott torus, then $e$ comes from the maximum and $h$ from the minimum, while if $\partial N$ is negative, then $h$ comes from the maximum and $e$ from the minimum. In the following definitions we will assume that $\partial N$ is a negative Morse-Bott torus when $\alpha$ is rational. The similar definitions in the positive case are left to the reader.

\begin{dfn}
The chain complex $ECC(\op{int}(N), \alpha)$ is generated by orbit sets contained in the interior
of $N$ and the differential is defined from  $J$-holomorphic curves in $\R \times \op{int}(N)$. 
\end{dfn}
$ECC(\op{int}(N), \alpha)$ is a chain complex because the foliation by Reeb orbits in $\partial N$ prevents $J$-holomorphic curves with asymptotics in $\op{int}(N)$ from touching $\partial N$ by positivity of intersection. (Alternatively, we can say that $\R \times \partial N$ is a Levi-flat\footnote{Named after Eugenio Elia Levi, an Italian mathematician who was killed in action during WWI. His brother Beppo Levi, also a mathematician, was forced into exile by the fascist racial laws of 1938.} surface.)

\begin{dfn}
The chain complex $ECC(N, \alpha)$ coincides with $ECC(\op{int}(N), \alpha)$ if $\alpha$ is irrational, and is generated by orbit sets built from closed Reeb orbits in $\op{int}(N)$ plus $e$ and $h$ on $\partial N$. The differential counts very nice Morse-Bott buildings contained in
$\R \times N$. 
\end{dfn}
The proof that $ECC(N, \alpha)$ is a chain complex when $\alpha$ is rational consists in showing that a family of very nice Morse-Bott buildings in $\R \times N$ cannot break into a non very nice Morse-Bott building; see \cite[Lemma 7.12]{binding}. This is a consequence of the fact that, by the trapping lemma \cite[Lemma 5.3.2]{binding}, positive ends of $J$-holomorphic curves in $\R \times N$ asymptotic to the closed Reeb orbits foliating $\partial N$ must be trivial.

\begin{dfn}
The chain complex $ECC^\flat(N, \alpha)$ is generated by orbit sets built  from closed Reeb orbits in $\op{int}(N)$ plus $e$ and the chain complex $ECC^\sharp(N, \alpha)$ is generated by orbit sets built  from closed Reeb orbits in $\op{int}(N)$ plus $h$. 
\end{dfn}
Both $ECC^\flat(N, \alpha)$ and $ECC^\sharp(N, \alpha)$ are subcomplexes of $ECC(N, \alpha)$ because $e$ and $h$ can appear only at the negative end of a very nice Morse-Bott building, except for connectors and two negative gradient flow trajectories from $h$ to $e$ in their Morse-Bott family.

Invariance was addressed only for $ECH(N, \alpha)$ when $\alpha$ is an irrational contact form; for the other flavours it will not be needed.
\begin{prop}[{\cite[Proposition 7.2.1]{binding}}]\label{prop: ECH of two contact forms in M}
Let $\alpha_1$ and $\alpha_2$ be contact forms on $N$  which agree on $\partial N$ to first order (and in particular the Reeb vector fields  and the characteristic foliations of $\alpha_1$ and $\alpha_2$ on $\partial N$ are equal)  and define contact structures $\xi_i = \ker \alpha_i$ which are isotopic relative to the boundary. If $\partial N$ is foliated by Reeb orbits of irrational slope, then there is an isomorphism $$ECH(N, \alpha_1) \cong ECH(N, \alpha_2).$$
\end{prop}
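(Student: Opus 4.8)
The plan is to deform $\alpha_1$ into $\alpha_2$ through contact forms that all have the same (irrational) characteristic foliation on $\partial N$, and to show that each sufficiently small step of the deformation induces an isomorphism on $ECH$ via a cobordism map. The point of the irrational hypothesis is that, by the trapping lemma \cite[Lemma 5.3.2]{binding}, every holomorphic curve that enters the picture stays inside a fixed compact subset of $\R \times \op{int}(N)$, so the analysis is formally the same as in a closed symplectization.

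First I would reduce to the case of a fixed contact structure. Since $\xi_1$ and $\xi_2$ are isotopic relative to the boundary, Gray stability in its relative version produces a diffeomorphism $\phi$ of $N$, equal to the identity to first order along $\partial N$ and isotopic to the identity, with $\phi^*\xi_2 = \xi_1$. As $\phi$ tautologically identifies $ECC(N,\alpha_2)$ with $ECC(N,\phi^*\alpha_2)$, and $\phi^*\alpha_2$ still agrees with $\alpha_1$ to first order on $\partial N$, we may assume $\ker\alpha_1 = \ker\alpha_2 = \xi$, hence $\alpha_2 = f\alpha_1$ with $f > 0$ and $f \equiv 1$ along $\partial N$. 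Choosing a constant $c$ with $0 < c < \min_N f$ and concatenating the linear rescalings from $\alpha_1$ to $c\,\alpha_1$ and from $c\,\alpha_1$ to $f\alpha_1$, I obtain a path $\{\alpha_r\}$ of contact forms from $\alpha_1$ to $\alpha_2$ in which every member is a positive multiple of $\alpha_1$ — hence contact with kernel $\xi$ — the rescaling factor varies monotonically in $r$ at every point of $N$ along each of the two segments, and along $\partial N$ every $\alpha_r$, together with its exterior derivative, agrees with a constant multiple of $\alpha_1$. Consequently $\partial N$ is foliated by Reeb orbits of the same irrational slope for every $\alpha_r$, so the trapping lemma applies uniformly along the path.

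Next I would subdivide the path at finitely many parameters at which the contact form is nondegenerate in $\op{int}(N)$ (a generic condition), taking the subdivision fine enough that consecutive forms $\alpha_-, \alpha_+$ are $C^\infty$-close; by telescoping it then suffices to prove $ECH(N,\alpha_+) \cong ECH(N,\alpha_-)$ for each such pair, with $\alpha_+$ obtained from $\alpha_-$ through a monotone rescaling. On the cobordism $W = \R \times N$ I would take $\Omega_W = d(e^s\alpha_{\rho(s)})$, where $\rho$ interpolates the two parameter values monotonically; monotonicity together with $C^\infty$-closeness makes $\Omega_W$ symplectic and the cobordism exact, while $\R \times \partial N$ remains foliated by Reeb orbits and hence a Levi-flat barrier, so the trapping lemma confines every relevant $J_W$-holomorphic curve to a fixed compact piece of $\R \times \op{int}(N)$. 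As in the discussion following \eqref{eqn: index inequality}, curves of ECH index $0$ in this exact cobordism are embedded and, for generic $J_W$, regular, so $\# {\mathcal M}_W^{I=0}(\bs{\gamma}_+, \bs{\gamma}_-)$ defines a degree-zero map $\Phi_W \colon ECC(N,\alpha_+) \to ECC(N,\alpha_-)$; inspecting the ends of the one-dimensional moduli spaces ${\mathcal M}_W^{I=1}(\bs{\gamma}_+, \bs{\gamma}_-)$ — whose compactness is again provided by the trapping lemma — shows $\Phi_W$ is a chain map, and running the construction with the reversed rescaling gives $\Phi_{\overline W}$ in the opposite direction. To see that $\Phi_{\overline W}$ and $\Phi_W$ are mutually inverse up to chain homotopy, I would stretch the neck of the composite cobordisms $\overline W \circ W$ and $W \circ \overline W$: each is exact and, after stretching, $C^0$-close to a trivial symplectization, so by SFT compactness its index-$0$ curves are $C^0$-close to unions of trivial cylinders, and an explicit count in this near-trivial model — of the same flavour as the computation of $G(h)$ in Subsection~\ref{subsec: geometric U} — identifies the composites with the respective identities up to homotopy. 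Composing the resulting isomorphisms along the subdivision yields $ECH(N,\alpha_1) \cong ECH(N,\alpha_2)$.

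The hard part will be the well-definedness of the cobordism maps $\Phi_W$ and the composition formula $\Phi_{\overline W} \circ \Phi_W \simeq \op{id}$, which in the closed case are obtained through Taubes's isomorphism with Seiberg--Witten theory — a tool unavailable for a manifold with boundary. Here they must be established directly by holomorphic curve methods: SFT compactness, gluing, and ECH-index bookkeeping, in particular controlling connector components and excluding multiply covered components in the low-index strata. This is exactly where the irrational-slope hypothesis is essential: the trapping lemma keeps every curve in a fixed compact part of $\R \times \op{int}(N)$, so the compactness and gluing machinery of \cite[Sections 5--7]{binding} applies essentially verbatim; and restricting to deformations through contact forms with the \emph{same} kernel along a piecewise-monotone path is what keeps the interpolating cobordisms exact and $C^0$-close to products, which makes the direct argument feasible without any virtual perturbation scheme.
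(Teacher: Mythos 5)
Your overall strategy --- interpolate through contact forms with the same irrational boundary foliation, confine curves with the trapping lemma, and build continuation maps by counting $I=0$ curves in exact cobordisms $\R\times N$ --- is not the route the paper takes, and the step you yourself flag as ``the hard part'' is a genuine gap rather than a technicality to be checked. Defining an ECH cobordism map by the naive count $\#\mathcal{M}_W^{I=0}(\bs{\gamma}_+,\bs{\gamma}_-)$ and verifying the chain map property by inspecting the ends of $\mathcal{M}_W^{I=1}$ is not known to work even for exact cobordisms between \emph{closed} contact manifolds: the compactification of the $I=1$ moduli spaces can contain buildings whose cobordism level is a multiply covered curve of negative ECH index (the index inequality \eqref{eqn: index inequality} controls only somewhere injective curves), and such configurations can neither be excluded for generic $J_W$ nor glued by the Hutchings--Taubes obstruction-bundle machinery of \cite{HT1,HT2}, which is specific to branched covers of trivial cylinders in a symplectization. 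The trapping lemma removes the confinement problem but does nothing about this transversality problem, so your assertion that the compactness and gluing machinery of \cite{binding} applies ``essentially verbatim'' is exactly where the argument breaks. This is why ECH continuation maps are constructed via Taubes's isomorphism with Seiberg--Witten Floer homology, a tool requiring a closed manifold; the same obstruction also undermines your neck-stretching proof that $\Phi_{\overline W}\circ\Phi_W\simeq\op{id}$.

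The paper's proof is designed to sidestep precisely this. As indicated immediately after the statement, one extends $(N,\alpha_1)$ and $(N,\alpha_2)$ to contact forms on closed manifolds arranged so that every closed Reeb orbit not contained in $N$ has much larger action than every orbit in $N$; the Seiberg--Witten-defined continuation maps between the closed manifolds exist, are ``supported on holomorphic curves'' and hence respect the action filtration, and the blocking/trapping lemmas identify the low-action part of the closed complexes with $ECC(N,\alpha_i)$. Your opening reductions (relative Gray stability, reduction to $\alpha_2=f\alpha_1$, piecewise-monotone rescaling) are sound and compatible with either approach, but to complete the proof you should route the continuation maps through closed manifolds as the paper does rather than attempt a direct curve count in $\R\times N$.
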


The strategy of the proof is to extend $(N, \alpha_i)$, $i=1,2$, to closed contact manifolds so that the closed Reeb orbits not contained in $N$ have much larger action than those contained in $N$, and to use the action properties of the continuation maps for embedded contact homology of closed three-manifolds.

When $\partial N$ is a negative Morse-Bott torus, we define two further versions of embedded contact homology for $(N, \alpha)$ which are, in some sense, embedded contact homology groups relative to the boundary. We recall that $ECC(N, \alpha)$ can be seen as a commutative algebra generated by simple closed Reeb orbits with the relation that $\gamma^2=0$ if $\gamma$ is hyperbolic. Let $\langle e-1 \rangle$ be the ideal generated by $e-1$. Even if the differential does not respect the multiplicative structure, this ideal is a subcomplex because only connectors can have $e$ at a positive end. 
\begin{dfn}
We define
\begin{align*}
ECC(N, \partial N, \alpha)&= ECC^\flat(N, \alpha)/ \langle e-1 \rangle \quad \text{and} \\
 \widehat{ECC}(N, \partial N, \alpha)&= ECC(N, \alpha)/ \langle e-1 \rangle.
\end{align*}
\end{dfn}
We denote the differential in $ECC(N, \partial N, \alpha)$ by $\partial_{rel}$ and in $\widehat{ECC}(N, \partial N, \alpha)$ by $\widehat{\partial}_{rel}$. Factoring out $h$, we can decompose $\widehat{\partial}_{rel}$ as follows. If $\bs{\gamma}$ is an orbit set not containing $h$, we can write
\begin{align*}
\widehat{\partial}_{rel}(\bs{\gamma})&= \partial_{rel}(\bs{\gamma})+hU_{rel}(\bs{\gamma}) \\
 \widehat{\partial}_{rel}(h\bs{\gamma})&= h \widehat{\partial}_{rel}(\bs{\gamma}).
\end{align*}
From $(\widehat{\partial}_{rel})^2=0$ we deduce that the map
$$U_{rel} \colon ECC(N, \partial N, \alpha) \to ECC(N, \partial N, \alpha)$$
is a chain map. Its name is motivated by the fact that it plays the role of the map $U$ in embedded contact homology relative to the boundary.

Given $a \in H_1(N; \Z)$, let $ECC_a(N, \alpha)$ be the subcomplex of $ECC(N, \alpha)$ generated by orbit sets in the homology class $a$. The same notation is used for the sharp ($\sharp$) and flat ($\flat$) flavours. From the fact that only connectors can have $e$ at a positive end, it is easy to see that the map
\begin{align*}
ECC_a(N, \alpha) & \to ECC_{a+[e]}(N, \alpha) \\
\bs{\gamma} & \mapsto e \bs{\gamma}
\end{align*}
is a chain map.

 If $\overline{a} \in H_1(N; \Z)/ \langle [e] \rangle$, let $ECC_{\overline{a}}(N, \partial N, \alpha)$ be the subcomplex of $ECC(N, \partial N, \alpha)$ generated by orbit sets in the class $\overline{a}$. Note that $H_1(N; \Z)/ \langle [e] \rangle$ is the first homology group of the closed manifold obtained by Dehn filling $N$ along the slope of $e$. The following lemma is almost immediate.
\begin{lemma}\label{hat as direct limit}
Suppose $[e] \neq 0$ in $H_1(N; \Z)$. If $a \in H_1(N; \Z)$ and $\overline{a}$ is its image in $H_1(N; \Z)/ \langle [e] \rangle$, then
\begin{align*}
ECH_{\overline{a}}(N, \partial N, \alpha) & = \varinjlim \left \{ ECH_{a+j[e]}^\flat(N, \alpha) \right \}_{j \in \N} \\
\widehat{ECH}_{\overline{a}}(N, \partial N, \alpha) & = \varinjlim \left \{ ECH_{a+j[e]}(N, \alpha) \right \}_{j \in \N}.
\end{align*}
\end{lemma}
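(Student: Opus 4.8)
The plan is to show that the chain complex $ECC_{\overline a}(N,\partial N,\alpha)$, as a $\Z/2\Z$-vector space with differential $\partial_{rel}$, is literally the colimit of the directed system $\{ECC^\flat_{a+j[e]}(N,\alpha)\}_{j\in\N}$ with connecting maps given by multiplication by $e$, and that the differentials are compatible with this identification; the statement about homology then follows because taking homology commutes with filtered colimits of chain complexes over a field. (A completely parallel argument, using $ECC(N,\alpha)$ in place of $ECC^\flat(N,\alpha)$, handles the hatted version.)

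First I would unwind the definitions. By construction $ECC(N,\partial N,\alpha)=ECC^\flat(N,\alpha)/\langle e-1\rangle$, and $ECC^\flat(N,\alpha)$ is the free $\Z/2\Z$-vector space on orbit sets built from interior Reeb orbits together with possible powers of $e$; multiplicatively every such generator is uniquely $e^j\bs\delta$ with $\bs\delta$ an orbit set involving only interior orbits and $j\in\N$. Modding out by $e-1$ identifies $e^j\bs\delta$ with $\bs\delta$, so $ECC(N,\partial N,\alpha)$ has as a $\Z/2\Z$-basis exactly the interior orbit sets $\bs\delta$; under the homology-class grading, since $[e]\ne 0$ is assumed, the class of $e^j\bs\delta$ is $[\bs\delta]+j[e]$, and these are all distinct for distinct $j$, so the image in $H_1(N;\Z)/\langle[e]\rangle$ is well-defined and $ECC_{\overline a}(N,\partial N,\alpha)$ is spanned by those $\bs\delta$ with $[\bs\delta]\in a+\Z[e]$. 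This exhibits a $\Z/2\Z$-linear isomorphism
\[
\varinjlim_{j}\, ECC^\flat_{a+j[e]}(N,\alpha)\;\xrightarrow{\ \sim\ }\;ECC_{\overline a}(N,\partial N,\alpha),
\]
where on the $j$-th term $e^k\bs\delta\mapsto\bs\delta$: the key point is that $[e]\ne 0$ guarantees the connecting maps (multiplication by $e$, raising the class by $[e]$) are injective, so the colimit is just the increasing union, and every basis element of the target is hit from all sufficiently large stages.

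Next I would check that this isomorphism is a chain map. On the colimit side, each $ECC^\flat_{a+j[e]}(N,\alpha)$ carries the ECH differential counting $I=1$ very nice Morse-Bott buildings, and the connecting maps $\bs\gamma\mapsto e\bs\gamma$ are chain maps (stated in the excerpt, using that only connectors can have $e$ at a positive end), so the colimit inherits a differential. On the target side, $\partial_{rel}$ is induced from the $ECC^\flat$ differential after setting $e=1$. The compatibility is then exactly the statement that the ECH differential commutes with $e\mapsto 1$, which is the reason $\langle e-1\rangle$ is a subcomplex; concretely, a curve contributing to $\partial(e^k\bs\delta)$ has $e$ only as trivial-cylinder (connector) ends at the negative end, and quotienting by $e-1$ collapses the bookkeeping of how many copies of $e$ appear, matching the differential on interior orbit sets. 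Having an isomorphism of chain complexes, I conclude $ECH_{\overline a}(N,\partial N,\alpha)=\varinjlim_j ECH^\flat_{a+j[e]}(N,\alpha)$ using that homology commutes with filtered colimits (each stage has a subobject of cycles and boundaries, and colimits are exact over $\Z/2\Z$). The hatted identity is identical with $ECC(N,\alpha)$ and $\widehat\partial_{rel}$ replacing $ECC^\flat(N,\alpha)$ and $\partial_{rel}$.

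The main obstacle — really the only non-formal point — is verifying that the connecting maps in the directed systems are precisely the already-defined chain maps $\bs\gamma\mapsto e\bs\gamma$ and that multiplication by $e$ is \emph{injective} at chain level, which is where the hypothesis $[e]\ne 0$ in $H_1(N;\Z)$ enters: it ensures $e^j\bs\delta$ are distinct generators for distinct $j$, so no collapsing occurs before passing to the quotient by $\langle e-1\rangle$, and the colimit is a genuine ascending union rather than something with nontrivial transition kernels. Everything else is the bookkeeping of the multiplicative basis of $ECC^\flat(N,\alpha)$ and the standard fact that filtered colimits are exact, hence commute with homology, over a field. This is why the excerpt calls the lemma ``almost immediate.''
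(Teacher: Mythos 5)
Your argument is correct and is exactly the unwinding the paper intends: the paper gives no proof beyond declaring the lemma ``almost immediate'', and identifying $ECC_{\overline a}(N,\partial N,\alpha)$ at the chain level with the colimit of the $ECC^\flat_{a+j[e]}(N,\alpha)$ along multiplication by $e$, then invoking exactness of filtered colimits over $\Z/2\Z$, is the intended content. One small caveat: your claim that the classes $[\bs\delta]+j[e]$ are distinct for distinct $j$ (which is what makes the colimit-to-quotient map injective, and is the real place the hypothesis on $[e]$ enters, rather than injectivity of multiplication by $e$, which holds at chain level regardless) requires $[e]$ to be non-torsion and not merely nonzero; this is harmless here, since in the paper's applications there is a class $\varphi\in H^1(N;\Z)$ with $\varphi([e])>0$.
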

The relation between embedded contact homology of $N$ relative to the boundary and embedded contact homology of the Dehn filling of $N$ along the slope of $e$ is the main result of \cite{binding}. The following theorem is a generalisation of \cite[Theorem 1.1.1]{binding}, expressed in a slightly different language.
\begin{thm}\label{main theorem of binding}
Let $N$ be a three-manifold with torus boundary and let $\alpha$ be a contact form on $N$ such that $\partial N$ is a negative Morse-Bott torus for the Reeb vector field.
Let moreover $M$ be the Dehn filling of $N$ with respect of the slope of $e$.
If there is a cohomology class $\varphi \in H^1(N; \Z)$ such that $\varphi([\gamma]) \ge 0$ for all closed Reeb orbits $\gamma$ and $\varphi([e])>0$, then there exist isomorphisms
\begin{align*}
\sigma_* & \colon ECH(N, \partial N, \alpha) \to ECH(M), \\
\widehat{\sigma}_* & \colon \widehat{ECH}(N, \partial N, \alpha) \to \widehat{ECH}(M)
\end{align*}
such that the diagram
$$\xymatrix{
\ldots  \ar[r] &\widehat{ECH}(N, \partial N) \ar[r] \ar[d]^{\widehat{\sigma}_*} & ECH(N, \partial N) \ar[r]^-{U_{rel}} \ar[d]^{\sigma_*} & ECH(N, \partial N) \ar[d]^{\sigma_*} \ar[r] & \ldots \\
\ldots \ar[r] & \widehat{ECH}(M) \ar[r] & ECH(M) \ar[r]^{U} &  ECH(M) \ar[r] & \ldots
}$$
commutes. Moreover, the maps $\sigma_*$ and $\widehat{\sigma}$ are compatible with the direct sum decompositions according to homology classes.
\end{thm}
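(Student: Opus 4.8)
The plan is to follow the strategy of \cite{binding} --- of which this statement is essentially a reformulation --- computing $ECH(M)$ and $\widehat{ECH}(M)$ from a contact form adapted to the Dehn filling and matching the resulting complexes, in a direct limit, with those for $(N,\alpha)$ relative to the boundary. Write $M = N \cup_{\partial N} V$, where $V$ is the solid torus whose meridian is glued to the slope of $e$, so that $e$ bounds a meridian disk of $V$ and $H_1(M;\Z) = H_1(N;\Z)/\langle [e]\rangle$. First I would extend $\alpha$ to a contact form $\overline{\alpha}$ on $M$ so that: the core circle of $V$ is an elliptic Reeb orbit $\gamma_0$ (the ``binding''), with the rotation numbers of $\gamma_0$ and its iterates controlled as in Lemma \ref{basic fact about partitions}; for any prescribed action threshold $L$ the only closed Reeb orbits of $\overline{\alpha}$ inside $V$ of action below $L$ are the iterates $\gamma_0^k$; and $\overline{\alpha}$ agrees with the Morse-Bott perturbation of $\alpha$ producing $e$ and $h$ outside a thin collar of $\partial N$. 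This is the standard local model for filling a Morse-Bott torus, the large-action condition being achieved by taking the collar thin enough.

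Next I would match the chain complexes in the direct limit. The hypotheses $\varphi([\gamma]) \ge 0$ and $\varphi([e])>0$ guarantee $[e]\ne 0$, so Lemma \ref{hat as direct limit} applies:
\[
ECH_{\overline a}(N,\partial N,\alpha)=\varinjlim_j ECH^\flat_{a+j[e]}(N,\alpha), \qquad \widehat{ECH}_{\overline a}(N,\partial N,\alpha)=\varinjlim_j ECH_{a+j[e]}(N,\alpha),
\]
the maps being multiplication by $e$. On the $M$ side I would filter $ECC(M,\overline{\alpha})$ by the $\overline{\alpha}$-action, choosing $\overline{\alpha}|_N$ so that the action of an orbit set in $\op{int}(N)$ is governed by $\varphi$ (legitimate since $\varphi\ge 0$ on all Reeb orbits) and $\gamma_0^k$ has small action, so that $ECH(M)=\varinjlim_L ECH^{<L}(M)$. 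The orbit set $e^m\bs{\gamma}$ (with $\bs{\gamma}\subset\op{int}(N)$ and no $h$) is then matched with $\gamma_0^m\bs{\gamma}$ in $M$: the $m$ ends at $e$ on the $N$ side are the ``receptacle'' for the intersection multiplicity with $\R\times\gamma_0$ on the $M$ side, increasing $m$ (the direct-limit maps) corresponds to enlarging the window $L$, and the cohomological hypothesis makes the two filtrations compatible. Under this dictionary the bounded-action generators of $ECC^\flat_{a+j[e]}(N,\alpha)$ are in bijection with those of $ECC^{<L}(M)$ in the class $\overline a$.

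Then I would match differentials and $U$-maps. A holomorphic curve in $\R\times M$ of low ECH index either lies in $\R\times\op{int}(N)$, contributing the $N$-side differential, or meets $\R\times V$; for the latter I would stretch the neck along $\R\times\partial N$, so that by SFT compactness and the Morse-Bott results of Subsection \ref{subsec: ECH with boundary} the curve degenerates into a very nice Morse-Bott building in $\R\times N$ together with a piece in $\R\times V$, and the trapping lemma \cite[Lemma 5.3.2]{binding}, positivity of intersection with $\R\times\gamma_0$ and with the meridian disks, and the controlled partitions of $\gamma_0^k$ force the $V$-piece to be a union of essentially trivial caps. This identifies, on each action window, the $M$-differential with $\widehat{\partial}_{rel}$ (resp.\ with $\partial_{rel}$ after discarding $h$), and, placing the constraint point near the core, the geometric $U$-map on $M$ with $U_{rel}$ --- for which one first gives, as in Subsection \ref{subsec: geometric U}, a geometric description counting curves through a point. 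Passing to the limit over $L$ and $j$ yields the chain isomorphisms inducing $\sigma_*$ and $\widehat{\sigma}_*$; since $\widehat{ECC}(N,\partial N,\alpha)$ is the mapping cone of $U_{rel}$ and $\widehat{ECC}(M)$ the mapping cone of $U_{\mathbf{z}}$, the fact that $\sigma_*$ intertwines the two $U$-maps up to chain homotopy produces $\widehat{\sigma}_*$ together with the commuting ladder of exact triangles, while compatibility with the $H_1$-decompositions is built into the dictionary $e^m\bs{\gamma}\leftrightarrow\gamma_0^m\bs{\gamma}$ and the identity $H_1(M;\Z)=H_1(N;\Z)/\langle[e]\rangle$.

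The main obstacle, and the technical heart of \cite{binding}, is the neck-stretching step: proving uniformly, as the collar shrinks, that the part of a low-ECH-index curve of $M$ lying inside the filling solid torus reduces to trivial material. This requires the trapping lemma, precise index and action bookkeeping to exclude nontrivial components and bad Morse-Bott degenerations, and care with the multiply-covered curves flagged in Subsection \ref{subsec: ECH with boundary}. The only genuinely new work beyond \cite{binding} is threading the point constraint through this analysis to obtain the $U$-map compatibility, hence the commutative diagram.
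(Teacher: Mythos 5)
Your plan is, almost verbatim, the heuristic that the paper records immediately after the statement of Theorem \ref{main theorem of binding} (``one expects that $I_{ECH}=1$ holomorphic curves that cross the core \ldots are in one-to-one correspondence with holomorphic curves which have $e$ at the negative end'') and explicitly does \emph{not} turn into a proof. The gap is the central step you defer to neck-stretching: the claim that the part of a low-index curve lying in the filling solid torus ``reduces to trivial caps'', i.e.\ the bijection between curves of $\R\times M$ interacting with the core $\gamma_0$ and curves of $\R\times N$ with negative ends at $e$. Stretching along $\R\times\partial N$ produces buildings whose $V$-levels may contain nontrivial planes and cylinders asymptotic to covers of $\gamma_0$ (and multiply covered components of negative Fredholm index), and gluing these back is exactly the unestablished correspondence; the trapping lemma and positivity of intersection do not dispose of them. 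Moreover, the shape of the paper's actual isomorphism shows that your generator-level dictionary $e^m\bs{\gamma}\leftrightarrow\gamma_0^m\bs{\gamma}$ cannot be the right chain map: the map $\sigma$ of Equation \eqref{definition of sigma} is \emph{not} diagonal but carries correction terms $(e')^i\otimes(\partial_N')^i\bs{\Gamma}$, so a naive identification of generators would fail to intertwine the differentials.

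What the paper actually does is engineered precisely to avoid analysing curves near the core. It decomposes $M=N\cup(T^2\times[1,2])\cup V$ with a ``no man's land'' whose orbits have large action and whose symplectisation carries Wendl's finite energy foliation (controlling every curve that enters it); it filters $ECC(V,\alpha_V)\otimes ECC(N,\alpha)$ by a class $\eta\in H^1(V;\Z)$; it computes $ECH^\flat(V,\alpha_V)\cong\Z/2\Z[e']$ via the exhaustion by concentric tori of irrational slope (note: the interior orbits of $V$ are discarded because their ECH index, not their action, grows --- your proposal to keep only the core below an action threshold $L$ is a different and unjustified mechanism); this collapses the spectral sequence onto ${\mathcal F}^0$, generated by $(e')^i(h')^j\bs{\Gamma}$ rather than by $\gamma_0^m\bs{\Gamma}$, and the isomorphism with $ECH(N,\partial N,\alpha)$ is then an algebraic computation plus the explicit chain map $\sigma$. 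The $U$-map is likewise handled by placing the base point in the no man's land, where the only contributing curve is the foliation cylinder from $e'$ to $e$, not near the core. So while your outline identifies the correct target statements (direct limits, $H_1$-compatibility, mapping cones), it is missing the actual mechanism --- filtration, spectral sequence, and the computation of $ECH^\flat(V)$ --- that replaces the core-crossing correspondence you assume.
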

The proof of Theorem \ref{main theorem of binding} will be sketched in the next subsection. A heuristic reason why it is expected to hold is the following. Let $\alpha_\delta$, $0 <\delta \ll 1$, be a family of contact forms on $M=N \cup V$ such that, on the solid torus $V \cong D^2 \times (\R/\Z)$ with cylindrical coordinates $(r,\theta,z)$, their Reeb vector fields $R_\delta$
are tangent to the concentric tori $\{r=const\}$ and have constant slope $\frac{1}{\delta}$
(away from the core).  As we send $\delta\to 0$, the Conley-Zehnder index of the core
goes to $+\infty$ and we should therefore be able to ignore it.
At the same time, one expects that $I_{ECH}=1$ holomorphic curves that cross the core
when $\delta>0$ are in one-to-one correspondence with holomorphic curves which
have $e$ at the negative end when $\delta=0$.  This is the reason for identifying $e=1$. Similarly, if we place the marked point $z$ on the core and $h$ correspondingly, then $I_{ECH}=2$ holomorphic curves that pass through $z$ should be in one-to-one correspondence with $I_{ECH}=1$ holomorphic curves which have $h$ at a negative end when $\delta=0$.
\subsection{How to prove Theorem \ref{main theorem of binding}}
\label{subsec: proof of binding}
\subsubsection{A special contact form on $M$}
Let $\alpha$ be a contact form on $N$ which is nondegenerate in $\op{int}(N)$ and has a negative Morse-Bott torus of closed Reeb orbits at $\partial N$. To keep the discussion closer to \cite{binding} we assume that there is a properly embedded surface $S \subset N$ such that $[\partial S] \cdot [e]=1$ and the Reeb vector field of $\alpha$ is positively transverse to $S$.
This ensures the existence of a cohomology class $\varphi \in H^1(N; \Z)$ such that
$\varphi(\gamma) \ge 0$ and $\varphi([e])=1$.

We decompose $M= N \cup (T^2 \times [1,2]) \cup V$ where $V$ is a solid torus whose meridian is attached to the slope of $e$. We identify $T^2 \times \{ 1 \}$ with $\partial N$ and $T^2 \times \{ 2 \}$ with $\partial V$. The region $T^2 \times (1,2)$ will be called the {\em no man's land}.\footnote{The name was suggested to us by a visit to the remains of Berlin's wall.}  We choose a contact form $\alpha_M$ on $M$ such that:
\begin{enumerate}
\item $\alpha_M$ restricts to $\alpha$ on $N$, so that, in particular, $\partial N$ is a negative Morse-Bott torus,
\item the closed Reeb orbits in the no man's land have arbitrarily large action and foliate the tori $T^2 \times \{ c \}$ for $c \in (1,2)$,
\item  $\alpha_V=\alpha_M|_{V}$ is nondegenerate in $\op{int}(V)$ and $\partial V$ is a positive Morse-Bott torus,
\item every closed Reeb orbit in $\op{int}(V)$ is transverse to a meridional disc of $V$,
\item there is an exhaustion of concentric solid tori $V_0 \subset V_1 \subset \ldots \subset \op{int}(V)$ such that $\partial V_i$ is linearly foliated by Reeb orbits of irrational slope $r_i$ with $r_i \to + \infty$ as $i \to + \infty$ (for some choice of coordinates in which $e$ has slope $+\infty$) and all closed Reeb orbits in $V \setminus V_i$ have slope larger that $r_i$.
\end{enumerate}
On the Morse-Bott $S^1$-family of closed Reeb orbits corresponding to $\partial V$ we choose a Morse function with a unique minimum and a unique maximum and denote by $e'$ the (elliptic) orbit corresponding to the maximum and $h'$ the (hyperbolic) orbit corresponding to the minimum.

Condition (2) on $\alpha_M$ implies that the closed Reeb orbits in the no man's land can effectively be ignored in computing $ECH(M)$. In order to prove this rigorously we need a direct limit argument, and therefore we need a family of contact forms $\alpha_M$ for which the closed Reeb orbits in the no man's land have larger and larger action. This direct limit is somewhat involved because it must be repeated at each step of the proof of Theorem \ref{main theorem of binding}, and therefore we prefer to ignore it. For this reason, from now on, every statement in this section will hold after an unexpressed direct limit. See \cite[Sections 9.2 and 9.3]{binding} for the detailed construction of the contact forms.

After ignoring the closed Reeb orbits in the no man's land, $ECC(M, \alpha_M)$ becomes isomorphic to $ECC(V, \alpha_V) \otimes ECC(N, \alpha)$ with a differential $\partial_M$ which is, however, not the usual differential of a tensor product of complexes.

\subsubsection{A filtration}
The next step is to introduce a filtration\footnote{The idea of this filtration was suggested to us by Michael Hutchings.} ${\mathcal F}$ on $ECC(V, \alpha_V) \otimes ECC(N, \alpha_N)$ to simplify the differential. To construct this filtration, we choose the generator $\eta \in H^1(V; \Z)$ which evaluates positively on the closed Reeb orbits in $\op{int}(V)$ and define ${\mathcal F}^p$ as the subspace generated by orbit sets $\bs{\gamma} \otimes \bs{\Gamma}$ with $\eta([\bs{\gamma}]) \le p$.
Note that ${\mathcal F}^p = \{ 0 \}$ for $p <0$ and ${\mathcal F}^0$ is generated by orbit sets of the form $\bs{\gamma} \otimes \bs{\Gamma}$ where $\bs{\gamma}$ contains only orbits in $\partial V$.
\begin{lemma}[{\cite[Corollary 9.4.2]{binding}}]
The differential $\partial_M$ preserves the vector spaces ${\mathcal F}^p$ for all $p$.
\end{lemma}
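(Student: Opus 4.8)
The plan is to show that whenever $\bs\gamma_-\otimes\bs\Gamma_-$ occurs with nonzero coefficient in $\partial_M(\bs\gamma_+\otimes\bs\Gamma_+)$ --- equivalently, whenever there is a very nice Morse--Bott building $u$ in $\R\times M$ counted by $\partial_M$ from $\bs\gamma_+\otimes\bs\Gamma_+$ to $\bs\gamma_-\otimes\bs\Gamma_-$ --- one has $\eta([\bs\gamma_-])\le\eta([\bs\gamma_+])$, which is exactly the inclusion $\partial_M({\mathcal F}^p)\subseteq{\mathcal F}^p$. The idea is to detect $\eta([\bs\gamma])$ by pairing holomorphic curves with a closed $1$-form supported on the $V$-side. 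Since $V$ is a solid torus, $\eta\in H^1(V;\Z)$ extends as an integral class to the slightly larger solid torus $W=V\cup(T^2\times(\tfrac{3}{2},2])\subset M$, where $T^2\times\{2\}$ is identified with $\partial V$; fix a closed $1$-form $\theta$ on $W$ representing it. Two facts about $\eta$ are used: $\eta([e'])=\eta([h'])=0$ and $\eta\ge 0$ on every closed Reeb orbit of $V$ --- it is $\ge 1$ on orbits in $\op{int}(V)$, by the choice of $\eta$, and $0$ on $e',h'$ --- both forced by the given description of ${\mathcal F}^0$; and $\theta$ evaluates to $0$ on the Reeb orbits foliating the no man's land, which by the construction all carry the meridian slope of $V$, in particular on the orbits foliating $\partial W=T^2\times\{\tfrac{3}{2}\}$.

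The first step would be to control how $u$ meets $\R\times\partial W$. Because $\partial W$ is foliated by Reeb orbits, $\R\times\partial W$ is Levi-flat; by the trapping lemma \cite[Lemma~5.3.2]{binding} --- and since those orbits have very large action under the direct-limit conventions in force --- no end of $u$ is asymptotic to an orbit foliating $\partial W$, so $u$ is transverse to $\R\times\partial W$ and meets it along a properly embedded $1$-manifold $c\subset\R\times\partial W$. Write $u_W=u\cap(\R\times W)$; its boundary consists of the asymptotic orbits of $u$ lying in $W$ (namely $\bs\gamma_+$, and possibly $e',h'$, at the positive end, and $\bs\gamma_-$ at the negative end) together with $c$. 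Stokes' theorem applied to $\theta$ on $u_W$ then gives
\begin{equation*}
\eta([\bs\gamma_+])-\eta([\bs\gamma_-])=-\int_c\theta,
\end{equation*}
the contributions of the $e',h'$ ends vanishing since $\eta([e'])=\eta([h'])=0$.

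It remains to see $\int_c\theta\le 0$. On $\R\times\partial W$ the form $\theta$ is closed and vanishes on the orbit leaves of the Levi-flat foliation; hence, up to an exact form, it is a non-negative multiple of the cohomology class dual to those leaves, and $\int_c\theta$ equals, up to sign, the algebraic intersection number of $c$ with a leaf $\R\times\gamma$ computed inside $\R\times\partial W$. This number is $u\cdot(\R\times\gamma)$, the intersection number of the two holomorphic curves $u$ and $\R\times\gamma$ in $\R\times M$, which is non-negative by positivity of intersections in dimension four \cite[Appendix~E]{MS}: by the structure of $I=1$ buildings one first applies positivity to the embedded part of $u$, while the connector part (branched covers of trivial cylinders over orbits of $\bs\gamma_\pm\otimes\bs\Gamma_\pm$) contributes non-negatively because $\eta\ge 0$ on all Reeb orbits of $V$. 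Checking orientations --- the positive ends of $u$ are the outgoing ones and $\theta$ is dual to a meridional disc of $V$ --- one finds the sign is the one needed, so $\int_c\theta\le 0$ and therefore $\eta([\bs\gamma_-])\le\eta([\bs\gamma_+])$.

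The main obstacle is the first step together with the sign issue in the last one: a priori $u$ can cross the no man's land several times and in both directions, so $c$ may be complicated, and one must verify that every crossing contributes to $\int_c\theta$ with the same (non-negative) sign. This is precisely where the geometric hypotheses (2), (4) and (5) on $\alpha_M$ and the trapping lemma enter; once $c$ is understood, the homological bookkeeping at the ends and the appeal to positivity of intersections are routine.
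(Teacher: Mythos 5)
Your overall strategy is the right one and is the same circle of ideas as the paper's: detect $\eta$ homologically and reduce the monotonicity $\eta([\bs\gamma_-])\le\eta([\bs\gamma_+])$ to positivity of intersection between $u$ and holomorphic cylinders over Reeb orbits near $V$. The paper does this more directly: it shows $\eta([\bs\gamma_+])-\eta([\bs\gamma_-])=[u]\cdot[e']\ge 0$ by intersecting $u$ with the holomorphic cylinder $\R\times e'$ over the orbit $e'\subset\partial V$ (whose class generates $\ker\bigl(H_1(\partial V)\to H_1(V)\bigr)$, i.e.\ the meridian direction), and, in the subtle case where $\bs\gamma_\pm$ themselves contain $e'$ or $h'$, it uses positivity of intersection with the leaves over the non-closed irrational-slope orbits on the interior tori $\partial V_i$ together with an analysis of how the ends of $u$ approach $\partial V$.

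The gap in your version is the step converting $\int_c\theta$ into the intersection number of $c$ with a leaf of $\R\times\partial W$. That conversion requires $\theta$ to vanish on the foliation direction of $T^2\times\{\tfrac32\}$, i.e.\ the Reeb slope there must lie in $\ker\eta$ (the meridian slope of $V$). You assert this, but the construction does not provide it and in fact essentially forbids it: the orbits foliating the no man's land are required to have arbitrarily large action, and a torus foliated by closed orbits of a fixed primitive (meridian) slope has orbits of bounded action, so the foliation slopes on the intermediate tori are long rational slopes distinct from the meridian. If the slope on your test torus is $(p,q)$ with $q\ne 0$ in meridian--longitude coordinates and $[c]=(a,b)$, then positivity of intersection with a leaf gives $aq-bp\ge 0$, whereas the quantity you must control is $b=\eta([c])=\eta([\bs\gamma_+])-\eta([\bs\gamma_-])$; the inequality $b\le 0$ does not follow without additional control on $a$, which is not determined by the data on the $V$-side. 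Pushing the test torus back to $\partial V$ restores the meridian slope but reintroduces the ends of $u$ at $e'$ and $h'$ — precisely the dichotomy the paper resolves by using $\R\times e'$ in the easy case and the interior tori $\partial V_i$ (with the framing argument) in the subtle one. So the Stokes bookkeeping is fine, but the positivity step needs to be run against the correct holomorphic test object.
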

\begin{proof}[Sketch of proof]
Let $u$ be an embedded holomorphic curve in $\R \times M$ between $\bs{\gamma}_+ \otimes \bs{\Gamma}_+$  and $\bs{\gamma}_-  \otimes \bs{\Gamma}_-$. We denote by $[u] \in H_2(M, \bigcup \gamma^+_i \cup \bigcup \gamma_j^-)$ the relative homology class determined by the projection of $u$ to $M$. If $\bs{\gamma}_+$ and $\bs{\gamma}_-$ do not contain orbits in $\partial V$, then it is easy to see that
$$\eta([\bs{\gamma}_+])- \eta([\bs{\gamma}_-]) = [u] \cdot [e'] \ge 0$$
by positivity of the intersections between the image of $u$ and the holomorphic cylinder $\R \times e'$. If  $\bs{\gamma}_+$ or $\bs{\gamma}_-$ contain orbits in $\partial V$ the proof is more subtle because one has to use positivity of intersection with the nonclosed Reeb orbits on the tori $\partial V_i$.
\end{proof}
The next step is to describe the differential induced in the graded complex or, in fancy words, in the zero page of the spectral sequence associated to the filtration. For that we use Morse-Bott techniques, which are justified by the following lemma.
\begin{lemma}[See {\cite[Corollary 9.5.2]{binding}}] \label{noia}
A Morse-Bott building $u$ between $\bs{\gamma}_+ \otimes \bs{\Gamma}_+$ and $\bs{\gamma}_- \otimes \bs{\Gamma}_-$ with $I(u)=1$ and $\eta(\bs{\gamma}_+)=\eta(\bs{\gamma}_-)$ is very nice. Moreover its projection to $M$ is contained either in $N$ or in $V$ or in the no man's land.
\end{lemma}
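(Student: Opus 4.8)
The plan is to derive both conclusions from positivity of intersection together with the ECH index bound $I(u)=1$, reasoning level by level along the Morse--Bott building and then using that consecutive levels share asymptotic orbits.

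First I would confine the projection of $u$ to a single region. As in the sketch of proof of \cite[Corollary 9.4.2]{binding}, each holomorphic level $v$ of $u$ satisfies $[v]\cdot[\R\times e']\ge 0$ by positivity of intersection with the $J$-holomorphic cylinder $\R\times e'$, and summing over all levels gives $\sum_v[v]\cdot[\R\times e']=\eta([\bs{\gamma}_+])-\eta([\bs{\gamma}_-])=0$; hence no level meets $\R\times e'$ transversally. Next, no level may cross the Levi--flat hypersurfaces $\R\times(T^2\times\{c\})$ foliating the no man's land, nor $\R\times\partial V_i$ for the concentric irrational tori $\partial V_i\subset\op{int}(V)$, nor (in the Morse--Bott picture) $\R\times\partial N$: each of these is foliated by $J$-holomorphic leaves, so a transverse intersection would violate positivity of intersection, while asymptotically the ends of $u$ lie off these tori --- this is where the trapping lemma \cite[Lemma 5.3.2]{binding} is used. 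Since the Reeb orbits carried by $\op{int}(N)$, the no man's land and $\op{int}(V)$, together with $e,h$ on $\partial N$ and $e',h'$ on $\partial V$, form pairwise disjoint families, and the asymptotics of a level match those of the neighbouring levels, these barriers force the whole building to project into $N$, into $V$, or into the no man's land.

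For the \emph{very niceness} I would invoke the concatenation property of the ECH index: $I(u)$ equals the sum of the indices of the holomorphic levels of $u$ plus the non-negative bookkeeping contributions of the gradient segments of the Morse--Bott building. A non-constant holomorphic level has non-negative ECH index, and a level of index $0$ is a connector, i.e.\ a union of branched covers of trivial cylinders. Hence $I(u)=1$ permits exactly one non-trivial holomorphic level, necessarily of index $1$, forces every other level to be a connector, and leaves no room for a finite gradient trajectory joining two holomorphic pieces; this is precisely the definition of a very nice Morse--Bott building.

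The crux --- and the step I expect to be the main obstacle --- is justifying the non-negativity and the additivity of the ECH index along a cascade obtained as an SFT limit. A family of embedded, hence regular, $J_\epsilon$-holomorphic curves can in principle degenerate into a cascade containing a multiply covered holomorphic piece of negative Fredholm, or even negative ECH, index, which would destroy the bookkeeping above. Ruling this out requires the careful compactness analysis and the extension of the index inequality $\op{ind}(v)+2\delta(v)\le I(v)$ to the Morse--Bott constituents developed in \cite{binding}; granting those inputs, both conclusions follow from the elementary index count just described.
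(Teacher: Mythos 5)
Your confinement argument (blocking by the Levi--flat barriers $\R\times(T^2\times\{c\})$ and $\R\times\partial V_i$, i.e.\ positivity of intersection of the projection of $u$ with the Reeb flow) is the paper's use of the blocking lemma, and your use of $I(u)=1$ to exclude two non-connector components that are not linked by a gradient segment also matches the paper (two such components could be translated independently in the $\R$-direction, so they cannot coexist in an index-one building). The gap is in the remaining failure mode of very-niceness: you assert that $I(u)=1$ ``leaves no room for a finite gradient trajectory joining two holomorphic pieces'', treating the gradient segments as non-negative contributions to an additive index formula. No such bookkeeping is available: a cascade containing a finite gradient segment has, a priori, the same expected dimension as an unbroken curve, so the index count alone cannot exclude it. This is exactly why the statement carries the hypothesis $\eta(\bs{\gamma}_+)=\eta(\bs{\gamma}_-)$, which your very-niceness argument never invokes. (Your closing worry about negative-index multiple covers in the SFT limit is a real but separate issue, handled by the bijection theorem between very nice Morse--Bott buildings and honest $J_\epsilon$-curves; it is not the missing step here.)

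The paper's mechanism for this step is geometric rather than index-theoretic: a finite gradient segment lives in one of the Morse--Bott families at $\partial N$ or $\partial V$, so some holomorphic piece of $u$ must have a nontrivial positive end at an orbit of $\partial N$ or a nontrivial negative end at an orbit of $\partial V$. Positivity of intersection with the nearby Reeb orbits forces such an end to approach its limiting orbit with a framing different from the one induced by the Morse--Bott torus, and a topological (winding-number) count then yields $\eta(\bs{\gamma}_+)>\eta(\bs{\gamma}_-)$, contradicting the hypothesis. You need to replace your index bookkeeping for the gradient segments by this framing argument, using the hypothesis $\eta(\bs{\gamma}_+)=\eta(\bs{\gamma}_-)$.
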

\begin{proof}
If $I(u)=1$ and $u$ is not very nice, it must have two irreducible components joined by a Morse trajectory of finite length: the condition $I(u)=1$ rules out the possibility that $u$ is not very nice because of two irreducible components which are not connectors and are not linked by a Morse trajectory, since those two components could be translated independently in the $\R$-direction. Then $u$ must have either a positive end at an orbit of $\partial N$ or a negative end at an orbit of $\partial V$. In either case, positivity of intersection with nearby Reeb orbits implies that $u$ approaches that orbit with a framing different from the framing induced by the Morse-Bott torus, and therefore a topological argument forces $\eta(\bs{\gamma}_+) > \eta(\bs{\gamma}_-)$. For the details see \cite[ Lemma~9.5.1 and Corollary~9.5.2]{binding}. The last claim holds because the tori in the no man's land foliated by Reeb orbits form a barrier for the holomorphic curve by the
blocking lemma \cite[Lemma 5.2.3]{binding}, which is a consequence of positivity of intersection between the projection of $u$ to $M$ and the Reeb vector field. For the details of this last argument see the proof of \cite[Lemma 9.5.3]{binding}.
\end{proof}

The holomorphic curves in the symplectisation of the no man's land can be described using a finite energy foliation first constructed by Wendl in \cite{We}; see \cite[Section 8.4]{binding}: there is a foliation of $\R \times T^2 \times [1,2]$ by a cylinders with positive end at the closed Reeb orbits in $\partial V$ and negative end at the closed Reeb orbits in $\partial N$. After a Morse-Bott perturbation, this foliation gives two $I=1$ holomorphic curves in $\R \times T^2 \times [1,2]$: one cylinder from $e'$ to $h$ and one cylinder from $h'$ to $e$. Moreover, the foliation obstructs the existence of any other embedded holomorphic curve in $\R \times T^2 \times [1,2]$.
The same holds for Morse-Bott buildings $u$ with $I(u)=2$ and constrained to pass through a generic point in $\R \times M$.

 Given two  orbit sets $\bs{\gamma}'=\prod\gamma_i^{m_i'}$ and $\bs{\gamma}=\prod \gamma_i^{m_i}$  (in multiplicative notation), we set $\bs{\gamma}/\bs{\gamma}'= \prod\gamma_i^{m_i-m_i'}$ if $m_i'\leq m_i$ for all $i$; otherwise we set $\bs{\gamma}/\bs{\gamma}'=0$.
 We denote $E^p_0= {\mathcal F}^p/ {\mathcal F}^{p+1}$; then $E_0= \bigoplus E_0^p$ is a chain complex with differential $\partial_0$ induced by $\partial_M$. The differential $\partial_0$ respects the direct sum decomposition by construction, and we denote by $E_1^p$ the homology of $E_0^p$.

As a vector space we can still identify $E_0 \cong ECC(V, \alpha_V) \otimes ECC(N, \alpha)$.
More precisely, if we denote the subcomplex of $ECC(V, \alpha_V)$ generated by orbit sets $\bs{\gamma}$ such that $\eta([\bs{\gamma}])=p$ by $ECC_p(V, \alpha_V)$, then $E_0^p \cong ECC_p(V, \alpha_V) \otimes ECC(N, \alpha)$.

From Lemma \ref{noia} and the description of the $I=1$ holomorphic curves in the symplectisation of the no man's land, it follows that the differential $\partial_0$ is
\begin{equation}\label{differential in degree 0}
 \partial_0(\bs{\gamma} \otimes \bs{\Gamma}) =  (\partial_V \bs{\gamma})\otimes \bs{\Gamma }+ (\bs{\gamma}/e') \otimes h\bs{\Gamma} + (\bs{\gamma}/h')\otimes e \bs{\Gamma} + \bs{\gamma} \otimes (\partial_{N} \bs{\Gamma}),
\end{equation}
where $\partial_N$ and $\partial_V$ denote the differential of $ECC(N, \alpha)$ and $ECC(V, \alpha_V)$ respectively. See \cite[Lemma 9.5.3]{binding}.

\subsubsection{Computation of $E_1$}
By factoring out the terms $h'$ and $h$, we can write the differentials $\partial_V$ and $\partial_N$ as:
\begin{equation} \label{eqn: decomposition of boundary}
\left \{
\begin{array}{l} \partial_V \bs{\gamma} = \partial^{\flat}_V \bs{\gamma} \\
\partial_V (h' \bs{\gamma}) = h' \partial_V^{\flat} \bs{\gamma} +
\partial'_V(h' \bs{\gamma}) \end{array} \right. \qquad \left \{
\begin{array}{l} \partial_N \bs{\Gamma} = \partial^{\flat}_N
\bs{\Gamma} + h \partial_N' \bs{\Gamma} \\ \partial_N (h \bs{\Gamma})= h
\partial^{\flat}_N \bs{\Gamma}
\end{array} \right.
\end{equation}
where $\bs{\gamma} \in ECC^{\flat}(V, \alpha_V)$, $\bs{\Gamma} \in ECC^{\flat}(N, \alpha)$, $\partial^\flat_V$ and $\partial^\flat_N$ are the differentials for the chain complexes $ECC^{\flat}(V, \alpha_V)$ and $ECC^{\flat}(N, \alpha)$ respectively, and the terms $\partial'_V(h' \bs{\gamma})$ and $\partial'_N \bs{\Gamma}$ do not contain $h'$.

 If we write
$$C_{k,k'}^p= (h')^{k'} ECC^{\flat}_p(V, \alpha) \otimes h^k ECC^{\flat}(N, \alpha)$$
with $(k,k') \in \{ 0,1 \}^2$, then
we can describe the differential $\partial_0$ on $E_0^p$ by the following diagram
\begin{equation} \label{diagramma 1} \xymatrix{
C_{0,1}^p \ar[rrr]^{1 \otimes h\partial'_N + \cdot / e' \otimes h}
\ar[d]_{\partial'_V \otimes 1 + \cdot / h' \otimes e} & & &
C_{1,1}^p \ar[d]^{\partial'_V \otimes 1 + \cdot / h' \otimes e} \\
C_{0,0}^p \ar[rrr]^{1 \otimes h\partial_N' + \cdot / e' \otimes h} & &
& C_{1,0}^p} \end{equation}
where each $C_{k,k'}^p$ carries the internal differential $\partial^\flat_V \otimes 1 + 1 \otimes \partial_N^\flat$. Thus we can filter each $E_0^p$ by $k'-k$ and the first term of the corresponding spectral sequence (i.e.\ the homology of the graded complex associated to the filtration) is $ECH^\flat_p(V, \alpha_V) \otimes ECH^\flat(N, \alpha)$.

We make a digression into the embedded contact homology of the solid torus.
\begin{lemma}[{\cite[Lemma 8.1.2(4)]{binding}}] \label{ECH of V} $ECH^\flat(V, \alpha_V) \cong \Z / 2 \Z[e']$.
 \end{lemma}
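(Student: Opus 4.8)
The statement to prove is Lemma \ref{ECH of V}: $ECH^\flat(V, \alpha_V) \cong \Z/2\Z[e']$.

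The plan is to compute the embedded contact homology of the solid torus $V$ directly from the Reeb dynamics of $\alpha_V$, following the approach of Hutchings--Taubes for lens spaces. First I would set up coordinates: write $V \cong D^2 \times (\R/\Z)$ and recall that $\alpha_V$ is nondegenerate in $\op{int}(V)$ with $\partial V$ a positive Morse-Bott torus, perturbed using a Morse function with unique maximum $e'$ and minimum $h'$. The key structural input is that every closed Reeb orbit in $\op{int}(V)$ is positively transverse to a meridional disc of $V$ (condition (4) on $\alpha_M$), together with the exhaustion by concentric tori $\partial V_i$ with irrational slopes $r_i \to +\infty$. Since $ECC^\flat(V, \alpha_V)$ is generated by orbit sets built from closed Reeb orbits in $\op{int}(V)$ together with $e'$ (but not $h'$), I would argue that the generator $\eta \in H^1(V;\Z)$ gives a grading (the intersection number with a meridional disc), and every generator has nonnegative $\eta$-grading, with $\eta = 0$ exactly for powers of $e'$. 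One must then show that in each fixed homology class the complex has homology of rank one.

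The main steps, in order: (1) Identify the closed Reeb orbits explicitly — the core circle of $V$, the orbits $e'$ and $h'$ on (the perturbation of) $\partial V$, plus possibly a discrete family in the interior, all with controlled Conley--Zehnder indices as one approaches $\partial V$ (the rotation numbers increase without bound, by condition (5)). (2) Use the ECH index and the partition conditions (Lemma \ref{basic fact about partitions}) to show that the differential on $ECC^\flat(V,\alpha_V)$ is determined by low-index holomorphic curves, which in the solid torus are cylinders and pairs of pants connecting these orbits; in particular the powers $(e')^k$ are the only generators in $\eta$-grading $0$ and they are cycles. (3) Show by a direct limit / continuation argument over the exhaustion $V_i$ that in every nonzero value of the relevant homology class the complex is acyclic, so that the homology is concentrated on the classes $k[e']$, $k \ge 0$, each contributing one generator; hence $ECH^\flat(V,\alpha_V) \cong \Z/2\Z[e']$ as a module, with the algebra structure coming from concatenation of orbit sets. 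Alternatively, and more cleanly, I would invoke the computation already present in \cite{binding} (this is literally Lemma 8.1.2(4) there): degenerate $V$ to a standard model where the Reeb flow is an irrational-slope linear flow near the boundary and a rotation near the core, for which ECH can be computed by hand exactly as Hutchings does for $S^3$ and lens spaces, and observe that discarding the $h'$-summand leaves precisely the polynomial algebra on $e'$.

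The hard part will be controlling the holomorphic curves near $\partial V$: a priori there could be holomorphic curves with ends on high-index interior orbits that contribute to the differential in a way that changes the homology, so one needs the trapping/blocking lemmas (\cite[Lemmas 5.2.3, 5.3.2]{binding}) and the positivity-of-intersection argument with the foliation by tori $\partial V_i$ to confine curves and to run the direct limit. The other delicate point is the bookkeeping with the hyperbolic orbit $h'$: one must check that factoring out $h'$ (passing from $ECC$ to $ECC^\flat$) is compatible with the differential, i.e. that $h'$ can only appear at a negative end together with connectors and the two Morse flow-lines from $h'$ to $e'$, exactly as asserted just before Lemma \ref{ECH of V} in the discussion of $ECC^\flat$. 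Once these confinement statements are in hand, the computation reduces to the lens-space-type calculation and the module structure $\Z/2\Z[e']$ drops out. Since all of this is carried out in detail in \cite{binding}, in this survey it suffices to cite \cite[Lemma 8.1.2(4)]{binding} and sketch the geometric picture above.
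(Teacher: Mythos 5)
Your proposal matches the paper's argument in all essentials: confinement of holomorphic curves by positivity of intersection with the irrationally foliated tori $\partial V_i$, the direct limit over the exhaustion in which the growing ECH index kills every interior orbit set, and the filtration by $\eta$ isolating the powers of $e'$ (together with the observation that $h'$ only appears at negative ends). The only organizational difference is that the paper factors the computation into two clean steps --- first $ECH(\op{int}(V), \alpha_V) \cong \Z/2\Z$ (only the empty orbit set survives the direct limit, since the absolute ECH index of any other fixed generator grows with $i$), and then a spectral sequence for the $\eta$-filtration on $ECC^\flat(V, \alpha_V)$ collapsing at $E^1 = \Z/2\Z[e'] \otimes ECH(\op{int}(V), \alpha_V)$ --- rather than arguing homology class by homology class as you sketch.
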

\begin{proof}[Sketch of proof] We start by computing $ECH(\op{int}(V), \alpha_V)$. We recall the exhaustion $V_0 \subset V_i \subset \ldots \subset \op{int}(V)$ by concentric solid tori. Each $\partial V_i$ is linearly foliated by Reeb orbits with irrational slope. Positivity of intersection with the Reeb flow implies that holomorphic curves with ends in $V_i$ are contained in $V_i$ and thus, by \cite[Lemma 8.3.1]{binding}, we have
$$ECH(\op{int}(V), \alpha_V) \cong \varinjlim ECH(V_i, \alpha_V|_{V_i}).$$
Since $\alpha_V|_{V_i}$ is irrational, Proposition \ref{prop: ECH of two contact forms in M} applies, and therefore  we can compute $ECH(V_i, \alpha_V|_{V_i})$ using a different contact form with a unique simple closed Reeb orbit; see \cite[Section 8.2]{binding}. For those contact forms the ECH index $I$ can be lifted to an absolute index on closed orbits, which is moreover preserved by continuation maps; see  \cite[Lemma~8.2.2 and Lemma~8.2.3]{binding}. A direct computation shows that the ECH index of elements in $ECC(V_i, \alpha_V|_{V_i})$, besides the class of the empty set, grows with $i$. Hence in the direct limit
only the empty set survives, and therefore we have $ECH(\op{int}(V), \alpha_V) \cong \Z / 2 \Z$. See \cite[Proposition 8.3.2]{binding}.

In order to compute $ECH^\flat(V, \alpha_V)$ we restrict the filtration ${\mathcal F}$ induced by $\eta \in H^1(V; \Z)$ to $ECC^\flat(V, \alpha_V)$. The induced spectral sequence collapses at $E^1 = \Z / 2 \Z[e'] \otimes ECH(\op{int}(V), \alpha_V) \cong \Z / 2 \Z[e']$. See \cite[Section 8.5]{binding}.
\end{proof}
This implies that $ECH^\flat(V, \alpha_V) \otimes ECH^\flat(N, \alpha)=0$ for $p>0$ and therefore $E_0^p=0$ for $p>0$. Thus the spectral sequence induced by ${\mathcal F}$ collapses at the $E_1$ term and the inclusion ${\mathcal F}^0 \subset ECC(M, \alpha_M)$ induces an isomorphism $E_1^0 = H({\mathcal F}^0) \cong ECH(M)$.

Now we compute $E_1^0$. Given a finite set of simple closed Reeb orbits $\gamma_1, \ldots, \gamma_n$ we define ${\mathcal R}[\gamma_1, \ldots, \gamma_n]$ as the polynomial algebra generated by those orbits over $\Z / 2 \Z$ with the relation that $\gamma^2=0$ if $\gamma$ is hyperbolic. Using this notation, $ECC_0(V, \alpha_V)={\mathcal R}[e', h']$. There are only three holomorphic curves in $V$ with $I=1$ and asymptotic to $e'$ or $h'$: two cylinders from $e'$ to $h'$ coming from gradient flow trajectories in the Morse-Bott family corresponding to $\partial V$ and one holomorphic plane positively asymptotic to $h'$. See \cite[Proposition~8.4.4 and Proposition~8.4.5]{binding} for the construction of this plane. Thus $\partial_V((e')^i)=0$ and $\partial_V((e')^ih')=(e')^i$, so we can write
$E_0^0 \cong {\mathcal R}[e', h'] \otimes ECC(N, \alpha)$ with differential
\begin{equation}\label{differential of E_0^0}
\partial_0 (\bs{\gamma} \otimes \bs{\Gamma})= \bs{\gamma}\otimes (\partial_N \bs{\Gamma}) + (\bs{\gamma}/h') \otimes (1+e) \bs{\Gamma} + (\bs{\gamma}/e') \otimes h \bs{\Gamma}.
\end{equation}
A simple algebraic argument now implies that $E_0^1 \cong ECH(N, \partial N, \alpha)$. In fact $\bs{\gamma} \otimes \bs{\Gamma}$ is a cycle only if $\bs{\gamma}=1$ and $\partial_N \bs{\gamma}=0$; the third term of Equation \eqref{differential of E_0^0} implies that, if $h \bs{\gamma}$ is a cycle, then it is a boundary, and the second term implies that $e \bs{\Gamma}= \bs{\Gamma}$ in homology. 

\subsubsection{Definition of $\sigma$}
The argument of the previous paragraphs  gives a noncanonical isomorphism $ECH(M, \alpha_M) \cong ECH(N, \partial N, \alpha)$. Now we show that this isomorphism is in fact induced by a geometrically meaningful chain map. We define the complex
$$ECC^\natural(N, \alpha) = {\mathcal R}[h'] \otimes ECC^\flat(N, \alpha)$$
with differential
\begin{equation}\label{boundary natural}
\partial^\natural(\bs{\gamma} \otimes \bs{\Gamma})= \bs{\gamma} \otimes \partial^\flat_N(\bs{\Gamma}) + \bs{\gamma}/h' \otimes (1+e) \bs{\Gamma}.
\end{equation}
It is easy to show that $ECH^\natural(N, \alpha) \cong ECH(N, \partial N, \alpha)$.

We define
\begin{equation}\label{definition of sigma}
\sigma (\bs{\gamma} \otimes \bs{\Gamma})= \sum \limits_{i=0}^\infty (e')^i \bs{\gamma} \otimes (\partial_N')^i \bs{\Gamma}.
\end{equation}
The sum is well defined because $(\partial_N')^{k+1}(\bs{\Gamma})=0$ if $\varphi([\bs{\Gamma}])=k$, where $\varphi$ is a class in $H^1(N; \Z)$ such that $\varphi([e])=1$.

Since $\bs{\gamma}=(h')^j$ for $j=0,1$, the image of $\sigma$ is contained in ${\mathcal F}^0$, which is the first nonzero group of the filtration, and therefore to show that $\sigma$ is a chain map it is enough to verify that $\sigma \circ \partial^\natural = \partial_0 \circ \sigma$. This is an easy verification using Equation \eqref{differential in degree 0} and the fact that $\partial_N'$ commutes with $\partial_N^\flat$ and the multiplication by $e$. Finally one can prove without much effort by algebraic considerations similar to those of the previous paragraph that the map $\sigma$ induces an isomorphism in homology.
\subsubsection{The $U$ map and the hat version}
We recall that the $U$ map in $ECC(M, \alpha_M)$ counts $I=2$ holomorphic curves in $\R \times M$ passing through a generic base point $z \in \R \times M$. In order to simplify the computation, we put the base point in the  symplectisation of the no man's land $\R \times T^2 \times (1,2)$. The following lemma is proved by the same techniques we used to study the differential.
\begin{lemma}[See {\cite[Lemma 9.9.3]{binding}}]
The map $U$ preserves the filtration ${\mathcal F}$. In the lowest filtration level ${\mathcal F}^0$, it is given by
\begin{equation}\label{U map on F^0}
U(\bs{\gamma} \otimes \bs{\Gamma})= \bs{\gamma}/e' \otimes \bs{\Gamma}.
\end{equation}
\end{lemma}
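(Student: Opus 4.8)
The plan is to run, essentially word for word, the argument that produced the differential $\partial_0$ on ${\mathcal F}^0$ in \eqref{differential in degree 0}; the two things to establish are that $U$ is compatible with the filtration and that, on the bottom piece ${\mathcal F}^0$, the only holomorphic curves that survive are the ones recorded by \eqref{U map on F^0}. Throughout one works modulo $2$, so there are no orientations to track.

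First I would show that $U$ preserves ${\mathcal F}$. Let $u$ be an $I = 2$ holomorphic curve in $\R \times M$ through the constraint point $z \in \R \times T^2 \times (1,2)$, running from $\bs{\gamma}_+ \otimes \bs{\Gamma}_+$ to $\bs{\gamma}_- \otimes \bs{\Gamma}_-$. Exactly as in the proof of \cite[Corollary 9.4.2]{binding}, positivity of intersection of the relative homology class $[u]$ with the holomorphic cylinder $\R \times e'$ --- and, when orbits on $\partial V$ occur, with the non-closed Reeb orbits foliating the tori $\partial V_i$ --- forces $\eta([\bs{\gamma}_+]) \ge \eta([\bs{\gamma}_-])$, hence $U({\mathcal F}^p) \subseteq {\mathcal F}^p$ for all $p$. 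The point constraint plays no role in this homological estimate, so the argument is literally the one already used for $\partial_M$.

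The computation on ${\mathcal F}^0$ rests on the placement of $z$: since $z$ sits in the symplectisation of the no man's land, every curve contributing to $U(\bs{\gamma} \otimes \bs{\Gamma})$ must meet $\R \times T^2 \times (1,2)$, so by the blocking lemma \cite[Lemma 5.2.3]{binding} and the trapping lemma \cite[Lemma 5.3.2]{binding} it cannot have all of its asymptotics in $N$ nor all of them in $V$. Copying the reasoning of Lemma \ref{noia} --- the equality $I(u) = 2$ together with the rigidity imposed by the point constraint rules out any degeneration into a building that is not very nice, and the foliations of the no man's land act as barriers --- one obtains that the $I = 2$ Morse-Bott building is very nice and that its single non-trivial piece lies in $\R \times T^2 \times [1,2]$. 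By Wendl's finite energy foliation of that region (\cite{We}; see \cite[Section 8.4]{binding}) this piece is a single leaf cylinder from a closed Reeb orbit on $\partial V$ to one on $\partial N$, and the same foliation forbids any other embedded curve through $z$ there, so the count is $0$ or $1$. It then remains to redo the bookkeeping of the proof of \cite[Lemma 9.5.3]{binding}: pair the leaf with the Morse-Bott gradient half-trajectories forced at its two ends, compute the ECH index of the resulting building, and read off its net asymptotics. The configuration that is rigid under the point constraint is the one that removes a single factor $e'$ from $\bs{\gamma}$ and does not change $\bs{\Gamma}$, which is exactly \eqref{U map on F^0}.

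The delicate point --- as one should expect --- is this last bookkeeping step: one must identify, among the leaf-cylinder plus trajectory configurations in the no man's land, precisely which one has ECH index $2$ once a generic point constraint is imposed, and one must rule out holomorphic curves with mixed asymptotics spanning both $V$ and $N$ which could a priori also meet the no man's land. As in the rest of the section this is controlled by the action estimates built into $\alpha_M$ together with the (unexpressed) direct limit, which keep the low-action orbits of $\op{int}(V)$ and of the no man's land from producing spurious contributions; once this is settled, \eqref{U map on F^0} follows.
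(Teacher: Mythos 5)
Your overall route is the paper's route: filtration preservation is proved exactly as for $\partial_M$ by positivity of intersection with $\R \times e'$ and with the tori $\partial V_i$ (the point constraint indeed plays no role there), and the computation on ${\mathcal F}^0$ is reduced, via the blocking and trapping lemmas and the $I=2$ analogue of Lemma \ref{noia}, to Wendl's finite energy foliation of $\R \times T^2 \times [1,2]$. So the architecture is fine.

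The gap is that you stop exactly where the content is: you call the identification of the rigid configuration ``bookkeeping'' and then assert its answer, whereas this identification is the whole point of the lemma. Carry it out: a cascade built from a leaf of the foliation has positive end flowing down from $e'$ or $h'$ on $\partial V$ and negative end flowing down to $e$ or $h$ on $\partial N$. Since $e'$ is the maximum of its Morse--Bott family and $e$ is the minimum of the family on $\partial N$, the cascades from $e'$ to $e$ sweep out the full one-parameter family of leaves (hence $I=2$), while $e' \to h$ and $h' \to e$ force constant gradient pieces at one end and are the $I=1$ curves already counted in $\partial_0$. The point constraint at $\mathbf{z}$ therefore selects the unique leaf through its projection, and the contributing curve is the cylinder from $e'$ to $e$, counted once. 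Note, however, that this curve has a genuine negative end at $e \subset \partial N$, so the honest chain-level output is $\bs{\gamma}/e' \otimes e\bs{\Gamma}$, not ``$\bs{\Gamma}$ unchanged'' as you assert; your phrasing matches \eqref{U map on F^0} only after invoking the identification $e\bs{\Gamma} = \bs{\Gamma}$ forced by the second term of \eqref{differential of E_0^0} (equivalently, the relation $e=1$ defining $ECC(N,\partial N,\alpha)$). As written, your claim that the geometry directly yields a configuration ``not changing $\bs{\Gamma}$'' is not what the moduli space produces, and a reader trying to reconstruct the count from your argument would be misled at precisely the step you declared delicate.
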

The unique holomorphic curve contributing to $U$ in the lowest filtration level is a cylinder from $e'$ to $e$ belonging to the finite energy foliation of $\R \times T^2 \times [1,2]$ which was already used to understand the differential $\partial_0$. This foliation is described in \cite[Section 8.4]{binding}.

On the other hand, we define the map $U^\natural : ECC^\natural(N, \alpha) \to ECC^\natural(N, \alpha)$ by
\begin{equation}\label{U natural}
U^\natural(\bs{\gamma} \otimes \bs{\Gamma}) = \bs{\gamma} \otimes \partial_N' \bs{\Gamma}.
\end{equation}
By comparing Equation \eqref{U natural} with Equations \eqref{boundary natural}, \eqref{definition of sigma} and \eqref{U map on F^0}, we obtain that $U^\natural$ is a chain map and that the diagram
$$\xymatrix{
ECC^\natural(N, \alpha) \ar[r]^{U^\natural} \ar[d]_{\sigma} & ECC^\natural(N, \alpha) \ar[d]_{\sigma} \\
ECC(M, \alpha_M) \ar[r]^U & ECC(M, \alpha_M)
}$$
commutes. The map $U^\natural$ corresponds to $U_{rel}$ under the isomorphism $ECH^\natural(N, \alpha) \cong ECH(N, \partial N, \alpha)$, and therefore we have established part of Theorem \ref{main theorem of binding}. 

We also define the chain complex $$\widehat{ECC}{}^\natural (N, \alpha) =  {\mathcal R}[h'] \otimes ECC(N, \alpha)$$ with differential
$$\widehat{\partial}^\natural (\bs{\gamma} \otimes \bs{\Gamma}) =\bs{\gamma} \otimes \partial_N \bs{\Gamma} + \bs{\gamma} /h' \otimes (1+e) \bs{\Gamma}.$$
 The decomposition of the differential $\partial_N$ described in Equation~\eqref{eqn: decomposition of boundary} implies that $\widehat{ECC}{}^\natural (N, \alpha)$ is the cone of the map $U^\natural$. Moreover one can easily prove that $\widehat{ECH}{}^\natural(N, \alpha) \cong \widehat{ECH}(N, \partial N, \alpha)$, and therefore the remaining part of Theorem \ref{main theorem of binding} follows from the naturality of the mapping cone construction.
\subsection{Sutured embedded contact homology}\label{subsec: sutured}
Sutured embedded contact homology is another version of embedded contact homology for manifolds with boundary which was defined in \cite{CGHH}.
Sutured manifolds are three-manifolds with decorated boundary which were introduced by Gabai in \cite{gabai} for the study of foliations. Convex surfaces were introduced by Giroux in \cite{giroux} as a tool for studying contact structures. The parallel between the theory of sutured manifolds and convex surfaces has been explored since  \cite{HKM: taut}.

 In this subsection we relate sutured embedded contact homology to the versions of embedded contact homology introduced in Subsection \ref{subsec: ECH with boundary}. First, we recall the definition of a family of sutured manifolds which is less general than those considered by Gabai but closer to the theory of convex surfaces.
\begin{dfn}
A {\em balanced sutured manifold} $(M, \Gamma)$ is a three-manifold with boundary $M$ decorated by an embedded one-dimensional oriented submanifold $\Gamma \subset \partial M$ such that:
\begin{itemize}
\item each connected component of $M$ intersects $\partial M$,
\item each connected component of $\partial M$ intersects $\Gamma$,
\item if $N(\Gamma)$ denotes a closed tubular neighbourhood of $\Gamma$ in $\partial M$, then
$$\partial M \setminus \op{int}(N(\Gamma)) = R_+(\Gamma) \sqcup R_-(\Gamma),$$
\item if we orient $R_+(\Gamma)$ with the orientation of $\partial M$ and $R_-(\Gamma)$ with the opposite orientation, then the orientation of $\Gamma$ coincides with the orientation of $\partial R_+(\Gamma)$ and is opposite to the orientation of $R_-(\Gamma)$, and finally
\item $\chi(R_+(\Gamma)) = \chi(R_-(\Gamma))$.
\end{itemize}
\end{dfn}
\begin{rem}
If $(M, \xi)$ is a contact three-manifold with convex boundary and $\Gamma$ is the dividing set $\partial M$, then $(M, \Gamma)$ is a balanced sutured manifold by \cite{giroux}. 
\end{rem}

It is often useful to think of a sutured manifold as a manifold with corners, where $R_+(\Gamma) \cup R_-(\Gamma)$ is the ``horizontal boundary'', $N(\Gamma)$ is the ``vertical boundary'' and $\partial  R_+(\Gamma) \cup \partial R_-(\Gamma) = \partial N(\Gamma)$ are the corners. The following definition is best visualised in this point of view.
\begin{dfn}[See {\cite[Definition 2.8]{CGHH}}]
A contact form $\alpha$ is {\em adapted} to the balanced sutured manifold $(M, \Gamma)$ if $\alpha|_{R_\pm(\Gamma)}$ are Liouville forms on $R_\pm(\Gamma)$ and the Reeb vector field is transverse and outward pointing on $R_+(\Gamma)$, transverse and inward pointing at $R_-(\Gamma)$ and tangent to $N(\Gamma)$, so that $N(\Gamma)$ is foliated
by Reeb trajectories going from $R_-(\Gamma)$ to $R_+(\Gamma)$.
\end{dfn}
The simplest example of a sutured manifold with an adapted contact form is the following: we take a compact,  oriented surface with boundary $P$ and we define $M=P \times [-1,1]$ and $\Gamma = \partial P \times \{ 0 \}$. We identify $R_\pm(\Gamma)=P \times \{ \pm 1 \}$ and $N(\Gamma)=\partial P \times [-1,1]$. The contact form is $\alpha= \lambda + dt$, where $\lambda$ is a Liouville form on $P$. However many more interesting examples exist: in fact, contact sutured manifolds are a fairly general concept, as the following proposition shows.
\begin{prop}[See {\cite[Lemma 4.1]{CGHH}}]
Let $(M, \xi)$ be a contact manifold with convex boundary and no closed connected component, and let $\Gamma$ be the dividing set of $\partial M$. Then % $(M, \Gamma)$ is a balanced sutured manifold\footnote{This is a restatement of {\bf Giroux}} and
$\xi$ admits a contact form which is adapted to the balanced sutured manifold $(M, \Gamma)$.
\end{prop}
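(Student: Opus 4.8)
The plan is to build $\alpha$ near $\partial M$ out of three standard local models --- one over each of $R_+(\Gamma)$, $R_-(\Gamma)$ and $N(\Gamma)$ --- to glue them across the transition regions, and then to extend the result arbitrarily over $\op{int}(M)$, where the adapted condition imposes no constraint. First I would use convexity of $\partial M$ to fix a convex collar: a contact vector field transverse to $\partial M$ identifies a neighbourhood of $\partial M$ with $\partial M\times(-1,0]$, $\partial M=\partial M\times\{0\}$ with coordinate $t$ growing towards the boundary, on which $\xi$ is $\partial_t$-invariant and equals $\ker(\beta+u\,dt)$ for a $1$-form $\beta$ and a function $u$ on $\partial M$ with $\Gamma=\{u=0\}$, $R_\pm(\Gamma)=\{\pm u>0\}$, and contact condition $u\,d\beta+\beta\wedge du>0$. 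By Giroux's convex surface theory the germ of $\xi$ along $\partial M$ depends only on $(\partial M,\Gamma)$ up to contactomorphism and isotopy rel $\Gamma$, so I am free to replace it locally by whichever model with the same dividing set is most convenient.

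Over $\op{int}(R_+)$ the rescaling $\ker(\beta+u\,dt)=\ker(\lambda_++dt)$ with $\lambda_+:=\beta/u$ works: $\lambda_+$ is $t$-independent and $d\lambda_+=u^{-2}(u\,d\beta+\beta\wedge du)>0$, so $\lambda_+$ is a Liouville form on $R_+$, while $d(\lambda_++dt)=d\lambda_+$ has kernel $\R\partial_t$, so the Reeb field is $\partial_t$, transverse to $R_+=\{t=0\}$ and outward pointing, and $\alpha|_{R_+}=\lambda_+$ is Liouville as required. Over $\op{int}(R_-)$ the mirror model $\ker(\lambda_--dt)$ with $\lambda_-:=-\beta/u$ gives Reeb field $-\partial_t$, transverse and inward at $R_-$, with $\alpha|_{R_-}=\lambda_-$ Liouville for the orientation of $R_-$. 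Over a component of $N(\Gamma)$, parametrised by $(\phi,y)\in(\R/\Z)\times[-1,1]$ with $y=\mp 1$ on the $R_\mp$-side and inward coordinate $\rho\in[0,\epsilon)$, I would use $\ker(dy+\rho\,d\phi)$: then $d(dy+\rho\,d\phi)=d\rho\wedge d\phi$ has kernel $\R\partial_y$, so the Reeb field is $\partial_y$, tangent to $N(\Gamma)=\{\rho=0\}$ and foliating it by arcs from $R_-$ to $R_+$, exactly as demanded.

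The delicate point is the transition $R_\pm\leftrightarrow N(\Gamma)$ near $\partial R_\pm=\partial N(\Gamma)$. Over $\op{int}(R_+)$ the Reeb field $\partial_t$ is uniformly transverse to the boundary right up to $\partial R_+$, whereas over $N(\Gamma)$ it is tangent to the boundary; by continuity these two behaviours cannot be reconciled along a single smooth boundary surface, so the flat boundary of $M$ near $\Gamma$ must be replaced by the cornered boundary $R_+\cup N(\Gamma)\cup R_-$. Concretely I would bend the boundary near $\Gamma$ --- equivalently, enlarge $M$ by gluing a thin collar modelled on $\ker(dy+\rho\,d\phi)$ along $\partial M$ near $\Gamma$ --- an operation that produces a contact manifold contactomorphic to $(M,\xi)$, after which the interpolation between $\lambda_\pm\pm dt$ and $dy+\rho\,d\phi$ can be carried out explicitly in the corner coordinates while keeping the kernel equal to $\xi$. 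This corner construction is the step I expect to be the main obstacle; the Liouville conditions, the sign bookkeeping, and the gluing over $\op{int}(R_\pm)$ are all routine once it is in place.

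It remains to extend the contact form from the collar over all of $M$. Since the set of contact forms with kernel the fixed plane field $\xi$ is $\{e^h\alpha_0 : h\in C^\infty(M)\}$, which is contractible, any germ along $\partial M$ extends to a global contact form for $\xi$, and because the adapted condition constrains only the Reeb field along $\partial M$, nothing further is needed in $\op{int}(M)$. This produces the desired adapted contact form $\alpha$.
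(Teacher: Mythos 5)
Your proposal is correct and follows essentially the same route as the proof of the cited result \cite[Lemma 4.1]{CGHH} (the survey itself offers no proof, only the citation): vertically invariant normal form $\beta+u\,dt$ on a convex collar, rescaling by $1/|u|$ over $\op{int}(R_\pm(\Gamma))$ to produce the Liouville forms and the Reeb fields $\pm\partial_t$, the model $dy+\rho\,d\phi$ (equivalently $dt+e^\tau d\theta$) along $N(\Gamma)$, and a bending of the boundary into a corner near $\Gamma$ to reconcile the transverse and tangent Reeb behaviours. The sign and orientation bookkeeping you carry out is accurate, and the one step you defer --- the explicit interpolation in the corner model, done in \cite{CGHH} in Giroux's normal form near the dividing set --- is indeed the technical content of that lemma.
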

The interest of sutured contact manifolds is that they form a fairly large class of contact manifolds for which SFT and ECH compactness hold: see \cite[Corollary 5.19 and Corollary 5.21]{CGHH}. In \cite{CGHH} we considered sutured contact manifolds of any dimension, and that level of generality introduced extra complications in the proof of compactness. In dimension three, however, the story is fairly simple: all we need to show is that a $J$-holomorphic curve in $\R \times M$ which is asymptotic to closed Reeb orbits of $\alpha$ cannot touch $\R \times \partial M$ if $J$ belongs to a suitable class of almost complex structures. Roughly speaking, we define a function $t$ on a neighbourhood of $\partial M$ by integrating the Reeb vector field, and a function $\tau$ in a neighbourhood on $N(\Gamma)$ such that, in that neighbourhood, we can write $\alpha = dt + e^\tau d \theta$ with $\theta$ a coordinate on $\Gamma$. We say that an almost complex structure $J$ on $\R \times M$ is {\em tailored} to $(M, \Gamma, \alpha)$ if it is compatible with $\alpha$ and moreover $t$ is harmonic and $\tau$ is subharmonic with respect to the Laplacian induced by $J$; see \cite[Section 5.1, Section 5.2 and Section 5.3]{CGHH}. Thus, if $u$ is a $J$-holomorphic curve which is asymptotic to closed Reeb orbits, the maximum principle applied to $t \circ u$ and $\tau \circ u$ is enough to show that $u$ cannot intersect the region where $t$ and $\tau$ are defined. It is easy to see that the space of tailored almost complex structures is nonempty and contractible.

All other aspects of the theory of $J$-holomorphic curves (i.e.\ regularity, Fredholm theory, transversality, gluing, etc.) are semi-local in nature, and therefore do not change if the ambient manifold is not closed. Thus, we can define the {\em sutured embedded contact homology} complex $ECC(M, \Gamma, \alpha)$ as the vector spaces over $\Z / 2 \Z$ generated by orbit sets in $M$ with differential counting $J$-holomorphic curves in $\R \times M$ with ECH index $I=1$ for a tailored almost complex structure $J$. These complexes were defined in \cite{CGHH} and were inspired by sutured Floer homology, which was defined by Juh\'asz in the context of Heegaard Floer homology in \cite{juhasz: sfh}. We conjectured the following.
\begin{conj}[{\cite[Conjecture 1.5]{CGHH}}] \label{isomorphism for sutured manifolds}
Let $(M, \Gamma, \alpha)$ be a sutured contact manifold. Then there is an isomorphism
$$ECH(M, \Gamma, \alpha) \cong SFH(-M, -\Gamma),$$
which moreover respects the decomposition into first homology classes on the left-hand side and relative Spin$^c$-structures on the right-hand side.
\end{conj}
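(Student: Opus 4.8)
One natural route to Conjecture~\ref{isomorphism for sutured manifolds} is to run the strategy behind Theorem~\ref{main} in the sutured category, after first replacing $(M,\Gamma,\alpha)$ by a combinatorially convenient model. The plan is to start from a \emph{partial open book decomposition} adapted to $(M,\Gamma,\xi)$, in the sense of Honda--Kazez--Mati\'c: a page $S$, a subsurface $P\subseteq S$, and a partial monodromy $\hh\colon P\to S$ fixing a neighbourhood of $\partial P\cap\partial S$, together with a contact form (isotopic rel boundary to the given one) which is adapted to $(M,\Gamma)$ in the sense of \cite{CGHH} and whose Reeb flow is positively transverse to the pages away from a Morse--Bott locus of closed orbits near the binding. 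Every sutured contact manifold admits such a model, and the construction of Subsection~\ref{HFhat on a page} has a partial analogue: the partial open book determines a sutured Heegaard diagram whose sutured Floer homology is $SFH(-M,-\Gamma)$ (this is precisely the Honda--Kazez--Mati\'c description), and, after pushing the essential $\bs{\alpha}\cap\bs{\beta}$ intersection points to one page, one obtains a chain complex built from $(P,\hh)$ computing $SFH(-M,-\Gamma)$, in complete parallel with Proposition~\ref{prop:hf} and the contact-class remark following it.

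On the embedded contact side I would introduce the \emph{sutured mapping torus} $N$ of $(P,\hh)$ and prove a relative version of Theorem~\ref{main theorem of binding}: after a Morse--Bott perturbation the boundary tori contribute elliptic/hyperbolic pairs $e,h$, and the filtration-and-no-man's-land argument sketched in Subsection~\ref{subsec: proof of binding}---with the solid-torus filling $V$ replaced by the boundary region appropriate to $(M,\Gamma)$---yields $ECH(M,\Gamma,\alpha)\cong \varinjlim_j ECH^\flat_{a+j[e]}(N,\alpha)$ together with the identification of the sutured $U$-map with $U_{rel}$; here the trapping and blocking lemmas of \cite{binding} apply essentially verbatim once $J$ is tailored, since the sutured maximum principle plays the role of the foliation barrier at $\R\times\partial N$. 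One then uses the partial open book to build symplectic cobordisms $W_\pm$ exactly as in Sections~\ref{sec: OC} and~\ref{sec: CO}, defines chain maps $\Phi_*\colon SFH(-M,-\Gamma)\to ECH_\bullet(N)$ and $\Psi_*$ in the opposite direction by counting holomorphic curves with the relevant (possibly singular) Lagrangian boundary conditions, and proves $\Phi_*\circ\Psi_*$ and $\Psi_*\circ\Phi_*$ are the identity by reducing, as in Section~\ref{sec: homotopies}, to a computation of relative Gromov--Taubes invariants. A stabilisation argument in the spirit of Section~\ref{sec: stabilisation}---now for partial open books---identifies $\varinjlim_j ECH^\flat_{a+j[e]}(N)$ with $ECH(M,\Gamma,\alpha)$ and completes the proof of the isomorphism; the $U$-map compatibility and the hat/plus analogues then follow from the mapping-cone formalism of Subsection~\ref{subsec: proof of binding}, and the matching of the $H_1(M;\Z)$-grading on the ECH side with the relative $\op{Spin}^c$-grading on the $SFH$ side is tracked through each of these maps exactly as in the closed case (relative homology classes of curves corresponding to relative $\op{Spin}^c$-structures), as in the work of Ramos~\cite{Vini}.

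The main obstacle, as throughout \cite{I,II,III}, is analytic and concentrated in two places. First, one must run SFT compactness in cobordisms that simultaneously carry the sutured maximum-principle barrier and the cobordism's own vertical Lagrangian (or fibred) boundary; the two barrier conditions must be made compatible, and the limiting almost complex structures arising in the degenerations used for the $\Phi\circ\Psi$ computation must remain tailored throughout. Second, and more seriously, the cobordism $\overline{W}_-$ defining $\Psi_*$ carries a \emph{singular} Lagrangian boundary condition---already the hardest part of \cite{I}---and the extra holomorphic curve degenerations this permits interact with the sutured boundary in ways that must be excluded by hand. I also expect the partial-open-book stabilisation step, the sutured counterpart of Section~\ref{sec: stabilisation} for which no written analogue currently exists, to require genuinely new input: one must check that stabilising $P$ inside $S$ leaves $\varinjlim_j ECH^\flat_{a+j[e]}(N)$ unchanged. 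A pragmatic intermediate goal would be to establish the isomorphism first for product sutured manifolds (where both sides are rank one and computed directly) and for sutures with connected $R_\pm(\Gamma)$, where the partial open book model is cleanest, and then to bootstrap to the general case, ideally via the behaviour of both theories under contact handle attachments.
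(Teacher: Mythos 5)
You are proposing a proof of a statement that this paper does not prove: it is stated here as Conjecture \ref{isomorphism for sutured manifolds} (Conjecture 1.5 of \cite{CGHH}), the only supporting evidence offered being the gluing maps of \cite{CGHH} formally analogous to Juh\'asz's sutured-decomposition maps and the Honda--Kazez--Mati\'c gluing maps, together with the remark that the actual proof is work in progress with Gilberto Spano. So there is no proof in the paper to compare against, and your text is a research programme rather than a proof --- which, to your credit, you acknowledge yourself when you flag the sutured SFT-compactness compatibilities, the singular Lagrangian analysis in $\overline{W}_-$, and the partial-open-book stabilisation as open and ``requiring genuinely new input''. As it stands it cannot be accepted as establishing the conjecture.

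Beyond its programmatic character there is a concrete gap at your very first ECH step. The ``relative version of Theorem \ref{main theorem of binding}'' you invoke, giving $ECH(M,\Gamma,\alpha)\cong\varinjlim_j ECH^{\flat}_{a+j[e]}(N,\alpha)$ with the sutured $U$-map matched to $U_{rel}$, presupposes exactly the structures that the closed-case argument of Subsection \ref{subsec: proof of binding} uses: a torus boundary foliated by closed Reeb orbits, a Morse--Bott elliptic/hyperbolic pair $e,h$, a distinguished class $[e]$ to take a direct limit over, and the blocking/trapping lemmas and no man's land tori. For a contact form adapted to a general balanced sutured manifold none of this is present: the Reeb field is transverse to $R_{\pm}(\Gamma)$ and tangent to $N(\Gamma)$, there are no closed orbits on $\partial M$ at all, and $\partial M$ need not be a torus. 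The paper establishes identifications of the kind you want only in the two special cases of Theorem \ref{main result of section 10} ($M(1)$ with a single suture and a knot complement with two meridional sutures), and its proof relies on the specific local model of Figure \ref{fig: suture}; saying that the trapping and blocking lemmas ``apply essentially verbatim'' for ``the boundary region appropriate to $(M,\Gamma)$'' hides precisely the missing construction of the intermediate fibred manifold $N$, of the orbit $e$, and of the limit for an arbitrary partial open book. Note also that the evidence the paper cites points toward a different intended route --- gluing maps and sutured decompositions, reducing ultimately to product sutured manifolds --- which is closer to the ``pragmatic intermediate goal'' in your final sentence than to the cobordism-map scheme that forms the bulk of your outline.
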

As an evidence to this conjecture, we constructed gluing maps which are formally analogous to Juh\'asz's map for sutured decompositions from \cite{juhasz: decomposition} and Honda, Kazez and Mati\'c's map for gluing along convex surfaces from \cite{HKM: gluing}: see \cite[Theorem 1.9 and Theorem 1.10]{CGHH}. The proof is currently a work in progress of the authors with Gilberto Spano.

The sutured embedded contact homology groups are independent of the contact form and the almost complex structure in the following sense.
\begin{thm}[{\cite[Theorem 10.2.2]{binding}}]
Let $\alpha_1$ and $\alpha_2$ be contact forms adapted to the balanced sutured manifold
$(M, \Gamma)$ and let $J_1$ and $J_2$ be almost complex structures on $\R \times M$
such that $J_i$ is tailored to $(M, \Gamma, \alpha_i)$ for $i=1,2$. If $\xi_1 = \ker \alpha_1$ and $\xi_2 = \ker \alpha_2$ are isotopic through contact structures  making $\partial M$ convex with dividing set $\Gamma$, then
$$ECH(M, \Gamma, \alpha_1) \cong ECH(M, \Gamma, \alpha_2),$$
where the first group is defined using $J_1$ and the second using $J_2$.
 Moreover this isomorphism preserves the decomposition of the sutured embedded contact homology groups as direct sums of subgroups indexed by homology classes in $H_1(M)$.
\end{thm}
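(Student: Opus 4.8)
The plan is to reduce the statement to the already-established invariance of embedded contact homology of closed three-manifolds, following the strategy of Proposition~\ref{prop: ECH of two contact forms in M}: one closes up $(M,\Gamma)$ to a closed contact manifold in which the newly created Reeb orbits have arbitrarily large action, and then works entirely at the level of the action filtration, where only the orbits inside $M$ are visible.

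First I would normalise the pair. By Gray's stability theorem, applied to the given isotopy of contact structures and realised by an ambient isotopy preserving $\partial M$ together with its dividing set $\Gamma$, we may assume $\ker\alpha_1 = \ker\alpha_2 =: \xi$. Using the flexibility of the germ of $\xi$ along the convex surface $\partial M$ and a global rescaling, I would further arrange that $\alpha_1$ and $\alpha_2$ coincide on a collar of $\partial M$; thus $\alpha_2 = g\,\alpha_1$ for a positive function $g$ on $M$ equal to $1$ near the boundary, and both forms remain adapted to $(M,\Gamma)$.

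Second, I would close up. Since $\alpha_i$ is adapted, near $\partial M$ we are in the standard model $\alpha = dt + e^\tau d\theta$ of Subsection~\ref{subsec: sutured}, so I can glue a fixed contact cap $(W,\beta_W)$ onto $\partial M$ and obtain a closed contact manifold $(Y,\xi_Y)$ with $\xi_Y|_M = \xi$, carrying two contact forms $\beta_1,\beta_2$ with $\beta_i|_M = \alpha_i$ and $\beta_1 = \beta_2 = \beta_W$ on $W$; in particular $\ker\beta_1 = \ker\beta_2 = \xi_Y$. Rescaling the cap, I would arrange that every closed Reeb orbit of $\beta_i$ meeting $W$ has action larger than a prescribed constant $L$, which may be taken as large as we like. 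The tailored almost complex structures $J_i$ extend over $\R\times Y$ so that, by the maximum principle for the functions $t$ and $\tau$---exactly as in the proof of sutured compactness in \cite{CGHH}---a $J_i$-holomorphic curve with both ends at orbits in $\op{int}(M)$ cannot escape $M$; the same holds, along a homotopy keeping the near-boundary structure fixed, for curves in the interpolating cobordism. Consequently the action-$<L$ subcomplex $ECC(Y,\beta_i)^{<L}$ is generated by orbit sets lying in $M$, with differential the sutured differential, so $ECC(M,\Gamma,\alpha_i) \cong ECC(Y,\beta_i)^{<L}$ as chain complexes, compatibly with the map $H_1(M)\to H_1(Y)$ on the $H_1$-gradings.

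Finally, I would transfer the closed-manifold invariance to the sutured groups. By Taubes' isomorphism between embedded contact homology and monopole Floer homology, and by the filtered cobordism maps of Hutchings and Taubes, the continuation maps between $ECC(Y,\beta_1)$ and $ECC(Y,\beta_2)$ act on the action filtration with a shift controlled by the energy of a (weakly exact) interpolating symplectic cobordism, and their composites are chain homotopic to the identity with a similarly controlled homotopy. Choosing $L$ larger than these bounds---possible because the cap can be rescaled without affecting anything below level $L$---the continuation maps descend to an isomorphism $ECC(Y,\beta_1)^{<L} \cong ECC(Y,\beta_2)^{<L}$ in homology, hence to an isomorphism $ECH(M,\Gamma,\alpha_1)\cong ECH(M,\Gamma,\alpha_2)$ respecting the splitting by $H_1(M)$. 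I expect the main obstacle to be precisely this last step, i.e.\ verifying that the Seiberg--Witten-defined isomorphism of the closed ECH groups is compatible with the action filtration up to the scale $L$ and tracks homology classes in $H_1(M)$; the remaining ingredients are the maximum-principle barrier, which is already in place for sutured compactness, and the semi-local $J$-holomorphic curve theory, which behaves exactly as in the closed case.
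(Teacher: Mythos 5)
Your proposal follows essentially the same route as the paper: the proof of this theorem in \cite{binding} proceeds, as in Proposition~\ref{prop: ECH of two contact forms in M}, by extending $(M,\Gamma,\alpha_i)$ to closed contact manifolds in which all closed Reeb orbits not contained in $M$ have much larger action, identifying the low-action subcomplex with the sutured complex via the tailored/maximum-principle barrier, and invoking the action-controlled continuation maps for closed-manifold ECH coming from the Seiberg--Witten framework of Taubes and Hutchings--Taubes. Your outline, including the normalisation near the boundary and the tracking of the $H_1$ splitting, matches that strategy.
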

The proof is similar to the proof of Proposition \ref{prop: ECH of two contact forms in M}. This result has been proved independently, but with similar techniques, by Kutluhan, Sivek and Taubes \cite{KST}, who also prove naturality of the isomorphism.
In light of Conjecture \ref{isomorphism for sutured manifolds} we expect that the sutured contact homology groups should be independent also of the contact structure, but at the moment we are not able to find a direct proof of this more general invariance.

Two more basic but important examples of sutured manifolds are the following:
\begin{itemize}
\item given a closed three-manifold $M$ and an embedded codimension zero ball $B \subset M$, we define the balanced sutured manifold $(M(1), \Gamma_1)$, where $M(1)=M \setminus \op{int}(B)$ and $\Gamma_1$ is a connected curve in $\partial M(1)$;
\item given a closed three-manifold $M$ and a knot $K \subset M$, we define a balanced sutured manifold $(M(K), \Gamma_K)$, where $M(K)$ is the complement of an open tubular neighbourhood of $K$ and $\Gamma_K$ consists of two parallel copies of the meridian of $K$ with opposite orientations.
\end{itemize}
Sutured embedded contact homology for these two sutured manifolds is related to the embedded contact homology groups defined in the previous sections as follows.
\begin{figure} \centering
\begin{overpic}[height=4.5cm]{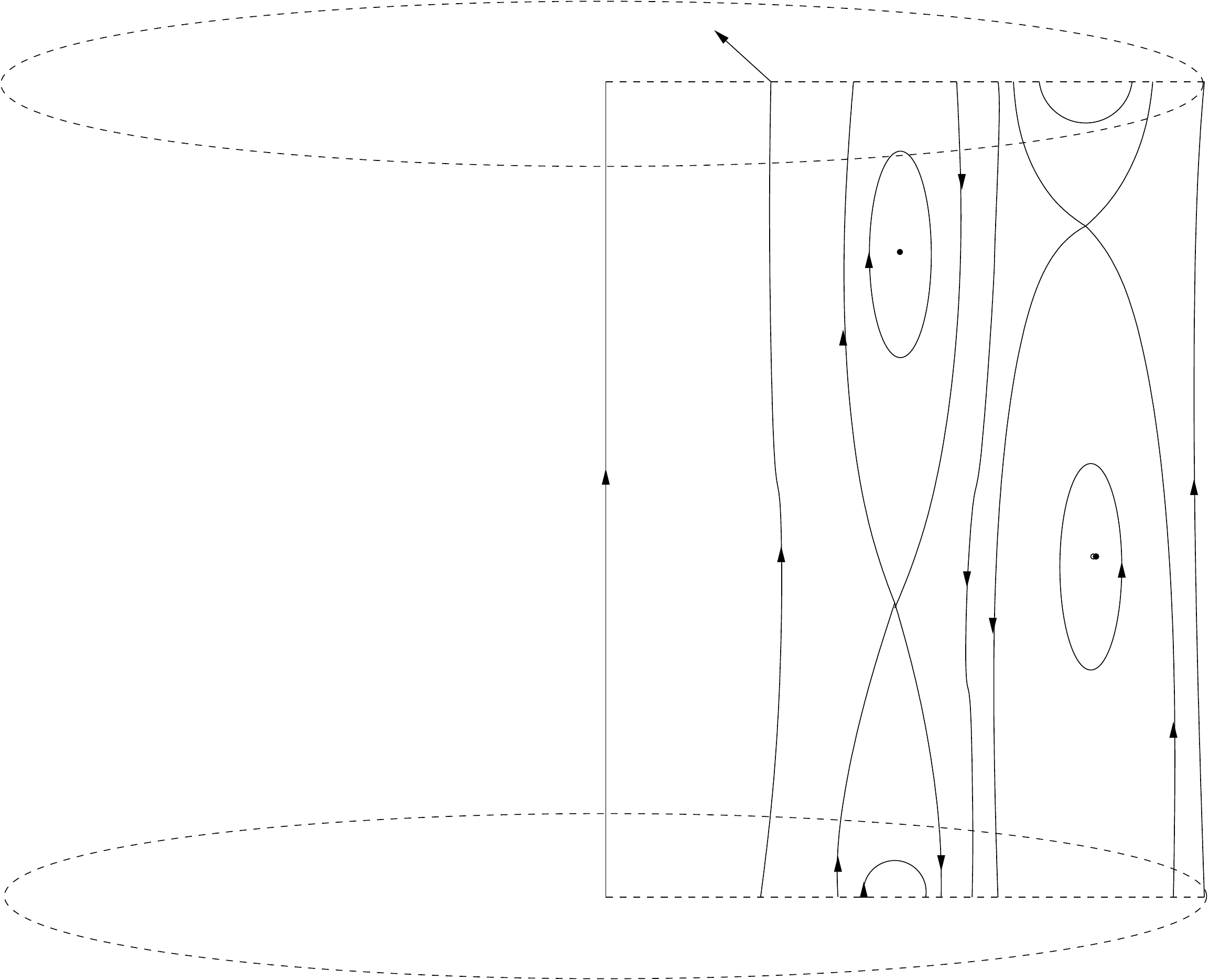}
\put(65.3,59){\tiny $e'$} \put(67.5,30.5) {\tiny $h'$}
\put(92,61){\tiny $h$} \put(84.2,33.6){\tiny $e$}
\put(44,45){\tiny $K$} 
\end{overpic}
\caption{The first return map of the Reeb vector field of $\alpha_M$ on a longitudinal section of a neighbourhood of $K$. The top and the bottom are identified.}
\label{fig: suture}
\end{figure}
\begin{thm}[{\cite[Theorem 10.3.1 and Theorem 10.3.2]{binding}}]\label{main result of section 10}
Let $M$ be a closed three-manifold, $K \subset M$ a knot and $N$ the complement of an open tubular neighbourhood of $K$ in $M$. Then there exist a contact form on $N$ as in Subsection \ref{subsec: ECH with boundary} and adapted contact forms $\alpha_1$ on $(M(1), \Gamma_1)$ and $\alpha_K$ on $(M(K), \Gamma_K)$ such that
\begin{align*}
ECH(M(1), \Gamma_1, \alpha_1) & \cong \widehat{ECH}(N, \partial N, \alpha) \quad \text{and} \\
ECH(M(K), \Gamma_K, \alpha_K) & \cong ECH^\sharp(N, \alpha).
\end{align*}
\end{thm}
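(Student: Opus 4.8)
The plan is to realize both $(M(1), \Gamma_1)$ and $(M(K), \Gamma_K)$ as sutured contact manifolds whose contact forms are built out of the contact form $\alpha$ on $N$ of Subsection~\ref{subsec: ECH with boundary} (and, for $M(1)$, out of its Dehn-filling model $\alpha_M$ on $M = N \cup V$ from Subsection~\ref{subsec: proof of binding}), and then to identify the resulting sutured embedded contact homology chain complexes with $ECC^\sharp(N, \alpha)$ and with $\widehat{ECC}^\natural(N, \alpha) = {\mathcal R}[h'] \otimes ECC(N, \alpha)$ respectively; recall that the latter computes $\widehat{ECH}(N, \partial N, \alpha)$. As everywhere in Subsection~\ref{subsec: proof of binding}, the whole argument is understood to take place after the unexpressed direct limit over the actions of the auxiliary orbits. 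In both cases the strategy is the same: (i) choose an adapted contact form whose Reeb dynamics near the sutured boundary are completely controlled, so that the generators match on the nose; and (ii) use the maximum-principle/barrier arguments for tailored almost complex structures of \cite{CGHH}, together with the blocking and trapping lemmas \cite[Lemma~5.2.3 and Lemma~5.3.2]{binding}, to confine the holomorphic curves counted by the differential to the region where we understand them, so that the differentials match as well.

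For $ECH(M(K), \Gamma_K, \alpha_K) \cong ECH^\sharp(N, \alpha)$: here $M(K)$ is just $N$, now regarded as a sutured manifold. I would take $\alpha_K$ equal to $\alpha$ outside a collar of $\partial N$ and, inside the collar, deform the Reeb vector field so that $\partial N$ becomes a convex torus with dividing set $\Gamma_K$ (two meridians), the Reeb field being outward transverse on one of the two annuli $R_+(\Gamma_K)$, inward transverse on $R_-(\Gamma_K)$, and tangent to the two sutured annuli $N(\Gamma_K)$, with the deformation small enough that the only closed Reeb orbit created near $\partial N$ is the hyperbolic orbit $h$ of the original negative Morse-Bott torus. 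Then the generators of $ECC(M(K), \Gamma_K, \alpha_K)$ are exactly the orbit sets built from closed orbits in $\op{int}(N)$ together with $h$, i.e. the generators of $ECC^\sharp(N, \alpha)$. Choosing a tailored almost complex structure on $\R \times M(K)$ that restricts on $\R \times \op{int}(N)$ to a generic one compatible with $\alpha$, the barriers force every curve counted by the differential to lie in $\R \times \op{int}(N)$ and to have only trivial ends along the characteristic foliation of $\partial N$, so the $I=1$ count is precisely the differential of $ECC^\sharp(N, \alpha)$, and the two complexes coincide.

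For $ECH(M(1), \Gamma_1, \alpha_1) \cong \widehat{ECH}(N, \partial N, \alpha)$: write $M(1) = N \cup (T^2 \times [1,2]) \cup (V \setminus B)$ with $B \subset \op{int}(V)$ the removed ball and $\Gamma_1$ a single curve on $\partial B \cong S^2$. I would take $\alpha_1$ equal to $\alpha_M$ on $N$ and on the no man's land $T^2 \times [1,2]$ (whose closed orbits have arbitrarily large action and are ignored in the limit), and on $V \setminus B$ a model contact form whose only closed orbit is the hyperbolic orbit $h'$ coming from the minimum of a Morse function on the Morse-Bott family $\partial V$ --- the corresponding elliptic orbit $e'$ being cancelled against the removed ball --- and with $\partial B$ convex with dividing set $\Gamma_1$. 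The generators of $ECC(M(1), \Gamma_1, \alpha_1)$ are then the orbit sets $(h')^j \bs{\Gamma}$ with $j \in \{0,1\}$ and $\bs{\Gamma}$ an orbit set of $ECC(N, \alpha)$ (closed orbits in $\op{int}(N)$ together with $e$ and $h$, the latter two produced by the usual Morse-Bott perturbation of the now-interior torus $\partial N$), i.e. the generators $(h')^j \otimes \bs{\Gamma}$ of $\widehat{ECC}^\natural(N, \alpha)$. With a tailored $J$, the no man's land tori are Levi-flat barriers and $\R \times \partial M(1)$ is avoided, so the curves counted by the differential are: those internal to $\R \times N$, which give $\partial_N$; the holomorphic plane asymptotic to $h'$ in $V \setminus B$, which gives the constant term; and the no man's land cylinder from $h'$ to $e$ taken from the finite-energy foliation of \cite{We} (see \cite[Section~8.4]{binding}), which gives the $e$-term. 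This identifies the differential with $\widehat{\partial}^\natural(\bs{\gamma} \otimes \bs{\Gamma}) = \bs{\gamma} \otimes \partial_N \bs{\Gamma} + \bs{\gamma}/h' \otimes (1+e)\bs{\Gamma}$, and since $\widehat{ECH}^\natural(N, \alpha) \cong \widehat{ECH}(N, \partial N, \alpha)$ the claim follows.

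The main obstacle is the first, geometric step in each case: producing adapted contact forms whose Reeb dynamics near the sutured boundary are exactly as described, and in particular arranging the cancellation of the ``extra'' orbits --- the elliptic $e$ near $\partial N$ for $M(K)$, and the elliptic $e'$ near $\partial V$ for $M(1)$ --- which requires explicit local models near the convexified boundary and a verification, via positivity of intersection against the characteristic foliation and against the Levi-flat hypersurfaces $\R \times \partial M(1)$ and $\R \times \partial M(K)$, that these models admit a tailored almost complex structure and create no unwanted low-index holomorphic curves; moreover, as in the proof of Theorem~\ref{main theorem of binding}, all of this must be interleaved with the direct limit over action levels in the no man's land. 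Once the chain-level identifications are in place, compatibility with the decompositions by classes in $H_1$ is immediate, since under the inclusion $\op{int}(N) \hookrightarrow M(1)$ and the identification $\op{int}(N) = \op{int}(M(K))$ the identifications of generators send each orbit set to one in the same homology class.
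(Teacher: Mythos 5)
Your treatment of $M(1)$ is essentially the paper's argument: there too $\alpha_1$ is the restriction of the closed-manifold form $\alpha_M$ of Subsection \ref{subsec: proof of binding}, the removed ball is placed inside $V$ so that the core of $V$ and the elliptic orbit $e'$ are destroyed while $h'$ survives (interior orbits of $V$ and the no man's land being discarded by the unexpressed action/direct-limit argument), and the differential is identified with $\widehat{\partial}^\natural$ via the plane at $h'$ and the no man's land cylinder from $h'$ to $e$, so that the homology is $\widehat{ECH}{}^\natural(N,\alpha) \cong \widehat{ECH}(N, \partial N, \alpha)$.

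The $M(K)$ half, however, has a genuine gap. You place the sutured boundary at $\partial N$ itself and assert that a deformation of $\alpha$ in a collar, ``small enough'', turns $\partial N$ into an adapted sutured boundary with dividing set two meridians while creating exactly one closed orbit, the hyperbolic $h$, so that the sutured complex equals $ECC^\sharp(N,\alpha)$ on the chain level. This deformation cannot be small in any useful sense: for $\alpha$ the Reeb field is tangent to $\partial N$ and parallel to the meridional sutures, whereas an adapted form must have the Reeb field transverse to the two annuli $R_\pm(\Gamma_K)$ (whose cores are meridians) and crossing $N(\Gamma_K)$ from $R_-$ to $R_+$; the Reeb direction must therefore rotate by a definite amount across the collar, and in the transition region you must control the closed orbits appearing on intermediate tori and the holomorphic curves interacting with them --- precisely the kind of analysis the blocking/trapping lemmas and the no man's land constructions of \cite{binding} are designed for. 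Neither the ``smallness'' claim nor the assertion that the surviving dynamics consists of a single hyperbolic meridional orbit is justified; and even granting such a model, identifying the $I=1$ count for the deformed adapted form with the differential of $ECC^\sharp(N,\alpha)$ (which counts very nice Morse--Bott buildings for $\alpha$, with the Morse--Bott torus still present) is not automatic, since the sutured invariance theorem you would invoke requires both forms to be adapted to the same sutured structure, which $\alpha$ is not. The paper sidesteps all of this: it takes $M(K)$ to be the complement of a smaller neighbourhood of $K$, so that the Morse--Bott torus $\partial N$, carrying both $e$ and $h$, remains in the interior; it restricts $\alpha_M$ and corrects it near the suture by inserting a rotation layer (the restriction is not adapted because the contact structure is negatively transverse to one boundary component of $R_\pm(\Gamma_K)$). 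Then both sutured complexes are generated by orbit sets built from orbits in $N$ together with $h'$, the only difference being that the holomorphic plane at $h'$ does not contribute for $M(K)$, and $ECH^\sharp(N,\alpha)$ emerges at the end by an algebraic manipulation (the complex is the cone of multiplication by $e$ on $ECC(N,\alpha)$, as in the proof of Theorem \ref{main theorem of binding} but easier), not by a geometric cancellation of $e$ at the boundary.
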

\begin{proof}[Sketch of proof]
Let $\alpha_M$ be a contact form on $M$ as in Subsection \ref{subsec: proof of binding}. The first return map of the Reeb vector field of $\alpha_M$ in a neighbourhood of $K$ containing $\partial N$ and after the Morse-Bott perturbation is depicted in Figure \ref{fig: suture}.
The sutured manifold $(M(1), \Gamma_1)$ is identified to the subset of $M$ described in the left side of Figure \ref{fig: suture12}.
\begin{figure}
\begin{tabular}{lcr}
\includegraphics[height=4.5cm]{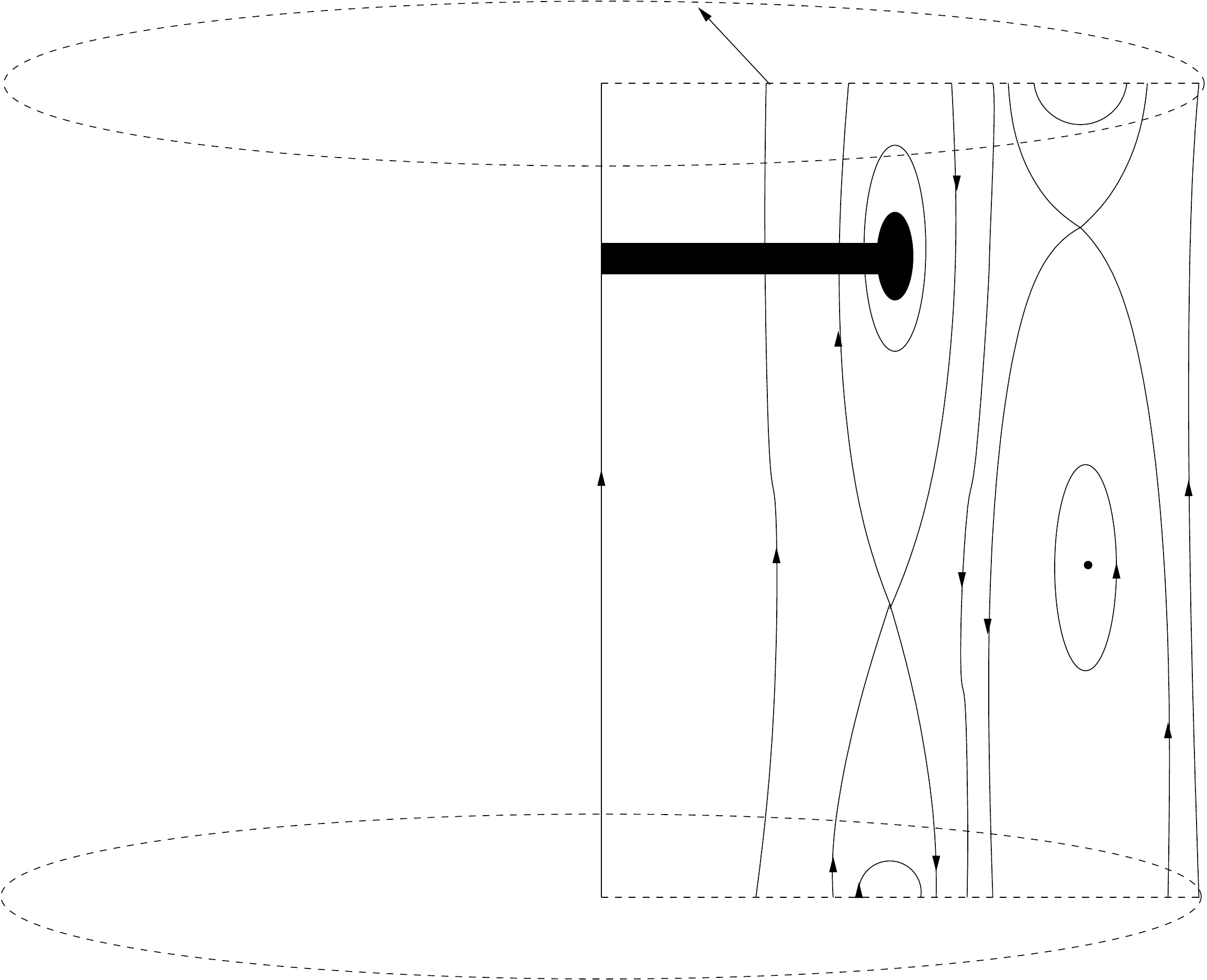} & & \includegraphics[height=4.5cm]{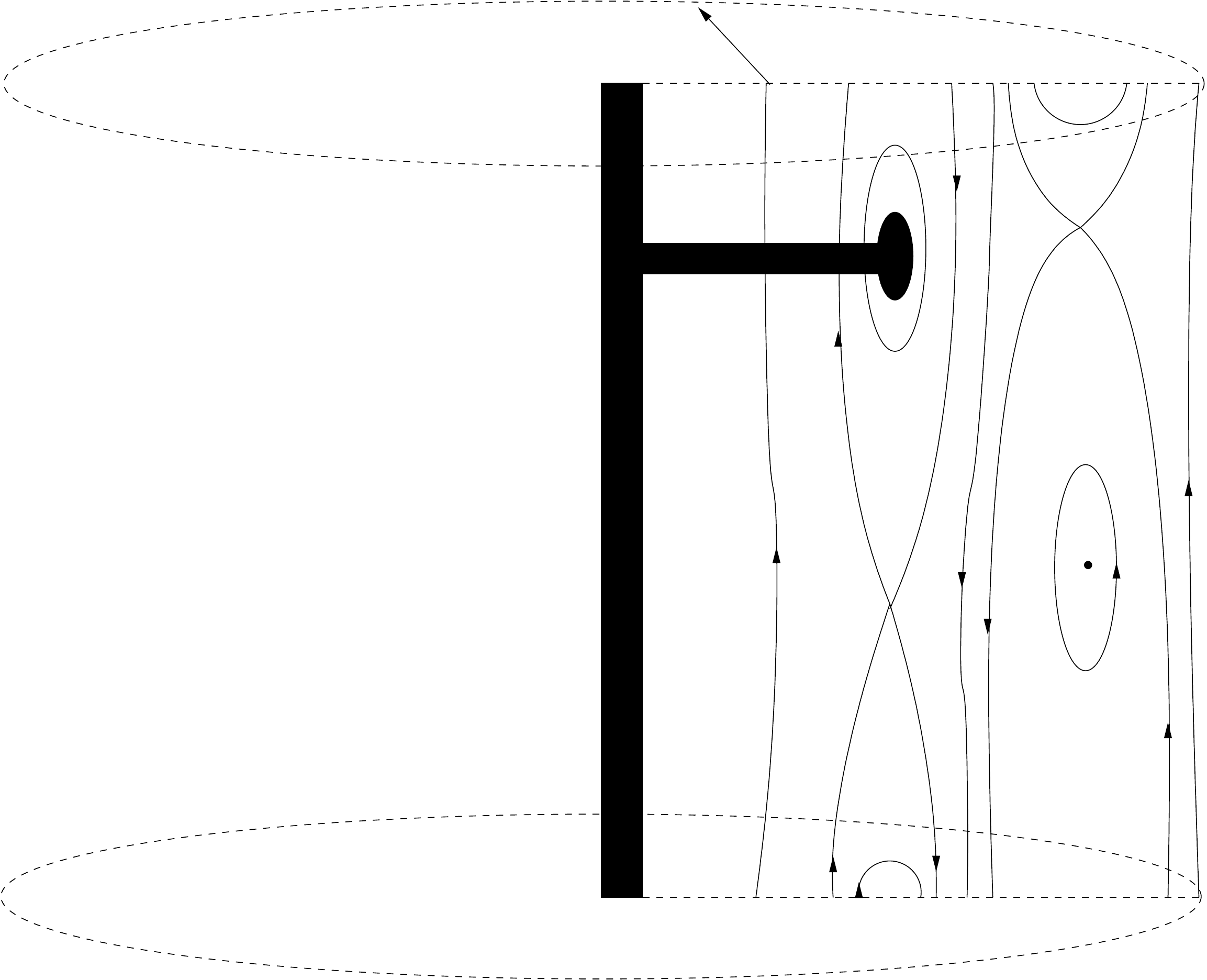}
\end{tabular}
\caption{The ball $B$ and the neighbourhood of $K$ are obtained by rotating the shaded regions on the left and on the right, respectively, around the vertical axis. The top and the bottom are identified.}
 \label{fig: suture12}
\end{figure}
The adapted contact forms $\alpha_1$ is the restrictions of $\alpha_M$ to $M(1)$. However, the contact form $\alpha_K'$ obtained by restricting $\alpha_M$ to $M(K)$ is not adapted to the suture because its restriction to $R_\pm(\Gamma_K)$ is not a Liouville form. This can be seen from the fact that the contact structure is negatively transverse to the component of $\partial R_\pm(\Gamma_K)$ which is closer to $K$. The issue can be corrected by adding a layer to that component in which the Reeb vector field remains constant and the contact structure rotates until it satisfies the sutured condition. The new sutured manifold is diffeomorphic to $(M(K), \Gamma_K)$ and the contact form $\alpha_K$ we obtain is adapted to $(M(K), \Gamma_K)$.

Both $ECC(M(1), \Gamma_1, \alpha_1)$ and $ECC(M(K), \Gamma_K, \alpha_K)$ are generated by orbit sets containing closed Reeb orbits in $N$ and $h'$; moreover the holomorphic plane in $\R \times M$ which is positively asymptotic to $h'$ does not contribute to the differential of $ECC(M(K), \Gamma_K, \alpha_K)$. At this point, the result follows from algebraic manipulations similar to those in the proof of Theorem \ref{main theorem of binding}, but easier.
\end{proof}
Combining Theorem \ref{main theorem of binding} with Theorem \ref{main result of section 10} we obtain the following corollary.
\begin{cor} $ECH(M(1), \Gamma_1, \alpha_1) \cong \widehat{ECH}(M, \alpha_M)$.
\end{cor}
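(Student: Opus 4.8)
The corollary follows by concatenating the two displayed isomorphisms from Theorems~\ref{main theorem of binding} and~\ref{main result of section 10}. First I would invoke Theorem~\ref{main result of section 10}, applied to the knot $K \subset M$ whose complement is $N$: it produces a contact form $\alpha$ on $N$ of the type described in Subsection~\ref{subsec: ECH with boundary} (so that $\partial N$ is a negative Morse-Bott torus for its Reeb vector field, with associated elliptic orbit $e$ and hyperbolic orbit $h$), an adapted contact form $\alpha_1$ on the balanced sutured manifold $(M(1), \Gamma_1)$, and an isomorphism
\begin{equation*}
ECH(M(1), \Gamma_1, \alpha_1) \cong \widehat{ECH}(N, \partial N, \alpha).
\end{equation*}
Crucially, the proof of Theorem~\ref{main result of section 10} constructs $\alpha$ as the restriction to $N$ of the contact form $\alpha_M$ on $M$ built in Subsection~\ref{subsec: proof of binding}; in particular $\alpha$ is exactly a contact form to which Theorem~\ref{main theorem of binding} applies.

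It then remains to check the hypotheses of Theorem~\ref{main theorem of binding} for this $N$ and $\alpha$. The torus $\partial N$ is a negative Morse-Bott torus by construction. The Dehn filling of $N$ along the slope of $e$ is $M$: the orbit $e$ foliates $\partial N$ along the meridian of $K$ (this is visible from the way $\partial N$ sits inside the neighbourhood of $K$ in Figure~\ref{fig: suture}), and filling a knot complement along the meridian recovers the ambient manifold. Finally, the cohomological hypothesis holds because, as explained in Subsection~\ref{subsec: proof of binding}, the properly embedded surface $S \subset N$ with $[\partial S]\cdot[e]=1$ that is positively transverse to the Reeb vector field provides a class $\varphi \in H^1(N;\Z)$ with $\varphi([\gamma])\ge 0$ for every closed Reeb orbit $\gamma$ and $\varphi([e])>0$. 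Hence Theorem~\ref{main theorem of binding} yields an isomorphism
\begin{equation*}
\widehat{\sigma}_* \colon \widehat{ECH}(N, \partial N, \alpha) \xrightarrow{\cong} \widehat{ECH}(M).
\end{equation*}
Since $\widehat{ECH}(M)$ is by definition the homology of $\widehat{ECC}(M, \alpha_M)$ (invariance of $\widehat{ECH}$ being part of the setup recalled in Subsection~\ref{subsec: ECH}), we may write the target as $\widehat{ECH}(M, \alpha_M)$, and composing the two isomorphisms above gives $ECH(M(1), \Gamma_1, \alpha_1) \cong \widehat{ECH}(M, \alpha_M)$.

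The only genuine point requiring care --- and the one I would spell out --- is the compatibility of the two constructions: one must verify that the contact form on $N$ furnished by Theorem~\ref{main result of section 10} is of precisely the kind demanded in the hypotheses of Theorem~\ref{main theorem of binding} (negative Morse-Bott torus at the boundary, admissible cohomology class $\varphi$, correct filling slope), so that the two theorems compose directly rather than merely up to an intervening invariance isomorphism. This is essentially a bookkeeping matter, since both theorems are engineered around the same model contact form $\alpha_M$ of Subsection~\ref{subsec: proof of binding}; once the identifications of slopes and of the orbits $e$, $h$, $e'$, $h'$ along the neighbourhood of $K$ are made explicit (compare Figures~\ref{fig: suture} and~\ref{fig: suture12}), the corollary is immediate.
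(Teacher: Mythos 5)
Your proposal is correct and is exactly the paper's argument: the corollary is obtained by composing the isomorphism $ECH(M(1), \Gamma_1, \alpha_1) \cong \widehat{ECH}(N, \partial N, \alpha)$ of Theorem~\ref{main result of section 10} with the isomorphism $\widehat{\sigma}_*$ of Theorem~\ref{main theorem of binding}, both theorems being set up around the same model contact form $\alpha_M$ of Subsection~\ref{subsec: proof of binding}. Your extra verification of the hypotheses (negative Morse-Bott boundary, meridional slope of $e$, the class $\varphi$) is sound bookkeeping that the paper leaves implicit.
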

Combining Conjecture \ref{isomorphism for sutured manifolds} with Theorem \ref{main result of section 10}  and the relation between sutured Floer homology and knot Floer homology, we obtain the following conjecture.
\begin{conj}
If $K \subset M$ is a null-homologous knot of genus $g$, $N$ is the complement of an open tubular neighbourhood of $K$ and $\alpha$ is a contact form on $N$ for which $\partial N$ is a negative Morse-Bott torus foliated by meridians of $K$, then
$$ECH^\sharp_i(N, \alpha) \cong \widehat{HFK}(M, K, i-g),$$
where $ECH^\sharp_i(N, \alpha)$ denotes the homology of the subcomplex of  $ECH^\sharp(N, \alpha)$ generated by orbit sets in $N$ with total linking number $i$ with $K$.
\end{conj}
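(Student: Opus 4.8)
The plan is to obtain the conjecture as the combination of three ingredients that are, or should become, available: Theorem~\ref{main result of section 10}, which identifies $ECH^\sharp(N,\alpha)$ with the sutured embedded contact homology $ECH(M(K),\Gamma_K,\alpha_K)$; Conjecture~\ref{isomorphism for sutured manifolds}, which would identify the latter with the sutured Floer homology $SFH(-M(K),-\Gamma_K)$; and Juh\'asz's theorem that the sutured Floer homology of a knot complement with meridional sutures is the knot Floer homology $\widehat{HFK}(M,K)$, compatibly with the gradings. Modulo Conjecture~\ref{isomorphism for sutured manifolds}, the only genuinely new content is the verification that the grading by linking number with $K$ on the embedded contact homology side matches the Alexander grading on the $\widehat{HFK}$ side, together with the determination of the affine shift.

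First I would record that $ECC^\sharp(N,\alpha)$ does split as a direct sum over linking numbers: the differential and the relevant maps count holomorphic curves in $\R\times N$, hence preserve relative homology classes in $N$, and because $K$ is null-homologous the linking number of an orbit set with $K$ is a homological invariant, namely its intersection number with a Seifert surface. The same reasoning, applied to the chain maps appearing in the proof of Theorem~\ref{main result of section 10} exactly as in the proof of Theorem~\ref{main theorem of binding}, shows that the isomorphism $ECH^\sharp(N,\alpha)\cong ECH(M(K),\Gamma_K,\alpha_K)$ respects the splitting by first homology classes. Since $K$ is null-homologous, $H_1(M(K);\Z)$ contains the class of the meridian of $K$ as a distinguished $\Z$-summand, with the coefficient along this summand being exactly the linking number; thus $ECH^\sharp_i(N,\alpha)$ is matched with the summand of $ECH(M(K),\Gamma_K,\alpha_K)$ in the corresponding homology class.

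Granting Conjecture~\ref{isomorphism for sutured manifolds} for $(M(K),\Gamma_K,\alpha_K)$ gives $ECH(M(K),\Gamma_K,\alpha_K)\cong SFH(-M(K),-\Gamma_K)$, an isomorphism which by hypothesis carries the homology-class splitting on the left to the splitting by relative $\op{Spin}^c$-structures on the right. By Juh\'asz's isomorphism $SFH(M(K),\Gamma_K)\cong\widehat{HFK}(M,K)$, the relative $\op{Spin}^c$-structures on $(M(K),\Gamma_K)$ --- an affine set over $H_1(M(K);\Z)$ whose ``Alexander factor'' is a copy of $\Z$ because $K$ is null-homologous --- are identified with Alexander gradings, and over $\Z/2\Z$ the orientation reversals involved ($-M(K)$, $-\Gamma_K$, conjugation of $\op{Spin}^c$, and the symmetry $\widehat{HFK}(M,K,j)\cong\widehat{HFK}(M,K,-j)$) are immaterial. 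Chaining these isomorphisms yields $ECH^\sharp_i(N,\alpha)\cong\widehat{HFK}(M,K,j)$ with $j$ depending affinely on $i$.

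It remains to pin down $j=i-g$, for which I would use the explicit geometric model in the proof of Theorem~\ref{main result of section 10}: the linking number $i$ of an orbit set in $N$ is non-negative and, via the correspondence with intersection number against a meridional disc of $K$, is bounded above by $2g$, while $\widehat{HFK}(M,K,j)$ is supported in $|j|\le g$ with $\widehat{HFK}(M,K,g)\ne 0$ by genus detection. Matching the extremal summands --- for instance the linking-number-zero generator assembled from Reeb orbits near $\partial N$ with the bottom Alexander grading --- forces the normalisation $j=i-g$. The main obstacle is of course Conjecture~\ref{isomorphism for sutured manifolds} itself: it is the substantial analytic input, resting on the construction and comparison of gluing maps, and is the subject of work in progress; conditionally on it, the steps above are essentially homological bookkeeping.
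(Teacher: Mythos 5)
This statement is a \emph{conjecture} in the paper, and the paper's own justification for it is precisely the chain you describe --- Theorem~\ref{main result of section 10} identifying $ECH^\sharp(N,\alpha)$ with $ECH(M(K),\Gamma_K,\alpha_K)$, Conjecture~\ref{isomorphism for sutured manifolds} relating the latter to sutured Floer homology, and Juh\'asz's identification of the sutured Floer homology of the knot complement with meridional sutures with $\widehat{HFK}(M,K)$ --- so your proposal matches the paper's approach, including its honest dependence on the still-open sutured isomorphism. One minor caveat: your claimed a priori bound $i\le 2g$ ``via intersection with a meridional disc'' is not right (the linking number is the intersection with a Seifert surface and is not obviously bounded above), but this only affects your heuristic normalisation of the affine shift, which the paper does not attempt to justify either.
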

Both the Alexander grading in knot Floer homology and  the linking number depend on the choice of a Seifert surface if $H_2(M; \Z) \ne 0$. In the conjecture we assume that we have made the same choice in both cases.
\subsection{Periodic Floer homology and open books}\label{subsec: PFH}
We recall from Giroux \cite{giroux-obd} that a contact form is supported by an open book decomposition if its Reeb vector field is tangent to the binding and positively transverse to the interior of the pages. In the same article he also sketched an equivalence between contact structures up to isotopy and open book decompositions up to positive stabilisation, but here we will need only the easy part of the equivalence for our purposes: namely the existence of contact forms supported by open book decomposition and
the isotopy between contact structures supported by open book decompositions related by positive stabilisations.

Let $(S, \hh)$ be an abstract open book decomposition for $M$. We assume, without loss of generality, that $\partial S$ is connected, and identify the mapping torus of $\hh$, denoted by $N$, with the complement of an open tubular neighbourhood of the binding. 
\begin{assumption}
On $\hh$ we assume the following:
\begin{itemize}
\item $\hh^* \beta - \beta$ is exact for some Liouville form $\beta$ on $S$, and
\item $\hh(y, \theta)= (y, \theta - y)$ for coordinates $(y, \theta) \in (- \varepsilon, 0] \times \R / \Z$ in a collar of $\partial S$.
\end{itemize}
\end{assumption}

By a Moser's trick argument one can always find a representative of the monodromy satisfying the conditions above (see \cite[Lemma 9.3.2]{binding}) and, by a refinement of Thurston and Winkelnkemper's construction from \cite{TW}, one can construct a contact form $\alpha$ on $M$ which is supported by the open book decomposition $(S, \hh)$, and such that the first return map of the Reeb flow in $N$ is $\hh$ and $\partial N$ is a negative Morse-Bott torus; see  \cite[Lemma 9.3.3]{binding} and \cite[Lemma 2.12]{I}. Thus the Reeb flow of $\alpha$ on $N$ satisfies the conditions of Subsections \ref{subsec: ECH with boundary} and \ref{subsec: proof of binding} permitting to define $ECH(N, \partial N, \alpha)$ and prove the isomorphism with $ECH(M, \alpha)$.

We define
$$ECH_i(N, \alpha)= \bigoplus \limits_{a \cdot [S]=i} ECH_a(N, \alpha),$$
where $a \in H_1(N)$ and $[S] \in H_2(N, \partial N)$ is the relative homology class of a page of the open book decomposition. Thus, by Lemma \ref{hat as direct limit},
$$\widehat{ECH}(N, \partial N, \alpha) = \varinjlim ECH_i(N, \alpha).$$
In the definition of the isomorphism between $\widehat{HF}(-M)$ and $\widehat{ECH}(M)$ it will be useful to pass to periodic Floer homology groups.

There is a family of stable Hamiltonian structures $(\alpha_\varsigma, \omega_\varsigma)_{\varsigma \in [0,1]}$ such that $\omega_\varsigma = d \alpha_\varsigma$ for $\varsigma \in (0,1]$ (i.e. $\alpha_\varsigma$ is a contact form for $\varsigma \ne 0$), $\alpha_1=\alpha$ and $(\alpha_0, \omega_0)$ is the stable Hamiltonian structure induced by the fibration $\pi \colon N \to S^1$. See \cite[Section 3.1]{I}. Let $J_\varsigma$ be a smooth family of almost complex structures on $\R \times N$ compatible with $(\alpha_\varsigma, \omega_\varsigma)$.

The {\em Periodic Floer homology} complexes $PFC_i(N, \alpha_0, \omega)$ are defined in the same way as the embedded contact homology complexes $ECC_i(N, \alpha)$: the different names have only a historical motivation. We can also define all decorated version of periodic Floer homology as in Subsection \ref{subsec: ECH with boundary}.
By comparing regular $J_0$-holomorphic curves with $J_\varsigma$-holomorphic curves with $\varsigma>0$ sufficiently small, we obtain the following result.
\begin{lemma}[{See \cite[Theorem 3.6.1]{I}}]
For every $i \in \N$ there exists $\varsigma_i \in (0,1]$ such that, for all $\varsigma \in (0, \varsigma_i]$, there is an isomorphism of chain complexes
$$PFC_i(N, \alpha_0, \omega_0) \cong ECC_i(N, \alpha_\varsigma).$$
\end{lemma}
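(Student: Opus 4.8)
The plan is to prove the isomorphism one degree at a time, exploiting the fact that fixing the degree $i$ makes both the set of generators and the set of index-one curves finite, so that everything is governed by a compactness-and-gluing argument as $\varsigma \to 0$, with no continuation-map bookkeeping needed beyond choosing $\varsigma_i$ small.

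First I would set up the generators. The generators of $PFC_i(N,\alpha_0,\omega_0)$ are orbit sets $\bs{\gamma}$ of the Hamiltonian vector field $R_0$ with $[\bs{\gamma}]\cdot[S]=i$; after the Morse-Bott perturbation of $\partial N$ these are built from the periodic orbits of the monodromy $\hh$ whose periods sum to $i$, together with $e$ and $h$ on $\partial N$, and there are only finitely many because $\hh$ has been arranged to be nondegenerate up to the relevant period. Since $R_\varsigma \to R_0$ in $C^\infty$ as $\varsigma\to 0$ and nondegenerate closed orbits persist under small perturbations, there is $\varsigma_i' \in (0,1]$ such that for all $\varsigma \le \varsigma_i'$ the closed Reeb orbits of $\alpha_\varsigma$ with action below the threshold corresponding to degree $i$ are in canonical bijection with the periodic orbits of $\hh$ of period $\le i$, matching elliptic/hyperbolic type. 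The family of stable Hamiltonian structures also identifies relative homology classes and provides a continuous family of trivialisations of $\xi_\varsigma$, so the ECH-type index (a topological quantity, by its homology invariance) is constant along the family; hence the bijection identifies the generating set of $PFC_i(N,\alpha_0,\omega_0)$ with that of $ECC_i(N,\alpha_\varsigma)$, compatibly with the grading and the splitting by homology class.

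Second, for the differentials I would show that the count of index-one holomorphic curves between such orbit sets is independent of $\varsigma$ for $\varsigma$ small enough. The essential inputs are: (a) an energy bound --- a curve contributing to the degree-$i$ differential has $\omega_\varsigma$-energy and Hofer energy bounded in terms of $i$ and the (uniformly bounded) actions of its asymptotics, so SFT compactness applies uniformly in $\varsigma$; (b) $\partial N$ is a negative Morse-Bott torus, and by the trapping and blocking lemmas of \cite{binding} curves asymptotic to orbit sets in $\op{int}(N)$ stay in a fixed compact subset of $\op{int}(N)$, uniformly in $\varsigma$; and (c) $I=1$ is the minimal index of a nontrivial curve, so an SFT limit of a sequence of $I=1$ curves can only break off connectors, hence is again a single embedded $I=1$ curve. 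Thus any sequence $u_{\varsigma_n}$ of $I=1$ $J_{\varsigma_n}$-curves with $\varsigma_n\to 0$ subconverges to an embedded $I=1$ $J_0$-curve. Conversely, the $I=1$ $J_0$-curves are embedded, hence somewhere injective, hence regular once the path $(J_\varsigma)$ is chosen generically, and a standard implicit-function-theorem/gluing argument shows each such curve is the limit of a unique $I=1$ $J_\varsigma$-curve for small $\varsigma$. Since for fixed $i$ there are finitely many $I=1$ curves on either side, choosing $\varsigma_i\le\varsigma_i'$ small enough makes the two counts agree orbit-set-by-orbit-set, so the differentials coincide under the identification of generators and the chain complexes are literally isomorphic.

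The step I expect to be the main obstacle is (c) combined with the degeneration of the symplectic form: for $\varsigma>0$ the curves live in the exact symplectisation $d(e^s\alpha_\varsigma)$, while at $\varsigma=0$ the ambient form $ds\wedge\alpha_0+\omega_0$ comes only from a stable Hamiltonian structure, so one must verify that SFT compactness applies uniformly across the whole family and that no energy escapes into the ends or into the Morse-Bott family on $\partial N$ as $\varsigma\to 0$; this uniform analysis is the bulk of the work and is exactly what is carried out in \cite[Section 3]{I}. A secondary point is to check that the needed transversality can be arranged \emph{within} the prescribed family $(J_\varsigma)$ rather than by an unconstrained perturbation, which is possible because somewhere-injective curves are cut out transversally for generic paths of almost complex structures.
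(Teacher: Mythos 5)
Your proposal is correct and follows essentially the same route as the paper: the survey introduces this lemma with the sentence ``By comparing regular $J_0$-holomorphic curves with $J_\varsigma$-holomorphic curves with $\varsigma>0$ sufficiently small,'' and your argument --- matching generators via persistence of nondegenerate orbits of degree $\le i$, then matching the $I=1$ counts via uniform SFT compactness plus gluing of regular somewhere-injective curves, with $\varsigma_i$ chosen degree by degree --- is precisely that comparison. You also correctly identify the two genuine subtleties (uniform compactness as the contact forms degenerate to the stable Hamiltonian structure, and transversality within the constrained family $(J_\varsigma)$), which is where the work in \cite[Section 3]{I} lies.
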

This is not yet enough for taking direct limits because $\varsigma_i$ becomes smaller as $i$ increases and might tend to zero.\footnote{This issue was pointed out to us by Thomas Brown.} However, arguing with more care, we can prove the following.
\begin{lemma} \label{lemma: PFH=ECH} For $\alpha$ and $(\alpha_0, \omega_0)$ as above, there is an isomorphism
$$\varinjlim PFH_i(N, \alpha_0, \omega_0) = \widehat{PFH}(N, \partial N, \alpha_0, \omega_0) \cong \widehat{ECH}(N, \partial N, \alpha).$$
\end{lemma}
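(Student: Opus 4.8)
The plan is to rewrite both sides as direct limits of truncated groups carrying the same structure maps, and then to splice the chain-level isomorphisms of the previous lemma into a cofinal comparison between the two direct systems.

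First I would note that the two equalities $\widehat{ECH}(N,\partial N,\alpha)=\varinjlim_i ECH_i(N,\alpha)$ and $\widehat{PFH}(N,\partial N,\alpha_0,\omega_0)=\varinjlim_i PFH_i(N,\alpha_0,\omega_0)$ are essentially built in: the first is Lemma~\ref{hat as direct limit}, and the second is its verbatim analogue, which is legitimate because the periodic Floer complexes and all their decorated versions are defined exactly as in Subsection~\ref{subsec: ECH with boundary} and because $\partial N$ is a negative Morse--Bott torus also for the fibration stable Hamiltonian structure $(\alpha_0,\omega_0)$ --- by the normalisation $\hh(y,\theta)=(y,\theta-y)$ near $\partial S$, the normal linearised first return map is $\bigl(\begin{smallmatrix} 1 & 0 \\ -1 & 1 \end{smallmatrix}\bigr)$, so $a=-1<0$. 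Since $|[e]\cdot[S]|=1$ (from the assumption $[\partial S]\cdot[e]=1$ made in Subsection~\ref{subsec: proof of binding}), raising the multiplicity of $e$ raises $i$ by one, so on both sides the direct-system structure map $(\cdot)_i\to(\cdot)_{i+1}$ is multiplication by $e$. Thus it suffices to produce an isomorphism $\varinjlim_i PFH_i(N,\alpha_0,\omega_0)\cong\varinjlim_i ECH_i(N,\alpha)$ compatible with these maps.

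Next I would extract from the previous lemma, for each $i$, a threshold $\varsigma_i\in(0,1]$ --- which we may take non-increasing in $i$ --- such that $PFC_i(N,\alpha_0,\omega_0)\cong ECC_i(N,\alpha_\varsigma)$ as chain complexes for all $\varsigma\in(0,\varsigma_i]$, and then check that whenever $\varsigma\le\varsigma_{i+1}$ this isomorphism intertwines the two ``multiplication by $e$'' maps. The latter point involves only trivial cylinders over $e$ and the identification of low-index holomorphic curves already carried out in the proof of that lemma, so it is essentially bookkeeping. Choosing a non-increasing sequence $\varsigma^{(i)}\le\varsigma_i$, these commuting squares identify $\varinjlim_i PFH_i$ with the colimit of the ``staircase'' system whose $i$-th term is $ECH_i(N,\alpha_{\varsigma^{(i)}})$ and whose structure map $ECH_i(N,\alpha_{\varsigma^{(i)}})\to ECH_{i+1}(N,\alpha_{\varsigma^{(i+1)}})$ is multiplication by $e$ followed by a change-of-contact-form map $ECH_{i+1}(N,\alpha_{\varsigma^{(i)}})\to ECH_{i+1}(N,\alpha_{\varsigma^{(i+1)}})$.

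The remaining --- and genuinely delicate --- point is to construct those change-of-contact-form maps so that the staircase ladder commutes both with the $\cdot\,e$ maps and with the truncation by $i$, and then to identify its colimit with $\varinjlim_i ECH_i(N,\alpha_1)=\widehat{ECH}(N,\partial N,\alpha)$. Since $\varsigma\mapsto\alpha_\varsigma$ is a path of contact forms for a fixed isotopy class of contact structures on $N$, agreeing near $\partial N$, the needed invariance should come from the filtered-continuation-map argument behind Proposition~\ref{prop: ECH of two contact forms in M} and \cite[Theorem~10.2.2]{binding}: continuation maps respect the action filtration, so passing to the direct limit --- which precisely discards the action dependence responsible for $\varsigma_i\to 0$ --- makes the limits of the groups $ECH_i(N,\alpha_\varsigma)$ canonically isomorphic to one another. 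The \emph{main obstacle} is that Proposition~\ref{prop: ECH of two contact forms in M} is stated only for irrational boundary slope, whereas here $\partial N$ carries a Morse--Bott torus; one must therefore either perturb $\partial N$ into an irrational Levi-flat configuration $\varsigma$-uniformly, or rerun the filtered continuation argument directly in the Morse--Bott relative setting, keeping careful track of the $e$-action so that the comparison maps are strictly equivariant for the structure maps. A cheaper route for the final identification is to observe that for all sufficiently small $\varsigma$ the complexes $ECC_i(N,\alpha_\varsigma)$ are already identified with $PFC_i(N,\alpha_0,\omega_0)$ --- hence with one another --- and to invoke Theorem~\ref{main theorem of binding}, which gives $\widehat{ECH}(N,\partial N,\alpha_\varsigma)\cong\widehat{ECH}(M)$ for every $\varsigma\in(0,1]$, only at the very end, to pin down the target as $\widehat{ECH}(N,\partial N,\alpha)$.
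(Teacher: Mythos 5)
Your proposal is correct and follows essentially the same route as the paper: identify both sides as direct limits with structure map given by multiplication by $e$, compare them term by term using the chain-level isomorphisms $PFC_i(N,\alpha_0,\omega_0)\cong ECC_i(N,\alpha_{\varsigma_i})$ together with continuation maps back to the fixed contact form $\alpha$, and check compatibility with the $\cdot\, e$ maps. The obstacle you flag is resolved in the paper by exactly your first option --- $N$ is slightly enlarged to an $N'$ whose boundary is foliated by Reeb orbits of irrational slope, in such a way that no new closed orbit meeting a page fewer than $i+2$ times is created, so that Proposition \ref{prop: ECH of two contact forms in M} applies --- and the $e$-equivariance of the resulting continuation maps is not mere bookkeeping but follows from the trapping lemma, which forces any curve with a positive end at $e$ to contain a trivial cylinder over $e$.
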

\begin{proof}
We need to define continuation maps $ECC_i(N, \alpha) \to ECC_i(N, \alpha_{\varsigma_i})$ for every $i \in \N$ such that the diagram
$$\xymatrix{
ECC_i(N, \alpha) \ar[d]_{\cdot e} \ar[r] & ECC_i(N, \alpha_{\varsigma_i}) \ar@{=}[r] & PFC_i(N, \alpha_0, \omega_0) \ar[d]_{\cdot e} \\
ECC_{i+1}(N, \alpha) \ar[r] & ECC_{i+1}(N, \alpha_{\varsigma_{i+1}}) \ar@{=}[r] & PFC_i(N, \alpha_0, \omega_0)
}$$
commutes. To define the continuation maps $ECC_i(N, \alpha) \to ECC_i(N, \alpha_{\varsigma_i})$, we slightly enlarge $N$ to $N'$ so that $\partial N'$ is foliated by Reeb orbits of irrational slope and no new closed Reeb orbit intersecting a page less than $i+2$ times is created, and invoke Proposition \ref{prop: ECH of two contact forms in M}. The continuation maps are supported on holomorphic curves which are contained in $\R \times M$ endowed with a symplectic form which interpolates between $\alpha$ and $\alpha_{\varsigma_i}$. See \cite[Theorem 3.1.2]{binding} for the definition of ``supported on holomorphic curves'' and an overview on continuation maps in ECH and \cite{HT4} for the full story. By the trapping lemma \cite[Lemma 5.3.2]{binding}, curves with a positive end at $e$ must contain a trivial cylinder over $e$, and therefore the commutativity of the diagram follows.
\end{proof}
\section{The open-closed maps $\Phi$ and $\Phi^+$}\label{sec: OC}
In this section we define maps from Heegaard Floer homology to embedded contact homology. As it often happens in symplectic geometry, these maps will be defined by counting holomorphic curves in symplectic cobordisms. Here the symplectic cobordisms will come from an open book decomposition $(S, \hh)$ of $M$.
\subsection{The maps $\Phi$ and $\widehat{\Phi}_*$}
Let $B_+$ be the unit disc with one puncture in the interior and one puncture on the boundary, which we identify biholomorphically with the subset of the cylinder $\R \times \R/2 \Z$ obtained by rounding the corners of  $ \R \times \R/2 \Z \setminus (2, \infty) \times (1,2)$. We define a fibration
$$\pi_{W_+} \colon W_+ \to B_+$$
with fibre $S$ and monodromy $\hh$, and equip the total space with the symplectic form $\Omega_+= ds \wedge dt + \omega$, where $(s,t)$ are coordinates on $B_+ \subset \R \times \R / 2 \Z$ and $\omega = d \beta$ is an area form on $S$ which is preserved by $\hh$.
We can view $(W_+, \Omega_+)$ as a symplectic cobordism (with boundary) between
$[0,1] \times S$ and $N$. We refer to \cite[Section~5]{I} for more details about this construction.

Let $\mathbf{a}= \{ a_1, \ldots, a_{2g} \}$ be a basis of arcs of $S$. We define a Lagrangian submanifold $L_{\mathbf{a}} \subset \partial W$ as the trace of the parallel transport of $\{ 3 \} \times \{ 1 \} \times \mathbf{a}$ along $\partial B_+$ with respect to the symplectic connection defined as the $\Omega_+$-orthogonal to the tangent spaces of the fibres. Then,
\begin{align*}
L_{\mathbf{a}} \cap [3, + \infty) \times \{ 1 \} \times S & =  [3, + \infty) \times \{ 1 \} \times \mathbf{a} \\
L_{\mathbf{a}} \cap [3, + \infty) \times \{ 0 \} \times S & =  [3, + \infty) \times \{ 0 \} \times \hh(\mathbf{a}).
\end{align*}

In the following $\widehat{\mathcal O}_k$ will denote the set of orbit sets in $N$ with total intersection number $k$ with a fibre.
We fix an almost complex structure $J_+$ on $W_+$ which is compatible with $\Omega_+$ and with the stable Hamiltonian structures at the ends.
 Given a Heegaard Floer generator $\mathbf{y} \in {\mathcal S}_{\mathbf{a}, \hh(\mathbf{a})}$ and a periodic Floer homology generator $\boldsymbol{\gamma} \in \widehat{\mathcal O}_{2g}$, we denote by ${\mathcal M}_{W_+}(\mathbf{y}, \bs{\gamma})$ the moduli space of $J_+$-holomorphic curves in $W_+$ with boundary on $L_{\mathbf{a}}$ which are positively asymptotic to $\mathbf{y}$ and negatively asymptotic to $\boldsymbol{\gamma}$. 
Holomorphic curves in ${\mathcal M}_{W_+}(\mathbf{y}, \bs{\gamma})$ are somewhere injective because distinct connected components of the boundary are on distinct connected components of $L_{\mathbf{a}}$.
Thus, for generic $J_+$, the moduli spaces  ${\mathcal M}_{W_+}(\mathbf{y}, \bs{\gamma})$ are (disjoint unions of) smooth manifolds of the dimension predicted by the Fredholm index.

The ECH-type index $I$ for $J_+$-holomorphic curves in ${\mathcal M}_{W_+}(\mathbf{y}, \bs{\gamma})$ is defined in \cite[Definition 5.6.5]{I}. By the index inequality \cite[Theorem 5.6.9]{I}, $J_+$-holomorphic curves $u$ with $I(u)=0$ are embedded, have Fredholm index $\op{ind}(u)=0$, and therefore are isolated. We define
$$\overline{\Phi} \colon \overline{CF}(S, \mathbf{a}, \hh(\mathbf{a})) \to PFC_{2g}(N, \alpha_0, \omega)$$
such that, on each $\mathbf{y} \in {\mathcal S}_{\mathbf{a}, \hh(\mathbf{a})}$, it is given by
\begin{equation} \label{eqn: definition of Phi}
\overline{\Phi}(\mathbf{y}) = \sum \limits_{\bs{\gamma} \in \widehat{\mathcal O}_{2g}} \# {\mathcal M}_{W_+}^{I=0}(\mathbf{y}, \bs{\gamma}) \bs{\gamma},
\end{equation}

Compactness and gluing for holomorphic curves in Heegaard Floer homology and embedded contact homology extend to holomorphic curves in $W_+$. Therefore the usual argument on the ends of one-dimensional moduli spaces proves that $\overline{\Phi}$ is a chain map; see \cite[Proposition 6.2.2]{I}.

\begin{lemma}[{\cite[Theorem 6.2.4]{I}}] \label{Phi and the contact class}
The map $\overline{\Phi}$ induces a chain map
$$\Phi \colon \widehat{CF}(S, \mathbf{a}, \hh(\mathbf{a})) \to PFC_{2g}(N, \alpha_0, \omega)$$
and moreover $\Phi(\mathbf{x})=e^{2g}$, where $\mathbf{x}$ represents the contact class.
\end{lemma}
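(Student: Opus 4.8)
The plan is to prove the two assertions separately. For the first, I would show that $\overline{\Phi}$ descends to the quotient $\widehat{CF}(S,\mathbf{a},\hh(\mathbf{a}))$ of $\overline{CF}(S,\mathbf{a},\hh(\mathbf{a}))$ by the subcomplex generated by the elements $\{x_i\}\cup\mathbf{y}_0 + \{x_i'\}\cup\mathbf{y}_0$. Since $\overline{\Phi}$ is already a chain map, it suffices to check that this subcomplex lies in its kernel, or equivalently that $\overline{\Phi}(\{x_i\}\cup\mathbf{y}_0)=\overline{\Phi}(\{x_i'\}\cup\mathbf{y}_0)$ for every $(2g-1)$-tuple $\mathbf{y}_0$. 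The key input is the analogue in $W_+$ of \cite[Claim 4.9.2]{I}: a holomorphic curve in ${\mathcal M}_{W_+}^{I=0}(\mathbf{y},\bs{\gamma})$ with a positive puncture asymptotic to a chord over $x_i$ or $x_i'$ must have a trivial-strip irreducible component over that chord, because the thin strips $D_i$, $D_i'$ near $\partial S$ act as barriers. Removing that trivial strip identifies ${\mathcal M}_{W_+}^{I=0}(\{x_i\}\cup\mathbf{y}_0,\bs{\gamma})$ and ${\mathcal M}_{W_+}^{I=0}(\{x_i'\}\cup\mathbf{y}_0,\bs{\gamma})$ with the same moduli space (of curves over the remaining $2g-1$ arcs), which forces equality of the two counts. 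Hence $\overline{\Phi}$ factors through $\widehat{CF}$, giving the chain map $\Phi$.

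For the computation $\Phi(\mathbf{x})=e^{2g}$, recall that $\mathbf{x}=\{x_1,\dots,x_{2g}\}$ with each $x_i\in\partial S$, so $\mathbf{x}$ is exactly the intersection point built from the endpoints of the basis arcs on the boundary, and is the Heegaard Floer representative of the contact class by \cite{HKM}. The strategy is to identify ${\mathcal M}_{W_+}^{I=0}(\mathbf{x},\bs{\gamma})$ explicitly. Geometrically, the Lagrangian $L_{\mathbf{a}}$ is the parallel-transport trace of $\{3\}\times\{1\}\times\mathbf{a}$ around $\partial B_+$, and its endpoints trace out the binding; the obvious holomorphic curve with positive asymptotics at $\mathbf{x}$ is the union of the $2g$ "trivial" pieces sitting over the boundary arcs, whose negative asymptotics are covers of the Reeb orbits foliating $\partial N$, i.e. of $e$. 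This gives a curve asymptotic to $e^{2g}$ (matching the intersection number $2g$ with a fibre, consistent with $\bs{\gamma}\in\widehat{\mathcal O}_{2g}$). One must then show (i) this curve is $I=0$, embedded, regular, and counted with multiplicity one, and (ii) it is the \emph{only} element of ${\mathcal M}_{W_+}^{I=0}(\mathbf{x},\bs{\gamma})$ for any $\bs{\gamma}$. Point (ii) follows from the barrier property of the thin strips together with positivity of intersection: a curve with all positive punctures at boundary points $x_i$ is trapped near $\partial S$ and cannot carry any interior homology, so it must be precisely this boundary configuration and $\bs{\gamma}=e^{2g}$. This forces $\overline{\Phi}(\mathbf{x})=e^{2g}$, and since $\mathbf{x}$ survives to $\widehat{CF}$, also $\Phi(\mathbf{x})=e^{2g}$.

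The main obstacle is point (ii) above — the uniqueness and transversality of the distinguished curve, and ruling out any other $I=0$ curve asymptotic to $\mathbf{x}$. This requires a careful application of the SFT compactness and the maximum-principle/barrier arguments for $W_+$ (the thin strips $D_i$, $D_i'$ and the binding region playing the role of $J_{W_+}$-holomorphic barriers), exactly the kind of analysis carried out in \cite[Section~6]{I}; I would cite \cite[Theorem 6.2.4]{I} for the full details rather than reproduce them. The factorization through $\widehat{CF}$ (the first assertion) is comparatively routine once the barrier claim \cite[Claim 4.9.2]{I} is available in the cobordism setting, which is again established in \cite{I}.
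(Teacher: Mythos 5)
Your proposal is correct and follows essentially the same route as the paper: both assertions are deduced from the single structural fact (proved in \cite[Lemma 6.2.3]{I}, and quoted in the survey's sketch) that any $J_+$-holomorphic curve in $W_+$ with a positive end at a chord over a boundary point $x_i$ or $x_i'$ has an irreducible component equal to the trivial section $B_+\times\{x\}$ capped off by a gradient trajectory to $e$ in the Morse-Bott family of $\partial N$, so that removing this component identifies the counts for $\{x_i\}\cup\mathbf{y}_0$ and $\{x_i'\}\cup\mathbf{y}_0$, and applying it to all $2g$ components of a curve asymptotic to $\mathbf{x}$ forces the unique configuration with $\bs{\gamma}=e^{2g}$. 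One small correction: the mechanism is not the thin strips $D_i$, $D_i'$ (these live in the page $-S_1$ of the closed Heegaard surface and play no role in $W_+$), but positivity of intersection with the holomorphic surfaces $B_+\times\{\theta\}$, $\theta\in\partial S$, near the vertical boundary; note also that the trivial piece is negatively asymptotic to the Morse-Bott orbit over $x$, connected to $e$ by the gradient trajectory, so removing it divides $\bs{\gamma}$ by $e$.
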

\begin{proof}[Sketch of proof]
The statement is a consequence of the following fact, proved in \cite[Lemma 6.2.3]{I}: a $J_+$-holomorphic curve in $W_+$ with boundary on $L_{\mathbf{a}}$ which is positively asymptotic to a chord over an intersection point $x$ on $\partial S$ (i.e. $x=x_i$ or $x=x_i'$ for some $i$) has an irreducible component consisting of a trivial section $B_+ \times \{ x \}$ followed by a gradient flow trajectory connecting the orbit over $x$ to $e$ in the Morse-Bott family associated to $\partial N$. In fact a holomorphic curve with a nonconstant positive end at $x$ cannot be contained in $N$.
\end{proof}
Finally we define the map $\widehat{\Phi}_* \colon \widehat{HF}(-M) \to \widehat{ECH}(M)$ of Theorem \ref{main} as the composition of $\Phi_* \colon \widehat{HF}(S, \mathbf{a}, \hh(\mathbf{a})) \to ECH_{2g}(N, \alpha_0, \omega)$ with
the inclusion
$$PFC_{2g}(N, \alpha_0, \omega) \to \widehat{PFC}(N, \partial N, \alpha_0, \omega) \cong \widehat{ECH}(N, \partial N, \alpha)$$
and the map $\widehat{\sigma}_* \colon \widehat{ECH}(N, \partial N, \alpha) \to \widehat{ECH}(M)$ of Theorem \ref{main theorem of binding}.
\begin{rem}
We could have defined the map $\Phi$ using the contact form $\alpha$ instead of the stable Hamiltonian structure $(\alpha_0, \omega)$ at the negative end of $W_+$. However, working with the stable Hamiltonian structure induced by the fibration will be useful in the construction of the inverse map $\Psi$ in Section \ref{sec: CO}.
\end{rem}
\subsection{The map $\Phi^+$}
The map $\Phi^+$ will be defined by counting holomorphic curves in a symplectic cobordism with boundary $(X_+, \Omega_{X_+})$ from $[0,1] \times \Sigma$ to $M$. The symplectic form $\Omega_{X_+}$ will not be exact, and therefore the count will require some extra care.

\subsubsection{The symplectic cobordism $(X_+, \Omega_{X_+})$}
 We describe a simplified construction of $(X_+, \Omega_{X_+})$ and refer the reader to \cite[Section 4]{III} for the actual details, with the warning that the notations will not correspond exactly.

Let $(\Sigma, \bs{\alpha}, \bs{\beta}, z)$ be the pointed Heegaard diagram for $-M$ associated to the open book decomposition $(S, \hh)$ as in Subsection \ref{HFhat on a page}.\footnote{What we call $\Sigma$ here was called $-\Sigma$ in \cite{III}.} In particular $\Sigma = S_0 \cup -S_1$. For technical reasons we need to assume that $S$ has genus at least two.
We also fix an area form $\omega^+$ on $\Sigma$, whose properties will be specified in the construction of $X_+$.

We decompose $M=N \cup (T^2 \times [1,2]) \cup V$ as in Subsection \ref{subsec: proof of binding} and choose a contact form $\lambda$ on $M$ which is supported by the open book decomposition $(S, \hh)$ and moreover satisfies the following properties:
\begin{enumerate}
\item $\lambda|_N=\alpha$ (in particular $\partial N$ is a negative Morse-Bott torus),
\item $\partial V$ is a positive Morse-Bott torus,
\item Reeb orbits in $\op{int}(V)$ and in the no man's land $T^2 \times (1,2)$ are long\footnote{What ``long'' means will be made clear in Lemma \ref{paperino}.}, and
\item $\alpha$ is close to the stable Hamiltonian structure $(\alpha_0, \omega)$ induced by the fibration $N \to S^1$ with fibre $S$, so that $ECC_{2g}(N, \lambda)$ is isomorphic to $PFC_{2g}(N, \lambda_0, d \lambda)$.
\end{enumerate}
At least at a first approximation, the symplectic cobordism $(X_+, \Omega_+)$ will satisfy the following properties:
\begin{enumerate}
\item the positive end is
$$X_+^{>}= [3, + \infty) \times [0,1] \times \Sigma, \quad \Omega_{X_+}|_{X_+^{>}}= ds \wedge dt+ \omega^+;$$
\item the negative end is
$$X_+^{<}= (- \infty, -1] \times M, \quad \Omega_{X_+}|_{X_+^{<}} = d(e^s \lambda);$$
\item there is a symplectic surface $S_z \subset X_+$ such that
$$S_z \cap X_+^{>}= [3, + \infty) \times [0,1] \times \{ z \} \quad \text{and} \quad S_z \cap X_+^{<}= \emptyset;$$
\item $\Omega_{X_+}|_{X_+ \setminus S_z}= d \Theta$ for some one-form $\Theta$ such that $\Theta|_{X_+^{<}}= e^s \lambda$;
\item there is a Lagrangian submanifold $L_{\bs{\alpha}} \subset \partial X_+$ such that
$$X_+^{>} \cap L_{\bs{\alpha}}= ([3, + \infty) \times \{ 0 \} \times \bs{\beta}) \cup  ([3, + \infty) \times \{ 1 \} \times \bs{\alpha})$$
and $\Theta$ is exact on $L_{\bs{\alpha}}$; and
\item there is an embedding of $W_+$ into $X_+$ such that $W_+ \cap S_z= \emptyset$,
$$X_+^{>} \cap W_+ = [3, + \infty) \times [0,1] \times S_0, \quad X_+^{<} \cap W_+= \R \times N,$$
and the restriction of $\Omega_{X_+}$ to $W_+$ is close to $\Omega_+$ (see the proof of Lemma \ref{paperino}).
\end{enumerate}
We refer to \cite[Lemma 4.1.1]{III} for a more precise description of $(X_+, \Omega_{X_+})$.
We will give here only a sketch of the construction; the details, which are quite delicate, can be found in \cite[Section 4.1]{III}.

We identify $S_0 \cong S$ and extend $\hh \colon S_0 \to S_0$ to a diffeomorphism $\hh^+ \colon \Sigma \to \Sigma$ which, on $S_1$, restricts to a perturbation of the identity by a small Hamiltonian isotopy. We require that $\bs{\beta}= \hh^+(\bs{\alpha})$.
Let $B_+^0= B_+ \cap [0, +\infty) \times S^1$; i.e.\ we cut the negative end out of $B_+$. We define $X_+^0$ as the bundle over $B_+^0$ with fibre $\Sigma$ and monodromy $\hh^+$. If $\omega^+$ is an area form on $\Sigma$ preserved by $\hh^+$, then we define $\Omega_{X^0_+}= ds\wedge dt + \omega^+$. The component of the boundary of $X_+^0$ over $\{ 0 \} \times S^1$ is a three-manifold $N^+$ obtained as mapping torus of $(\Sigma, \hh^+)$. It contain the mapping torus $N$ of $(S, \hh)$ as a closed submanifold of codimension zero.

We recall that in the following construction the orientation of $S_1$ will be the opposite of the orientation as a page of the open book decomposition $(S, \hh)$ of $M$. We define $X_+^1= D^2 \times S_1$ and $\Omega_{X_+^1}= \omega_{D^2} + \omega^+|_{S_1}$ where, of course, $\omega_{D^2}$ is the standard symplectic form on $D^2$. Then we glue $X^0_+$ and $X^1_+$ by identifying $\partial D^1 \times S_1 \subset \partial X^1_+$ with $N^+ \setminus \op{int}(N)$, which is the mapping torus of $(S_1, \hh^+|_{S_1})$. We can glue the symplectic forms $\Omega_{X_+^0}$ and $\Omega_{X_+^1}$ because $\hh^+|_{S_1}$ is Hamiltonian isotopic to the identity. The gluing of $X^0_+$ and $X^1_+$ is schematically depicted in Figure \ref{fig: Xplus}.
\begin{figure}
\begin{center}
% \psfragscanon
% \psfrag{A}{\small $S_1$}
% \psfrag{B}{\small $S_0$}
% \psfrag{C}{\small $B_+^0$}
% \psfrag{D}{\small $D^2$}
\includegraphics[width=6cm]{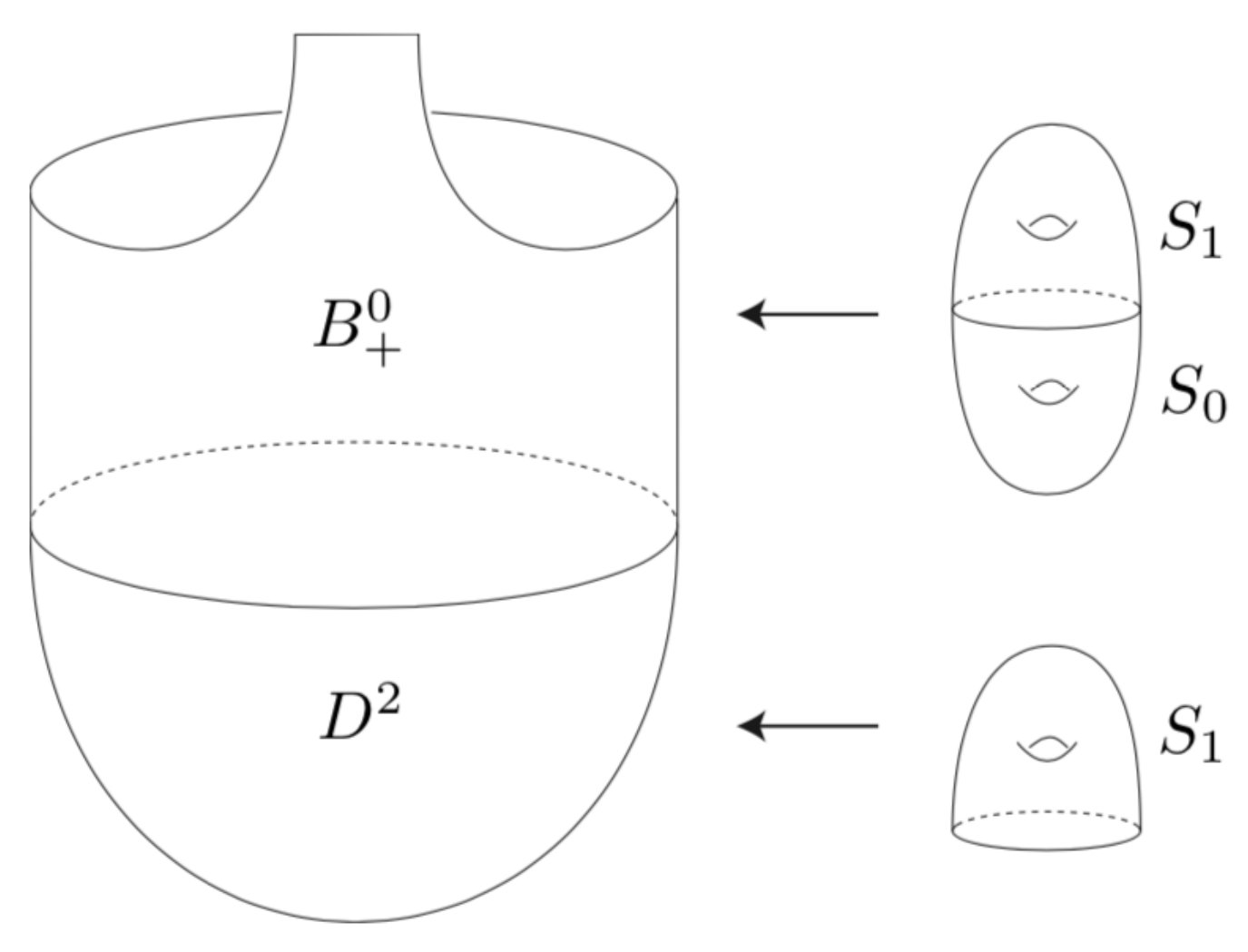}
\end{center}
\caption{Schematic diagram for $X_+^0\cup X_+^1$ which indicates the fibres over each subsurface.}
\label{fig: Xplus}
\end{figure}
Thus $X^0_+ \cup X^1_+$ is a manifold with boundary and (concave) corners and the compact boundary component of $X^0_+ \cup X^1_+$ is homeomorphic to $M$. We smooth the corner and modify the symplectic form near the compact boundary component so that, in a collar, it looks like a piece of the symplectisation of $(M, \lambda)$. Finally we obtain $(X_+, \Omega_{X_+})$ by completing $X^0_+ \cup X^1_+$ with the negative half-symplectisation $((- \infty, 0] \times M, d(e^r \lambda))$ of $(M, \lambda)$.

We assume that the base point $z \in S_1$ is a fixed point for $\hh^+$; then we define the symplectic surface
$$S_z = (B^0_+ \cup D^2) \times \{ z \}.$$
Finally, we define a Lagrangian submanifold $L_{\bs{\alpha}} \subset \partial X_+$ such that
$$X_+^{>} \cap L_{\bs{\alpha}} = ([3, + \infty) \times \{ 0 \} \times \bs{\beta}) \cup ([3, + \infty) \times \{ 1 \} \times \bs{\alpha}).$$
Note that $\partial X_+ \subset X_+^0$, which is the portion of $X_+$ which has the structure of a symplectic surface bundle, and therefore has a symplectic connection.
As it was the case for $L_{\mathbf{a}} \subset \partial W_+$, we define $L_{\bs{\alpha}}$ as the trace of the parallel transport of $\{ 3 \} \times \{ 1 \} \times \bs{\alpha}$ along $\partial B_+$ with respect to this symplectic connection.
\subsubsection{Definition of $\Phi^+$}
We fix a generic almost complex structure $J^+$ on $X_+$ which is compatible with $\Omega_{X_+}$ and the stable Hamiltonian structures at the ends, and makes $S_z$ holomorphic. Given a $2g$-tuple of intersection points $\mathbf{y} \in {\mathcal S}_{\bs{\alpha}, \bs{\beta}}$ and an orbit set $\bs{\gamma} \in {\mathcal O}$ (i.e.\ contained in $M$) we denote by ${\mathcal M}_{X_+}(\mathbf{y}, \bs{\gamma})$ the moduli space of $J^+$-holomorphic curves in $X_+$ with boundary on $L_{\bs{\alpha}}$ which are positively asymptotic to $[0,1] \times \mathbf{y}$ and negative asymptotic to $\bs{\gamma}$. To a curve $u \in {\mathcal M}_{X_+}(\mathbf{y}, \bs{\gamma})$ we associate an ECH-type index $I(u)$ and the intersection number ${\mathcal F}(u)$ between the image of $u$ and the surface $S_z$. By positivity of intersection, ${\mathcal F}(u) \ge 0$.

We define the map $\Phi^+ \colon CF^+(\Sigma, \bs{\alpha}, \bs{\beta}, z) \to ECC(M, \lambda)$ as
\begin{equation}
\Phi^+([\mathbf{y}, i]) = \sum \limits_{\bs{\gamma} \in {\mathcal O}} \sum \limits_{j=0}^i \# {\mathcal M}_{X_+}^{I=0, {\mathcal F}=j}(\mathbf{y}, \bs{\gamma}) \bs{\gamma}.
\end{equation}
\begin{thm}
$\Phi^+$ is a well defined chain map.
\end{thm}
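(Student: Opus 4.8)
The plan is to run the standard argument --- counts of rigid holomorphic curves in a symplectic cobordism define a chain map --- adapted to the two features specific to $X_+$: the non-exactness of $\Omega_{X_+}$ and the bookkeeping of the intersection number ${\mathcal F}$ with the holomorphic surface $S_z$. First I would check well-definedness, i.e.\ that the sums are finite and that ${\mathcal M}_{X_+}^{I=0, {\mathcal F}=j}(\mathbf{y}, \bs{\gamma})$ is a finite set of points. Because $\Omega_{X_+}$ restricts to $d\Theta$ on $X_+ \setminus S_z$, with $\Theta$ exact on $L_{\bs{\alpha}}$ and equal to $e^s \lambda$ on the negative end, a Stokes computation for $u \in {\mathcal M}_{X_+}^{I=0, {\mathcal F}=j}(\mathbf{y}, \bs{\gamma})$ bounds the $\lambda$-action of $\bs{\gamma}$ in terms of fixed data attached to $\mathbf{y}$ plus a term proportional to ${\mathcal F}(u) = j \le i$, coming from the period of $\Omega_{X_+}$ across $S_z$; by nondegeneracy of $\lambda$ only finitely many orbit sets $\bs{\gamma}$, in finitely many relative homology classes, can occur. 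SFT and ECH compactness then give precompactness of all moduli spaces, the vertical boundary of $X_+$ being a barrier and the horizontal boundary $J^+$-convex so that curves stay in a compact region. For regularity, the index inequality for $X_+$ (extending \cite[Theorem 4.5.13]{I}, cf.\ \cite[Section 4]{III}) shows that any $u$ with $I(u) < 2$ is embedded, hence somewhere injective, hence regular for generic $J^+$, with $\op{ind}(u) = I(u)$; so ${\mathcal M}_{X_+}^{I=0, {\mathcal F}=j}(\mathbf{y}, \bs{\gamma})$ is a compact $0$-manifold and ${\mathcal M}_{X_+}^{I=1, {\mathcal F}=j}(\mathbf{y}, \bs{\gamma})$ a smooth $1$-manifold.

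To prove the chain-map identity I would analyse the boundary of the SFT compactification of ${\mathcal M}_{X_+}^{I=1, {\mathcal F}=j}(\mathbf{y}_+, \bs{\gamma})$ for $0 \le j \le i$, which by the compactness and gluing results carried over from the $W_+$ analysis in \cite{I} and from \cite{Li, HT2} is a compact $1$-manifold with boundary. Since the ECH-type index is additive under concatenation, is $\ge 0$ on each level for generic almost complex structures, and nontrivial Heegaard Floer curves as well as nontrivial embedded contact homology curves have $I \ge 1$ (an $I=0$ level being a union of trivial strips on the positive end, resp.\ a connector on the negative end), the only genuine boundary configurations are: (i) a curve $v \in \widehat{\mathcal M}_X^{I=1, n_z=j'}(\mathbf{y}_+, \mathbf{y}_-)$ followed by $u \in {\mathcal M}_{X_+}^{I=0, {\mathcal F}=j-j'}(\mathbf{y}_-, \bs{\gamma})$, and (ii) a curve $u' \in {\mathcal M}_{X_+}^{I=0, {\mathcal F}=j}(\mathbf{y}_+, \bs{\gamma}')$ followed by $w \in \widehat{\mathcal M}_{\R \times M}^{I=1}(\bs{\gamma}', \bs{\gamma})$; in (i) the relation ${\mathcal F} = n_z + {\mathcal F}$ on the broken curve uses that $S_z$ coincides with $\R \times [0,1] \times \{ z \}$ over the positive end, and in (ii) that $S_z$ is disjoint from the negative end. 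Gluing identifies each such configuration with exactly one end of the $1$-manifold, so the mod $2$ count of configurations of type (i) equals that of type (ii).

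Summing this equality over $0 \le j \le i$ and re-indexing via $j = j' + j''$ turns the type-(i) sum into the $\bs{\gamma}$-coefficient of $\Phi^+ \circ \partial^+([\mathbf{y}_+, i])$ --- unwinding the definitions of $\partial^+$ and of $\Phi^+$ --- and the type-(ii) sum into the $\bs{\gamma}$-coefficient of $\partial \circ \Phi^+([\mathbf{y}_+, i])$, using the definition of the embedded contact homology differential $\partial$. This gives $\Phi^+ \circ \partial^+ = \partial \circ \Phi^+$, completing the proof.

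I expect the main obstacle to be the compactness analysis forced by the non-exactness of $\Omega_{X_+}$: one must verify that the ${\mathcal F}$-grading is the only obstruction to an a priori area bound, that no area escapes into $S_z$ in a limit --- so that ${\mathcal F}$ is appropriately semicontinuous and no closed components or bubbles form along $S_z$, which may require a small perturbation of $J^+$ near $S_z$ as in the geometric $U$-map of Subsection~\ref{subsec: geometric U} --- and that the barriers coming from the vertical boundary of $X_+$ genuinely confine the curves. This is exactly where the delicate constructions of \cite[Section 4]{III} do the work; the gluing at the positive end, where the breaking interacts with the Lagrangian condition on $L_{\bs{\alpha}}$, is more involved than in a closed setting but is handled by the techniques already used in \cite{I}.
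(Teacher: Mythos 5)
Your outline of the compactness and breaking analysis is the right skeleton, and your Stokes/action argument for why fixing ${\mathcal F}$ yields an a priori bound is exactly the role that the ${\mathcal F}$-grading plays in the paper. But you have misdiagnosed the second difficulty caused by the non-exactness of $\Omega_{X_+}$, which is the one the paper singles out. The problem is not only that closed components might ``form along $S_z$'' in a limit: because $\Omega_{X_+}$ is not exact, $X_+$ can contain $J^+$-holomorphic curves with \emph{no positive ends at all} --- closed curves in fibre classes (e.g.\ in the capped-off region $X_+^1 \cong D^2\times S_1$ and their multiple covers), and more generally components with only negative ends. Such a component can be glued as a disjoint union onto a ``good'' curve without changing the asymptotics $(\mathbf{y},\bs{\gamma})$, so it threatens both the definition of $\Phi^+$ (a configuration counted in ${\mathcal M}^{I=0,{\mathcal F}=j}_{X_+}(\mathbf{y},\bs{\gamma})$ could a priori contain such a piece) and the identification of the boundary of the $I=1$ moduli spaces with the two chain-map terms. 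Your proposed fix --- a small perturbation of $J^+$ near $S_z$, as in the geometric $U$-map --- does not address this: these components need not live near $S_z$, and genericity cannot kill them when their expected dimension is nonnegative (closed curves in fibre classes are exactly of this kind).

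The mechanism the paper uses is different and is an index argument, not a transversality argument: one shows that any $J^+$-holomorphic curve in $X_+$ with no positive ends has large ECH-type index, so that by additivity of $I$ under concatenation its presence would force the remaining components to have negative index, which is impossible for generic $J^+$. This is the content of \cite[Lemma 5.6.2 and Lemma 5.6.3]{III}, and it is precisely here that the standing hypothesis that the page $S$ has genus at least two is used --- a hypothesis your proof never invokes, which is a symptom of the missing step. With that lemma in place, the rest of your argument (the index bookkeeping ${\mathcal F}=n_z+{\mathcal F}$ across a breaking at the positive end, the vanishing of ${\mathcal F}$ on the negative end, and the re-indexing over $j=j'+j''$) goes through as you describe.
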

This follows from the usual  analysis of moduli spaces. However, the symplectic form $\Omega_{X_+}$ is not exact, and therefore some extra care is needed. For example,  fixing the intersection number ${\mathcal F}$ is an important ingredient for proving compactness of the moduli spaces. Moreover, $J^+$-holomorphic curves with no positive ends can exist, and we have to exclude them from the count. This is done by showing that they have large ECH-type index; see \cite[Lemma 5.6.2 and Lemma 5.6.3]{III}. The assumption on the genus of $S$ is used there.

We need to show that the maps $\widehat{\Phi}_*$ and $\Phi^+_*$ fit into the commutative diagram \eqref{commutativity of isomorphisms}. In \cite[Section 6]{III} this is done by comparing the mapping cones of $U$ in Heegaard Floer homology and embedded contact homology. For simplicity we prefer to ignore algebraic complications and illustrate only the geometric ideas of  \cite[Section 6]{III}. Thus, instead to work with cones, we will sketch only the proofs of commutativity of the two squares displayed in \eqref{commutativity of isomorphisms}. The next theorem shows commutativity of the right-hand square.
\begin{thm}[{\cite[Theorem 5.9.1]{III}}] The map $\Phi^+$ commutes up to homotopy with the maps $U$ on Heegaard Floer homology and embedded contact homology.
\end{thm}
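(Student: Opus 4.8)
Since by Theorem~\ref{thm: U-map} the algebraic $U$-map on $CF^+(\Sigma,\bs{\alpha},\bs{\beta},z)$ is chain homotopic to the geometric $U$-map $U_{\mathbf{z}}$, and pre- or post-composing a chain homotopy with a chain map again yields a chain homotopy, it is enough to exhibit a homotopy between $\Phi^+\circ U_{\mathbf{z}}$ and $U_{\mathbf{z}'}\circ\Phi^+$, where $\mathbf{z}=(0,\frac{1}{2},z)\in\R\times[0,1]\times\Sigma$ and $\mathbf{z}'\in\R\times M$ are generic points and $U_{\mathbf{z}'}$ is the geometric $U$-map of embedded contact homology. The plan, parallel to the proof of Theorem~\ref{thm: U-map}, is to move a point constraint across the cobordism $X_+$. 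By the compactness and gluing results underlying the definitions of $\Phi^+$ and of the two $U$-maps, $\Phi^+\circ U_{\mathbf{z}}$ counts broken configurations consisting of an $HF$-curve in $\R\times[0,1]\times\Sigma$ of ECH-type index two through $\mathbf{z}$ (a generator of $U_{\mathbf{z}}$) glued to an $I=0$ curve in $X_+$, while $U_{\mathbf{z}'}\circ\Phi^+$ counts an $I=0$ curve in $X_+$ glued to a curve in $\R\times M$ of ECH index two through $\mathbf{z}'$ (a generator of $U_{\mathbf{z}'}$). By gluing, each such configuration is the $\tau\to 0$, respectively $\tau\to 1$, limit of a one-parameter family of honest $I=2$ curves in $X_+$ passing through a moving point $\mathbf{p}_\tau$, where $\mathbf{p}_0$ is pushed deep into the positive end $[3,+\infty)\times[0,1]\times\Sigma$ and $\mathbf{p}_1$ deep into the negative end $(-\infty,-1]\times M$.

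Concretely, fix a generic smooth path $\mathbf{p}_\tau$, $\tau\in[0,1]$, of points in $X_+\setminus S_z$ with the asymptotic behaviour just described, together with a generic path of compatible almost complex structures making $S_z$ holomorphic and interpolating between the relevant choices, and set
\begin{equation*}
K([\mathbf{y},i]) = \sum_{\bs{\gamma}\in{\mathcal O}}\ \sum_{j=1}^{i}\ \#\,{\mathcal M}_{X_+}^{I=1,\,{\mathcal F}=j}(\mathbf{y},\bs{\gamma};\mathbf{p}_*)\ \bs{\gamma},
\end{equation*}
where ${\mathcal M}_{X_+}^{I=1,\,{\mathcal F}=j}(\mathbf{y},\bs{\gamma};\mathbf{p}_*)$ denotes the space of pairs $(u,\tau)$ with $u$ an $I=1$ curve in $X_+$ from $[0,1]\times\mathbf{y}$ to $\bs{\gamma}$, with ${\mathcal F}(u)=j$, passing through $\mathbf{p}_\tau$; since passing through a point is a codimension two condition and the parameter $\tau$ contributes one dimension, this space is zero-dimensional for generic data. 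The homotopy relation is then read off the ends of the one-dimensional parametrised moduli spaces of pairs $(u,\tau)$ with $I(u)=2$ and $u$ through $\mathbf{p}_\tau$: the ends at $\tau=0$ produce $\Phi^+\circ U_{\mathbf{z}}$, the ends at $\tau=1$ produce $U_{\mathbf{z}'}\circ\Phi^+$, and the remaining ends — where, for $\tau$ interior, $u$ splits off a differential curve at the positive or the negative end while the residual curve of $X_+$ (still carrying a free parameter $\tau$) is counted by $K$ — produce $K\circ\partial^+$ and $\partial\circ K$. Combining this with Theorem~\ref{thm: U-map} gives a chain homotopy between $\Phi^+\circ U$ and $U\circ\Phi^+$; the additional bookkeeping needed to phrase all of this in terms of the mapping cones of the $U$-maps, matching the filtration level $i$ with powers of $U$ on the $ECH$ side, is carried out in \cite[Section 6]{III}.

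As always in \cite{III}, the analysis requires some care. Because $\Omega_{X_+}$ is not exact, compactness of the parametrised moduli spaces is secured by fixing the intersection number ${\mathcal F}$ with the holomorphic surface $S_z$ and invoking positivity of intersection, the path $\mathbf{p}_\tau$ being taken disjoint from $S_z$ so that the point constraint forces no spurious intersections. One must also exclude the degenerations that would spoil the argument: closed irreducible components through the point constraint (ruled out by taking the almost complex structures to be generic perturbations keeping $S_z$ holomorphic, exactly as for the geometric $U$-map), curves with no positive end (ruled out, under the standing hypothesis that $S$ has genus at least two, because they carry large ECH-type index, as in \cite[Lemma 5.6.2 and Lemma 5.6.3]{III}), and multiply covered components (ruled out by the index inequality, which forces the low-index curves in play to be embedded and hence regular for generic data). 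The main obstacle, however — and the real content of the statement beyond the algebraic reduction of Theorem~\ref{thm: U-map} — is the gluing analysis at the two ends: one must show that as $\mathbf{p}_\tau$ escapes through the positive, respectively negative, end of $X_+$, the SFT limit is precisely one index two curve of the symplectisation carrying the point constraint, glued to an $I=0$ curve of $X_+$ together with trivial strips and cylinders, with the index two piece recognisable as a generator of $U_{\mathbf{z}}$, respectively $U_{\mathbf{z}'}$. This is the same neck-stretching that underlies the definitions of the geometric $U$-maps in the two theories, now performed inside the cobordism $X_+$.
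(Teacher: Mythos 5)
Your proposal is correct and follows essentially the same strategy as the paper: reduce to the geometric $U$-maps (via Theorem~\ref{thm: U-map} on the Heegaard Floer side), then define the homotopy by moving a point constraint from the positive end of $X_+$ to the negative end and counting $I=1$ curves through the moving point at isolated instants, reading the relation off the ends of the parametrised $I=2$ moduli spaces. The compactness and degeneration issues you flag (fixing ${\mathcal F}$, excluding closed components and curves with no positive ends) are exactly the ones the paper defers to \cite[Section 6]{III}.
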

\begin{proof}
The maps $U$ are defined by counting $I=2$ holomorphic curves passing through a generic point: in embedded contact homology by definition and in Heegaard Floer homology by Theorem \ref{thm: U-map}. We define a chain homotopy between $\Phi^+ \circ U$ and $U \circ \Phi^+$ by moving a base point from the positive end of $X_+$ to the negative end, and counting $I=1$ holomorphic curves in $X_+$ passing through the moving point at some isolated instant.
\end{proof}
Now we prove commutativity of the left-hand square.
\begin{lemma}[{\cite[Lemmas 5.8.1 and 5.8.2]{III}}]\label{paperino}
If $\mathbf{y} \in {\mathcal S}_{\mathbf{a}, \hh(\mathbf{a})}$ and $u \in {\mathcal M}_{X_+}^{{\mathcal F}=0}(\mathbf{y}, \bs{\gamma})$, then $\bs{\gamma} \in \widehat{\mathcal O}_{2g}$ (i.e.\ it is built from closed Reeb orbits in $N$) and the image of $u$ is contained in $W_+$.
\end{lemma}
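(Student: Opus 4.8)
The strategy is to show that such a $u$ cannot leave the embedded sub-cobordism $W_+\subset X_+$, combining positivity of intersection against the surface $S_z$, an index obstruction, and the blocking and trapping lemmas of \cite{binding}. First I would record what the hypothesis $\mathcal F(u)=0$ buys. Since $S_z$ is $J^+$-holomorphic and is not a component of $u$ (its positive end lies over $S_1$, while those of $u$ will be seen to lie over $S_0$, and it meets neither $L_{\bs{\alpha}}$ nor a negative end), the intersection number $\mathcal F(u)$ is a sum of nonnegative local indices, so $\mathcal F(u)=0$ forces the image of $u$ to be disjoint from $S_z$. As $\Omega_{X_+}=d\Theta$ on $X_+\setminus S_z$ and $\Theta$ is exact on $L_{\bs{\alpha}}$, a compact component of $u$ with no puncture would satisfy $\int\Omega_{X_+}=\int_{\partial}\Theta=0$ by Stokes, which is impossible for a nonconstant holomorphic curve; hence no component of $u$ is closed, and every component carries a puncture.

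Next I would locate the positive punctures. Because $\mathbf{y}\in{\mathcal S}_{\mathbf{a},\hh(\mathbf{a})}$, every positive puncture of $u$ is asymptotic to a chord $[0,1]\times\{y_k\}$ over a point $y_k\in S_0$; there are exactly $2g$ of them, none over $S_1$ and none at a Reeb orbit. The ECH-type index is additive over irreducible components and, for generic $J^+$, each component has nonnegative index, so $I(u)=0$ forces $I=0$ on each component. By \cite[Lemma 5.6.2 and Lemma 5.6.3]{III} --- where the standing hypothesis that $S$ has genus at least two is used --- an irreducible component with a puncture but no positive puncture has strictly positive ECH-type index. Combined with the previous step this shows that every irreducible component of $u$ has a positive end over a point of $S_0$.

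The crux is the confinement step: a $J^+$-holomorphic curve with boundary on $L_{\bs{\alpha}}$, with every positive end over $S_0$ and disjoint from $S_z$, has image inside $W_+$. I would prove this by exhibiting the separating hypersurface $\mathcal W\subset X_+$ that cuts $X_+$ into $\op{int}(W_+)$ and its complement: over the negative end $X_+^{<}$ it is $\R\times\partial N$; over the bundle region $X_+^0$ it is the mapping torus of $\partial S$, which is a barrier because $\hh$, hence $\hh^+$, restricts to the identity on $\partial S$, so $\mathcal W$ is foliated by $J^+$-holomorphic sections once $J^+$ is adapted to the fibration near $\mathcal W$; within the cap $X_+^1=D^2\times S_1$ it is $D^2\times\partial S$, foliated by the holomorphic discs $D^2\times\{p\}$, $p\in\partial S$; and these pieces match across the smoothed corner. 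The blocking lemma \cite[Lemma 5.2.3]{binding} and the trapping lemma \cite[Lemma 5.3.2]{binding} then prevent $u$ from crossing $\mathcal W$, and since the boundary and positive asymptotics of $u$ lie on the $W_+$-side, so does its whole image; in particular $u$ never reaches $\R\times\op{int}(V)$ or the symplectisation of the no man's land, so its negative ends are closed Reeb orbits of $N$. A component hiding entirely inside the cap $X_+^1$ would, since $\partial X_+\subset X_+^0$ and $X_+^1$ carries no ends, be closed, which was excluded in the first step; this is where the hypothesis $\mathcal F(u)=0$ enters.

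Once $u\subset W_+$, the projection $\pi_{W_+}\circ u\colon F\to B_+$ is a branched cover which over the positive strip end of $B_+$ has the $2g$ sheets $[0,1]\times\{y_k\}$, hence has degree $2g$; restricting over the negative end, where $\pi_{W_+}$ factors through the fibration $N\to S^1$, the image of $u$ meets a page $S$ in $2g$ points counted with multiplicity, i.e.\ $\bs{\gamma}\in\widehat{\mathcal O}_{2g}$. I expect the confinement step to be the main obstacle: one must assemble $\mathcal W$ coherently across $X_+^{<}$, $X_+^0$, $X_+^1$ and the corner region, arrange $J^+$ so that $\mathcal W$ together with its foliation is a genuine $J^+$-holomorphic barrier, and check that the only residual escape route --- a closed component inside $X_+^1$ --- is precisely the one closed off by $\mathcal F(u)=0$. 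The remaining verifications are routine positivity-of-intersection and index-inequality bookkeeping.
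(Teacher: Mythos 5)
Your confinement step via the wall foliated by the sections over $\partial S$ is essentially the paper's second step (positivity of intersection with the $J^+$-holomorphic surfaces $C_\theta=B_+\times\{\theta\}$, $\theta\in\partial S$), and your Stokes argument excluding closed components is fine. But there is a genuine gap in how you rule out negative ends outside $N$. The blocking lemma only forbids a holomorphic curve from crossing a torus foliated by Reeb orbits \emph{transversally}; it does not forbid a negative puncture asymptotic to a closed Reeb orbit lying \emph{on} such a torus, nor does it say anything about closed orbits sitting between $\partial N$ and whichever barrier torus you choose. Since the no man's land $T^2\times(1,2)$ contains closed Reeb orbits on the rational-slope tori (arbitrarily close to $\partial N$), your argument as written does not exclude $\bs{\gamma}$ from containing such orbits, and hence does not yield $\bs{\gamma}\in\widehat{\mathcal O}_{2g}$. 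The paper closes this loophole with an energy estimate: ${\mathcal F}(u)=0$ confines the image to $X_+\setminus S_z$, where $\Omega_{X_+}=d\Theta$ with $\Theta$ exact on $L_{\bs{\alpha}}$ and equal to $e^s\lambda$ on the negative end, so Stokes gives an a priori bound on the total action of $\bs{\gamma}$; since the orbits in $\op{int}(V)$ and in the no man's land are constructed to be \emph{long} (this is condition (3) on $\lambda$, whose footnote points precisely to this lemma), they cannot appear. Only after this reduction (which still leaves $e'$ and $h'$ on $\partial V$) does the intersection argument with $C_\theta$ finish the job. You use exactness only to kill closed components, which is not where its real force lies.

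Two smaller points. First, your index step assumes $I(u)=0$, but the lemma is stated for $u\in{\mathcal M}_{X_+}^{{\mathcal F}=0}(\mathbf{y},\bs{\gamma})$ with no index restriction, so you cannot invoke additivity of $I$ over components to force each component to have a positive end; the exclusion of components without positive ends must come from \cite[Lemmas 5.6.2 and 5.6.3]{III} (or from the energy bound) independently of an index hypothesis on $u$. Second, once the energy estimate is in place, the trapping lemma is not really needed here; the remaining work is the positivity-of-intersection count against the sections over $\partial S$, which also eliminates $e'$ and $h'$ and pins the image inside $W_+$.
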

\begin{proof}
Since ${\mathcal F}(u)=0$, the image of $u$ is contained in $X_+ \setminus S_z$, where $\Omega_{X_+}$ is exact. Then an energy estimate shows that $\bs{\gamma}$ is built from closed Reeb orbits in $N$ plus $e'$ and $h'$ because orbits in $\op{int}(V)$ and in the no man's land are long. Then, using positivity of intersection with the $J^+$-holomorphic surfaces $C_\theta = B_+ \times \{ \theta \} \subset W_+$, where $\theta \in \partial S$, we obtain that $\bs{\gamma}$ is built from closed orbits in $N$ and the image of $u$ is contained in $W_+$.
\end{proof}
If $\Omega_{X_+}|_{W_+}$ is sufficiently close to $\Omega_+$, not only we can identify $ECC_{2g}(N, \alpha)$ and $PFC_{2g}(N, \alpha_0, \omega)$, but we can also choose $J^+$ on $X_+$ and $J_+$ on $W_+$ so that there is a bijection between $J^+$-holomorphic curves contained in $W_+$ and $J_+$-holomorphic curves. Thus Lemma \ref{paperino} implies that there is a commutative diagram
$$\xymatrix{
\overline{CF}(S, \mathbf{a}, \hh(\mathbf{a})) \ar[r] \ar[d]^{\overline{\Phi}} & CF^+(\Sigma, \bs{\alpha}, \bs{\beta}, z) \ar[d]^{\Phi^+} \\
ECC_{2g}(N) \ar[r] & ECC(M)
}$$
where the top arrow is the inclusion $\mathbf{y} \mapsto [\mathbf{y}, 0]$. In homology, both this map and $\overline{\Phi}$ factor through $\widehat{HF}(S, \mathbf{a}, \hh(\mathbf{a}))$, and this proves the commutativity of the left-hand square.

The next theorem is proved by an algebraic trick which generalises Ozsv\'ath and Szab\'o's observation that $\widehat{HF}(M)$ is trivial if and only if $HF^+(M)$ is trivial\footnote{$\widehat{HF}(M)$ and $HF^+(M)$ are never trivial, but subgroups associated to some Spin$^c$ structures can be trivial.} in \cite[Proposition 2.1]{OSz2}.
\begin{prop}[See {\cite[Theorem 6.1.5]{III}}] $\Phi^+_*$ is an isomorphism if and only if $\widehat{\Phi}_*$ is an isomorphism.
\end{prop}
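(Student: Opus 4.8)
The plan is to exploit the commutative diagram \eqref{commutativity of isomorphisms} together with the long exact sequences (the rows of that diagram) and the fact that $\widehat{\Phi}_*$ and $\Phi^+_*$ are chain maps that are \emph{already known to be defined}; only their bijectivity is in question. The key algebraic input is the generalisation of \cite[Proposition 2.1]{OSz2}: in a cofiber-type long exact sequence relating $\widehat{HF}$, $HF^+$ and $HF^+$ via $U$, triviality of one term forces triviality of the others, and more usefully, a map of such triangles is an isomorphism on the ``$HF^+$-columns'' iff it is an isomorphism on the ``$\widehat{HF}$-column''. I would phrase this as a purely homological-algebra lemma: if one has a morphism of long exact sequences
\[
\xymatrix{
\cdots \ar[r] & A_n \ar[r] \ar[d]_{f_n} & B_n \ar[r]^{U} \ar[d]_{g_n} & B_n \ar[r] \ar[d]_{g_n} & A_{n-1} \ar[r] \ar[d]_{f_{n-1}} & \cdots \\
\cdots \ar[r] & A'_n \ar[r] & B'_n \ar[r]^{U} & B'_n \ar[r] & A'_{n-1} \ar[r] & \cdots
}
\]
in which the ``$B$'' entries in the two copies are identified and the connecting maps are compatible with $U$ in the evident way, then $f_\bullet$ is an isomorphism iff $g_\bullet$ is. Here $A = \widehat{HF}(-M)$, $B = HF^+(-M)$ on the top, $A' = \widehat{ECH}(M)$, $B' = ECH(M)$ on the bottom, $f = \widehat{\Phi}_*$, $g = \Phi^+_*$, and the rows are the tautological sequence \eqref{successione tautologica} and its ECH analogue.

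First I would set up the algebraic lemma carefully. The ``$\Leftarrow$'' direction (if $\Phi^+_*$ is an isomorphism then $\widehat{\Phi}_*$ is) is the easy one: by the five lemma applied to the morphism of long exact sequences \eqref{commutativity of isomorphisms}, if $g_n$ (which appears twice, in both $HF^+$ columns) is an isomorphism for all $n$, then $f_n = \widehat{\Phi}_*$ is sandwiched between isomorphisms and hence is itself an isomorphism. The ``$\Rightarrow$'' direction is the content of \cite[Proposition 2.1]{OSz2}'s trick: assuming $\widehat{\Phi}_*$ is an isomorphism, one argues that $\Phi^+_*$ is an isomorphism by an induction. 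Concretely, using the $U$-action one filters $HF^+$ by the image of $U^k$; because $HF^+$ and $ECH$ are both complete and cocomplete with respect to the $U$-adic filtration (each element is killed by a sufficiently large power of $U$ on the ``tails'', and the inverse limit recovers the group), a map between them that induces an isomorphism on the associated graded — which is governed by the $\widehat{HF}$, $\widehat{ECH}$ columns via the long exact sequence — is itself an isomorphism. The cone/mapping-cone language of \cite[Section 6]{III} packages exactly this: $\widehat{HF}(-M)$ is (up to the relevant shift) the cone of $U$ on $HF^+(-M)$ and $\widehat{ECH}(M)$ the cone of $U$ on $ECH(M)$, the diagram \eqref{commutativity of isomorphisms} exhibits $(\Phi^+_*,\Phi^+_*)$ as a morphism of such cones inducing $\widehat{\Phi}_*$ on cones, and a morphism of mapping cones that is a quasi-isomorphism on cones is a quasi-isomorphism on the underlying complexes provided the $U$-filtrations are exhaustive and Hausdorff. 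I would invoke precisely this.

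The main obstacle is the ``$\Rightarrow$'' direction, and within it the finiteness/convergence issue: $HF^+$ and $ECH$ are infinitely generated, so one cannot simply run the five lemma on finitely generated pieces; one must check that the $U$-adic filtration behaves well enough that the relevant spectral-sequence or mapping-cone comparison argument converges. The remedy, already implicit in the excerpt, is that $\widehat{ECH}(N,\partial N)$ is by construction a direct limit $\varinjlim ECH_i(N)$ (Lemma \ref{hat as direct limit}) and the $U$-map decreases $i$, so on each homology-class summand the filtration is eventually stable; on the Heegaard side the analogous statement is the classical fact that $HF^+$ is a direct limit of $\widehat{HF}$-type groups. So the argument reduces to: (i) establish the morphism-of-cones statement purely algebraically; (ii) observe that commutativity of \eqref{commutativity of isomorphisms} — already proven via the two squares treated above — exactly provides this morphism of cones; (iii) apply (i). I expect step (i) to be a short diagram chase plus a limit argument, step (ii) to be a citation to the construction of $\widehat{\Phi}_*$ and $\Phi^+_*$, and the whole proof to be a couple of paragraphs in the style of \cite[Theorem 6.1.5]{III}.
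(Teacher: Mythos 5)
Your proposal is correct and follows essentially the same route the paper takes: the easy direction by the five lemma applied to the ladder \eqref{commutativity of isomorphisms}, and the hard direction by the mapping-cone/$U$-torsion argument generalising \cite[Proposition 2.1]{OSz2}, using that every class in $HF^+$ and in $ECH \cong \varinjlim ECH^\flat_{a+j[e]}(N,\alpha)$ is annihilated by a sufficiently large power of $U$. The only cosmetic point is that the relevant hypothesis is this local nilpotence of $U$ (a torsion/colimit property), not completeness with respect to the $U$-adic inverse limit, but your argument already rests on the former.
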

\section{The closed-open map $\Psi$}\label{sec: CO}
In this section we will define a closed-open map
$$\Psi \colon PFC_{2g}(N, \omega, \alpha_0) \to \widehat{CF}(S, \mathbf{a}, \hh(\mathbf{a})).$$
First, we observe that the naive map that we would obtain by turning the cobordism $W_+$ ``upside-down'' does not work: if we stack $W_+$ on top of the upside-down cobordism $W_-$, the map we obtain is not homotopic to the identity because holomorphic curves from an Heegaard Floer generator to itself have negative index. A similar phenomenon happens in \cite{ganatra}, where the composition of the open-closed map from Hochschild homology to symplectic cohomology with the closed-open map from symplectic cohomology to Hochschild cohomology is a ``Poincar\'e duality'' rather than the identity. In our case, to amend the naive map, we compactify the cobordism $W_-$ and count holomorphic curves in the compactified cobordism. However the price of this operation is to work with holomorphic curves with boundary conditions on a {\em singular} Lagrangian {\em with boundary}, which introduces an incredible amount of complication.

\subsection{Definition of $\Psi$}
Let $D$ be a disc. On $D$ we consider polar coordinates $(\rho, \phi)$ with $\rho \in [0,1]$ and write $z_\infty$ for the origin.
Let $\overline{S}$ be the capped-off surface obtained by gluing $D$ to $S$ along their boundary. For every $m \gg 0$ (i.e.\ large enough so that all conditions we impose on it make sense) we define a diffeomorphism $\overline{\hh}_m \colon \overline{S} \to \overline{S}$
such that
$$\overline{\hh}_m|_{S}= \hh \quad \text{and} \quad \overline{\hh}_m|_{D}(\rho, \phi)=(\rho, \phi+ \nu_m(\rho))$$
where $\nu_m(\rho)= \frac{2 \pi}{m}$ for $\rho \le \frac 12$, $\nu_m(1)=0$ and $\nu_m(\rho) \ge 0$ for $\rho \in [0,1]$.  We also assume that $\overline{\hh}_m$ converges to a diffeomorphism $\overline{\hh}_{\infty}$ for  $m \to \infty$. For the complete list of properties of $\nu_m$ see \cite[Subsection 5.1.2]{I} and for a graphical description of the action of $\overline{\hh}_m$ on $D$ see Figure \ref{fig: around_z_infty}. Most of the time we will write $\overline{\hh}$ for $\overline{\hh}_m$ and assume $m \gg 0$. Similarly, every object depending on $\overline{\hh}_m$ may or may not have an index $m$ depending on the context.

We extend the arcs $a_i$ to arcs $\overline{a}_i$  by straight lines in $D$, so that they all meet at $z_\infty$. We assume, among other things, that the angles at $z_\infty$ between two consecutive arcs is an integer multiple of $\frac{2 \pi}{m}$  larger than $\frac{4 \pi g}{m}$ and that, as $m \to \infty$, the portions of the arcs in $D_{1/2}= \{ \rho \le 1/2 \}$ converge to the segment $\{ \phi = 0 \}$. This last property will be used in the proof of Theorem \ref{thm: difficile}. See \cite[Subsection 5.2.2]{I} for the complete description of the arcs $\overline{a}_i$.

\begin{figure}\centering
\includegraphics[width=4cm]{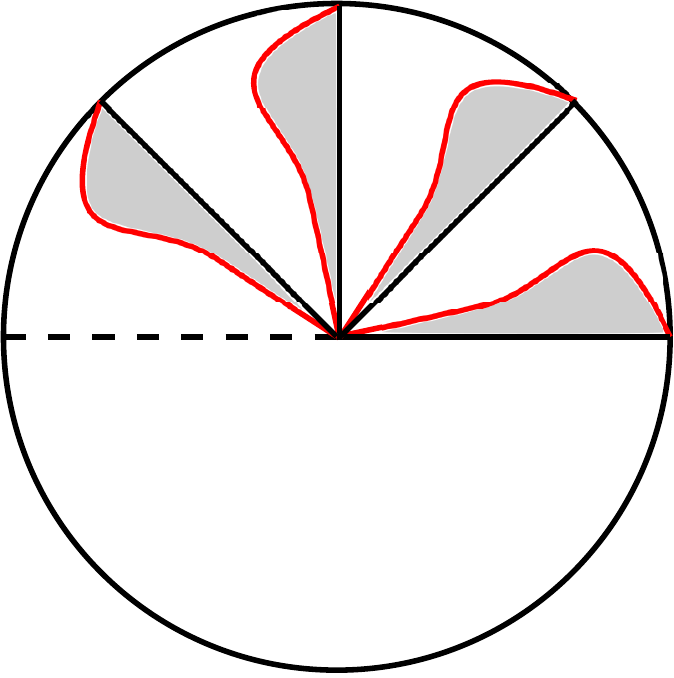}
\caption{The extended arcs and the action of $\overline{\hh}_m$ on $D$. The black arcs are portions of the $\overline{a}_i$ and the red ones are portions of the $\overline{\hh}(\overline{a}_i)$. The gray regions are the bigons covered by the {\em thin strips} of Definition \ref{dfn: thin strip}.}
\label{fig: around_z_infty}
\end{figure}

Let $B_-$ be the unit disc with one puncture in the interior and one puncture on the boundary, which we identify biholomorphically with the subset of the cylinder $\R \times \R/2 \Z$ obtained by rounding the corners of  $ (\R \times \R/2 \Z) \setminus ((- \infty, -2) \times (1,2))$. We define a fibration
$$\pi_{\overline{W}_-} \colon \overline{W}_- \to B_-$$
with fibre $\overline{S}$ and monodromy $\overline{\hh}$, and equip the total space with the symplectic form $\overline{\Omega}_-= ds \wedge dt + \overline{\omega}$, where $(s,t)$ are coordinates on $B_- \subset \R \times \R / 2 \Z$ and $\overline{\omega}$ is an area form on $\overline{S}$ which is preserved by $\overline{\hh}$ and extends $\omega$.
We can view $(\overline{W}_-, \overline{\Omega}_-)$ as a symplectic cobordism (with boundary) between $[0,1] \times \overline{S}$ and $\overline{N}$, where the latter is the mapping torus of $(\overline{S}, \overline{\hh})$. We refer to \cite[Section~5]{I} for more details about this construction. Note that $\overline{N}$ carries a stable Hamiltonian structure $(\overline{\alpha}_0, \overline{\omega})$ which is induced by the fibration $\overline{\pi} \colon \overline{N} \to S^1$. Its only closed Reeb orbit in $\overline{N} \setminus N$ intersecting a fibre at most $2g$ times is the orbit $\delta_0$ corresponding to $z_\infty$.

In $\overline{W}_-$ there is the subset $W_-$ which is the total space of the fibration over $B_-$ with fibre $S$ and monodromy $\hh$. Since $\overline{\hh}|_D$ is isotopic to the identity, there is an identification $\overline{W}_- \setminus \op{int}(W_-) \cong B_- \times D$ which induces a map\footnote{This map is called $\overline{\pi}_{D^2}$ in \cite{I}.}
$$\pi_D \colon \overline{W}_- \setminus \op{int}(W_-) \to D.$$
We will denote similar maps for $\R \times [0,1] \times \overline{S}$ and $\R \times \overline{N}$ by the same symbol.

We define an immersed Lagrangian submanifold with boundary $L_{\overline{\mathbf{a}}}$ by parallel transport of $\overline{\mathbf{a}}= (\overline{a}_1, \ldots, \overline{a}_{2g})$ placed at the fibre over $(-3,1)$. Then,
\begin{align*}
L_{\overline{\mathbf{a}}} \cap (- \infty, -3] \times \{ 1 \} \times \overline{S} & =  (-\infty, -3] \times \{ 1 \} \times \overline{\mathbf{a}} \\
L_{\overline{\mathbf{a}}} \cap (-\infty, -3] \times \{ 0 \} \times \overline{S} & =  (- \infty, -3] \times \{ 0 \} \times \overline{\hh}(\overline{\mathbf{a}}).
\end{align*}
The manifold $L_{\overline{\mathbf{a}}}$ is embedded in the interior and has a clean self-intersection at the boundary.

We fix an almost complex structure $J_-$ on $\overline{W}_-$  which is generic among those
whose restriction to $\overline{W}_- \setminus \op{int}(W_-)$ is the pull back of a split almost complex structure on $B_- \times D$ so that, in particular, the map $\pi_D$ is holomorphic.

We call $\sigma_\infty^-= B_- \times \{ z_\infty \} \subset \overline{W}_-$ the {\em section at infinity}. By our choice of almost complex structure, $\sigma_\infty^-$ is holomorphic, regular (i.e.\ its linearised Cauchy-Riemann operator is surjective) and $\op{ind}(\sigma_\infty^-)=0$; see \cite[Lemma 5.8.9]{I}.

We denote by $\overline{\mathcal O}_{2g}$ the set of orbit sets built from closed Reeb orbits in $\overline{N}$ with total intersection number $2g$ with a fibre and by $\overline{\mathcal S}_{\mathbf{a}, \hh(\mathbf{a})}$ the set of $2g$-tuples of intersection points between $\overline{\mathbf{a}}$ and $\overline{\hh}(\overline{\mathbf{a}})$.  A subtlety in the definition of $\overline{\mathcal S}_{\mathbf{a}, \hh(\mathbf{a})}$ is that $z_\infty$ can be repeated, unlike all other intersection points, because it can be seen as an intersection point between different pairs of arcs. We denote by ${\mathcal M}_{\overline{W}_-}(\bs{\gamma}, \mathbf{y})$ the moduli space of $J_-$-holomorphic curves in $\overline{W}_-$ with boundary on $L_{\overline{\mathbf{a}}}$ which are positively asymptotic to $\bs{\gamma} \in \overline{\mathcal O}_{2g}$ and negatively asymptotic to $\mathbf{y} \in \overline{\mathcal S}_{\mathbf{a}, \hh(\mathbf{a})}$. We also fix the basepoint $\mathfrak{m}=\{ (0, \frac 32 ) \} \times \{ z_\infty \} \in \overline{W}_-$ and denote the set of $J_-$-holomorphic curves in ${\mathcal M}_{\overline{W}_-}(\bs{\gamma}, \mathbf{y})$ which pass through $\mathfrak{m}$ by ${\mathcal M}_{\overline{W}_-}(\bs{\gamma}, \mathbf{y}; \mathfrak{m})$.

Let $z_\infty^\dagger \in \overline{S}$ be a point near $z_\infty$ in the complement of the arcs $\overline{\mathbf{a}}$ and $\overline{\hh}(\overline{\mathbf{a}})$. We denote by $(\sigma_\infty^-)^\dagger$ the $J_-$-holomorphic multisection which intersects each fibre in the orbit of $z_\infty^\dagger$ under the monodromy $\overline{\hh}$. We recall that $\overline{\hh}$ has order $m \gg 0$ near $z_\infty$ and therefore $(\sigma_\infty^-)^\dagger$ intersects each fibre $m$ times. Given a $J_-$-holomorphic map $u \in {\mathcal M}_{\overline{W}_-}(\bs{\gamma}, \mathbf{y})$, we define $n_*(u)$ as the algebraic intersection number between $(\sigma_\infty^-)^\dagger$ and the image of $u$. The orbit of $z_\infty^\dagger$ gives rise to holomorphic multisections and to intersection numbers, still denoted by $n_*$, also in $\R \times \overline{N}$ and $\R \times [0,1] \times \overline{S}$. Since both $(\sigma_\infty^-)^\dagger$ and $u$ are $J_-$-holomorphic, $n_*(u) \ge 0$ by positivity of intersection. To $u$ we associate also an ECH-type index $I(u)$; see \cite[Definition 5.6.6 and Subsection 5.7.7]{I}. If $u$ has no ends at $\delta_0$ or at the chord over $z_\infty$, then $I(u) \ge 0$ for a generic $J_-$ by the index inequality \cite[Theorem 5.6.9]{I}. Moreover, $I(\sigma_\infty^-)=0$.

We define a map $\overline{\Psi} \colon ECC_{2g}(N, \alpha_0, \omega) \to \overline{CF}(S, \mathbf{a}, \hh(\mathbf{a}))$ by
$$\overline{\Psi}(\bs{\gamma})= \sum \limits_{\mathbf{y} \in {\mathcal S}_{\mathbf{a}, \hh(\mathbf{a})}} \#{\mathcal M}_{\overline{W}_-}^{I=2, n_*=m}(\bs{\gamma}, \mathbf{y}; \mathfrak{m}) \mathbf{y}$$
(where $\bs{\gamma} \in \widehat{\mathcal O}_{2g}$) and, finally, the map $\Psi \colon ECC_{2g}(N, \alpha_0, \omega) \to \widehat{CF}(S, \mathbf{a}, \hh(\mathbf{a}))$ by composing $\overline{\Psi}$ with the projection $\overline{CF}(S, \mathbf{a}, \hh(\mathbf{a})) \to \widehat{CF}(S, \mathbf{a}, \hh(\mathbf{a}))$.

By a standard argument involving the index inequality we can prove that the moduli spaces ${\mathcal M}_{\overline{W}_-}^{I=2, n_*=m}(\bs{\gamma}, \mathbf{y}; \mathfrak{m})$ are zero-dimensional manifolds. However, showing that they are compact, and even more showing that $\Psi$ is a chain map, occupy a large portion of \cite{I} because of the unusual structure of the Lagrangian boundary condition $L_{\mathbf{a}}$. In the next subsection we will give an outline of the argument.

 \subsection{$\Psi$ is a chain map}\label{Psi si a chain map}
To prove that $\Psi$ is a chain map we analyse the boundary of the compactification of the one-dimensional moduli spaces ${\mathcal M}_{\overline{W}_-}^{I=3, n_*=m}(\bs{\gamma}, \mathbf{y}; \mathfrak{m})$. The first step is to verify that SFT compactness holds.
\begin{thm}[See {\cite[Section 7.3]{I}}] \label{SFT compactness for Psi}
Let $u_n$ be a sequence of holomorphic curves in ${\mathcal M}_{\overline{W}_-}^{I=i, n_*=m}(\bs{\gamma}, \mathbf{y}; \mathfrak{m})$. Then, up to taking a subsequence, $u_n$ converges in the SFT sense to a building $u_\infty = (u_\infty^{-h}, \ldots, u_\infty^0, \ldots ,u_\infty^l)$, where $u_\infty^k$ is one of the following:
\begin{itemize}
\item a holomorphic curve in $\overline{W}_-$ with boundary on $L_{\overline{\mathbf{a}}}$ and passing through $\mathfrak{m}$ for $k=0$,
\item a holomorphic curve in $\R \times \overline{N}$ for $k>0$, or
\item a holomorphic curve in $\R \times [0,1] \times \overline{S}$ with boundary on $(\R \times \{ 1 \} \times \overline{\mathbf{a}}) \cup \R \times (\{ 0 \} \times \overline{\hh}(\overline{\mathbf{a}}))$ for $k<0$.
\end{itemize}
Moreover $u_\infty^l$ has positive ends at $\bs{\gamma}$, $u_\infty^{-h}$ has negative ends at $\mathbf{y}$ and the positive ends of $u_{\infty}^k$ match with the negative ends of $u_{\infty}^{k+1}$ for $k=-h, \ldots, l-1$. Finally, $I(u_\infty^{-h}) + \ldots + I(u_{\infty}^l)=i$ and $n_*(u_\infty^{-h}) + \ldots + n_*(u_{\infty}^l)=m$.
\end{thm}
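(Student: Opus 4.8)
\textbf{Proof proposal for Theorem \ref{SFT compactness for Psi}.}

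The plan is to deduce SFT compactness from the general compactness theorem of \cite{BEHWZ}, adapted to the present setting of a symplectic cobordism with boundary on a (singular, noncompact) Lagrangian. The main preliminary is a uniform energy bound: I would first show that the curves $u_n$ have uniformly bounded Hofer energy. Since $\overline{\Omega}_-$ is exact away from the relevant asymptotic data --- indeed $\overline{\Omega}_- = ds \wedge dt + \overline{\omega}$ and $\overline{\omega} = d\overline{\beta}$ for a primitive preserved by $\overline{\hh}$, while $L_{\overline{\mathbf a}}$ is an exact Lagrangian --- the energy of a curve in ${\mathcal M}_{\overline{W}_-}(\bs{\gamma}, \mathbf{y})$ is controlled by the difference of the actions of the asymptotics $\bs{\gamma}$ and $\mathbf{y}$, which are fixed along the sequence. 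With the energy bound in hand, I would invoke the SFT compactness of \cite{BEHWZ} (in the version allowing Lagrangian boundary conditions and punctures, as in \cite[Lecture 7]{We-sft}) to conclude that a subsequence of $u_n$ converges to a holomorphic building $u_\infty = (u_\infty^{-h}, \ldots, u_\infty^0, \ldots, u_\infty^l)$ whose levels live in the completions $\R \times [0,1] \times \overline{S}$ (for negative levels, i.e.\ the symplectisation end at the positive end of the cobordism in the $HF$-direction), $\overline{W}_-$ (the level $k=0$), and $\R \times \overline{N}$ (for positive levels, i.e.\ the symplectisation end over $\overline{N}$), with boundary on the respective limits of $L_{\overline{\mathbf a}}$ and matching asymptotics between consecutive levels. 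The matching of ends at the bottom with $\mathbf{y}$ and at the top with $\bs{\gamma}$, and the additivity $\sum_k I(u_\infty^k) = i$, follow from homotopy invariance and concatenation of the ECH-type index (properties (1) and (2) of the index recalled in Section 2), while $\sum_k n_*(u_\infty^k) = m$ follows because $n_*$ is an intersection number with the fixed holomorphic multisection $(\sigma_\infty^-)^\dagger$ and intersection numbers add under the Gromov/SFT limit by positivity of intersection.

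Several points require genuine care beyond the formal application of \cite{BEHWZ}. First, because the ambient space $\overline{W}_-$ has boundary carrying a Lagrangian, one must ensure no energy escapes through $\partial \overline{W}_-$: this is exactly the role of the choice of $J_-$, for which $\pi_D$ is holomorphic near $\partial\overline{W}_-$ and the fibres of $\pi_{\overline W_-}$ restricted to the boundary provide a foliation by holomorphic submanifolds acting as a barrier, so that a maximum-principle argument confines the curves (this is the ``SFT compactness'' hypothesis set up in Section 2). Second, the Lagrangian $L_{\overline{\mathbf a}}$ is noncompact (it has boundary, sitting over $z_\infty$) and has a clean self-intersection along its boundary; one must check that the incidence condition through $\mathfrak{m}$ and the constraint $n_* = m$ together prevent the curves from running off to the noncompact end or from degenerating into the self-intersection locus uncontrollably. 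Third, the asymptotic operators at the chords over the $\overline{a}_i$ and at the orbits $\delta_0$, $\gamma_i^\pm$ must be nondegenerate (or Morse--Bott and handled accordingly); for the degenerate orbit $\delta_0$ and the chord over $z_\infty$ one works in the Morse--Bott framework and allows the limiting building to have ends there, which is why the statement is phrased with $u_\infty^k$ in $\R \times \overline{N}$ rather than with a restriction to orbit sets in $N$.

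I expect the main obstacle to be precisely the interaction of the compactness theorem with the \emph{singular Lagrangian with boundary} $L_{\overline{\mathbf a}}$: standard SFT compactness is formulated for smooth, and typically closed or properly embedded, Lagrangians, so one needs a local model near $z_\infty$ describing how a sequence of curves with boundary on $L_{\overline{\mathbf a}}$ can degenerate --- in particular, ruling out boundary bubbling off the self-intersection and controlling the behaviour of $\pi_D \circ u_n$ near the origin of $D$. This is where the careful choice of the arcs $\overline{a}_i$ (angles at $z_\infty$ that are integer multiples of $\tfrac{2\pi}{m}$ larger than $\tfrac{4\pi g}{m}$) and the split form of $J_-$ near $\partial\overline{W}_-$ enter: they force $\pi_D \circ u_n$ to be a branched cover of $D$ of controlled degree with boundary on the straight arcs, so that the possible limits are constrained. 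Once this local picture is established, the remainder is a routine bookkeeping of levels, asymptotics and the additivity of $I$ and $n_*$, as in \cite[Section 7.3]{I}.
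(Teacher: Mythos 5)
Your overall strategy --- establish a uniform energy bound, then adapt the SFT compactness theorem of \cite{BEHWZ} to this cobordism-with-boundary, with the vertical boundary controlled by the holomorphic fibres of $\pi_D$ acting as a barrier and with a separate local analysis of the singular Lagrangian near $z_\infty$ --- is the same route the paper takes; the survey itself only says that the proof ``follows the proof of the usual SFT compactness theorem in \cite{BEHWZ} very closely'' and defers the details to \cite[Section 7.3]{I}, and your bookkeeping of levels, matching ends, and additivity of $I$ (by concatenation) and of $n_*$ (by positivity of intersection, after checking that no intersections with $(\sigma_\infty^-)^\dagger$ escape into the ends, which are asymptotic to data disjoint from the orbit of $z_\infty^\dagger$) is consistent with that.

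There is, however, a genuine flaw in your justification of the key prerequisite, the energy bound. You assert that $\overline{\omega}=d\overline{\beta}$ and hence that $\overline{\Omega}_-$ is essentially exact, so that the energy is controlled by the action difference of the fixed asymptotics. This is false: the fibre of $\overline{W}_-\to B_-$ is the capped-off surface $\overline{S}$, which is \emph{closed}, and an area form on a closed surface is never exact (only $\omega=d\beta$ on the surface-with-boundary $S$ is exact). Non-exactness in the capped-off setting is exactly the point the paper flags for $X_+$ (``the symplectic form $\Omega_{X_+}$ will not be exact, and therefore the count will require some extra care''), and it is precisely why the intersection numbers $n_*$ (resp.\ ${\mathcal F}$) are built into the definition of the moduli spaces. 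The correct argument is topological rather than action-theoretic: the $\overline{\omega}$-energy of $u$ is the pairing of the closed form $\overline{\omega}$ with the relative class of $u$; with the asymptotics $\bs{\gamma}$ and $\mathbf{y}$ fixed, the possible relative classes differ by classes on which $[\overline{\omega}]$ pairs nontrivially essentially only through the fibre class (classes swept by the Reeb/monodromy direction pair trivially since $\iota_R\overline{\omega}=0$), and the fibre-class component is pinned down by the constraint $n_*=m$, because $(\sigma_\infty^-)^\dagger$ meets each fibre $m$ times. As written, your first step would not exclude sequences whose relative classes pick up unboundedly many fibre classes (equivalently, closed components in the fibre class), which is exactly the degeneration the fixed value of $n_*$ is there to rule out. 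With the energy bound repaired along these lines, the remainder of your outline is in line with \cite[Section 7.3]{I}.
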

Despite the unusual situation, the proof of Theorem \ref{SFT compactness for Psi} follows the proof of the usual SFT compactness theorem in \cite{BEHWZ} very closely. See \cite[Section 7.3]{I} for the details. Even if $\bs{\gamma} \in \widehat{\mathcal O}_{2g}$ and $\mathbf{y} \in {\mathcal S}_{\mathbf{a}, \hh(\mathbf{a})}$, the holomorphic curves $u_\infty^k$ can have one of the following features, which we will call ``bad degenerations'':
\begin{enumerate}
\item[(i)] ends at $\delta_0$ (for $k \ge 0$) or at the chord over $z_\infty$ (for $k \le 0$),
\item[(ii)] boundary points at the boundary\footnote{This type of bad degeneration is called ``boundary points at $z_\infty$'' in \cite{I}.} of $L_{\overline{\mathbf{a}}}$, or
\item[(iii)] closed irreducible components.
\end{enumerate}
Any bad degeneration appearing in the compactification of ${\mathcal M}_{\overline{W}_-}^{I=3, n_*=m}(\bs{\gamma}, \mathbf{y}; \mathfrak{m})$ gives rise to a boundary component which contributes neither to $\partial \circ \overline{\Psi}$ nor $\overline{\Psi} \circ \partial$. In fact, it will turn out that $\overline{\Psi}$ is not a chain map, but we will be able to exclude enough bad degenerations and show that the contribution of the remaining ones is in the kernel of the projection from $\overline{CF}(S, \mathbf{a}, \hh(\mathbf{a}))$ to $\widehat {CF}(S, \mathbf{a}, \hh(\mathbf{a}))$.

In \cite[Section 7.5]{I} we use topological tools --- the intersection numbers $n_*$ and the ECH-type index --- to prove that bad degenerations can follow only a limited number of patterns, which are described in \cite[Theorem 7.6.1]{I}, and then we exclude all patterns but one by studying, via a rescaling argument inspired by \cite{IP1}, how the sequence $u_n$ approaches the limit. For sake of simplicity we wil describe only the main ideas of the proof.

\begin{dfn}\label{dfn: thin strip}
A {\em thin strip} is a holomorphic curve $u$ in $\R \times [0,1] \times D$ such that $\pi_D \circ u$ covers the small bigon in $D$ between an arc in $\overline{a}_i$ and its image under $\overline{\hh}$. See Figure \ref{fig: around_z_infty}.
\end{dfn}
A thin strip is positively asymptotic to the chord over $z_\infty$ and negatively asymptotic to a chord over one among the intersection points $x_i$ or $x_i'$. Moreover,  if $u$ is a thin strip, then it is regular and satisfies $I(u)=1$ and $n_*(u)=1$. The next proposition summarises some of the results of \cite[Section 7.5]{I}.
\begin{prop}\label{bad degenerations}
Let $u_n$ be a sequence of $J_-$-holomorphic curves in ${\mathcal M}_{\overline{W}_-}^{n_*=m}(\bs{\gamma}, \mathbf{y}; \mathfrak{m})$ converging to $u_\infty = (u_\infty^{-h}, \ldots, u_\infty^0, \ldots, u_\infty^l)$. Then the following hold:
\begin{enumerate}
\item there is at most one nontrivial negative end  (i.e.\ which does not belong to a trivial cylinder) at a cover of $\delta_0$ in all $u_\infty^k$ with $k>0$; \label{oe}
\item if one of the holomorphic curves $u_\infty^k$ has a negative end at a cover of $\delta_0$, then $u_\infty^0$ has an irreducible component which is a cover of the section at infinity $\sigma_\infty^-$; \label{sai}
\item if some $u_\infty^k$ with $k < 0$ has a positive end at the chord over $z_\infty$, then some $u_\infty^k$ with $k>0$ has a  negative end at a cover of $\delta_0$; \label{oid}
\item any nontrivial irreducible component of $u_\infty^k$ (with $k<0$) with a positive end at the chord over $z_\infty$ is a thin strip; \label{ts}
\item no $u_\infty^k$ has a boundary point at the boundary of $L_{\overline{\mathbf{a}}}$; and \label{nbp}
\item no $u_\infty^k$ has a closed irreducible component. \label{ncc}
\end{enumerate}
\end{prop}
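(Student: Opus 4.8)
The plan is to establish each of the six claims in Proposition~\ref{bad degenerations} by combining three ingredients: (a) the non-negativity and additivity of the intersection numbers $n_*$ under SFT degeneration, (b) the non-negativity of the ECH-type index $I$ for holomorphic curves with no ends at $\delta_0$ or at the chord over $z_\infty$ together with its additivity under gluing of matching ends, and (c) a rescaling analysis near $z_\infty$ modelled on \cite{IP1}. The key numerical constraint is that $\sum_k I(u_\infty^k)=i$ (here $i=3$ in the case of interest) and $\sum_k n_*(u_\infty^k)=m$ with $m\gg 0$; since the section at infinity and its perturbation $(\sigma_\infty^-)^\dagger$ have $I=0$ and a cover of $\sigma_\infty^-$ of degree $k_0$ contributes $n_*=k_0 m$, roughly speaking the large value of $n_*$ must be ``paid for'' almost entirely by a single cover of the section at infinity, while the remaining holomorphic pieces carry only a bounded amount of $n_*$ and a bounded amount of index.

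First I would prove claims \eqref{oe}, \eqref{sai} and \eqref{oid}, which are essentially homological/index bookkeeping near $z_\infty$. The point is that a negative end at a $d$-fold cover of $\delta_0$ in some $u_\infty^k$ with $k>0$ forces, by the matching-of-ends condition and positivity of intersection with $(\sigma_\infty^-)^\dagger$, the presence of a degree-$d$ cover of $\sigma_\infty^-$ as an irreducible component of $u_\infty^0$ (otherwise the required $n_*$ cannot be achieved without violating $\sum n_*=m$ once $m$ is large enough); this gives \eqref{sai}. Counting the index contributed by a nontrivial end at a cover of $\delta_0$ (which lowers the index available to the rest of the building) together with $\sum I(u_\infty^k)=i$ bounds the number of such nontrivial ends by $1$ when $i=3$, giving \eqref{oe}. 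For \eqref{oid}, a positive end at the chord over $z_\infty$ in a negative-level curve $u_\infty^k$ with $k<0$ must be matched going upward through the building; tracking it through level $0$ one sees (again by positivity of intersection with the multisection and by the structure of $\sigma_\infty^-$) that it can only be absorbed if some higher level has a negative end at a cover of $\delta_0$.

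Next I would handle \eqref{ts}: any nontrivial irreducible component $v$ of some $u_\infty^k$ with $k<0$ that has a positive end at the chord over $z_\infty$ must be a thin strip. Here the argument is the rescaling/blow-up analysis: near $z_\infty$ the surface $\overline S$ looks like $D$ with the arcs $\overline a_i$ and $\overline{\hh}(\overline a_i)$ meeting at prescribed angles that are multiples of $\tfrac{2\pi}{m}$ exceeding $\tfrac{4\pi g}{m}$, and as $m\to\infty$ the arcs in $D_{1/2}$ converge to the segment $\{\phi=0\}$. A component with a positive puncture at the $z_\infty$-chord has its $\pi_D$-image constrained by positivity of intersection with $(\sigma_\infty^-)^\dagger$ and by the index/angle count to cover exactly one of the small bigons between an $\overline a_i$ and its $\overline{\hh}$-image; the index computation ($I=1$, $n_*=1$, regularity) then identifies it as a thin strip in the sense of Definition~\ref{dfn: thin strip}. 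Finally, \eqref{nbp} and \eqref{ncc} follow from the perturbation conventions: the almost complex structure $J_-$ was chosen generically and, near the basepoint and near $z_\infty$, so that closed irreducible components through the relevant region are excluded (as in the $J^\Diamond$ perturbation used for the $U$-map), while a boundary point landing on the singular locus of $L_{\overline{\mathbf a}}$ would again force extra $n_*$ or would be ruled out by the angle conditions at $z_\infty$, since the Lagrangian self-intersection sits over $z_\infty$.

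The main obstacle will be claim \eqref{ts} — controlling the geometry of holomorphic pieces that approach the singular Lagrangian boundary condition near $z_\infty$. This is where the rescaling argument is genuinely delicate: one must perform a blow-up at $z_\infty$, pass to a limiting model in the tangent cone (a punctured disc with several radial Lagrangian rays and their rotated images), and classify the finite-energy holomorphic maps there subject to the intersection constraint with $(\sigma_\infty^-)^\dagger$ and the small-angle hypotheses. The other claims are, by comparison, robust index and positivity-of-intersection arguments; the hard analytic input is isolating the thin-strip model and showing no other configuration with a positive $z_\infty$-chord end survives in the limit. I would therefore expect to spend most of the effort there, referring to \cite[Section 7.5]{I} and \cite[Theorem 7.6.1]{I} for the full enumeration of patterns and to the rescaling technique of \cite{IP1} for the exclusion step.
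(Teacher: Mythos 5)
Your general strategy (budgeting the intersection number $n_*$ against the constraint $\sum_k n_*(u_\infty^k)=m$ and using positivity of intersection with $(\sigma_\infty^-)^\dagger$) is the right one, but the quantitative heart of the argument is missing and one of your accounting claims is wrong. A degree-$k_0$ cover of $\sigma_\infty^-=B_-\times\{z_\infty\}$ contributes $0$ to $n_*$, not $k_0m$: it is disjoint from the multisection through the orbit of $z_\infty^\dagger$. The large contributions come instead from the \emph{ends}: the paper's proof rests on the asymmetric estimates that a nontrivial \emph{positive} end at a $p$-fold cover of $\delta_0$ contributes at least $p$ to $n_*$ while a nontrivial \emph{negative} end contributes at least $m-p$ (because the monodromy near $z_\infty$ is a rotation by $2\pi/m$), that an end at the chord over $z_\infty$ contributes at least $2g$ unless it belongs to a thin strip (in which case it contributes $1$), and that a boundary point on $\partial L_{\overline{\mathbf{a}}}$ contributes at least $2g$ by the open mapping theorem applied to $\pi_D\circ u$. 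A second application of the open mapping theorem shows that the irreducible component of $u_\infty^0$ through $\mathfrak{m}$ contributes $m$ unless it is a cover of $\sigma_\infty^-$. With these in hand, all six claims are short arithmetic: two nontrivial negative ends at covers of $\delta_0$ would contribute $2m-p'-p''>m$ since $p'+p''\le 2g\ll m$ (claim 1); a negative end at $\delta_0$ contributing $m-p$ leaves no room for the $m$ from the component through $\mathfrak{m}$ (claim 2); and so on. Without the $m-p$ estimate for negative ends at $\delta_0$ your proofs of (1)--(5) do not close; moreover you invoke $\sum_k I(u_\infty^k)=3$, which is not a hypothesis of the proposition (only $n_*=m$ is fixed), so the index bookkeeping you propose for claim (1) is not available.

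You also misallocate effort on (4) and (6). Claim (4) does not require the rescaling/blow-up analysis: once (3) gives a nontrivial negative end at a cover of $\delta_0$ contributing $m-p$ with $p<2g$, a non-thin-strip positive end at the chord over $z_\infty$ would add at least $2g$ more, exceeding the budget $m$; the rescaling argument is reserved in the paper for the later exclusion of type (B) buildings (Theorem \ref{thm: difficile}), not for this proposition. For (6), a perturbation of $J$ near the basepoint only excludes closed components \emph{through} $\mathfrak{m}$; the paper instead observes that any closed component is homologous to a branched cover of a fibre $\overline{S}$ and has negative Fredholm index by Riemann--Roch, hence is excluded for generic $J_-$ everywhere.
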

\begin{proof}[Sketch of proof]
The main tool is the analysis of the contributions of ends at covers of $\delta_0$, ends at the chord over $z_\infty$ and boundary points at the boundary of $L_{\overline{\mathbf{a}}}$ to $n_*(u_\infty)$. By \cite[Lemma 7.4.1]{I}, a nontrivial positive end at a $p$-fold cover of $\delta_0$ contributes at least $p$ to $n_*(u_\infty)$, while a nontrivial negative end contributes at least $m-p$. By \cite[Lemma 7.4.2]{I} a nontrivial positive end at the chord over $z_\infty$ contributes $1$ to $n_*(u_\infty)$ if it belongs to a thin strip, and at least $2g$ otherwise, while a nontrivial negative end contributes at least $2g$. Finally, by \cite[Lemma 7.4.4]{I} a boundary point at the boundary of $L_{\overline{\mathbf{a}}}$ contributes at least $2g$. All these claims are easy consequences of positivity of intersection between holomorphic curves. As an example, we will compute the contribution of a boundary point at the boundary of $L_{\overline{\mathbf{a}}}$ in $u_\infty^0$. The composition $\pi_D \circ u_\infty^0$ is defined and holomorphic in a neighbourhood of such a point. Since nonconstant holomorphic functions are open, its image covers a region between two arcs in $\overline{\mathbf{a}}$ in a neighbourhood of $z_\infty$. Thus, if we take $z_\infty^\dagger$ close enough to $z_\infty$, the section $(\sigma_\infty^-)^\dagger$ will intersect the image of $u_\infty^0$ at least $2g$ times near the boundary point at the boundary of $L_{\overline{\mathbf{a}}}$.

Suppose that there are two nontrivial negative ends at $\delta_0$ with multiplicity $p'$ and $p''$ respectively. Then they contribute $2m-p'-p''$ to $n_*(u_\infty)$. Since $p'+p'' \le 2g \ll m$ and $n_*(u_\infty)=m$, we have a contradiction. This proves \eqref{oe}.

The map $\pi_D \circ u_\infty^0$ is holomorphic and nonconstant, and therefore open, in a neighbourhood of the point mapped to $\mathfrak{m}$. Thus, by unique continuation of holomorphic curves, a neighbourhood of that point contributes $m$ to $n_*(u_\infty)$, unless the irreducible component of $u_\infty^0$ passing through $\mathfrak{m}$ is a cover of the section at infinity $\sigma_\infty^-$. This proves \eqref{sai}.

A nontrivial negative end at a chord over $z_\infty$ contributes at least $2g$ to $n_*(u_\infty)$. Thus, by the argument above, if there is a positive end at a chord over $z_\infty$, an irreducible component of $u_\infty^0$ is a cover of $\sigma_\infty^0$, and therefore, for some $k>0$, the holomorphic curve $u_\infty^k$ has a nontrivial negative end at a cover of $\delta_0$. This proves  \eqref{oid}.

A nontrivial positive end at a chord over $z_\infty$ contributes at least $2g$ to $n_*(u_\infty)$ unless it belongs to a thin strip. By \eqref{oid}, there is a nontrivial negative end at a cover of $\delta_0$ which contributes $m-p$ with $p<2g$. This is a contradiction, and thus \eqref{ts} is proved.

Either there is a nontrivial negative end at a cover of $\delta_0$, which contributes $m-p$ to $n_*(u_\infty)$, or the irreducible component of $u_\infty^0$ passing through $\mathfrak{m}$ is not a cover of $\sigma_\infty^-$ and therefore contributes $m$ to  $n_*(u_\infty)$. A boundary point at the boundary of $L_{\overline{\mathbf{a}}}$ contributes at least $2g$, which is a contradiction in both cases because $p<2g$. This proves \eqref{nbp}.
\item A closed irreducible component of $u_\infty$ is homologous to a branched cover of a fibre $\overline{S}$. Its Fredholm index, computed by the Riemann-Roch formula, is negative, and therefore such a component cannot exist for a generic almost complex structure $J_-$.\footnote{In \cite{I} we use an ECH index computation to show that a building $u_\infty$ with a closed component has $I>3$.} This proves \eqref{ncc}.
\end{proof}
Now suppose $\sum \limits_{i=-h}^l I(u_\infty^k)=3$. By \cite[Lemma 7.5.5]{I} $I(u_\infty^0) \ge 0$ and $I(u_\infty^k) \ge 1$ for $i \ne 0$, which implies that we can have buildings with at most four levels in the boundary of the compactification of  ${\mathcal M}_{\overline{W}_-}^{I=3, n_*=m}(\bs{\gamma}, \mathbf{y}; \mathfrak{m})$.
Moreover $I(u_\infty^0)=2$ if no irreducible component of $u_\infty^0$ is a cover of $\sigma_\infty^-$ because passing through $\mathfrak{m}$ is a codimension-two constraint. Therefore, if $u_\infty$ is a two-level building, then $I(u_\infty^0)=2$ and $u_\infty$ contributes to either  $\overline{\Psi} \circ \partial$ or $\partial \circ \overline{\Psi}$. On the other hand, bad degenerations necessarily appear in buildings with three or four levels and have an irreducible component which is a cover of the section at infinity $\sigma_\infty^-$.

A holomorphic building $u_\infty$ in the compactification of ${\mathcal M}_{W_-}^{I=3, n_*=m}(\bs{\gamma}, \mathbf{y}; \mathfrak{m})$ with a bad degeneration is called of type (A) if $I(u_\infty^1)=2$ and of type (B)\footnote{Buildings of type (A) correspond to buildings of type (1) and  buildings of type (B) correspond to buildings of type (2) to (6) in \cite[Theorem 7.6.1]{I}.} if $I(u_\infty^1)=1$. If $u_\infty$ is of type (A), then $u_\infty = (u_\infty^{-1}, u_\infty^0, u_\infty^1)$, where $u_\infty^0$ contains $\sigma_\infty^-$ as an irreducible component and $u_\infty^{-1}$ is a thin strip. Buildings of type (B) can be ignored as a consequence of the next theorem. We will use a limiting argument for $m \to \infty$, and therefore we will have manifolds $\overline{W}_{-,m}$ constructed from $\overline{\hh}_m$, almost complex structures $J_{-,m}$ and so on. However, all manifolds $\overline{W}_{-,m}$ are diffeomorphic, so we fix a model $\overline{W}_-$ and regard all $J_{-,m}$ as almost complex structures on it.
\begin{thm}[{\cite[Theorem 7.10.1]{I}}]\label{thm: difficile}
If $m$  is large enough, buildings of type (B) do not appear in the boundary of the compactification of ${\mathcal M}_{(\overline{W}_-, J_{-,m})}^{I=3, n_*=m}(\bs{\gamma}, \mathbf{y}; \mathfrak{m})$ for any $\bs{\gamma} \in \widehat{\mathcal O}_{2g}$ and $\mathbf{y} \in {\mathcal S}_{\mathbf{a}, \hh(\mathbf{a})}$.
\end{thm}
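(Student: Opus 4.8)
The plan is to argue by contradiction, running a limiting argument as $m\to\infty$ combined with a renormalization of the curves near the orbit $\delta_0$ (equivalently near $z_\infty$) in the style of Ionel--Parker's rescaling at a divisor \cite{IP1}. Suppose the statement fails. Then there is a sequence $m_j\to\infty$ and, for each $j$, an orbit set $\bs{\gamma}_j\in\widehat{\mathcal O}_{2g}$, a tuple $\mathbf{y}_j\in{\mathcal S}_{\mathbf{a},\hh(\mathbf{a})}$, and a building $u_{\infty,m_j}$ of type (B) in the boundary of the compactification of ${\mathcal M}_{(\overline{W}_-,J_{-,m_j})}^{I=3,\,n_*=m_j}(\bs{\gamma}_j,\mathbf{y}_j;\mathfrak{m})$. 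Since $\widehat{\mathcal O}_{2g}$ and ${\mathcal S}_{\mathbf{a},\hh(\mathbf{a})}$ do not depend on $m$ --- all the $m$-dependence of $\overline{W}_{-,m}$ being confined to a neighbourhood of $z_\infty$ --- after passing to a subsequence we may fix $\bs{\gamma}_j=\bs{\gamma}$ and $\mathbf{y}_j=\mathbf{y}$. Recall that, after discarding trivial cylinders and strips, a type (B) building has one or more thin strips at the bottom, the section at infinity $\sigma_\infty^-$ (or a cover of it) as its level-$0$ piece, and the remaining index-$3$ worth of curve spread over \emph{more} levels than the single index-$2$ level allowed in type (A) (the enumeration of sub-cases being \cite[Theorem 7.6.1]{I}, types (2)--(6)); moreover at least one of these extra levels has a nontrivial negative end at a cover of $\delta_0$.

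First I would separate $u_{\infty,m_j}$ into the part lying over a fixed small neighbourhood $U$ of $z_\infty$ and the part lying over its complement. The complementary part has energy bounded independently of $j$; since all $\overline{W}_{-,m}$ are the fixed manifold $\overline{W}_-$ and $J_{-,m_j}\to J_{-,\infty}$, it subconverges in the SFT sense to a $J_{-,\infty}$-holomorphic building. The thin strips, on the other hand, live entirely over $U$, and their areas are $O(1/m_j)\to 0$: by construction the portions of the arcs $\overline{a}_i$ inside $D_{1/2}$ converge to the single segment $\{\phi=0\}$ and the monodromy near $z_\infty$ is the rotation by $2\pi/m_j$, so the bigons these strips cover collapse. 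Crucially, since $(\sigma_\infty^-)^\dagger$ is supported near $z_\infty$, the full count $n_*(u_{\infty,m_j})=m_j$ is accounted for over $U$; and, exactly as in the proof of Proposition \ref{bad degenerations}, every nontrivial end at a $p$-fold cover of $\delta_0$ or chord over $z_\infty$ contributes to $n_*$ an amount comparable to $p$ or to $m_j-p$, with $p\le 2g\ll m_j$.

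The core of the argument is a renormalization of the part of $u_{\infty,m_j}$ over $U$. One rescales so that the near-$z_\infty$ data --- the monodromy, the mutual positions of the rescaled arcs, and the thin-strip bigons --- stabilize to a fixed $m$-independent model, in which the thin strips become genuine strips, the relevant portion of $\sigma_\infty^-$ becomes a plane through the rescaled image of $\mathfrak{m}$, and the ends at covers of $\delta_0$ and chords over $z_\infty$ become finite-energy planes or cylinders in the model symplectisations. Standard energy quantization and bubbling analysis, together with the a priori bound $p\le 2g\ll m_j$ on the relevant multiplicities, shows that no energy escapes and that the renormalized limit is a nonconstant holomorphic building $v_\infty$ in this model. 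A relative ECH-type index computation --- using homology invariance and concatenation of $I$, and the index inequality \eqref{eqn: index inequality} --- then shows that, for a type (B) building, the levels of $v_\infty$ inherited from the non-section, non-thin-strip levels of $u_{\infty,m_j}$ would carry a total Fredholm index strictly below what a broken configuration of nonconstant, non-connector curves can realize in the model for a generic almost complex structure; hence some such level would have to be a trivial cover, contradicting that each level of $u_{\infty,m_j}$ is a genuine curve of its stated index. This is exactly what separates type (B) from type (A): for type (A) the renormalized model is the rigid index-$0$ configuration ``thin strip $\cup$ section at infinity'', which persists, so only type (B) is ruled out --- and the numerical deficit only bites once $m$ is large.

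The main obstacle is the renormalization step. One must pin down the correct rescaling and the model target (an $\R$-symplectisation of a mapping torus of a finite-order rotation of $\C$, glued to $\R\times[0,1]\times\C$ and to a model cobordism piece), identify the limiting Lagrangian boundary and asymptotic conditions --- this is where the delicate choices of the arcs $\overline{a}_i$ and of the functions $\nu_m$ in \cite[Subsections 5.1.2 and 5.2.2]{I} are used --- prove compactness (no escape of energy, no hidden bubbling) in the rescaled picture, and track both the ECH-type index and the Fredholm index across the rescaling so that the contradiction is genuine. The combinatorics of which covers of $\delta_0$ and which chords over $z_\infty$ can appear, governed by Proposition \ref{bad degenerations}(1)--(4) and by the inequality $p\le 2g\ll m$, is precisely what forces the argument to require $m$ large.
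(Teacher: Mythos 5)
Your overall frame --- argue by contradiction along a sequence $m_j\to\infty$, isolate the behaviour near $z_\infty$ and $\delta_0$, and run a rescaling argument in the spirit of Ionel--Parker --- matches the paper's strategy, but the mechanism you propose for the final contradiction is not the one that works, and I do not see how it could. You claim that in the renormalized model the non-section, non-thin-strip levels of a type (B) building would carry a total index ``strictly below what a broken configuration can realize''. But type (B) buildings pass every index and intersection-number test: they are exactly the configurations that survive the topological sieve of \cite[Section 7.5]{I} and appear in the enumeration of \cite[Theorem 7.6.1]{I} (e.g.\ a four-level building consisting of a thin strip with $I=1$, a cover of $\sigma_\infty^-$ with $I=0$, and two levels of index $1$ each is perfectly admissible index-wise). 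Since the ECH-type index is homological and additive under concatenation, no rescaling changes this count; if an index deficit excluded type (B), the analysis of Subsections 7.7--7.10 of \cite{I} would be unnecessary. What is absent from your argument is any use of the position of the marked point $\mathfrak{m}$ and of the symmetry of $B_-$, which is where the actual contradiction lives.

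The paper's argument is the following. The unique nontrivial negative end at (a cover of) $\delta_0$ has asymptotic expansion $\pi_D\circ u^1_{k,\infty}(s,t)=e^{\pi(1-1/m_k)s}c_k e^{i\pi t}+o(\cdot)$; after normalising $|c_k|=1$, the possible limits $c$ of the $c_k$ form a finite set ${\mathcal C}$ (finiteness of the $I=1$ moduli spaces up to translation), giving finitely many \emph{bad radial rays} $-ic\R_+$, and one arranges that $-\R_+$ is not among them. The truncate-and-rescale step then produces not a building in a blown-up target but a holomorphic function $w_\infty\colon B_-\to\C$ (the normal direction to $\sigma_\infty^-$ along which the sequence breaks), with a unique simple zero at $(0,\tfrac32)$ --- the projection of $\mathfrak{m}$ --- boundary values in $\R_+$, and angular asymptotics $ce^{i\pi t}$ at the positive end. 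The anti-holomorphic involution $(s,t)\mapsto(s,1-t)$ of $B_-$ fixes $(0,\tfrac32)$; the reflection argument applied to $w_\infty(s,t)/\overline{w_\infty(s,1-t)}$ forces $w_\infty(s,\tfrac32)\in\R$, which combined with the location of the zero and the positivity on $\partial B_-$ puts $-ic$ on $-\R_+$, contradicting the choice of bad radial rays. The largeness of $m$ enters through the convergence of the asymptotic eigenvalues $\pi(1-\tfrac1{m_k})\to\pi$ and of the arcs near $z_\infty$, not through an index deficit. To repair your proof you would need to replace the index-deficit step by this asymptotic-eigenfunction and involution analysis (or supply a genuinely different obstruction); as written the decisive step is unsupported.
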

The proof of this theorem occupies the subsections from 7.7 to 7.10 of \cite{I}; here we will sketch only the main ideas. Assume that there is a sequence $m_k \to \infty$ such that, for any $k$, there is a sequence of $J_{-,m_k}$-holomorphic curves $u_{k,j}$ in ${\mathcal M}_{(\overline{W}_-, J_{-,m_k})}^{I=3, n_*=m_k}(\bs{\gamma}, \mathbf{y}; \mathfrak{m})$ converging to a building $u_{k, \infty}$ with $I(u_{k, \infty}^1)=1$. We know from Proposition \ref{bad degenerations}  that there is only one nontrivial negative end in $u_{k, \infty}$ at a cover of $\delta_0$ for every $k$ and, moreover, it must belong to an irreducible component of index one.
For simplicity, we assume that the end at $\delta_0$ is simple; the case of a nontrivial cover is slightly more complicated, but does not need any new idea. To simplify the situation further, we assume that every $u_{k, \infty}$ contains a level $u_{k, \infty}^1$ with $I(u_{k, \infty}^1)=1$ from an orbit set $\bs{\gamma}_+$ to an orbit set $\delta_0 \bs{\gamma}_-$, a level $u_{k, \infty}^0$ which contains $\sigma_{\infty}^-$ as an irreducible component and has other components contained in $W_-$, and a level $u_{k, \infty}^{-1}$ which is a thin strip from the chord over $z_\infty$ to a chord over one of the intersection points $x_i$ or $x_i'$. There may be one more level, but it will not be relevant to our argument.

The end of $u_{k, \infty}^1$ at $\delta_0$ (which, we recall, is parametrised by $(s,t) \in (-\infty, 0] \times \R / 2 \Z$) satisfies the Fourier expansion
\begin{equation}\label{asymptotic expansion}
\pi_D \circ u_{k, \infty}^1(s,t) = e^{\pi (1-\frac{1}{m_k})s}f_k(t) + o( e^{\pi (1-\frac{1}{m_k})s})
\end{equation}
where $f_k(t)=c_k e^{\pi i t}$, with $c_k \in \C$, is the {\em asymptotic eigenfunction} of the end. See \cite[Lemma 7.7.3]{I}. For a generic choice of almost complex structure, and up to taking a subsequence, the coefficients $c_k$ converge to $c \ne 0$ by Lemma 7.7.6 and Lemma 7.7.9 in \cite{I}. If we translate $s$, we multiply $c_k$ by a positive real number, and therefore we can assume that $|c|=1$. Since the moduli spaces of holomorphic curves in $\R \times \overline{N}$ with $I=1$ and fixed asymptotics are finite up to translations, the set ${\mathcal C}$ of possible limit values $c$ is finite.
\begin{dfn}[See {\cite[Definition 7.7.10]{I}}\footnote{\cite[Definition 7.7.10]{I} differs from the definition we give here in the notation and for the fact that, in \cite{I}, we consider also the case of ends to multiple covers of $\delta_0$.}]
A {\em bad radial ray} in $\C$ is a half-line $-ic \R_+$ for $c \in {\mathcal C}$.
\end{dfn}
We observe that $-i = e^{\pi \frac 32 t}$ and, as we will see later, it is no coincidence that the projection of $\mathfrak{m}$ to $B_-$ is  $(0, \frac 32)$. As explained in \cite[Remark 7.7.11]{I} we can assume that $-1 \R_+$ is not a bad radial ray.

After extracting a diagonal subsequence $u_{k, j(k)}$, we find sequences $R^\pm_k \to + \infty$ for $k \to + \infty$ and holomorphic maps $\tilde{u}_k \colon B_- \cap ([-R_k^-, R_k^+] \times \R / 2\Z) \to \overline{W}_-$  (called {\em truncations} in \cite[Subsection 7.8.1]{I}) which parametrise the portions of $u_{k, j(k)}$ contained in a neighbourhood of the section at infinity $\sigma_\infty^-$. We define holomorphic functions $\widetilde{w}_k = \pi_D \circ \widetilde{u}_k$ and, after fixing a compact neighbourhood $K$ of $(0, \frac 32)$ in $B_-$, we define constants $C_k = \| \widetilde{w}_k \|_{L^\infty(K)}$ and holomorphic functions $w_k = \dfrac{\widetilde{w}_k}{C_k}$. If the diagonal subsequence and the truncations are chosen carefully (i.e.\ as in \cite[Lemma 7.8.4]{I}), the functions $w_k$ converge to a holomorphic function $w_\infty \colon B_- \to \C$ such that:
\begin{itemize}
\item $w_\infty$ has a unique and simple zero at $(0, \frac 32)$,
\item $w_\infty(\partial B_-) \subset \R_+$,
\item $\lim \limits_{s \to + \infty} \frac{w_\infty(s,t)}{|w_\infty(s,t)|}=ce^{i \pi t}$ for some $c \in {\mathcal C}$, and
\item $\lim \limits_{s \to - \infty} w_\infty(s,t) \in \R_+$.
\end{itemize}
See \cite[Theorem 7.8.15]{I}. 

The map $(s,t) \mapsto (s, 1-t)$ defines an anti-holomorphic involution of $B_-$ fixing $(0, \frac 32)$ --- remember that $t \in \R / 2 \Z$. The function
$$f(s,t)= w_\infty(s,t)/\overline{w_\infty(s, 1-t)}$$
is thus holomorphic, bounded, and has real boundary conditions. Moreover, it satisfies $\lim \limits_{s \to - \infty}f(s,t) =1$ and therefore,  by standard one-variable complex analysis, $f(s,t)=1$ for all $(s,t) \in B_-$, so $w_\infty(s,t)= \overline{w_\infty(s,1-t)}$. This implies that $w_\infty(s, \frac 32) \in \R$ for all $s \in [-2, + \infty)= B_- \cap \left (\R \times \left \{ \frac 32  \right\} \right )$.  Since $(0, \frac 32)$ is the unique zero of $w_\infty$ and $w_\infty(\partial B_+) \subset \R_+$, we have $-ic \in -1 \R_+$. This is a contradiction because $-1\R_+$ is not a bad radial ray, and therefore we have shown that, for $k$ sufficiently large, no building of type (B) with a simple end at $\delta_0$ can be in the boundary of the moduli space ${\mathcal M}_{(W_-, J_{-, m_k})}^{I=3, n_*=m_k}(\bs{\gamma}, \mathbf{y}; \mathfrak{m})$.

If the nontrivial end of each $u_{k, \infty}$ at a cover of $\delta_0$ is not simple, the limit must be taken in the SFT sense and yields a building  $w_\infty = (w_\infty^{-a}, \ldots, w_\infty^b)$ of holomorphic functions $w_\infty^h \colon \widetilde{F}_\infty^h \to \C$ where $\widetilde{F}_\infty^h$ is a branched cover of $\R \times [0,1]$ for $h<0$, of $B_-$ for $h=0$ and of $\R \times S^1$ for $h>0$. See \cite[Subsection 7.8.7]{I}. Then, we apply the involution argument component by component and obtain a similar contradiction. This ends the proof of Theorem \ref{thm: difficile}.

From now on we will assume that $m$ is large enough so that Theorem \ref{thm: difficile} holds. In order to prove that $\Psi$ is a chain map, it remains to prove the following.
\begin{thm}[Reformulation of {\cite[Theorem 7.2.2]{I}}]\label{bad degenerations come in pairs}
If $\bs{\gamma} \in \widehat{\mathcal O}_{2g}$ is an orbit set and $\mathbf{y}, \mathbf{y}' \in {\mathcal S}_{\mathbf{a}, \hh(\mathbf{a})}$ are $2g$-tuples of intersection points which differ only by replacing one intersection point $x_i$ with $x_i'$ for some $i$, the numbers of boundary points  corresponding to buildings of type (A) in the compactifications of ${\mathcal M}^{I=3, n_*=m}(\bs{\gamma}, \mathbf{y}; \mathfrak{m})$ and   ${\mathcal M}^{I=3, n_*=m}(\bs{\gamma}, \mathbf{y}'; \mathfrak{m})$ are equal.
\end{thm}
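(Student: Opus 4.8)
The plan is to match the type (A) boundary configurations ending at $\mathbf{y}$ one to one with those ending at $\mathbf{y}'$, the matching being nothing more than the replacement of a single ``thin'' piece lying over the slot in which $\mathbf{y}$ and $\mathbf{y}'$ differ. First I would pin down the shape of a type (A) building. For such a building $u_\infty=(u_\infty^{-1},u_\infty^0,u_\infty^1)$ the three storeys must have ECH-type indices $1,0,2$ from the bottom up: this follows from $I(u_\infty^{-1})+I(u_\infty^0)+I(u_\infty^1)=3$, the bounds $I(u_\infty^0)\ge 0$ and $I(u_\infty^{\pm 1})\ge 1$ of \cite[Lemma 7.5.5]{I}, and the defining condition $I(u_\infty^1)=2$. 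The classification of \cite[Theorem 7.6.1]{I}, built on Proposition \ref{bad degenerations}, then says that, up to trivial cylinders and strips, $u_\infty^{-1}$ is a thin strip (Definition \ref{dfn: thin strip}) negatively asymptotic to the chord over some $x_k$ or $x_k'$, that $u_\infty^0$ is the union of a cover of the section at infinity $\sigma_\infty^-$ --- which carries the point constraint $\mathfrak{m}$, since $\mathfrak{m}\in\sigma_\infty^-$ --- with components contained in $W_-$, and that $u_\infty^1$ lies in $\R\times\overline{N}$ with a negative end at a cover of $\delta_0$. The feature I would exploit is that, besides the thin strip and the trivial strip directly below the same chord, the only components of $u_\infty$ that can have an end over one of the points $x_i,x_i'$ are standard: by the trivial-strip phenomenon that underlies the quotient $\overline{CF}\to\widehat{CF}$ (the analogue for $W_-$ of \cite[Lemma 6.2.3]{I}; compare \cite[Claim 4.9.2]{I}), a component of $u_\infty^0$ contained in $W_-$ and negatively asymptotic to the chord over $x_i$ must be the trivial section over $x_i$ together with a gradient trajectory to $e$. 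Hence in a type (A) building ending at $\mathbf{y}=\mathbf{y}_0\cup\{x_i\}$, everything touching the $i$-th slot is one uniquely determined piece: either the thin strip $D_i$ over $x_i$, or --- when the thin strip hits some other slot --- the trivial strip over $x_i$ together with this standard section-and-trajectory piece in $u_\infty^0$.

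Next I would define the matching: replace that $i$-th slot piece by the corresponding piece over $x_i'$ (the thin strip $D_i'$ in the first case; the trivial strip over $x_i'$ and the trivial section over $x_i'$, with the same gradient trajectory to $e$, in the second) and leave every other component untouched. This replacement changes neither the ECH-type index nor the intersection number $n_*$ of any component, hence preserves the conditions $I=3$, $n_*=m$ and passing through $\mathfrak{m}$ (which lies on the untouched $\sigma_\infty^-$), and it carries a type (A) building ending at $\mathbf{y}$ to a type (A) building ending at $\mathbf{y}'=\mathbf{y}_0\cup\{x_i'\}$; the reverse replacement is its inverse. So the type (A) buildings in the compactification of ${\mathcal M}^{I=3,n_*=m}(\bs{\gamma},\mathbf{y};\mathfrak{m})$ are in bijection with those in the compactification of ${\mathcal M}^{I=3,n_*=m}(\bs{\gamma},\mathbf{y}';\mathfrak{m})$.

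Finally I would promote this to the asserted equality of \emph{numbers of boundary points}. Each type (A) building is glued back into the corresponding one-dimensional moduli space, the gluing being localised near the chord over $z_\infty$ (where the thin strip, or the trivial strip-and-section piece, is attached to $\sigma_\infty^-$) and near the orbit set separating $u_\infty^0$ from $u_\infty^1$; since the $i$-th slot pieces over $x_i$ and over $x_i'$ occupy neighbourhoods of $z_\infty$ that are interchanged by the local symmetry at $z_\infty$ (the rotation by $\pi$, which fixes $\sigma_\infty^-$ and the two arcs involved), the two gluing problems are isomorphic, so a building and its partner are limits of the same number of ends --- and over $\Z/2\Z$ even parity equality would suffice. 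This gives $n_A(\bs{\gamma},\mathbf{y})=n_A(\bs{\gamma},\mathbf{y}')$ for the two counts, so the error term $(\partial\overline{\Psi}+\overline{\Psi}\partial)(\bs{\gamma})=\sum_{\mathbf{y}} n_A(\bs{\gamma},\mathbf{y})\,\mathbf{y}$ lies in the kernel of the projection $\overline{CF}(S,\mathbf{a},\hh(\mathbf{a}))\to\widehat{CF}(S,\mathbf{a},\hh(\mathbf{a}))$; combined with Theorem \ref{thm: difficile}, which excludes type (B) buildings, this is exactly what makes $\Psi$ a chain map.

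The hard part will be this last step together with the classification it rests on: both depend on the delicate analysis of \cite{I} --- the gluing theory near the singular point $z_\infty$ of $L_{\overline{\mathbf{a}}}$ and near the non-generic curve $\sigma_\infty^-$, and the bookkeeping with $n_*$ and the ECH-type index that dictates which components can appear in an SFT limit. Granting those inputs, the pairing of bad degenerations is essentially formal, the one genuinely new ingredient being the elementary observation that thin strips --- and the standard boundary pieces over $x_i$ and $x_i'$ --- occur in the matched pairs $\{D_i,D_i'\}$.
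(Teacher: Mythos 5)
Your first two steps track the paper's setup: a type (A) building is $(u^{-1},u^0,u^1)$ with indices $1,0,2$, $u^{-1}$ a thin strip union trivial strips, $u^0$ containing a cover of $\sigma_\infty^-$ through $\mathfrak{m}$, and $u^1$ in $\R\times\overline{N}$ with a negative end at a cover of $\delta_0$; and the matching of buildings for $\mathbf{y}$ and $\mathbf{y}'$ is indeed the swap of the thin strip over $x_i$ for the one over $x_i'$. (Your ``second case'', with a trivial section plus a gradient trajectory to $e$, imports the $W_+$ picture of Lemma \ref{Phi and the contact class} into $\overline{W}_-$, where it does not apply; in the paper's gluing configuration the distinguished slot is the one occupied by $z_\infty$ in $\mathbf{y}_\infty$.)

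The genuine gap is in your last step. The theorem counts boundary points of the compactified one-dimensional moduli space, not buildings, so you must show that matched buildings are limits of the same number of ends, and your mechanism for this --- ``the two gluing problems are isomorphic'' via a rotation by $\pi$ about $z_\infty$ --- does not work. No such rotation is a symmetry of $(\overline{W}_-,L_{\overline{\mathbf{a}}},J_-,\mathfrak{m})$: it would have to permute the other $4g-2$ radial segments of the arcs $\overline{a}_j$, the components of $u^0$ lying in $W_-$, and the level $u^1$, none of which it preserves; and the two radial segments of $\overline{a}_i$ at $z_\infty$ are separated by an angle that is a multiple of $2\pi/m$, not by $\pi$. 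More importantly, whether a glued curve passes through $\mathfrak{m}$ is not a local question at the chord over $z_\infty$: it is governed by the interaction of the perturbations created at \emph{both} gluing sites (the chord over $z_\infty$ below $\sigma_\infty^-$ and the orbit $\delta_0$ above it), propagated along the cover of $\sigma_\infty^-$. The paper handles this by the truncate-and-rescale construction: the count of ends limiting to a fixed building is the degree over $(0,\tfrac32)$ of the evaluation map $\Upsilon'$ recording where glued curves meet $\sigma_\infty^-$; this degree is identified with that of a model map $\Upsilon''$ whose value is the unique zero of a holomorphic function $w$ on $B_-$ determined by the asymptotic eigenfunctions $c_1$ (positive end of the thin strip at the chord over $z_\infty$) and $c_2$ (negative end of $u^{+1}$ at $\delta_0$); and finally one computes explicitly that $c_1$ is the same for the thin strip over $x_i$ and the one over $x_i'$, because $\overline{\hh}$ acts near $z_\infty$ by the same rotation on both bigons. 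This reduction of the count to asymptotic eigenfunctions is the actual content of the theorem, and it is the ingredient your proposal is missing.
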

Let $\mathbf{y}_\infty \in \overline{\mathcal S}_{\mathbf{a}, \hh(\mathbf{a})}$ be the $2g$-tuple of intersection points obtained by replacing $x_i$ or $x_i'$ in $\mathbf{y}$ or $\mathbf{y}'$ with $z_\infty$. We fix $\bs{\gamma'} \in \widehat{\mathcal O}_{2g-1}$ and, for $r \gg 0$, define
$$\mathfrak{P}= [r, + \infty)^2 \times \widehat{\mathcal M}_{\R \times [0,1] \times \overline{S}}^{I=1, n_*=1}(\mathbf{y}_\infty, \mathbf{y}) \times {\mathcal M}_{\overline{W}_-}^{I=0, n_*=0}(\delta_0 \bs{\gamma'}, \mathbf{y}_\infty) \times \widehat{\mathcal M}_{\R \times \overline{N}}^{I=2, n_*=m-1}(\bs{\gamma}, \delta_0 \bs{\gamma'}).$$
The moduli spaces in the definition of $\mathfrak{P}$ have dimension $0$, $0$ and $1$ respectively, and therefore $\mathfrak{P}$ has dimension $3$.

For the rest of the section we will identify the quotient of a moduli space by the $\R$ action with a slices for that action (i.e\ the moduli spaces $\widehat{\mathcal M}$ with a slice in the corresponding moduli spaces ${\mathcal M}$).\footnote{Slices are denoted by $\widetilde{\mathcal M}$ in \cite{I}.} Gluing gives us an embedding
$$G \colon \mathfrak{P} \to {\mathcal M}_{\overline{W}_-}^{I=3, n_*=m}(\bs{\gamma}, \mathbf{y}).$$
Strictly speaking, we should allow the glued curve to have boundary on the Lagrangian submanifolds associated to the extension of the arcs $\overline{a}_i$ to the other side of $z_\infty$; these are the {\em extended moduli spaces} of \cite[Definition 5.7.24]{I}. However, \cite[Claim 7.12.12]{I} shows that this is a technical complication which can be ignored at a first reading.

To prove Theorem \ref{bad degenerations come in pairs} we need to understand when the image of $G$ belongs to ${\mathcal M}_{\overline{W}_-}^{I=3, n_*=m}(\bs{\gamma}, \mathbf{y}; \mathfrak{m})$ and show that, essentially, it will only depend on how holomorphic curves in $\widehat{\mathcal M}_{\R \times \overline{N}}^{I=2, n_*=m-1}(\bs{\gamma}, \delta_0 \bs{\gamma'})$ approach $\delta_0$.

To a sequence $(T_{-,n}, T_{+,n}, u^{-1}, u^{0}, u^{1}) \in \mathfrak{P}$ with $T_{\pm,n} \to + \infty$ we associate a holomorphic function $w \colon B_- \to \C$ by the truncate-and-rescale technique used in the proof of Theorem \ref{thm: difficile} applied to the sequence $G(T_{-,n}, T_{+,n}, u^{-1}, u^{0}, u^{1})$. We can see this function as a sort of normal vector to $(u^{-1}, u^{0}, u^{1})$ in the compactification of ${\mathcal M}_{\overline{W}_-}^{I=3, n_*=m}(\bs{\gamma}, \mathbf{y})$ along which the sequence $G(T_{-,n}, T_{+,n}, u^{-1}, u^{0}, u^{1})$ approaches $(u^{-1}, u^{0}, u^{1})$.

The function $w$ satisfies the following properties:
\begin{itemize}
\item $w(s,t)\in \R^+ \cdot e^{i\eta(t)}$ for all $(s,t) \in \partial B_-$, where $\eta \colon [0,1] \to \R$ is a decreasing function with $\eta(0)= \frac{\pi}{m}$ and $\eta(1)=0$;
\item $\lim \limits_{s \to -\infty} \left| w(s,t)-c_1 e^{-\frac{\pi}{m}(s + i t - i)} \right| < \infty$ for some $c_1\in \R^+$; and
\item $\lim \limits_{s \to +\infty} \left| w(s,t)-c_2 e^{\pi(s+it)} \right | < \infty$ for some $c_2\in \C^\times$.
\end{itemize}
Functions of this sort form a non-empty open cone ${\mathcal N}$ inside a three-dimensional real vector space, are determined by the constants $c_1$ and $c_2$ and have a unique zero.
 See \cite[Subsection 7.12.1]{I} for the proof of all these properties (and more).
Moreover, the zero of a holomorphic function $w$ associated to a sequence $(T_{-,n}, T_{+,n}, u^{-1}, u^{0}, u^{1})$ is the limit of the projections to $B_-$ of the intersection points between the image of $G(T_{-,n}, T_{+,n}, u^{-1}, u^{0}, u^{1})$ and the section at infinity $\sigma_\infty^-$.

The careful reader has surely observed that the truncate-and-rescale technique in the proof of Theorem \ref{thm: difficile} produced functions which were bounded for $s \to -\infty$, while here they have a pole of order $\frac{\pi}{m}$. The reason is that there we took a limit for $m \to +\infty$ which made the pole disappear; on the other hand, here we work with a large, but fixed, value of $m$.

We fix $r_0>r$ and denote $\partial \mathfrak{P}_{(r_0)}= \{T_+= r_0 \} \subset \mathfrak{P}$; thus $\partial \mathfrak{P}_{(r_0)}$ is two-dimensional. 
If $r_0$ is generic and sufficiently large, $G(\partial \mathfrak{P}_{(r_0)})$ intersects transversely every branch of the moduli space ${\mathcal M}^{I=3, n_*=m}_{\overline{W}_-}(\bs{\gamma}, \mathbf{y}; \mathfrak{m})$ converging to a building $(u^{-1}, u^0, u^{+1})$ with
$u^{-1} \in \widehat{\mathcal M}_{\R \times [0,1] \times \overline{S}}^{I=1, n_*=1}(\mathbf{y}_\infty, \mathbf{y})$,  $u_0 \in {\mathcal M}_{\overline{W}_-}^{I=0, n_*=0}(\delta_0 \bs{\gamma'}, \mathbf{y}_\infty)$ and  $u_1 \in \widehat{\mathcal M}_{\R \times \overline{N}}^{I=2, n_*=m-1}(\bs{\gamma}, \delta_0 \bs{\gamma'})$.

We define $\Upsilon' \colon \partial \mathfrak{P}_{(r_0)} \to B_-$ such that $\Upsilon' (T_-, r_0, u^{-1}, u^0, u^{+1})$ is the projection to $B_-$ of the intersection point between the image of $G(T_-, r_0, u^{-1}, u^0, u^{+1})$ and $\sigma_{\infty}^-$. We define a second map $\Upsilon'' \colon \partial \mathfrak{P}_{(r_0)} \to B_-$ such that $\Upsilon'' (T_-, r_0, u^{-1}, u^0, u^{+1})$ is the (unique) zero of a holomorphic function $w$ whose constants $c_1$ and $c_2$ are determined by the asymptotic eigenfunctions of $u^{\pm 1}$. Both functions are proper, and therefore their degrees are well defined.

For $r_0$ large enough, the holomorphic curves in $G(\partial \mathfrak{P}_{(r_0)})$ which intersect $\sigma_\infty^-$ near $\mathfrak{m}$ are close to breaking into the building $(u^{-1}, u^0, u^{+1})$ by \cite[Lemma 7.12.15]{I}, so $\Upsilon'$ and $\Upsilon''$ are $C^0$ close by the truncate-and-rescale argument defining $w$ and therefore they have the same degree. Since $(0, \frac 32)$ is a regular value for $\Upsilon'$ and $\Upsilon''$ depends only on the asymptotic eigenfunctions of the ends of $u^{\pm 1}$, this implies that the cardinality of the set
$$G(\partial \mathfrak{P}_{(r_0)}) \cap {\mathcal M}^{I=3, n_*=m}_{\overline{W}_-}(\bs{\gamma}, \mathbf{y}; \mathfrak{m})$$
depends only on the asymptotic eigenfunctions of the ends of $u^{\pm 1}$. Now recall that $u^{-1}$ is a thin strip. The asymptotic eigenfunction of its positive end, which can be easily computed explicitly --- see \cite[Subsection 7.7.1]{I},  depends only on the local behavior of the arcs $\overline{a}_i$ and $\overline{\hh}(\overline{a}_i)$, which is the same for both ends of the arc. Thus the same construction with $\mathbf{y}'$ in the place of $\mathbf{y}$
gives the same result. Thus we have proved the following theorem.
\begin{thm}
$\Psi$ is a chain map.
\end{thm}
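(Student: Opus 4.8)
The plan is to run the standard ``boundary of a one-dimensional moduli space'' argument for the cobordism $\overline{W}_-$, keeping in mind the essential subtlety that, because of the singular Lagrangian $L_{\overline{\mathbf{a}}}$, the map $\overline{\Psi}$ is \emph{not} itself a chain map: only its composition $\Psi$ with the projection $p\colon\overline{CF}(S,\mathbf{a},\hh(\mathbf{a}))\to\widehat{CF}(S,\mathbf{a},\hh(\mathbf{a}))$ will be. So I would fix $m$ large enough for Theorem~\ref{thm: difficile} to apply, fix $\bs{\gamma}\in\widehat{\mathcal O}_{2g}$ and $\mathbf{y}\in{\mathcal S}_{\mathbf{a},\hh(\mathbf{a})}$, and study the boundary of the compact one-manifold $\overline{\mathcal M}_{\overline{W}_-}^{I=3,\,n_*=m}(\bs{\gamma},\mathbf{y};\mathfrak{m})$. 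The vanishing of the mod $2$ count of its boundary points is the only input; everything is in the identification of that boundary.

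First I would enumerate the boundary strata. By Theorem~\ref{SFT compactness for Psi} a boundary point is a broken building, and by Proposition~\ref{bad degenerations} together with the index estimates $I(u_\infty^0)\ge 0$, $I(u_\infty^k)\ge 1$ for $k\neq 0$, and $I(u_\infty^0)=2$ whenever no irreducible component of $u_\infty^0$ is a cover of $\sigma_\infty^-$, the possibilities are exactly: (i) an ``honest'' two-level breaking $(u^{-1},u^0)$ or $(u^0,u^1)$ with $I(u^0)=2$, $\mathfrak{m}\in u^0$, $n_*(u^0)=m$ and $I(u^{\pm1})=1$, $n_*(u^{\pm1})=0$; (ii) a type-(A) degeneration $(u^{-1},u^0,u^1)$ with $u^{-1}$ a thin strip, $u^0$ containing a cover of $\sigma_\infty^-$, and $I(u^1)=2$; or (iii) a type-(B) degeneration. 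The gluing theorems needed to see that these strata genuinely occur in the boundary are available because the curves in these moduli spaces are somewhere injective. Case (iii) is eliminated by Theorem~\ref{thm: difficile}. In case (i), the constraint $n_*=0$ confines the outer level $u^1$ to $\R\times N$ (no nontrivial end at $\delta_0$, so $\bs{\gamma}$ breaks into $\bs{\gamma}'\in\widehat{\mathcal O}_{2g}$), respectively confines $u^{-1}$ to $\R\times[0,1]\times S$ and its negative endpoint $\mathbf{y}'$ to ${\mathcal S}_{\mathbf{a},\hh(\mathbf{a})}$; hence these outer levels are counted by the differential $\partial_{\mathrm{PFC}}$ of $PFC_{2g}(N,\alpha_0,\omega)$, respectively by the differential $\partial$ of $\overline{CF}(S,\mathbf{a},\hh(\mathbf{a}))$. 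Summing over the strata of case (i) gives precisely the coefficient of $\mathbf{y}$ in $(\partial\circ\overline{\Psi}+\overline{\Psi}\circ\partial_{\mathrm{PFC}})(\bs{\gamma})$.

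Therefore, after discarding cases (ii) and (iii) from the vanishing total count, one obtains
$$(\partial\circ\overline{\Psi}+\overline{\Psi}\circ\partial_{\mathrm{PFC}})(\bs{\gamma})\;=\;\sum_{\mathbf{y}\in{\mathcal S}_{\mathbf{a},\hh(\mathbf{a})}} n(\bs{\gamma},\mathbf{y})\,\mathbf{y},$$
where $n(\bs{\gamma},\mathbf{y})$ is the mod $2$ number of type-(A) degenerations in $\overline{\mathcal M}_{\overline{W}_-}^{I=3,\,n_*=m}(\bs{\gamma},\mathbf{y};\mathfrak{m})$. Since the negative end of a thin strip is a chord over some $x_i$ or $x_i'$, every such $\mathbf{y}$ has the form $\{x_i\}\cup\mathbf{y}_0$ or $\{x_i'\}\cup\mathbf{y}_0$ with $\mathbf{y}_0$ a $(2g-1)$-tuple of interior intersection points, and Theorem~\ref{bad degenerations come in pairs} gives $n(\bs{\gamma},\{x_i\}\cup\mathbf{y}_0)=n(\bs{\gamma},\{x_i'\}\cup\mathbf{y}_0)$. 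Hence $(\partial\circ\overline{\Psi}+\overline{\Psi}\circ\partial_{\mathrm{PFC}})(\bs{\gamma})$ is a sum of elements of the form $\{x_i\}\cup\mathbf{y}_0+\{x_i'\}\cup\mathbf{y}_0$, i.e.\ it lies in $\ker p$. Applying $p$, and using that $p$ is a chain map and $\Psi=p\circ\overline{\Psi}$, we get $\widehat{\partial}\circ\Psi+\Psi\circ\partial_{\mathrm{PFC}}=0$, which is the claim.

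The genuinely difficult ingredient, Theorem~\ref{thm: difficile} and its truncate-and-rescale proof, is already in place, so the main obstacle remaining in \emph{this} argument is one of completeness and bookkeeping: showing that the list of boundary strata above is exhaustive --- no level with the ``wrong'' value of $n_*$, no component escaping into $\overline{N}\setminus N$ or into the cap beyond the thin strips, and no boundary degeneration outside types (A) and (B), all of which is the content of Proposition~\ref{bad degenerations} and the estimates of \cite[Section 7.5]{I} --- and that the two-level strata are matched correctly with the terms of $\partial\circ\overline{\Psi}+\overline{\Psi}\circ\partial_{\mathrm{PFC}}$, in particular the point that an outer level with $n_*=0$ cannot touch the capping region.
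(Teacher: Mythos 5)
Your proposal is correct and follows essentially the same route as the paper: SFT compactness together with the index and $n_*$ estimates of Proposition \ref{bad degenerations} to enumerate the boundary strata of the compactification of ${\mathcal M}_{\overline{W}_-}^{I=3,\,n_*=m}(\bs{\gamma},\mathbf{y};\mathfrak{m})$, Theorem \ref{thm: difficile} to exclude type-(B) degenerations, and Theorem \ref{bad degenerations come in pairs} to show that the type-(A) contributions lie in the kernel of the projection to $\widehat{CF}(S,\mathbf{a},\hh(\mathbf{a}))$. The residual work you flag (exhaustiveness of the strata list, the $n_*=0$ confinement of the outer levels, and the gluing statements) is exactly what the paper delegates to \cite[Sections 7.3--7.6 and 7.12]{I}.
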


\section{Homotopies}\label{sec: homotopies}
In this section we prove the following.
\begin{thm}\label{Phi is an isomorphism}
$\Phi_* \colon \widehat{HF}(S, \mathbf{a}, \hh(\mathbf{a})) \to PFH_{2g}(N, \alpha_0, \omega)$ is an isomorphism.
\end{thm}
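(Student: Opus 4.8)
The plan is to show that $\Phi_*$ and $\Psi_*$ are mutually inverse isomorphisms by analyzing the two composite maps $\Psi_* \circ \Phi_*$ and $\Phi_* \circ \Psi_*$. Since $\Phi$ counts $J_+$-holomorphic curves in the cobordism $W_+$ and $\Psi$ counts $J_-$-holomorphic curves in the compactified cobordism $\overline{W}_-$, the natural strategy is to stack the two cobordisms and degenerate the resulting family of symplectic manifolds. First I would form the concatenation $\overline{W}_- \circ W_+$ (respectively $W_+ \circ \overline{W}_-$ in the other order), equipped with a family of almost complex structures that stretches the neck along the intermediate level ($N$ in one case, $[0,1]\times S$ or $[0,1]\times\overline S$ in the other). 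By the concatenation property of the ECH-type index and SFT compactness (adapted as in Theorem~\ref{SFT compactness for Psi}), a generic count of index-$2$ curves passing through the marked point $\mathfrak m$ in the glued cobordism computes, on one hand, the composite chain map $\overline\Psi \circ \overline\Phi$ (or its counterpart), and on the other hand — after degenerating the neck the other way — a count of curves in a single cobordism obtained by gluing $\overline W_-$ directly on top of $W_+$ \emph{without} an intervening cylindrical region.

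The key point is then to identify this latter count with the identity chain map up to homotopy. After gluing $W_+$ and $\overline W_-$ along their common end and capping, one obtains a cobordism from $[0,1]\times S$ (or $\overline S$) to itself, fibered over a sphere-with-punctures, and the relevant index-$2$ curves through $\mathfrak m$ become constrained to lie near a distinguished section. The heart of the matter is a relative Gromov–Taubes invariant computation, entirely analogous in spirit to the computation of $G(h)=1$ in the proof of Theorem~\ref{thm: U-map}: one degenerates the fiber $S$ (or $\overline S$) along separating curves into lower-genus or genus-one pieces, uses the multiplicativity of the relative invariant under such degenerations, and reduces to an explicit count on a torus or sphere that elementary complex analysis pins down to be $1$. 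This shows $\Psi_*\circ\Phi_* = \mathrm{id}$ on $\widehat{HF}(S,\mathbf a,\hh(\mathbf a))$; the symmetric argument, stacking in the other order and using the section at infinity $\sigma_\infty^-$ together with the marked point $\mathfrak m$ as in the construction of $\Psi$, gives $\Phi_*\circ\Psi_* = \mathrm{id}$ on $PFH_{2g}(N,\alpha_0,\omega)$. Together these force both maps to be isomorphisms.

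The main obstacle, and the part that occupies the bulk of \cite{II}, is controlling the degeneration of the glued cobordism — in particular ruling out the ``bad'' limit configurations that arise because the Lagrangian boundary condition $L_{\overline{\mathbf a}}$ is singular and has boundary. The same three types of bad degeneration identified in Proposition~\ref{bad degenerations} (ends at $\delta_0$ or the chord over $z_\infty$, boundary points at the boundary of $L_{\overline{\mathbf a}}$, and closed irreducible components) reappear in the concatenated setting, together with new ones coming from the interaction of the two marked-point constraints. One must again deploy the intersection-positivity bookkeeping with $n_*$ and the ECH-type index, followed by a truncate-and-rescale argument in the neck region (as in Theorem~\ref{thm: difficile}) to exclude all but the configurations that genuinely contribute to the identity. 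Once the combinatorics of the surviving configurations is understood, the remaining relative Gromov–Taubes computation is, as in Section~\ref{subsec: geometric U}, a matter of repeated separating degenerations of the fiber and a base case on a torus; this step I expect to be routine by comparison. I will carry out the neck-stretching and exclusion of bad degenerations first, then the index-counting identification of the two composites with the identity, and finally the Gromov–Taubes base-case computation.
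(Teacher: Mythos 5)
Your proposal follows essentially the same route as the paper: stack the two cobordisms, interpolate through a one-parameter family $\overline{W}_\tau$, read off the composite $\Psi\circ\Phi$ (resp.\ $\Phi\circ\Psi$) from the degeneration at one end and a homotopic map from the degeneration at the other end, control the bad degenerations with $n_*$, the ECH-type index and truncate-and-rescale arguments, and reduce the remaining count to relative Gromov--Taubes invariants computed by degenerating the fiber as in Subsection~\ref{subsec: geometric U}. The one place where your expectation differs from what actually happens is the claim $\Psi_*\circ\Phi_*=\mathrm{id}$: in that direction the second degeneration forces you to perturb the Lagrangian on the negative boundary from $\overline{\mathbf a}$ to a transverse pushoff $\overline{\mathbf b}$ (a Lagrangian cannot be paired with itself in the quadrilateral count), so what you obtain is not the identity but a continuation-type quasi-isomorphism $\Theta$ given by small quadrilaterals (Theorems~\ref{thm: H is a homotopy} and~\ref{understanding Theta}), plus a correction term $\overline V$ killed by the projection to $\widehat{CF}$; only in the direction $\Phi\circ\Psi$ does one literally get the identity (Theorems~\ref{thm: K is a homotopy} and~\ref{thm: computation of Xi}). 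This does not affect your conclusion, since an isomorphism on one side and the identity on the other still force $\Phi_*$ to be invertible, but it is the reason the paper's homotopy statement is phrased with $\Theta$ rather than $\mathrm{id}$.
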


In order to prove Theorem \ref{Phi is an isomorphism} we define a chain homotopy
$$H \colon \widehat{CF}(S, \mathbf{a}, \hh(\mathbf{a})) \to \widehat{CF}(S, \mathbf{a}, \hh(\mathbf{a}))$$
between $\Psi \circ \Phi$ and a quasi-isomorphism $\Theta$, and a chain homotopy
$$K \colon PFC_{2g}(N, \alpha_0, \omega) \to  PFC_{2g}(N, \alpha_0, \omega)$$
between $\Phi \circ \Psi$ and the identity by counting isolated holomorphic curves in one-parameter families of cobordisms. The definition of these maps and the proof that they have the required properties  occupies the largest part of \cite{II}.

We define the fibration $\pi_{\overline{W}_+} \colon \overline{W}_+ \to B_+$ with fibre $\overline{S}$ and monodromy $\overline{\hh}$. In $\overline{W}_+$ we consider the singular Lagrangian submanifold $L_{\mathbf{a}} \subset \partial \overline{W}_+$ defined as the trace of the parallel transport of a copy of $\overline{\mathbf{a}}$ on $\pi_{\overline{W}_+}^{-1}(3,1)$ along $\partial B_+$. Given $\mathbf{y} \in {\mathcal S}_{\mathbf{a}, \hh(\mathbf{a})}$ and $\bs{\gamma} \in \widehat{\mathcal O}_{2g}$, we denote by
${\mathcal M}_{\overline{W}_+}(\mathbf{y}, \bs{\gamma})$ the moduli space of holomorphic curves in $\overline{W}_+$ with boundary on $L_{\overline{\mathbf{a}}}$ which are positively asymptotic to the chords over $\mathbf{y}$ and negatively asymptotic to $\bs{\gamma}$.
We define the multisection $(\sigma_\infty^+)^\dagger$ in analogy with $(\sigma_\infty^-)^\dagger$ in $\overline{W}_-$ and denote the induced intersection number by $n_*$. If we choose the almost complex structure on $\overline{W}_+$ so that  $(\sigma_\infty^+)^\dagger$ is holomorphic, by \cite[Lemma 5.4.9]{I} we have
\begin{equation}\label{eqn: disagio}
{\mathcal M}_{\overline{W}_+}^{n_*=0}(\mathbf{y}, \bs{\gamma}) = {\mathcal M}_{W_+}(\mathbf{y}, \bs{\gamma}).
\end{equation}
\subsection{Homotopy for $\Psi \circ \Phi$} \label{homotopy I}
\begin{figure}
\begin{center}
% \psfragscanon
% \psfrag{a}{\small $L_{\overline{\bf a}}$}
% \psfrag{b}{\small $L_{\overline{\bf b}}$}
% \psfrag{c}{\small $L_{\overline{\hh}(\overline{\bf b})}$}
% \psfrag{d}{\small $L_{\overline{\hh}(\overline{\bf a})}$}
% \psfrag{x}{\small $\times$}
% \psfrag{y}{\small $B_{-\infty,2}$}
% \psfrag{z}{\small $B_{-\infty,1}$}
\includegraphics[width=10cm]{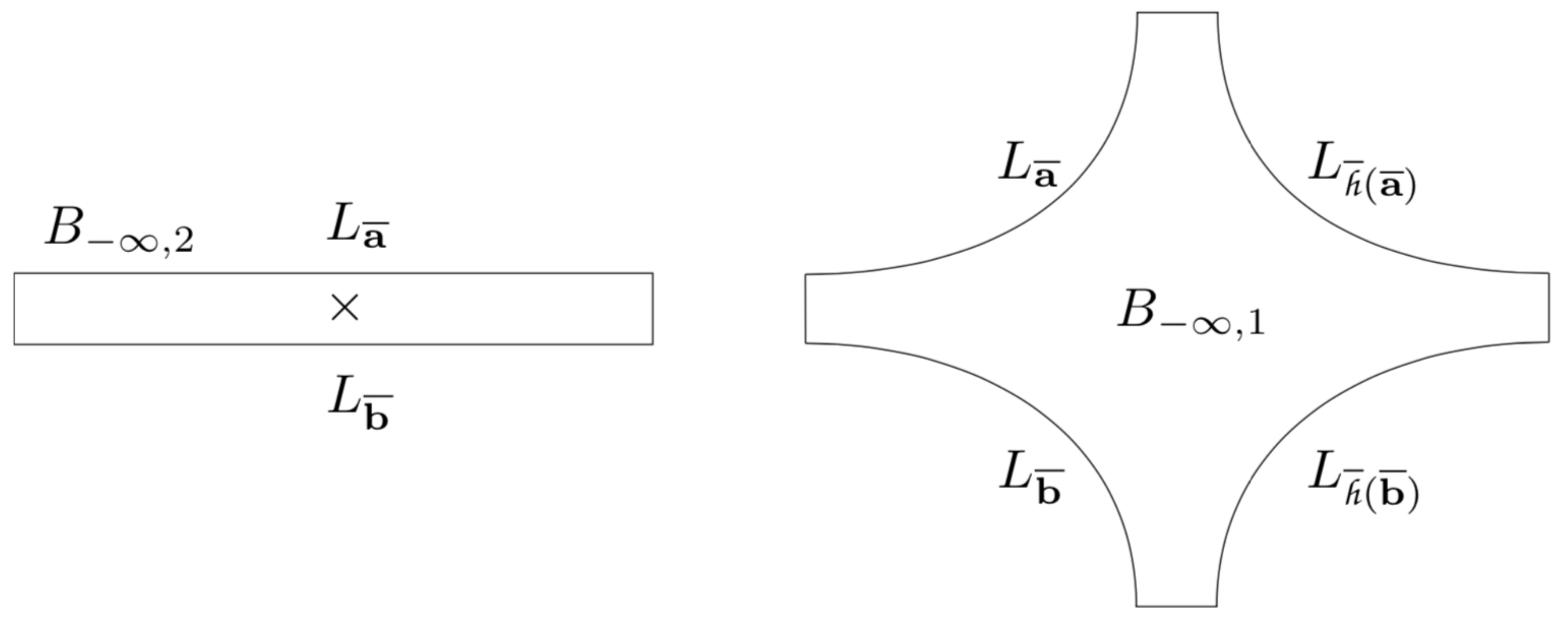}
\end{center}
\caption{The surface $B_{- \infty,2}$ to the left and $B_{-\infty,1}$ to the right. The $\times$ represents the point $\mathfrak{b}_{- \infty}$ and the labels on the boundary are the Lagrangian submanifolds.}
\label{figure: winfty}
\end{figure}
For $\tau \in \R$ we define $B_\tau$ as a smoothing of
 $$\R \times \R / 2 \Z \setminus ((e^\tau, + \infty) \times (1,2) \cup (- \infty, -e^\tau) \times (1,2)),$$
which can also be seen, up to a biholomorphism, as an annulus with a puncture on each boundary component. We choose a smooth family of points\footnote{These points are denoted by $\overline{\mathfrak{m}}^b(\tau)$ in \cite{II}.}
$$\mathfrak{b}_\tau \in (-e^\tau, e^\tau) \times \left \{ \frac 32 \right \} \subset B_\tau$$
such that $(B_\tau, \mathfrak{b}_\tau)$ converges (in the SFT sense; see \cite[Section 3.4]{BEHWZ}) to $(B_{\pm \infty}, \mathfrak{b}_{\pm \infty})$,  as $\tau \to \pm \infty$, where:
\begin{itemize}
\item $B_{+ \infty}= B_+ \sqcup B_-$ (with $B_+$ on top of $B_-$) and $\mathfrak{b}_{+ \infty}= (0, \frac 32) \in B_-$, and
\item $B_{- \infty}= B_{- \infty, 1} \sqcup B_{-\infty, 2}$ where $B_{- \infty, 1}$ is obtained from $\{-2 \le s \le 2 \} \cup \{ 0 \le t \le 1 \} \subset \R^2$ by smoothing the
corners (and therefore is biholomorphic to $D^2 \setminus \{ \pm 1, \pm i \}$), $B_{- \infty, 2} = [-2, 2] \times \R$ and $\mathfrak{b}_{-\infty} \in (0,0) \in [-2,2] \times \R$. See Figure \ref{figure: winfty}.
\end{itemize}
Every $(B_\tau, \mathfrak{b}_\tau)$ admits an anti-holomorphic involution which contains $\mathfrak{b}_\tau$ in its fixed point set. This property is used in a symmetry argument similar to that of Subsection \ref{Psi si a chain map}.

For each $\tau \in \R$, we define a fibration
$$\pi_{\overline{W}_\tau} \colon \overline{W}_\tau \to B_\tau$$
with fibre $\overline{S}$ and monodromy $\overline{\hh}$ (depending on an integer $m \gg0$ which will be suppressed from the notation; the reader should keep in mind that all statements hold for $m$ large enough). For $\tau = \pm \infty$ they extend to fibrations over the limit surfaces. The total space $\overline{W}_\tau$ (for $\tau \in \R \cup \{ \pm \infty \}$) admits a symplectic form $\Omega_{\overline{W}_\tau}$ such that all fibres are symplectic.

\begin{figure}
\begin{center}
% \psfragscanon
% \psfrag{a}{\tiny $\overline{a}_i$}
% \psfrag{b}{\tiny $\overline{b}_i$}
% \psfrag{c}{\tiny $\overline{\hh}(\overline{a}_i)$}
% \psfrag{d}{\tiny $\overline{\hh}(\overline{b}_i)$}
% \psfrag{x}{\tiny $x^\#_{i,1}$}
% \psfrag{y}{\tiny $x^\#_{i,2}$}
% \psfrag{z}{\tiny $z_\infty$}
\includegraphics[width=2cm]{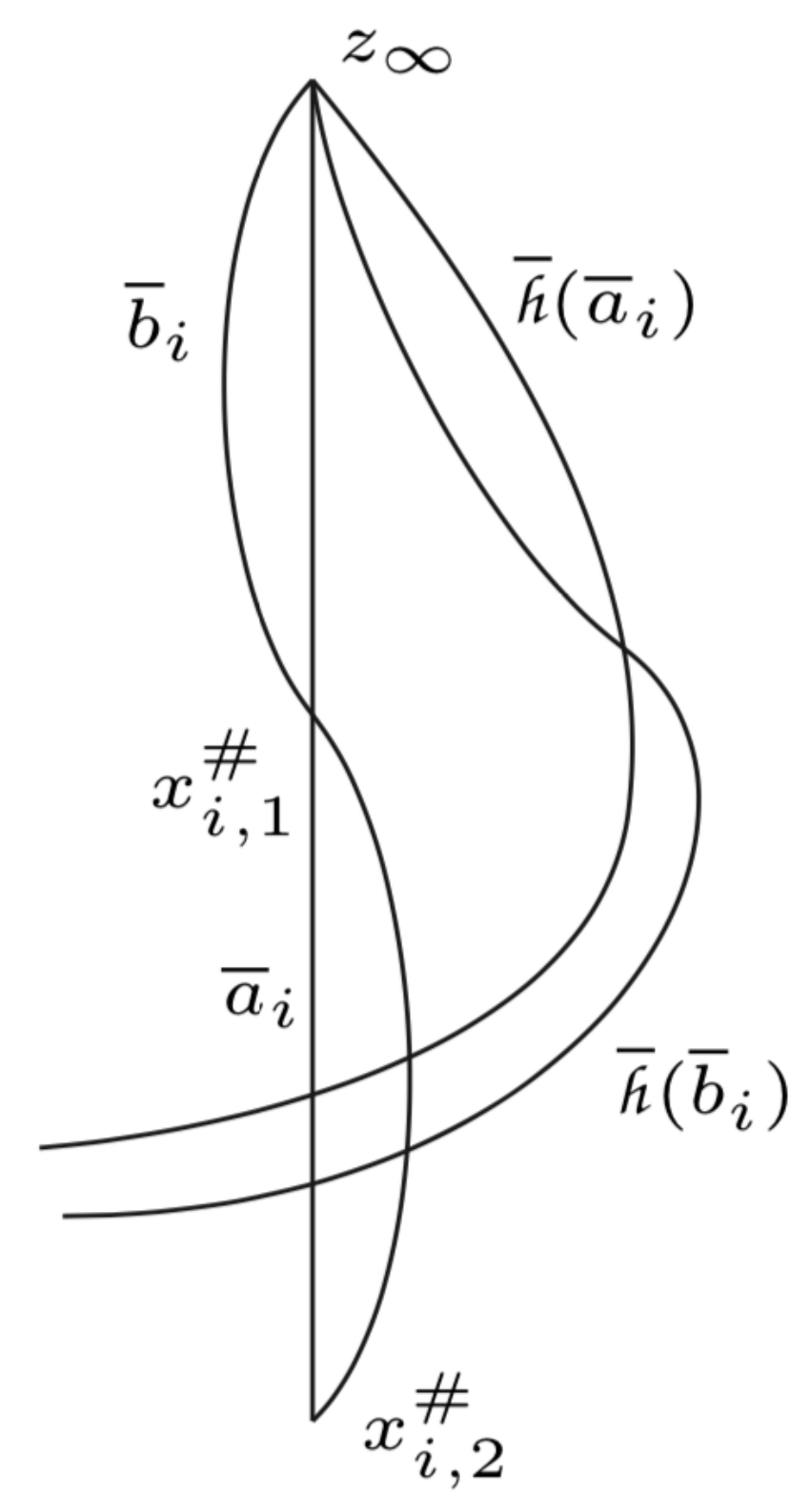}
\end{center}
\caption{The arcs $\overline{a}_i$, $\overline{b}_i$, $\overline{\hh}(\overline{a}_i)$, and $\overline{\hh}(\overline{b}_i)$ near $z_\infty$.}
\label{figure: aandb}
\end{figure}
Let $\overline{\mathbf{b}}= (\overline{b}_1, \ldots, \overline{b}_{2g})$ be small Hamiltonian perturbations of $\overline{\mathbf{a}}$ (depending on $m$) such that
\begin{itemize}
\item $\overline{a}_i \cap \overline{b}_i= \{ z_\infty, x^\sharp_{i,1}, x^\sharp_{i,2}, x^\sharp_{i,3} \}$, where $x^\sharp_{i,2} \in \op{int}(S)$ and $x^\sharp_{i,1}, x^\sharp_{i,3} \in \overline{S} \setminus S$,
\item $\overline{b}_i$, near $z_\infty$, is obtained from $\overline{a}_i$ by a small clockwise rotation, and
\item $\overline{b}_i$ approaches $\overline{a}_i$ sufficiently fast as $m \to \infty$.
\end{itemize}
See Figure \ref{figure: aandb}.
We denote $\partial_+ B_\tau = \partial B_\tau \cap [0, + \infty) \times \R /2 \Z$ and $\partial_- B_\tau = \partial B_\tau \cap (- \infty, 0] \times \R /2 \Z$. On $\partial_+ B_\tau$ we consider the singular Lagrangian submanifold $L_{\overline{\mathbf{a}}}$ obtained by parallel transporting $\overline{\mathbf{a}}$ and on $\partial_- B_\tau$ we consider the singular Lagrangian submanifold $L_{\overline{\mathbf{b}}}$ obtained by parallel transporting $\overline{\mathbf{b}}$, so that
\begin{align*}
L_{\mathbf{a}} \cap [2 e^\tau, + \infty) \times \{ 1 \} \times \overline{S} & =  [2 e^\tau, + \infty) \times \{ 1 \} \times \overline{\mathbf{a}}, \\
L_{\mathbf{a}} \cap [2 e^\tau, + \infty) \times \{ 0 \} \times \overline{S} & =  [2 e^\tau, + \infty) \times \{ 0 \} \times \overline{\hh}(\overline{\mathbf{a}}), \\
L_{\overline{\mathbf{b}}} \cap (- \infty, - 2e^\tau] \times \{ 1 \} \times \overline{S} & =  (-\infty, -2e^\tau] \times \{ 1 \} \times \overline{\mathbf{b}}, \\
L_{\overline{\mathbf{b}}} \cap (-\infty, -2e^\tau] \times \{ 0 \} \times \overline{S} & =  (- \infty, -2e^\tau] \times \{ 0 \} \times \overline{\hh}(\overline{\mathbf{b}}).
\end{align*}
The Lagrangian submanifolds $L_{\overline{\mathbf{a}}}$ and $L_{\overline{\mathbf{b}}}$ induce Lagrangian submanifolds in $\partial \overline{W}_{\pm \infty}$ in an obvious way. See Figure \ref{figure: winfty} for the Lagrangian submanifolds in $\partial \overline{W}_{-\infty}$.

Let $W_\tau \subset \overline{W}_\tau$ be the total space of a fibration with fibre $S$ and monodromy $\hh$. We choose a generic family of almost complex structures $J_\tau$ on $\overline{W}_\tau$  which restrict to the pull back of a split almost complex structure on $\overline{W}_\tau \setminus W_\tau \cong B_\tau \times D$. We also fix a family of marked points $\mathfrak{m}_\tau = (\mathfrak{b}_\tau, z_\infty) \in \overline{W}_\tau$.

Given $\mathbf{y}_+ \in {\mathcal S}_{\mathbf{a}, \hh(\mathbf{a})}$ and $\mathbf{y}_- \in {\mathcal S}_{\mathbf{b}, \hh(\mathbf{b})}$, we denote by ${\mathcal M}_{\overline{W}_*}(\mathbf{y}_+, \mathbf{y}_-; \mathfrak{m}_*)$ the moduli space of  pairs $(\tau, u)$, where $\tau \in \R$ and $u$ is a $J_\tau$-holomorphic curves in $\overline{W}_\tau$ with boundary on $L_{\mathbf{a}} \cup L_{\mathbf{b}}$ which is positively asymptotic to the chords over $\mathbf{y}_+$, negatively asymptotic to the chords $\mathbf{y}_-$ and passes through $\mathfrak{m}_\tau$. To any such $u$ we associate the ECH-type index $I(u)$ satisfying the index inequality --- see \cite[Subsection 3.2.6]{II} --- and the intersection number $n_*(u)$, which is the algebraic intersection between $u$ and the $J_\tau$-holomorphic multisection $(\sigma_\infty^\tau)^\dagger$ defined by a point $z_\infty^\dagger \in \overline{S}$ close to $z_\infty$ and not on the arcs $\overline{\mathbf{a}}$ and $\overline{\mathbf{b}}$; see \cite[Notation 3.2.9]{II}.

\begin{dfn}
We define\footnote{For sake of exposition we define here a simplified homotopy map. As explained later, the actual homotopy map must take into account the chord over $z_\infty$.} the map $\overline{H} \colon \overline{CF}(S, \mathbf{a}, \hh(\mathbf{a})) \to \overline{CF}(S, \mathbf{b}, \hh(\mathbf{b}))$ by
\begin{equation}\label{eqn: homotopy H}
\overline{H}(\mathbf{y}_+) = \sum \limits_{\mathbf{y}_- \in {\mathcal S}_{\mathbf{b}, \hh(\mathbf{b})}} \# {\mathcal M}_{\overline{W}_*}^{I=1, n_*(u)=m}(\mathbf{y}_+, \mathbf{y}_-; \mathfrak{m}_*) \mathbf{y}_-.
\end{equation}
\end{dfn}
Let ${\mathcal S}^\sharp$ be the set of unordered $2g$-tuple of intersection points ${x^\sharp_{1, j(1)}, \ldots, x^\sharp_{2g, j(2g)}}$ between $\overline{\mathbf{a}}$ and $\overline{\mathbf{b}}$. In particular no intersection point is equal to $z_\infty$.
Given $\mathbf{y}_+ \in {\mathcal S}_{\mathbf{a}, \hh(\mathbf{a})}$, $\mathbf{y}_- \in {\mathcal S}_{\mathbf{b}, \hh(\mathbf{b})}$ and $\mathbf{x}_\pm \in {\mathcal S}^\sharp$,
we will denote the moduli spaces of holomorphic curves in $\overline{W}_{-\infty, 1}$ with boundary conditions $L_{\overline{\mathbf{a}}} \cup L_{\overline{\mathbf{b}}} \cup L_{\overline{\hh}(\overline{\mathbf{b}})} \cup L_{\overline{\hh}(\overline{\mathbf{a}})}$ and asymptotic to the chords over $\mathbf{y}_+$, $\mathbf{x}_+$, $\mathbf{y}_-$ and $\overline{\hh}(\mathbf{x}_-)$ at the four ends of $\overline{W}_{- \infty, 1}$ (in counterclockwise order starting from the top) by ${\mathcal M}_{\overline{W}_{- \infty, 1}}(\mathbf{y}_+, \mathbf{x}_+, \mathbf{y}_-, \overline{\hh}(\mathbf{x}_-))$. Similarly, we denote the moduli space of holomorphic curves in $W_{- \infty, 2}$ with boundary conditions $L_{\overline{\mathbf{a}}} \cup L_{\overline{\mathbf{b}}}$ which are asymptotic to the chords over $\mathbf{x}^\sharp_\pm$ for $t \to \pm \infty$ and pass through $\mathfrak{m}_{-\infty}=(0,0)$ by ${\mathcal M}_{\overline{W}_{- \infty, 2}}(\mathbf{x}^\sharp_+, \mathbf{x}^\sharp_-; \mathfrak{m}_{- \infty})$. In the following definition we abbreviate 
\begin{align*}
{\mathcal M}_1(\mathbf{y}_+, \mathbf{x}^\sharp_+, \mathbf{y}_-, \overline{\hh}(\mathbf{x}^\sharp_-)) & = {\mathcal M}_{\overline{W}_{- \infty, 1}}^{I=0, n_*=0}(\mathbf{y}_+, \mathbf{x}_+, \mathbf{y}_-, \overline{\hh}(\mathbf{x}_-)) \quad \text{and}  \\ {\mathcal M}_2(\mathbf{x}^\sharp_+, \mathbf{x}^\sharp_-) & = {\mathcal M}_{\overline{W}_{- \infty, 2}}^{I=2, n_*=m}(\mathbf{x}^\sharp_+, \mathbf{x}^\sharp_-; \mathfrak{m}_{- \infty})
\end{align*}
\begin{dfn}
We define the map $\overline{\Theta} \colon \overline{CF}(S, \mathbf{a}, \hh(\mathbf{a})) \to \overline{CF}(S, \mathbf{b}, \hh(\mathbf{b}))$ as
\begin{equation}\label{eqn: isomorphism 1}
\overline{\Theta}(\mathbf{y}_+) = \sum \limits_{\mathbf{y}_- \in {\mathcal S}_{\mathbf{b}, \hh(\mathbf{b})}} \sum \limits_{\mathbf{x}_\pm^\sharp \in {\mathcal S}^\sharp} \# \left ( {\mathcal M}_1(\mathbf{y}_+, \mathbf{x}_+^\sharp, \mathbf{y}_-, \overline{\hh}(\mathbf{x}_-^\sharp)) \times {\mathcal M}_2(\mathbf{x}^\sharp_+, \mathbf{x}^\sharp_-) \right ) \mathbf{y}_-.
\end{equation}
\end{dfn}

\begin{thm}\label{thm: H is a homotopy}
The maps $\overline{H}$ and $\overline{\Theta}$ satisfy the relation
\begin{equation}\label{eqn: H is a homotopy}
\overline{\partial} \circ \overline{H}+ \overline{H} \circ \overline{\partial}=  \overline{\Psi} \circ \overline{\Phi} + \overline{\Theta} + \overline{V}
\end{equation}
for a map $\overline{V}$ whose image is generated by elements of the form $ \{ x_i \} \cup \mathbf{y}' + \{ x_i' \} \cup \mathbf{y}'$ where $x_i$ and $x_i'$ here denote the intersection points of $b_i$ ad $\hh(b_i)$ on $\partial S$ and $\mathbf{y}'$ is a $(2g-1)$-tuple of intersection points between the remaining arcs.
\end{thm}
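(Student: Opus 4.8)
The plan is to analyze the boundary of the one-dimensional moduli spaces $\mathcal{M}_{\overline{W}_*}^{I=2,\,n_*=m}(\mathbf{y}_+,\mathbf{y}_-;\mathfrak{m}_*)$ of pairs $(\tau,u)$ and extract from the codimension-one degenerations the identity \eqref{eqn: H is a homotopy}. First I would establish SFT compactness for this family, following the arguments of Section~\ref{sec: CO} (in particular the analogue of Theorem~\ref{SFT compactness for Psi}) together with the usual analysis of one-parameter families of cobordisms: a sequence $(\tau_n,u_n)$ either has $\tau_n$ staying bounded, in which case we get the standard SFT breaking of $u$ in a fixed $\overline{W}_\tau$ contributing to $\overline{\partial}\circ\overline{H}+\overline{H}\circ\overline{\partial}$, or $\tau_n\to+\infty$, or $\tau_n\to-\infty$. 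The middle case $\tau_n\to+\infty$ is where $B_\tau$ degenerates into $B_+\sqcup B_-$, so the limit building splits into a $J_+$-holomorphic curve in $\overline{W}_+$ on top of a $J_-$-holomorphic curve in $\overline{W}_-$; using \eqref{eqn: disagio} to pin the top curve to $W_+$ (the $n_*=0$ part) and the definitions of $\overline{\Phi}$ and $\overline{\Psi}$, these configurations assemble precisely into $\overline{\Psi}\circ\overline{\Phi}$.

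The case $\tau_n\to-\infty$ is the heart of the matter. Here $B_\tau$ degenerates into $B_{-\infty,1}\sqcup B_{-\infty,2}$ as in Figure~\ref{figure: winfty}, and the limiting building has one level mapping to $\overline{W}_{-\infty,1}$ and one level mapping to $\overline{W}_{-\infty,2}$ (with possible extra symplectization levels of $\R\times\overline{N}$ and $\R\times[0,1]\times\overline{S}$ in between, which I would rule out or absorb by an index count: the $I=2$ total index, with the point constraint $\mathfrak{m}_\tau$ costing $2$, forces $I=0$ on the $\overline{W}_{-\infty,1}$-level and $I=2$ on the $\overline{W}_{-\infty,2}$-level, leaving no budget for nontrivial symplectization pieces). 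The $\overline{W}_{-\infty,1}$-level, being a curve with four Lagrangian boundary arcs in a disc with four boundary punctures, is the ``square'' that produces a continuation/quasi-isomorphism $\overline{\Theta}$ between $\overline{CF}(S,\mathbf{a},\hh(\mathbf{a}))$ and $\overline{CF}(S,\mathbf{b},\hh(\mathbf{b}))$ after pairing with the $\overline{W}_{-\infty,2}$-level passing through $\mathfrak{m}_{-\infty}$ — this gives the term $\overline{\Theta}$ in \eqref{eqn: H is a homotopy}. Just as in Section~\ref{Psi si a chain map}, there will be ``bad degenerations'' (ends at $\delta_0$ or at the chord over $z_\infty$, boundary points at the singular locus of $L_{\overline{\mathbf{a}}}$ or $L_{\overline{\mathbf{b}}}$, closed components): I would use the intersection numbers $n_*$ exactly as in Proposition~\ref{bad degenerations}, and the symmetry/involution argument on $(B_\tau,\mathfrak{b}_\tau)$ afforded by the anti-holomorphic involution fixing $\mathfrak{b}_\tau$ (the analogue of Theorem~\ref{thm: difficile}, for $m\gg0$), to show that the surviving bad configurations either cancel in pairs or contribute only to $\overline{V}$ — their algebraic count lands in the span of elements $\{x_i\}\cup\mathbf{y}'+\{x_i'\}\cup\mathbf{y}'$ because, as in Theorem~\ref{bad degenerations come in pairs}, the relevant thin-strip asymptotics at $z_\infty$ depend only on the local picture of the arcs, which is identical at $x_i$ and $x_i'$.

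Assembling the count of all boundary components of the compactified $1$-manifold $\overline{\mathcal{M}}_{\overline{W}_*}^{I=2,\,n_*=m}(\mathbf{y}_+,\mathbf{y}_-;\mathfrak{m}_*)$ to zero gives, over $\Z/2\Z$,
$$\overline{\partial}\circ\overline{H}+\overline{H}\circ\overline{\partial}+\overline{\Psi}\circ\overline{\Phi}+\overline{\Theta}+\overline{V}=0,$$
which is \eqref{eqn: H is a homotopy}. I expect the main obstacle to be the $\tau\to-\infty$ analysis: identifying the $\overline{W}_{-\infty,1}$-level count with a genuine chain map $\overline{\Theta}$ (and later checking it is a quasi-isomorphism, though that is deferred) requires setting up the correct degenerate Lagrangian boundary conditions with the four families $L_{\overline{\mathbf{a}}}$, $L_{\overline{\mathbf{b}}}$, $L_{\overline{\hh}(\overline{\mathbf{b}})}$, $L_{\overline{\hh}(\overline{\mathbf{a}})}$ meeting at the corners, and the bad-degeneration bookkeeping in this setting is genuinely more involved than for $\Psi$ because two perturbed arc systems now interact near $z_\infty$; controlling the $n_*$-contributions and running the involution argument uniformly in $m$ is where the technical weight lies.
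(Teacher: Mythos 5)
Your overall architecture --- reading \eqref{eqn: H is a homotopy} off the boundary of the compactified one-dimensional moduli spaces ${\mathcal M}_{\overline{W}_*}^{I=2,\,n_*=m}(\mathbf{y}_+,\mathbf{y}_-;\mathfrak{m}_*)$, with finite-$\tau$ breakings giving the left-hand side, $\tau\to+\infty$ giving the composition and $\tau\to-\infty$ giving $\overline{\Theta}$ --- is the paper's. But you have misplaced the term $\overline{V}$, and the step where you do so is a genuine gap. You claim the $\tau\to+\infty$ boundary ``assembles precisely into $\overline{\Psi}\circ\overline{\Phi}$'', invoking \eqref{eqn: disagio} to pin the top-level curve into $W_+$. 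Equation \eqref{eqn: disagio} only identifies the $n_*=0$ part of ${\mathcal M}_{\overline{W}_+}(\mathbf{y},\bs{\gamma})$ with ${\mathcal M}_{W_+}(\mathbf{y},\bs{\gamma})$, and nothing forces the top level to have $n_*=0$: a three-level building consisting of a curve in $\overline{W}_+$ with a negative end at $\delta_0$ (contributing $m-1$ to $n_*$ and having $I=1$), the section at infinity $\sigma_\infty^-$ in $\overline{W}_-$ (with $n_*=0$, $I=0$, and passing through $\mathfrak{m}_{+\infty}$ for free), and a thin strip in $\R\times[0,1]\times\overline{S}$ (with $n_*=1$, $I=1$) has total $n_*=m$ and total index $2$, so it genuinely occurs in the $\tau\to+\infty$ boundary. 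These buildings are exactly what $\overline{V}$ counts: the thin strip ends at $x_i$ or $x_i'$ (the boundary intersection points of $b_i$ and $\hh(b_i)$), and the counts for the two choices agree by the asymptotic-eigenfunction argument of Theorem \ref{bad degenerations come in pairs}, which is why the image of $\overline{V}$ lies in the stated span. In your write-up this contribution has disappeared from the $+\infty$ end and been re-attributed, vaguely, to ``surviving bad configurations'' at $\tau\to-\infty$; but there the marked point sits over $B_{-\infty,2}$ and the $n_*=m$ constraint on the ${\mathcal M}_2$ factor rules out covers of the section at infinity through $\mathfrak{m}_{-\infty}$, so that end produces $\overline{\Theta}$ and not $\overline{V}$.

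The distinction matters beyond bookkeeping. In the proof that $\Psi$ is a chain map, the analogous type (B) buildings are excluded outright for $m\gg0$ by the truncate-and-rescale/involution argument, so no correction term appears there. Here the three-level buildings at $\tau\to+\infty$ cannot be argued away --- they are honest boundary points of the compactified moduli space --- and the entire content of the theorem is that their total count is of the form $\{x_i\}\cup\mathbf{y}'+\{x_i'\}\cup\mathbf{y}'$ and hence vanishes after passing to $\widehat{CF}$. A proof that asserts the $+\infty$ end contributes only $\overline{\Psi}\circ\overline{\Phi}$ would, if it were correct, prove a stronger statement with $\overline{V}=0$; since that is not what happens, the identification and pairing of these buildings must be supplied.
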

Theorem \ref{thm: H is a homotopy} is proved by analysing the degenerations of the moduli spaces ${\mathcal M}_{\overline{W}_*}^{I=2, n_*=m}(\mathbf{y}_+, \mathbf{y}_-; \mathfrak{m}_*)$. Degenerations which occur at some $\tau \in \R$ contribute to the left-hand side, degenerations which occur at $\tau \to + \infty$ contribute to $\overline{\Psi} \circ \overline{\Phi} + \overline{V}$ and degenerations which occur at $\tau \to - \infty$ contribute to $\overline{\Theta}$. For $\tau \to + \infty$, the cobordisms $\overline{W}_\tau$ degenerate into the juxtaposition of $\overline{W}_+$ on top of $\overline{W}_-$ with the base-point $\mathfrak{m}_{+ \infty} \in \overline{W}_-$. Sequences of holomorphic curves in ${\mathcal M}_{\overline{W}_*}^{I=2, n_*=m}(\mathbf{y}_+, \mathbf{y}_-; \mathfrak{m}_*)$ degenerate, for $\tau \to \infty$, either to two-level buildings in ${\mathcal M}_{\overline{W}_+}^{I=0, n_*=0}(\mathbf{y}_+, \bs{\gamma}) \times
{\mathcal M}_{\overline{W}_-}^{I=2, n_*=m}(\bs{\gamma}, \mathbf{y}; \mathfrak{m}_{+ \infty})$ for some $\bs{\gamma} \in \widehat{\mathcal O}_{2g}$ or to three-level buildings
involving a holomorphic curve in $\overline{W}_+$ with a negative and at $\delta_0$, the section at infinity $\sigma_{\infty}^-$ in $\overline{W}_-$, and a thin strip in $\R \times [0,1] \times \overline{S}$. The two-level degenerations contribute to $\overline{\Psi} \circ \overline{\Phi}$ by Equation \eqref{eqn: disagio}, while the three-level degenerations contribute to $\overline{V}$.

The main tools we use in the analysis of bad degenerations are, as for proving that $\Psi$ is a chain map, the intersection numbers $n_*$, the ECH-type index $I$ and truncate-and-rescale arguments to analyse how a degenerating sequence approaches its limit. However, the cases to treat are many more than in the previous section and their analysis is more difficult.

\begin{thm} \label{understanding Theta}
The map $\overline{\Theta}$ induces a quasi-isomorphism
$$\Theta \colon \widehat{CF}(S, \mathbf{a}, \hh(\mathbf{a})) \to \widehat{CF}(S, \mathbf{b}, \hh(\mathbf{b})).$$
\end{thm}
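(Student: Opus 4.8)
The plan is to recognise the map $\overline{\Theta}$ as a ``neck-stretching'' or ``change of Lagrangian'' map between two incarnations of the Heegaard Floer complex and then to deform $\overline{\mathbf{b}}$ back to $\overline{\mathbf{a}}$. The starting observation is that the factor $\mathcal{M}_2(\mathbf{x}^\sharp_+,\mathbf{x}^\sharp_-)$ is a moduli space of holomorphic curves in $\overline{W}_{-\infty,2}=[-2,2]\times\R\times\overline{S}$, i.e.\ in a trivial cobordism of the symplectisation type $\R\times[0,1]\times\overline{S}$, with boundary on the \emph{two} Lagrangians $L_{\overline{\mathbf a}}$ and $L_{\overline{\mathbf b}}$ and constrained to pass through the point $\mathfrak{m}_{-\infty}$; because $\overline{\mathbf b}$ is a small Hamiltonian perturbation of $\overline{\mathbf a}$, the $I=2$ count through a generic point is (up to the thin-strip corrections encoded in $\overline{V}$, which is why we pass to the hat complexes) the geometric $U$-map of Theorem~\ref{thm: U-map} composed with a continuation isomorphism, and in particular it contributes a term that becomes the identity in homology once one identifies $\widehat{CF}(S,\mathbf a,\hh(\mathbf a))$ with $\widehat{CF}(S,\mathbf b,\hh(\mathbf b))$. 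The other factor $\mathcal{M}_1(\mathbf{y}_+,\mathbf{x}^\sharp_+,\mathbf{y}_-,\overline{\hh}(\mathbf{x}^\sharp_-))$ lives in $\overline{W}_{-\infty,1}$, a disc with four boundary punctures fibred over $\overline S$, with Lagrangian boundary conditions $L_{\overline{\mathbf a}}\cup L_{\overline{\mathbf b}}\cup L_{\overline{\hh}(\overline{\mathbf b})}\cup L_{\overline{\hh}(\overline{\mathbf a})}$; this is exactly a quadrilateral-counting map, i.e.\ a triangle/composition map in the Fukaya-categorical picture of cylindrical Heegaard Floer homology, and its effect is to glue the $U$-type output on $\overline{\mathbf b}$-arcs back to a generator on $\overline{\mathbf a}$-arcs.

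Concretely I would proceed as follows. First, invoke the compactness and gluing package of \cite{I} (already used for $\Psi$) to see that $\overline{\Theta}$ is a well-defined chain map $\overline{CF}(S,\mathbf a,\hh(\mathbf a))\to\overline{CF}(S,\mathbf b,\hh(\mathbf b))$ and that it descends to $\Theta$ on the hat complexes, exactly as in the passage from $\overline{\Psi}$ to $\Psi$: the only degenerations that fail to cancel pairwise are the ones hitting a chord over $x_i$ versus $x_i'$, and those are killed in the quotient. Second, run a filtration/energy argument on $\overline{W}_{-\infty,1}$: because $\overline{\mathbf b}$ approaches $\overline{\mathbf a}$ arbitrarily fast as $m\to\infty$, the only $I=0$, $n_*=0$ quadrilaterals of small area are the ``small'' ones supported near the triple intersection points $x^\sharp_{i,\bullet}$, and a count of these identifies the composite $\overline{\Theta}$ with a \emph{triangle map} $\Psi_{\overline{\mathbf a}\overline{\mathbf b}}$ for the Hamiltonian isotopy $\overline{\mathbf a}\rightsquigarrow\overline{\mathbf b}$, followed by the geometric $U$-map $U_{\mathbf z}$ of Definition~\ref{geometric U} (the marked point $\mathfrak{m}_{-\infty}$ being the basepoint), followed by its inverse triangle map. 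Third, use the standard fact (the associativity of the triangle product, or equivalently invariance of $\widehat{HF}$ under Hamiltonian isotopy of the attaching curves, as in \cite{Li}) that $\Psi_{\overline{\mathbf b}\overline{\mathbf a}}\circ\Psi_{\overline{\mathbf a}\overline{\mathbf b}}$ is chain-homotopic to the identity, so that $\Theta$ is chain-homotopic to the continuation isomorphism $\widehat{CF}(S,\mathbf a,\hh(\mathbf a))\to\widehat{CF}(S,\mathbf b,\hh(\mathbf b))$ precomposed (or postcomposed) with the geometric $U$-map \emph{in its first, $i=0$ slot} — but on the subcomplex generated by top-dimensional generators (those lying in $S_0$, where the relevant $i$ is forced to be $0$) the geometric $U$-map of Theorem~\ref{thm: U-map} equals the algebraic $U$-map, which on $[\mathbf y,0]$ vanishes\,\dots\ so instead one arranges the point constraint so that $\mathcal{M}_2$ counts the \emph{length-zero} part, i.e.\ honest identity-flavour curves, whence $\Theta$ is the continuation isomorphism itself. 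Fourth, conclude $\Theta_*$ is an isomorphism.

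\textbf{Main obstacle.} The genuinely delicate point is the same one that plagues the construction of $\Psi$: the Lagrangian boundary conditions $L_{\overline{\mathbf a}}$, $L_{\overline{\mathbf b}}$ are \emph{singular with boundary}, meeting cleanly at the capping point $z_\infty$, and one must rule out the ``bad degenerations'' — ends at $\delta_0$ or at the chord over $z_\infty$, boundary points at the corner of $L_{\overline{\mathbf a}}\cup L_{\overline{\mathbf b}}$, and closed components — in the relevant one-parameter moduli spaces, using the $n_*$ intersection count and the truncate-and-rescale involution argument of Theorem~\ref{thm: difficile}. Here the complication is compounded because there are \emph{two} Lagrangians and hence two families of thin strips (Figure~\ref{figure: aandb}), so the eigenfunction bookkeeping and the $m\to\infty$ limiting argument must be run simultaneously for the $\overline{\mathbf a}$-side and the $\overline{\mathbf b}$-side; I expect roughly the subsections 7.7--7.10 worth of analysis from \cite{I}, adapted, to be needed. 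A secondary but nontrivial issue is keeping track of the orientation/sign conventions so that the identity-flavour curves in $\mathcal{M}_1$ really contribute the identity and not some nonzero but non-invertible self-map; this is handled by a local model computation of the small quadrilaterals near each $x^\sharp_{i,2}$, which for generic small perturbation contributes exactly one such curve.
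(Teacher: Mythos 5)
Your outline has the right skeleton --- reduce $\overline{\Theta}$ to the quadrilateral count in $\overline{W}_{-\infty,1}$ by showing the $\mathcal{M}_2$ factor contributes a factor of one, then identify the low-energy quadrilaterals near the points $x^\sharp_{i,2}$ as a bijective ``nearest generator'' map and conclude by an energy filtration or by triangle-map associativity --- and this is indeed how the argument goes. But the middle step, where you dispose of $\mathcal{M}_2$, contains a genuine gap that you half-acknowledge yourself. You first identify $\#\mathcal{M}_2(\mathbf{x}^\sharp_+,\mathbf{x}^\sharp_-)$ with the geometric $U$-map of Theorem \ref{thm: U-map}; since $U$ annihilates $[\mathbf{y},0]$ this would force $\Theta$ to vanish on the hat complex, and you then retreat to ``one arranges the point constraint so that $\mathcal{M}_2$ counts the length-zero part''. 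That is not what happens: the curves in $\mathcal{M}_2$ are neither trivial nor length-zero. They have $I=2$ and $n_*=m$, pass through the marked point lying over the \emph{singular} point $z_\infty$ (not a generic basepoint), and represent essentially the class $[\overline{S}]+2g[D^2]$, i.e.\ they contain a component close to a full fibre through $z_\infty$ together with $2g$ strips. The correct statement is that $\#\mathcal{M}_2(\mathbf{x}^\sharp_+,\mathbf{x}^\sharp_-)$ equals a relative Gromov--Taubes invariant $G_3$, and that $G_3=1$; this is a genuine computation, carried out by degenerating the base and then the fibre exactly as in the $G(h)=1$ computation of Subsection \ref{subsec: geometric U}, and it is not a consequence of any chain-level property of $U$. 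Without it, your reduction of $\overline{\Theta}$ to the $\mathcal{M}_1$ count is unjustified.

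Two smaller omissions. First, you need the combinatorial input that $\mathcal{M}_2(\mathbf{x}^\sharp_+,\mathbf{x}^\sharp_-)$ is nonempty only for the even/odd dichotomy on $\mathcal{S}^\sharp$, i.e.\ only when one of the tuples is the unique tuple of interior intersection points $\{x^\sharp_{1,2},\ldots,x^\sharp_{2g,2}\}$ and the other consists of points $x^\sharp_{i,1},x^\sharp_{i,3}$ near $z_\infty$; this is what turns the remaining sum over $\mathcal{M}_1$ into a composition of two triangle maps with the correct inputs. Second, the descent of $\overline{\Theta}$ to $\widehat{CF}$ is not obtained by cancelling bad degenerations as in the construction of $\Psi$: one proves the on-the-nose equality $\overline{\Theta}(\{x_i\}\cup\mathbf{y}')=\overline{\Theta}(\{x_i'\}\cup\mathbf{y}')$ by observing that every curve in the relevant $\mathcal{M}_1$ must contain the small fishtail quadrilateral at that slot as an irreducible component. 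Your final identification of the low-energy part with the nearest-generator bijection, and the conclusion that $\Theta$ is a quasi-isomorphism by standard homological algebra, is correct.
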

Before proving Theorem \ref{understanding Theta} we explain how, together with Equation \eqref{eqn: H is a homotopy}, it proves that $\Psi_* \circ \Phi_*$ is an isomorphism. The missing step is to prove that $\overline{H}$ induces a map $H \colon \widehat{CF}(S, \mathbf{a}, \hh(\mathbf{a})) \to \widehat{CF}(S, \mathbf{b}, \hh(\mathbf{b}))$. Unfortunately we were not able to prove directly that $\overline{H}(\{ x_i \} \cup \mathbf{y}')= \overline{H}(\{ x_i' \} \cup \mathbf{y}')$ if $\mathbf{y}'$ is a $(2g-1)$-tuple of intersection points between the remaining arcs, and thus we extend all maps in Equation \eqref{eqn: H is a homotopy} to a chain complex generated by $2g$-tuples of intersection points which can contain $z_\infty$ and whose homology is $\widehat{HF}(S, \mathbf{a}, \hh({\mathbf{a}}))$. This, of course, brings in more bad degenerations to control.

Now we sketch the main steps of the proof of Theorem \ref{understanding Theta}. We consider the two subsets ${\mathcal S}^\sharp_{odd}$ and ${\mathcal S}^\sharp_{even}$ of ${\mathcal S}^\sharp$: the first consists of $2g$-tuple $\{ x^\sharp_{1, j(1)}, \ldots, x^\sharp_{2g, j(2g)} \}$ where $j(i) \in \{1,3 \}$, and the second of the unique element $\{ x^\sharp_{1, 2}, \ldots, x^\sharp_{2g, 2} \}$. By \cite[Lemma 3.2.15]{II}, the moduli spaces ${\mathcal M}_{W_{- \infty, 2}}^{I=2, n_*=m}(x_+^\sharp, x_-^\sharp; \mathfrak{m}_{-\infty})$ is nonempty only when $x_-^\sharp \in {\mathcal S}^\sharp_{odd}$ and $x_+^\sharp \in {\mathcal S}^\sharp_{even}$.

For any $x_-^\sharp \in {\mathcal S}^\sharp_{odd}$ and $x_+^\sharp \in {\mathcal S}^\sharp_{even}$, the number of elements in the moduli space ${\mathcal M}_{W_{- \infty, 2}}^{I=2, n_*=m}(x_+^\sharp, x_-^\sharp; \mathfrak{m}_{-\infty})$ is equal to the relative Gromov-Taubes invariant $G_3$ defined as the number of embedded holomorphic curves in $D^2 \times \overline{S}$ with boundary on $\partial D^2 \times \mathbf{a}$, representing the relative homology class $[\overline{S}]+2g[D^2]$ and passing through $(0, z_\infty), (x_1, 1), \ldots, (x_{2g}, 1), (x_{2g+1}, -1), \ldots, (x_{4g}, -1)$ with $x_i, x_{i+2g} \in \overline{a}_i$. See \cite[Subsection 2.4.1]{II}.

By \cite[Theorem 2.4.2]{II} $G_3=1$; the proof is similar to the computation of the relative Gromov-Taubes invariant in Subsection \ref{subsec: geometric U}. Thus we can rewrite
$$ \overline{\Theta}(\mathbf{y}_+) = \sum \limits_{\mathbf{y}_- \in {\mathcal S}_{\mathbf{b}, \hh(\mathbf{b})}} \sum \limits_{\mathbf{x}_+^\sharp \in {\mathcal S}^\sharp_{odd}} \sum \limits_{x^\sharp_- \in {\mathcal S}^\sharp_{even}} \# {\mathcal M}_{\overline{W}_{- \infty, 1}}^{I=0, n_*=0}(\mathbf{y}_+, \mathbf{x}_+^\sharp, \mathbf{y}_-, \overline{\hh}(\mathbf{x}_-^\sharp))  \mathbf{y}_-.$$
This sum now has a fairly standard interpretation in Heegaard Floer theory as composition of two triangle maps which are know to give an isomorphism in homology: see \cite[Lemma 3.3.13]{II}. We can also argue as follows: any $2g$-tuple of intersection points $\mathbf{y} \in {\mathcal S}_{\mathbf{a}, \hh(\mathbf{a})}$ has a closest $2g$-tuple of intersection points $\mathbf{y}_ \in  {\mathcal S}_{\mathbf{a}, \hh(\mathbf{a})}$. There is a unique holomorphic curve in $W_{-\infty, 1}$ of small energy with $I=0$, $n_*=0$ and ends at the chords over $\mathbf{y}_+$, $\mathbf{x}^\sharp_+ \in {\mathcal S}^\sharp_{even}$,  $\mathbf{y}_-$ and $\mathbf{x}_- \in {\mathcal S}^\sharp_{odd}$: it is a union of $2g$ sections which project to the fibre on the fishtail-shaped quadrilateral in Figure \ref{figure: aandb}.

\begin{rem}
It might be surprising, at a first uncareful look, that those curves have index zero. The reason is that the concave angle gives a holomorphic map from a disc with four punctures on the boundary to $\overline{S}$ covering the quadrilateral for any conformal structure of the punctured disc, which depends on the cross-ratio of the four punctures. However, we are not looking just for maps into $\overline{S}$ but for multisections of $\overline{W}_{- \infty, 1} \to B_{- \infty, 1}$, so we also have a holomorphic map from the four-punctured disc to $B_{- \infty, 1}$.  The conformal structure of $B_{-\infty, 1}$ is fixed, and that fixes the conformal structure of the four-punctured disc.
\end{rem}

Thus the ``low energy part'' of $\overline{\Theta}$ is an isomorphism, and therefore $\overline{\Theta}$ is a quasi-isomorphism by standard homological algebra.
If a component of $\mathbf{y}_+$ is either $x_i$ or $x_i'$, a direct inspection shows that every element in the moduli space ${\mathcal M}_{\overline{W}_{- \infty, 1}}^{I=0, n_*=0}(\mathbf{y}_+, \mathbf{x}_+^\sharp, \mathbf{y}_-, \overline{\hh}(\mathbf{x}_-^\sharp))$ must have the small energy quadrilateral starting at the chord over $x_i$ or $x_i'$ as an irreducible component. This implies that $\overline{\Theta}(\{ x_i \} \cup \mathbf{y}')= \overline{\Theta}(\{ x_i' \} \cup \mathbf{y}')$ for all $\mathbf{y}'$, and therefore $\Theta$ induces a well defined map
$\Theta \colon \widehat{CF}(S, \mathbf{a}, \hh(\mathbf{a})) \to  \widehat{CF}(S, \mathbf{a}, \hh(\mathbf{a}))$ which is still a quasi-isomorphism.

\subsection{Homotopy for $\Phi \circ \Psi$}
In this section $B_\tau$ will denote a family of cylinders with a disc removed which, for $\tau \to \pm \infty$, converges to $B_{\pm \infty}$ such that:
\begin{itemize}
\item $B_{+ \infty}= B_- \sqcup B_+$ (with $B_-$ on top of $B_+$), and
\item $B_{- \infty}= B_{- \infty, 1} \vee B_{- \infty, 2}$ where $B_{- \infty, 1}= \R \times \R /2 \Z$, $B_{- \infty, 2}= D^2$, and the two components are attached by identifying $(0, \frac 32) \in \R \times \R/ 2 \Z$ with $0 \in D^2$.
\end{itemize}
In $B_\tau$ we choose a marked point $\mathfrak{b}_\tau$ (smoothly in $\tau$) such that
$\lim \limits_{\tau \to \pm \infty} \mathfrak{b}_\tau = \mathfrak{b}_{\pm \infty}$ where $\mathfrak{b}_{+ \infty}= (0, \frac 32) \in B_-$ and $\mathfrak{b}_{- \infty}= \frac i2 \in D^2$.
We will also assume that every $(B_\tau, \mathfrak{b}_\tau)$ admits an anti-holomorphic involution and that $\mathfrak{b}_\tau$ is contained in its fixed point set. This property is used in a symmetry argument similar to that of Subsection \ref{Psi si a chain map}.

% \marginpar{check if the same concept has already a name}
% \begin{dfn}
% If $\gamma$ is a simple closed curve in $N$, we define the {\em winding} of $\gamma$ as 
% the algebraic intersection of $\gamma$ with a fibre of the fibration $N \to S^1$ with fibre $S$.
% \end{dfn}
\begin{dfn}\label{dfn: double dagger}
The Reeb vector field of $N$ satisfies {\em Property $(\dagger \dagger)_i$} if every  simple elliptic orbit $\gamma$ intersecting a fibre at most $i$ times has first return map which is conjugated to a rotation by a sufficiently small negative angle so that the incoming partition of $(\gamma, j)$, for $j=1, \ldots, i$, is $(\underbrace{1, \ldots, 1}_{\text{$j$ times}})$ and the outgoing partition is $(j)$.
\end{dfn}
By \cite[Section 2.5]{I} it is possible to assume Property $(\dagger \dagger)_i$, for any $i$, after a $C^1$-small modification of the stable Hamiltonian structure supported on arbitrarily small neighbourhoods of the elliptic orbits intersecting a fibre at most $i$ times. Note that  we can see Property $(\dagger\dagger)_i$ as a property of the monodromy $\hh$. In this subsection we will always assume Property $(\dagger \dagger)_{2g}$.

For each $\tau \in \R$, we define a fibration
$$\pi_{\overline{W}_\tau} \colon \overline{W}_\tau \to B_\tau$$
with fibre $\overline{S}$ and monodromy $\overline{\hh}$ depending on an integer $m \gg0$ which will be suppressed from the notation. As in the previous subsection, the reader should keep in mind that all statements hold for $m$ large enough. For $\tau = \pm \infty$ those fibrations extend to fibrations over the limit surfaces. The total space $\overline{W}_\tau$ (for $\tau \in \R \cup \{ \pm \infty \}$) admits a symplectic form $\Omega_{\overline{W}_\tau}$ such that all fibres are symplectic. On $\partial \overline{W}_\tau$, for $\tau \in \R \cup \{ \pm \infty \}$, we consider the Lagrangian submanifold $L_{\overline{\mathbf{a}}}$ defined as the trace of the symplectic parallel transport along $\partial B_\tau$ applied to a copy of $\overline{\mathbf{a}}$ on the fibre over a point of $\partial B_\tau$. We also fixed the family of marked points $\mathfrak{m}_\tau =(\mathfrak{b}_\tau, z_\infty) \in \overline{W}_\tau$.

Let $W_\tau \subset \overline{W}_\tau$ be the total space of the fibration over $B_\tau$ with fibre $S$ and monodromy $\hh$. We choose a generic family of almost complex structures $J_\tau$ on $\overline{W}_\tau$ among those which restrict to the pull back of a split almost complex structure on $\overline{W}_\tau \setminus W_\tau \cong B_\tau \times D$. Since $\overline{W}_{-\infty, 1}= \R \times \overline{N}$, we also require that $J_{- \infty, 1}$ be compatible with the stable Hamiltonian structure on $\overline{N}$.

Given orbit sets $\bs{\gamma}_\pm \in \widehat{\mathcal O}_{2g}$, we denote by 
${\mathcal M}_{\overline{W}_*}(\bs{\gamma}_+, \bs{\gamma}_-; \mathfrak{m}_*)$
the moduli space of pairs $(u, \tau)$ where $\tau \in \R$ and $u$ is a $J_\tau$-holomorphic curves in $\overline{W}_\tau$ with boundary on $L_{\overline{\mathbf{a}}}$ which is positively asymptotic to $\bs{\gamma}_+$, negatively asymptotic to $\bs{\gamma}_-$ and passes through $\mathfrak{m}_\tau$.  To each $u$ in this moduli space we associate the ECH-type index $I(u)$ satisfying the index inequality --- see \cite[Subsection 4.2.6]{II} --- and the intersection number $n_*(u)$, which is the algebraic intersection between $u$ and the $J_\tau$-holomorphic multisection $(\sigma_\infty^\tau)^\dagger$ defined by a point $z_\infty^\dagger \in \overline{S}$ close to $z_\infty$ and not on the arcs $\overline{\mathbf{a}}$.

\begin{dfn}
We define the map $K \colon PFC_{2g}(N, \alpha_0, \omega) \to PFC_{2g}(N, \alpha_0, \omega)$ by
\begin{equation} \label{eqn: definition of K}
K(\bs{\gamma}_+)= \sum \limits_{\bs{\gamma}_- \in \widehat{\mathcal O}_{2g}} \# {\mathcal M}_{\overline{W}_*}^{I=1, n_*=m}(\bs{\gamma}_+, \bs{\gamma}_-; \mathfrak{m}_*) \bs{\gamma}_-.
\end{equation}
\end{dfn}

We say that a holomorphic $u$ curve in $\overline{W}_{- \infty, 1}$ passes through $\zeta \in \pi^{-1}_{\overline{W}_{- \infty, 1}}(0, \frac 32)$ with multiplicity $r$ if the multiplicity of $\zeta$ as intersection point between $u$ and $\pi^{-1}_{\overline{W}_{- \infty, 1}}(0, \frac 32)$ is $r$. To make sense of the multiplicity of the intersection we must assume that  $\pi^{-1}_{\overline{W}_{- \infty, 1}}(0, \frac 32)$ is holomorphic. In fact, in \cite{I} and \cite{II} we assume that most almost complex structures are compatible with the various fibrations.

We trivialise the fibration 
$$\pi_{\overline{W}_{- \infty, 2}} \colon \overline{W}_{- \infty, 2} \to B_{- \infty, 2}=D^2$$ 
and obtain a commutative diagram
$$\xymatrix{
\overline{W}_{- \infty, 2} \ar[rr]^{\cong} \ar[dr]_{\pi_{\overline{W}_{- \infty, 2}}} & & D^2 \times \overline{S} \ar[dl] \\
& D^2.}$$
We pull back the identification $\pi_{\overline{W}_{- \infty, 2}}^{-1}(0) \cong \overline{S}$ given by the diagram above to an identification $\pi^{-1}_{\overline{W}_{- \infty, 1}}(0, \frac 32) \cong \overline{S}$ via the gluing $\pi^{-1}_{\overline{W}_{- \infty, 1}}(0, \frac 32) \cong \pi_{\overline{W}_{- \infty, 2}}^{-1}(0)$.
 Thus a set of ``points with multiplicities'' $\mathfrak{z}=\{(\zeta_1, r_1), \ldots, (\zeta_l, r_l)\}$ with $\zeta_i \in \pi^{-1}_{\overline{W}_{- \infty, 1}}(0, \frac 32)$ with $r_i>0$ and $r_1 + \ldots + r_l=2g$ can be seen as an element $\mathfrak{z} \in \op{Sym}^{2g}(\overline{S})$. 

Given $\mathfrak{z}=\{(\zeta_1, r_1), \ldots, (\zeta_l, r_l)\} \in \op{Sym}^{2g}(\overline{S})$,  we denote  by ${\mathcal M}_{\overline{W}_{- \infty, 1}}(\bs{\gamma}_+, \bs{\gamma}_-; \mathfrak{z})$ the moduli space of holomorphic curves in $\overline{W}_{- \infty, 1}=\R \times \overline{N}$ which are positively asymptotic to $\bs{\gamma}_+$ and negatively asymptotic to $\bs{\gamma}_-$ for $\bs{\gamma}_\pm \in \widehat{\mathcal O}_{2g}$ and pass through $\zeta_i$ with multiplicity $r_i$ for each $i=1, \ldots, l$. 

Similarly, we % identify  $\pi^{-1}_{\overline{W}_{- \infty, 2}}(0) \subset \overline{W}_{- \infty, 2}$ with $\overline{S}$ via the gluing map from $\pi^{-1}_{\overline{W}_{- \infty, 2}}(0)$ to $\pi_{\overline{W}_{-\infty, 1}}^{-1}(0, \frac 32)$ and the fixed identification  $\pi_{\overline{W}_{-\infty, 1}}^{-1}(0, \frac 32) \cong \overline{S}$ and 
denote by ${\mathcal M}_{\overline{W}_{- \infty, 2}}(\mathfrak{m}, \mathfrak{z})$ the moduli space of holomorphic curves in $\overline{W}_{- \infty, 2}= D^2 \times \overline{S}$ with boundary on $L_{\overline{\mathbf{a}}}= \partial D^2 \times \overline{\mathbf{a}}$, representing the relative homology class $[\overline{S}] + 2g[D^2]$ and passing through $\mathfrak{m}_{-\infty}=(\frac i2, z_\infty)$ and $\mathfrak{z}$. 
\begin{dfn}
We define the map $\Xi \colon PFC_{2g}(N) \to PFC_{2g}(N)$ as
$$\Xi(\bs{\gamma}_-)= \sum \limits_{\bs{\gamma}_- \in \widehat{\mathcal O}_{2g}} \sum_{\mathfrak{z} \in \op{Sym}^{2g}(\overline{S})} \# \left ({\mathcal M}_{\overline{W}_{- \infty, 1}}^{I=0, n^*=0}(\bs{\gamma}_+, \bs{\gamma}_-; \mathfrak{z}) \times {\mathcal M}_{\overline{W}_{- \infty, 2}}(\mathfrak{m}, \mathfrak{z}) \right ).$$
\end{dfn}
The sum is finite because the moduli spaces ${\mathcal M}_{\overline{W}_{- \infty, 1}}^{I=0, n^*=0}(\bs{\gamma}_+, \bs{\gamma}_-; \mathfrak{z})$ are nonempty only for finitely many $\mathfrak{z} \in \op{Sym}^{2g}(\overline{S})$.

The following theorem is proved by analysing the degenerations of the moduli spaces ${\mathcal M}_{\overline{W}_\tau}^{I=2, n_*=m}(\bs{\gamma}_+, \bs{\gamma}_-; \mathfrak{m})$. To control the bad degenerations and show that either they cannot occur or they comes in pairs we use a combination of intersection theory, index computations, truncate-and-rescale and symmetry arguments like in Subsection \ref{Psi si a chain map}.
\begin{thm} \label{thm: K is a homotopy}
The maps $K$ and $\Xi$ satisfy the relation
\begin{equation}\label{eqn: K is a homotopy}
\partial \circ K + K \circ \partial = \Phi \circ \Psi + \Xi.
\end{equation}
\end{thm}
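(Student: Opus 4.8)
The plan is the standard one for a chain homotopy defined by a one-parameter family of cobordisms, carried out in parallel with the analysis of Subsection~\ref{Psi si a chain map}. First I would check that, for a generic choice of the family $(J_\tau)$ and for $m$ large, the moduli spaces ${\mathcal M}_{\overline{W}_*}^{I=2, n_*=m}(\bs{\gamma}_+, \bs{\gamma}_-; \mathfrak{m})$ are one-dimensional manifolds: the index inequality controls the Fredholm index, passing through $\mathfrak{m}_\tau$ is a codimension-two condition, and the free parameter $\tau \in \R$ restores one dimension. An SFT-type compactness statement analogous to Theorem~\ref{SFT compactness for Psi} then shows that the compactification $\overline{\mathcal M}$ is a compact one-manifold with boundary, and Equation~\eqref{eqn: K is a homotopy} will follow, over $\Z/2\Z$, from $\#\partial\overline{\mathcal M}=0$ once every boundary stratum is identified.

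The next step is to read off the three families of boundary points. Degenerations at a finite $\tau \in \R$ are ordinary SFT breakings in which a rigid (after translation) index-one level splits off in the symplectisation $\R \times \overline{N}$ at the positive or the negative puncture, the remaining curve running over $\overline{W}_\tau$; after discarding the bad degenerations below, the surviving breakings keep all orbit sets in $\widehat{\mathcal O}_{2g}$ and contribute exactly $K \circ \partial$ and $\partial \circ K$. At $\tau \to +\infty$ the base $B_\tau$ degenerates into $B_- \sqcup B_+$ with the marked point landing in $B_-$, so the limiting buildings are pairs consisting of a curve in $\overline{W}_-$ with $I=2$, $n_*=m$ through $\mathfrak{m}_{+\infty}$, stacked over a curve in $\overline{W}_+$ with $I=0$, $n_*=0$; by Equation~\eqref{eqn: disagio} the latter is a curve in $W_+$, so this stratum computes $\overline{\Phi} \circ \overline{\Psi}$, which equals $\Phi \circ \Psi$ because $\overline{\Phi}$ factors through the projection onto $\widehat{CF}$. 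At $\tau \to -\infty$ the base degenerates into $B_{-\infty,1} \vee B_{-\infty,2}$, and the limiting buildings consist of a curve in $\overline{W}_{-\infty,1}=\R \times \overline{N}$ with $I=0$, $n_*=0$ passing through a point configuration $\mathfrak{z} \in \op{Sym}^{2g}(\overline{S})$ on the fibre over $(0,\tfrac32)$, glued to a curve in $\overline{W}_{-\infty,2}=D^2 \times \overline{S}$ of relative homology class $[\overline{S}]+2g[D^2]$ through $\mathfrak{m}_{-\infty}$ and $\mathfrak{z}$; this is precisely the definition of $\Xi$. Adding the three contributions modulo $2$ gives Equation~\eqref{eqn: K is a homotopy}.

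The main obstacle, exactly as in the proof that $\Psi$ is a chain map, is that the fibre here is the capped-off surface $\overline{S}$, so the limiting buildings can a priori display ``bad degenerations'': ends at covers of $\delta_0$ or at the chord over $z_\infty$, boundary points at the clean self-intersection locus of $L_{\overline{\mathbf{a}}}$, or closed irreducible components. These must either be excluded or shown to occur in cancelling pairs. I would handle them with the same three tools used in Subsection~\ref{Psi si a chain map} and in Theorem~\ref{thm: difficile}: each offending feature forces a definite minimal contribution to $n_*(u_\infty)$, and since $n_*=m \gg 2g$ only a short list of patterns can occur; additivity of the ECH-type index under gluing, together with $I=2$, further constrains those patterns; and a truncate-and-rescale argument near the section at infinity, combined with the anti-holomorphic involution of $(B_\tau, \mathfrak{b}_\tau)$ that fixes $\mathfrak{b}_\tau$, forces the associated normalised holomorphic function on $B_\tau$ to have its asymptotic direction on a non-bad radial ray, which contradicts the existence of the building unless it has the single allowed shape; those allowed shapes then cancel in pairs. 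Since everything is over $\Z/2\Z$, no orientations need to be tracked. I expect the delicate points to be the adaptation of the involution/rescaling argument of Theorem~\ref{thm: difficile} to the present two-cobordism family, and the precise identification of the $\tau \to -\infty$ stratum with $\Xi$, including the bookkeeping of multiplicities in $\mathfrak{z}$ and the identification $\pi^{-1}_{\overline{W}_{-\infty,1}}(0,\tfrac32) \cong \overline{S}$.
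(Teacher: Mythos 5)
Your outline coincides with the strategy of the paper: a one-dimensional parametrised moduli space ${\mathcal M}_{\overline{W}_*}^{I=2,n_*=m}(\bs{\gamma}_+,\bs{\gamma}_-;\mathfrak{m}_*)$, SFT breakings at finite $\tau$ giving $\partial\circ K+K\circ\partial$, the $\tau\to+\infty$ stratum giving $\Phi\circ\Psi$ via Equation \eqref{eqn: disagio}, the $\tau\to-\infty$ stratum giving $\Xi$, and bad degenerations controlled by the intersection number $n_*$, the ECH-type index, truncate-and-rescale arguments and the anti-holomorphic involution. All of this matches the paper's argument.

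The one step you file under ``bookkeeping of multiplicities in $\mathfrak{z}$'' is in fact where the proof needs an ingredient your sketch does not supply. Because $J_{-\infty,1}$ is chosen compatible with the stable Hamiltonian structure on $\overline{N}$, a curve in $\overline{W}_{-\infty,1}=\R\times\overline{N}$ with $I=0$ and $n_*=0$ is necessarily a union of branched covers of trivial cylinders; in particular $\bs{\gamma}_+=\bs{\gamma}_-$ and $\Xi$ is diagonal. Gluing such a branched cover (over a multiply covered elliptic orbit) to the $\overline{W}_{-\infty,2}$ component and to the rest of the configuration is precisely the multiple-cover gluing problem of Hutchings and Taubes, and it does not automatically yield one boundary point of the compactified moduli space per pair counted by $\Xi$: the ends of the glued curve must realise the incoming and outgoing partitions, so one has to single out the correct representative of the branched cover (which is only well defined up to connectors in the ECH moduli spaces). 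This is exactly why the subsection assumes Property $(\dagger\dagger)_{2g}$: it forces the partitions of $(\gamma,j)$ to be $(1,\dots,1)$ incoming and $(j)$ outgoing, which pins down the branch data --- in the paper's example, a single branch point of order $2g-1$ over $\zeta$ --- and makes the gluing count agree with the product count defining $\Xi$. Without addressing this, the identification of the $\tau\to-\infty$ boundary with $\Xi$, and hence Equation \eqref{eqn: K is a homotopy}, is not justified.
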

Degenerations of ${\mathcal M}_{\overline{W}_\tau}^{I=2, n_*=m}(\bs{\gamma}_+, \bs{\gamma}_-; \mathfrak{m})$ which occur at some $\tau \in \R$ contribute to the left-hand side, degenerations which occur at $\tau \to + \infty$ contribute to $\Phi \circ \Psi$ because $\overline{W}_{+ \infty}$ is the juxtaposition of $\overline{W}_-$ on top of $\overline{W}_+$
and the basepoint $\mathfrak{m}_{+ \infty}$ is in $\overline{W}_-$, as in the previous subsection, and degenerations which occur at $\tau \to + \infty$ contribute to $\Xi$.

Property $(\dagger \dagger)_{2g}$ is used in the proof Theorem \ref{thm: K is a homotopy} to simplify the gluing of the broken holomorphic curves appearing in the definition of $\Xi$ to holomorphic curves in  ${\mathcal M}_{\overline{W}_\tau}^{I=2, n_*=m}(\bs{\gamma}, \bs{\gamma}; \mathfrak{m})$ when there is an elliptic orbit with multiplicity larger that one in $\bs{\gamma}$; see the proof of \cite[Lemma 4.3.4]{II}. We take $\bs{\gamma}_+ = \bs{\gamma}_- = \bs{\gamma}$ because holomorphic curves in ${\mathcal M}_{\overline{W}_{- \infty. 1}}^{I=0, n_*=0}(\bs{\gamma}_+, \bs{\gamma}_-)$ are necessarily branched covers of trivial cylinders because we have chosen $J_{-\infty, i}$ compatible with the stable Hamiltonian structure.

To exemplify the use of Property $(\dagger \dagger)_{2g}$ suppose for simplicity that we want to glue $(v_1, v_2)$ where $v_1 \in {\mathcal M}_{\overline{W}_{- \infty, 1}}^{I=0, n^*=0}(\gamma^{2g}, \gamma^{2g}; \mathfrak{z})$  is a degree $2g$ branched cover of the trivial cylinder over some elliptic orbit $\gamma$. Then 
$\mathfrak{z}=\{(\zeta, 2g) \}$ with $\zeta = \gamma(\frac 32)$. Recall that, in the definition of the moduli spaces for embedded contact homology, branched covers of trivial cylinders with the same degree are identified, but when we glue we must choose a representative. The ends of the curve after gluing must satisfy the incoming and outgoing partitions, which means that we must choose a representative of $v_1$ satisfying the same conditions. The form of the partitions of $\gamma$ implies that we can choose $v_1$ to be a branched cover of $\gamma$ with a unique branch point at $\zeta$ of order $2g-1$.

To prove that $\Phi_* \circ \Psi_*$ is an isomorphism, it remains to compute $\Xi$.
\begin{thm}\label{thm: computation of Xi} $\Xi$ is the identity.
\end{thm}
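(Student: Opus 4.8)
The plan is to compute $\Xi$ directly from its definition by analysing the two moduli spaces ${\mathcal M}_{\overline{W}_{- \infty, 1}}^{I=0, n^*=0}(\bs{\gamma}_+, \bs{\gamma}_-; \mathfrak{z})$ and ${\mathcal M}_{\overline{W}_{- \infty, 2}}(\mathfrak{m}, \mathfrak{z})$ and showing that, for each $\bs{\gamma}_+ = \bs{\gamma}_- = \bs{\gamma} \in \widehat{\mathcal O}_{2g}$, there is exactly one pair $(v_1, v_2)$ contributing to the count. First I would pin down the curves in $\overline{W}_{- \infty, 1} = \R \times \overline{N}$: since $J_{-\infty, 1}$ is compatible with the stable Hamiltonian structure on $\overline{N}$, an $I=0$, $n_*=0$ curve between $\bs{\gamma}_+$ and $\bs{\gamma}_-$ must be a branched cover of trivial cylinders, forcing $\bs{\gamma}_+ = \bs{\gamma}_- = \bs{\gamma}$ and the point set $\mathfrak{z}$ to be supported at the slices $\gamma_i(\tfrac 32)$ of the orbits $\gamma_i$ in $\bs{\gamma}$, with multiplicities matching the $m_i$. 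Here Property $(\dagger\dagger)_{2g}$ enters: the form of the incoming/outgoing partitions for the elliptic orbits means that once $\mathfrak{z}$ is fixed, there is a unique admissible choice of representative $v_1$ (a branched cover with a single branch point of the prescribed order at each $\zeta_i$), exactly as in the gluing analysis of \cite[Lemma 4.3.4]{II}. So $v_1$ and $\mathfrak{z}$ are determined by $\bs{\gamma}$ alone.

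Second I would analyse ${\mathcal M}_{\overline{W}_{- \infty, 2}}(\mathfrak{m}, \mathfrak{z})$, which consists of holomorphic curves in $D^2 \times \overline{S}$ with boundary on $\partial D^2 \times \overline{\mathbf{a}}$ in the relative class $[\overline{S}] + 2g[D^2]$, passing through $\mathfrak{m}_{-\infty} = (\tfrac i2, z_\infty)$ and through $\mathfrak{z}$. This is a relative Gromov-Taubes count in the same spirit as the invariant $G(h)$ of Subsection \ref{subsec: geometric U} and the invariant $G_3$ of Subsection \ref{homotopy I}. The strategy is to degenerate $\overline{S}$ along separating curves into tori (each carrying one arc endpoint region) and spheres, reducing to the genus-one and genus-zero computations already carried out, and to track the point constraints $\mathfrak{m}$ and $\mathfrak{z}$ through the degeneration. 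Since $z_\infty$ is a fixed point of $\overline{\hh}$ and sits at the common endpoint of all the $\overline{a}_i$, the constraint at $\mathfrak{m}$ together with the multisection structure rigidifies the curve; elementary complex analysis (as in the proofs that $G(1)=1$ and $G_3=1$) then shows the count is exactly $1$ for the unique compatible $\mathfrak{z}$.

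Putting the two pieces together: for each $\bs{\gamma}$ there is a unique $\mathfrak{z}$ for which both moduli spaces are nonempty, $v_1$ is then unique, and $v_2$ is unique as well, so $\Xi(\bs{\gamma}) = \bs{\gamma}$, i.e.\ $\Xi$ is the identity. I expect the main obstacle to be the bookkeeping around multiply covered elliptic orbits: verifying that the branch-point structure forced by $(\dagger\dagger)_{2g}$ on $v_1$ is genuinely unique (up to the equivalence by which branched covers of trivial cylinders of the same degree are identified) and that this matches exactly the point set $\mathfrak{z}$ seen by $v_2$, so that no spurious factors or cancellations arise. This is precisely the delicate compatibility worked out in \cite[Lemma 4.3.4]{II}, and the relative Gromov-Taubes computation for $v_2$ — while structurally parallel to the $G(h)$ and $G_3$ computations — must be checked to be insensitive to where the auxiliary points $\mathfrak{z}$ on $\overline{S}$ are placed, which again uses that they all lie over the single fibre slice at $z_\infty$.
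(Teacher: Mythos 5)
Your proposal is correct and follows essentially the same route as the paper: one first observes that the $I=0$, $n_*=0$ curves in $\overline{W}_{-\infty,1}=\R\times\overline{N}$ are branched covers of trivial cylinders (forcing $\bs{\gamma}_+=\bs{\gamma}_-$ and pinning down $\mathfrak{z}$), and then shows that the count in $\overline{W}_{-\infty,2}$ is a relative Gromov--Taubes invariant equal to $1$ by a degeneration argument parallel to the computation of $G(h)$ in Subsection \ref{subsec: geometric U}. The only cosmetic difference is that the paper first degenerates the base $D^2$ into $D^2\cup S^2$ and reduces to a count in $S^2\times\overline{S}$ independent of $\mathfrak{z}$, whereas you degenerate the fibre $\overline{S}$ directly; both rest on the same genus-one and genus-zero computations.
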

To prove the theorem, we first compute the relative Gromov-Taubes invariant $G_2$ defined as the number of elements in the moduli spaces ${\mathcal M}_{\overline{W}_{- \infty, 2}}(\mathfrak{m}_{- \infty}, \mathfrak{z})$; see \cite[Section 2.3]{II}. To do so, we degenerate $B_{-\infty, 2} \cong D^2$ into a copy of $D^2$ attached to a copy of $S^2$ along a point so that $\mathfrak{m}_{-\infty}$ remains in $S^2$. Holomorphic curves in ${\mathcal M}_{\overline{W}_{- \infty, 2}}(\mathfrak{m}_{- \infty}, \mathfrak{z})$ degenerate to constant sections of $D^2 \times \overline{S}$ passing through the points in $\mathfrak{z}$ and holomorphic curves in $S^2 \times \overline{S}$ in the homology class $[\overline{S}] \times 2g[S^2]$ passing through $\mathfrak{m}_{- \infty}$ and $\mathfrak{z}$. We denote the moduli space of the latter holomorphic curves by ${\mathcal M}_{S^2 \times \overline{S}} (\mathfrak{m}_{- \infty}, \mathfrak{z})$. Thus $\# {\mathcal M}_{\overline{W}_{- \infty, 2}}(\mathfrak{m}_{- \infty}, \mathfrak{z}) = \# {\mathcal M}_{S^2 \times \overline{S}} (\mathfrak{m}_{- \infty}, \mathfrak{z})$. By \cite[Theorem 2.3.3]{II}
$\# {\mathcal M}_{S^2 \times \overline{S}} (\mathfrak{m}_{- \infty}, \mathfrak{z}) = 1$ for any $\mathfrak{z} \in \op{Sym}^{2g}(\overline{S})$. The proof is similar to the computation of the relative Gromov-Taubes invariant in Subsection \ref{subsec: geometric U}. Then
$$\Xi(\bs{\gamma}_+) = \sum \limits_{\bs{\gamma}_- \in \widehat{\mathcal O}_{2g}} \# {\mathcal M}_{\overline{W}_{- \infty, 1}}^{I=0, n^*=0}(\bs{\gamma}_+, \bs{\gamma}_+) \bs{\gamma}_-.$$
Since, for a suitable choice of almost complex structure, the moduli space ${\mathcal M}_{\overline{W}_{- \infty, 1}}^{I=0, n^*=0}(\bs{\gamma}_+, \bs{\gamma}_+)$ consists of trivial cylinders, $\Xi$ is the identity map. This proves Theorem \ref{thm: computation of Xi} and therefore concludes the proof of Theorem \ref{Phi is an isomorphism}.

\section{Stabilisation}\label{sec: stabilisation}
In Section \ref{sec: homotopies} we proved that $\Phi_* \colon \widehat{HF}(S, \mathbf{a}, \hh(\mathbf{a})) \to PFH_{2g}(N, \alpha_0, \omega)$ is an isomorphism if $\hh$ satisfies Property $(\dagger \dagger)_{2g}$. In view of Lemma \ref{lemma: PFH=ECH}, to prove that $\widehat{\Phi}_*$ is an isomorphism it remains to prove that $PFH_{2g}(N, \alpha_0, \omega)$ is isomorphic to $\widehat{PFH}(N, \partial N, \alpha_0, \omega)$; this will be done by a stabilisation argument.

In this section it would be desirable to assume Property $(\dagger \dagger)_i$ for all $i$, but this is not possible. Nevertheless, in practice, we can pretend it is for the following reason.
\begin{lemma}[See {\cite[Lemma 5.5.2]{II}}] \label{lemma: pretend i=infty}
There is a sequence of stable Hamiltonian structures $(\alpha_0^i, \omega^i)$ with Reeb vector field $R_i$ satisfying Property $(\dagger \dagger)_i$ such that
\begin{itemize}
\item $R_i$ and $R_{i+1}$ coincide outside of a small neighbourhood of the orbits of $R_i$  intersecting a fibre at most $i+1$ times, and
\item  the continuation maps
$$\mathfrak{K}_i \colon PFC_i(N, \alpha_0^i, \omega^i) \to PFC_i(N, \alpha_0^{i+1}, \omega^{i+1})$$
satisfy $\mathfrak{K}_i(\bs{\gamma})= \bs{\gamma}$ for every generator of $PFC_i(N, \alpha_0^i, \omega^i)$.
\end{itemize}
\end{lemma}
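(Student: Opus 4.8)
The statement to prove is Lemma (\cite[Lemma 5.5.2]{II}): existence of stable Hamiltonian structures $(\alpha_0^i, \omega^i)$ satisfying Property $(\dagger\dagger)_i$ whose Reeb vector fields are nested-compatible and whose continuation maps $\mathfrak K_i$ act as the identity on generators.

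\textbf{Setup and strategy.} The plan is to build the sequence $(\alpha_0^i, \omega^i)$ inductively, at each stage perturbing only in arbitrarily small neighbourhoods of the finitely many simple elliptic orbits $\gamma$ that intersect a fibre at most $i+1$ times. Recall from \cite[Section 2.5]{I} that Property $(\dagger\dagger)_i$ is obtained by a $C^1$-small modification of the stable Hamiltonian structure supported near the relevant elliptic orbits; the point to engineer is that the modification needed to go from ``good up to level $i$'' to ``good up to level $i+1$'' touches only orbits of level between $i+1$ and $i+1$ (i.e.\ precisely those newly constrained), leaving $R_i$ untouched away from them. So first I would fix, once and for all, for each $n$ a compact exhausting family of neighbourhoods of the level-$\le n$ elliptic orbits, shrinking fast enough that the perturbations at different stages have disjoint supports from the orbits that are already ``locked in''. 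Concretely: having constructed $(\alpha_0^i,\omega^i)$ with Property $(\dagger\dagger)_i$, the elliptic orbits of level $\le i$ already have first-return maps conjugate to a small negative rotation; I leave them and a neighbourhood alone, and I perform the rotation-number adjustment only on the (finitely many) elliptic orbits of level exactly $i+1$ that do not yet satisfy the required inequality on $\theta$. This yields $R_{i+1}$ agreeing with $R_i$ outside a small neighbourhood of the level-$\le(i+1)$ orbits, which is the first bullet.

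\textbf{The continuation map.} For the second bullet I would set up the PFH continuation map $\mathfrak K_i \colon PFC_i(N,\alpha_0^i,\omega^i)\to PFC_i(N,\alpha_0^{i+1},\omega^{i+1})$ as usual by counting $I=0$ holomorphic curves in a symplectic cobordism $\R\times N$ interpolating between the two stable Hamiltonian structures (as in \cite[Theorem 3.1.2]{binding}; cf.\ the continuation maps in Lemma \ref{lemma: PFH=ECH}). The key observation is that the interpolating data can be chosen to be the \emph{constant} family outside the small neighbourhoods where $R_i$ and $R_{i+1}$ differ, and inside those neighbourhoods the interpolation is a small isotopy of rotation numbers of a handful of elliptic orbits. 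Because the perturbation is $C^1$-small and localised, and because both generators of $PFC_i$ involve only orbit sets of total fibre-intersection $i$, a standard energy/area estimate forces any $I=0$ curve contributing to $\mathfrak K_i$ to be (a branched cover of) a trivial cylinder: there is simply no room for a nonconstant curve of zero ECH-type index. Hence $\mathfrak K_i$ sends each generator $\bs\gamma$ to itself, as claimed. I would phrase this last point via the identification of generators: the orbit sets of $\alpha_0^i$ of level $\le i$ are in canonical bijection with those of $\alpha_0^{i+1}$ (they literally coincide as orbits, only their rotation numbers are perturbed), and under this bijection $\mathfrak K_i$ is the identity on the chain level.

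\textbf{Main obstacle.} The hard part is the localisation/compatibility bookkeeping: ensuring that the supports of the successive perturbations can be nested so that a perturbation at stage $i+1$ never disturbs the rotation-number normalisation already achieved at stages $\le i$, \emph{and simultaneously} that after all stages the Reeb flow is still genuinely a stable Hamiltonian flow (the closure condition $\ker d\alpha\subset\ker\omega$ and $\alpha\wedge\omega>0$ must survive each modification). One must also check that ``level $\le n$'' is a finite set at each stage and that small perturbations do not create new short orbits of the relevant level — this is exactly the content of \cite[Section 2.5]{I}, which I would invoke rather than reprove. The subtler analytic input, that the continuation-map count reduces to trivial cylinders, is a now-standard argument (it appears already in the proof of Lemma \ref{lemma: PFH=ECH} via the trapping lemma and energy estimates), so I would cite it and only indicate the energy bound, not grind it out.
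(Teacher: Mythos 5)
The survey states this lemma only with a pointer to \cite[Lemma 5.5.2]{II} and contains no proof, so your proposal can only be measured against the surrounding machinery. Your overall strategy --- an inductive, localised perturbation for the first bullet, and a low-energy analysis of the interpolating cobordism for the second --- is the expected one. One factual slip first: your claim that the passage from $(\dagger\dagger)_i$ to $(\dagger\dagger)_{i+1}$ ``touches only orbits of level exactly $i+1$'' is false. By Definition \ref{dfn: double dagger} together with Lemma \ref{basic fact about partitions}, Property $(\dagger\dagger)_i$ forces every elliptic orbit of level $\le i$ to have rotation angle in $(-\tfrac{1}{i},0)$, so at stage $i+1$ every such orbit whose angle lies in $(-\tfrac{1}{i},-\tfrac{1}{i+1}]$ must be perturbed again; each fixed orbit is in fact revisited infinitely often, its angle tending to $0$. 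This is harmless --- the first bullet permits the support to meet all orbits of level $\le i+1$, and is phrased that way for exactly this reason --- but your ``only the newly constrained orbits'' bookkeeping does not work as stated.

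The genuine gap is in the second bullet. You treat $\mathfrak{K}_i$ as a literal count of $I=0$ curves and conclude $\mathfrak{K}_i(\bs{\gamma})=\bs{\gamma}$ because ``there is no room'' for nonconstant curves. But (a) ECH/PFH continuation maps are not defined by such a count --- as the survey itself stresses in the proof of Lemma \ref{lemma: PFH=ECH}, they are only \emph{supported on} holomorphic curves in the sense of \cite[Theorem 3.1.2]{binding} --- and the curves you would want to count, branched covers of trivial cylinders, are precisely the ones that are not transversally cut out; and (b) even granting that a compactness argument (degenerating the interpolating cobordism to the product cobordism, where $I=0$ buildings are connectors) kills all off-diagonal entries, ``supported on holomorphic curves'' says nothing about the value of the diagonal entries, so you must still rule out $\mathfrak{K}_i(\bs{\gamma})=0$. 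A standard way to close this is to run the same compactness argument on the chain homotopy relating $\mathfrak{K}_i'\circ\mathfrak{K}_i$ to the identity (it is supported on $I=-1$ curves in a one-parameter family of cobordisms, which disappear in the product limit), conclude $\mathfrak{K}_i'\circ\mathfrak{K}_i=\mathrm{id}$ on the nose, and then use that both maps are diagonal over $\Z/2\Z$ to force all diagonal entries to equal $1$. Without some argument of this kind the proof is incomplete at its key step.
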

This implies that
$$\varinjlim PFH_i(N, \alpha_0^i, \omega^i)= \widehat{PFH}(N, \partial N, \alpha_0^0, \omega^0).$$

\begin{prop}\label{prop: stabilisation}
If $\hh$ satisfies  Property $(\dagger \dagger)_{2g+2}$, then the map
$$\mathfrak{j} \colon PFC_{2g}(N) \to PFC_{2g+2}(N), \quad \mathfrak{j}(\bs{\gamma})= e^2 \bs{\gamma}$$
induces an isomorphism in homology.
\end{prop}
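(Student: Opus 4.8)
The goal is to show that multiplication by $e^2$, realised as the stabilisation map $\mathfrak{j}\colon PFC_{2g}(N)\to PFC_{2g+2}(N)$, $\mathfrak{j}(\bs\gamma)=e^2\bs\gamma$, induces an isomorphism on homology. Since we already know from Section~\ref{sec: homotopies} (Theorem~\ref{Phi is an isomorphism}) that $\Phi_*\colon \widehat{HF}(S,\mathbf{a},\hh(\mathbf{a}))\to PFH_{2g}(N,\alpha_0,\omega)$ is an isomorphism, and that $\Phi(\mathbf{x})=e^{2g}$ represents the contact class (Lemma~\ref{Phi and the contact class}), the natural strategy is to compare the stabilisation map on the $PFH$ side with the identity map on the $\widehat{HF}$ side, via a suitably stabilised open book. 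The key geometric input is that positively stabilising the open book decomposition $(S,\hh)$ — i.e.\ plumbing on a positive Hopf band — changes neither the contact structure $\xi$ (Giroux) nor the underlying three-manifold $M$, hence neither $\widehat{HF}(-M)$ nor $\widehat{ECH}(M)$; it merely increases the genus of the page by one (so $2g\rightsquigarrow 2g+2$) and adds one new pair of arcs. The idea is to set up the two open-book-adapted descriptions of $\widehat{HF}(-M)$ — one for $(S,\hh)$ and one for the stabilised $(S',\hh')$ — together with their respective maps $\Phi_*$ and $\Phi'_*$, and produce a commuting square
$$\xymatrix{
\widehat{HF}(S,\mathbf{a},\hh(\mathbf{a})) \ar[r]^-{\cong} \ar[d]_{\Phi_*} & \widehat{HF}(S',\mathbf{a}',\hh'(\mathbf{a}')) \ar[d]^{\Phi'_*} \\
PFH_{2g}(N,\alpha_0,\omega) \ar[r]^-{\mathfrak{j}_*} & PFH_{2g+2}(N',\alpha'_0,\omega')
}$$
where the top arrow is the canonical stabilisation isomorphism in Heegaard Floer homology (a triangle-map computation, as in \cite{OSz1} or in the HKM description \cite{HKM}), and then identify $N'$ with $N$ as far as orbit sets intersecting a fibre $\le 2g$ times are concerned. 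Since three of the four maps are isomorphisms, $\mathfrak{j}_*$ would then be one too.

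\textbf{Key steps, in order.} First I would fix a positive stabilisation $(S',\hh')$ of $(S,\hh)$ supported away from the basepoint $z$, so that the new page is $S' = S\cup(\text{1-handle})$, with $\hh' = \hh\circ R_c$ where $R_c$ is a right-handed Dehn twist along a curve $c$ meeting the cocore of the new handle once; extend the basis of arcs by one new arc $a_{2g+1}'$ and adjust $\hh'$ so that the $\alpha'$- and $\beta'$-curves are transverse. Second, I would use the refined Thurston–Winkelnkemper construction (\cite[Lemma 9.3.3]{binding}, \cite[Lemma 2.12]{I}) to build a contact form $\alpha'$ supported by $(S',\hh')$ whose Reeb flow on the new mapping torus $N'$ has first return map $\hh'$ and a negative Morse-Bott torus at $\partial N'$; one can arrange $\alpha'$ to agree with $\alpha$ on the old part of the mapping torus and to make the new elliptic orbit $e$ on $\partial N'$ and the new interior orbit(s) created by the stabilisation. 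Third — and this is where the algebra of the statement enters — I would track how the class $[S']\in H_2(N',\partial N')$ relates to $[S]$, and verify that the generators of $PFC_{2g}(N)$ correspond exactly to orbit sets of intersection number $2g+2$ with a fibre in $N'$ under the operation $\bs\gamma\mapsto e^2\bs\gamma$: the two extra intersections with a page of $S'$ are precisely the two copies of the boundary-parallel elliptic orbit $e$ picked up by the stabilisation collar, which is why the degree-shift map is multiplication by $e^2$ rather than $e$. (This is the analogue of the relation $\mathbf{x}\mapsto e^{2g}$ in Lemma~\ref{Phi and the contact class} applied to the stabilised page.) Fourth, I would compare $\Phi_*$ and $\Phi'_*$ by embedding the cobordism $W_+$ for $(S,\hh)$ into the cobordism $W_+'$ for $(S',\hh')$ along the fibrewise inclusion $S\hookrightarrow S'$, using positivity of intersection with the $J$-holomorphic surfaces $C_\theta = B_+\times\{\theta\}$ over the binding (as in the proof of Lemma~\ref{paperino}) to show that any $I=0$ curve counted by $\Phi'_*$ with input a stabilised Heegaard generator of the special form $\mathbf{y}\cup\{x_{2g+1}'\}$ is contained in $W_+$, hence is counted by $\Phi_*$; the new arc's contribution is a forced trivial section terminating in $e$ via a Morse flow, exactly as in Lemma~\ref{Phi and the contact class}. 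Combined with the computation that the top stabilisation map sends $\widehat{HF}$-generators to the classes $\mathbf{y}\cup\{x_{2g+1}'\}$ up to homotopy, this yields the commuting square. Fifth, having established the square, I would invoke Lemma~\ref{lemma: pretend i=infty} / Lemma~\ref{lemma: PFH=ECH} to phrase the identification $PFH_{2g+2}(N',\alpha'_0,\omega')\cong PFH_{2g+2}(N,\alpha_0,\omega)$ (with $\mathfrak{j}_*$ the connecting map) — using that continuation maps for stable Hamiltonian structures that agree outside a neighbourhood of the new orbits act as the identity on low-intersection-number generators — and conclude that $\mathfrak{j}_*$ is an isomorphism.

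\textbf{Main obstacle.} The hard part is the fourth step: precisely matching the moduli-space counts of $\Phi'_*$ on the stabilised side with those of $\Phi_*$ on the unstabilised side, i.e.\ controlling holomorphic curves in $W_+'$ that could ``use'' the new handle. One must rule out $I=0$ curves in $W_+'$ whose projection to $S'$ genuinely enters the new 1-handle, for inputs of the distinguished form $\mathbf{y}\cup\{x_{2g+1}'\}$. Heuristically the area/energy budget and the Morse-Bott trapping lemma \cite[Lemma 5.3.2]{binding} force such an end to be trivial (a section over the new arc followed by a flow line to $e$), exactly as in Lemma~\ref{Phi and the contact class}; but making this rigorous requires an area estimate adapted to the stabilisation region together with positivity of intersection with the cylinders $\R\times e$ and $C_\theta$, and one has to check that the new orbit created by the Dehn twist along $c$ has action large enough not to interfere (achievable by choosing the support of the twist in a thin collar and scaling, as in condition (3) on $\lambda$ in Subsection~\ref{subsec: PFH}). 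A secondary technical point is that Property~$(\dagger\dagger)_{2g+2}$ must be maintained for $\hh'$, which is why the proposition hypothesises it rather than $(\dagger\dagger)_{2g}$: one needs the incoming/outgoing partitions of $(e,1)$ and $(e,2)$ to be the ``all-ones''/``single'' ones of Lemma~\ref{basic fact about partitions} so that the gluing of the forced trivial ends at $e^2$ behaves as expected, and the modification achieving $(\dagger\dagger)_{2g+2}$ must be taken compatibly with the inclusion $N\hookrightarrow N'$ (cf.\ the proof of \cite[Lemma 4.3.4]{II}).
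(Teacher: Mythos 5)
Your overall skeleton (stabilise the open book, use $\Phi$ for both the old and the new open book together with the Heegaard Floer stabilisation isomorphism, and conclude by ``three out of four maps are isomorphisms'') is indeed the paper's strategy, but as written it proves the wrong statement and the step that would repair it is missing. The commuting square you propose, with the mechanism of Lemma \ref{Phi and the contact class} forcing trivial sections over the new chords, identifies $\Phi'_*\circ(\text{HF stabilisation})$ with multiplication by the square of the elliptic orbit at the \emph{new} boundary $\partial\widetilde N$ (the paper's $e_3$), landing in $PFH_{2g+2}(\widetilde N)$ of the \emph{stabilised} mapping torus. That gives an isomorphism $\bs{\gamma}\mapsto e_3^2\bs{\gamma}$ (the paper's $\mathfrak T_*$), not the map $\mathfrak j_*\colon PFH_{2g}(N)\to PFH_{2g+2}(N)$, which multiplies by the \emph{old} boundary orbit $e$ and stays inside $N$. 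Your attempt to close this gap --- ``identify $N'$ with $N$ as far as orbit sets intersecting a fibre $\le 2g$ times are concerned'' via continuation maps as in Lemma \ref{lemma: pretend i=infty} --- does not work: $\widetilde N\setminus N$ is not a small perturbation region but contains genuinely new Reeb orbits of low degree ($e_3$, $h_3$, $e_P$, $h_{1P}$, $h_{2P}$ and all orbits in the torus piece), so there is no identification of $PFC_{2g+2}(\widetilde N)$ with $PFC_{2g+2}(N)$ and no continuation map acting as the identity there. What the paper actually does at this point is twofold: (a) it proves that the inclusion-induced map $\iota_*\colon PFH_{2g+2}(N)\to PFH_{2g+2}(\widetilde N)$ is injective (Lemma \ref{lemma: iota is injective}), by confining holomorphic curves in $\R\times\widetilde N$ to the pieces $\R\times N$, $\R\times N_T$, $\R\times N_P$ using the blocking and trapping lemmas together with $(\dagger\dagger)_{2g+2}$; and (b) it produces the explicit chain homotopy $\partial\bigl(h_{1P}(e+e_3)\bs{\gamma}\bigr)=e^2\bs{\gamma}+e_3^2\bs{\gamma}$, coming from the two Morse--Bott cylinders from $h_{1P}$ to $e$ and to $e_3$ over the pair of pants, which shows $\iota_*\circ\mathfrak j_*=\mathfrak T_*$; then $\iota_*$ is surjective, hence invertible, and $\mathfrak j_*=\iota_*^{-1}\circ\mathfrak T_*$. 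Neither ingredient appears in your proposal, and without them your square only controls $\mathfrak T_*$, not $\mathfrak j_*$.

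A secondary but real problem is the bookkeeping of the stabilisation itself. A single positive Hopf plumbing attaches one $1$-handle: with $\partial S$ connected this necessarily produces a page with two boundary components and the same genus (an Euler characteristic count), it adds only one arc to the basis, and it shifts the fibre-intersection grading by $1$, not $2$; your text oscillates between ``one new pair of arcs'' and ``one new arc $a_{2g+1}'$'', and the claimed source of the exponent $2$ (``two copies of the boundary-parallel orbit picked up by the stabilisation collar'') is not correct. To keep the binding connected and shift the degree by $2$ one needs a double stabilisation, which is exactly why the paper glues on a once-punctured torus $T$ (the trefoil open book of $S^3$) through a pair of pants $P$; the pair of pants is not decorative --- its interior hyperbolic orbit $h_{1P}$ is what furnishes the chain homotopy in point (b) above.
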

We recall that $\mathfrak{j}$ is a chain map because no holomorphic curve in $\R \times N$ can have a nontrivial positive end at a cover of $e$.

Let $(T, \hh_T)$ be the open book decomposition of $S^3$ such that $T$ is a torus with one boundary component and $\hh_T$ is isotopic, relative to the boundary, to the product of right-handed Dehn twists along two curves intersecting exactly at one point. In particular, the binding is the right-handed trefoil knot. Let $N_T$ be the mapping torus of $(T, \hh_T)$. We also assume that $\hh_T^* \beta_T- \beta_T$ is exact for some Liouville form $\beta_T$ on $T$ and that the Reeb vector field  of the stable Hamiltonian structure on $N_T$ induced by the fibration is nondegenerate and satisfies Property $(\dagger \dagger)_{2g+2}$ on $\op{int}(N_T)$, while $\partial N_T$ is a negative Morse-Bott torus.

\begin{figure}
\centering
\begin{overpic}[width=7cm]{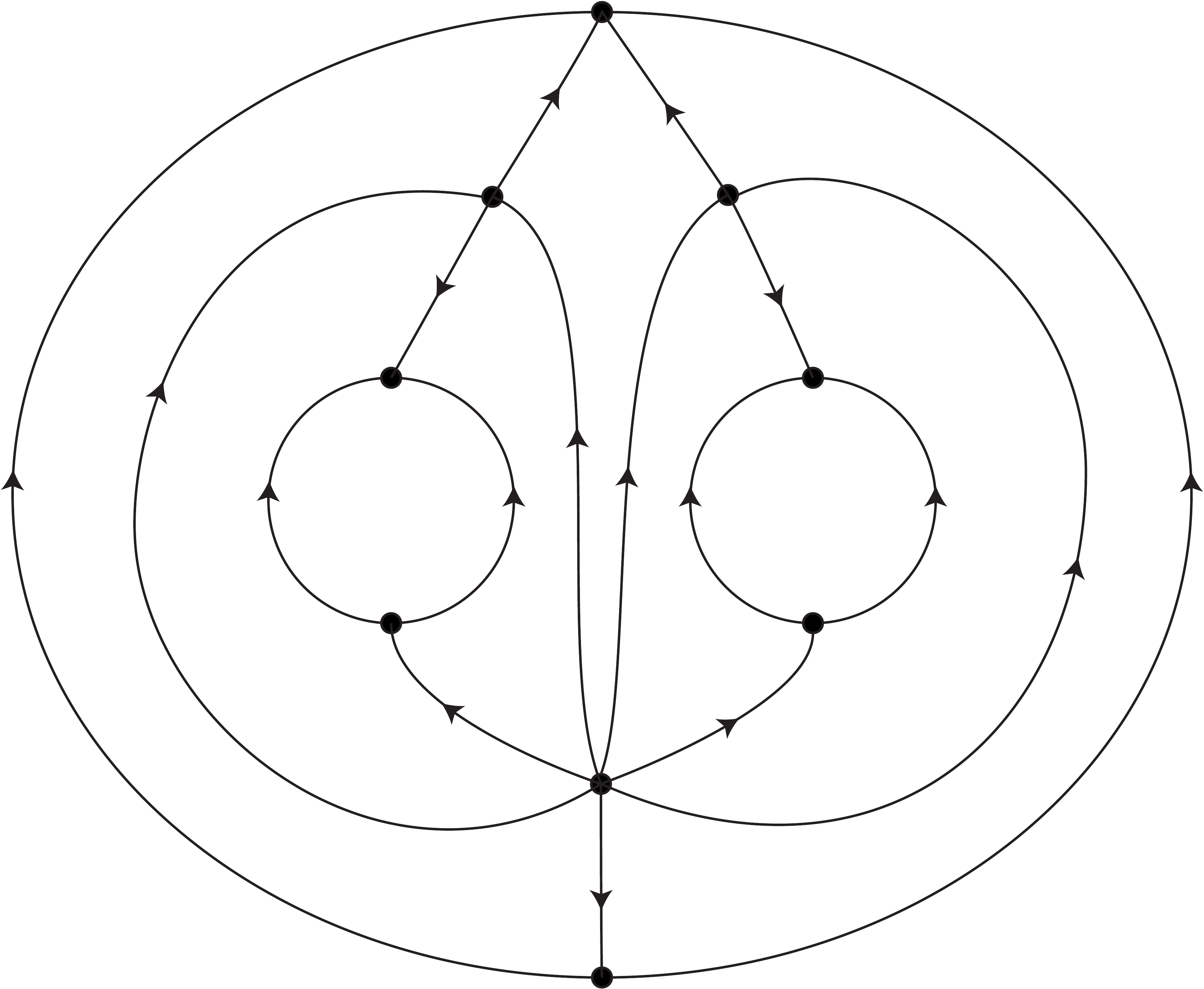}
\put(28.5,52.3){\tiny $e$} \put(68.5,52.3){\tiny $e_2$}
\put(49,83.5){\tiny $e_3$} \put(44.5,11.2){\tiny $e_P$}
\put(35,68.3){\tiny $h_{1P}$} \put(59.7,68.3){\tiny $h_{2P}$}
\put(45,2.8){\tiny $h_3$} \put(28.5,26.7){\tiny $h$}
\put(68,26.7){\tiny $h_2$} \put(30,38){$S$} \put(66,38){$T$}
\end{overpic}
\caption{The pair of pants $P$. The negative gradient trajectories of the Morse function $f$ are shown. The critical points $e$ and $h$ belong to the boundary component $\partial_1P$. The critical points $e_i$ and $h_i$, for $i=2,3$, belong to the boundary component $\partial_iP$.}
\label{fig: sprime}
\end{figure}

Next, we take a pair of pants $P$ with $\partial P = \partial_1 P \sqcup \partial_2 P \sqcup \partial_3 P$, we fix an area form on $P$ and choose a function $h \colon P \to \R$ which is sufficiently close to $1$ in the $C^\infty$ topology and such that its negative gradient flow is as shown in Figure \ref{fig: sprime}. We denote the time $1$ flow of $f$ by $\hh_P$ and the mapping torus of $(P, \hh_P)$ by $N_P$. Finally, we define $\widetilde{S} = P \cup S \cup T$, where $\partial_1P$ is glued to $\partial S$ and $\partial_2 P$ to $\partial T$. We define also $\widetilde{\hh} \colon \widetilde{S} \to \widetilde{S}$ such that $\widetilde{\hh}|_S= \hh$, $\widetilde{\hh}_T= \hh_T$ and $\widetilde{\hh}|_P= \hh_P$. Then $(\widetilde{S}, \widetilde{\hh})$ is an abstract open book decomposition of $M$ with page of genus $2g+2$ and we denote the mapping torus of $(\widetilde{S}, \widetilde{\hh})$ by $\widetilde{N}$.

Let $\{a_{2g+1}, a_{2g+2} \}$ be a basis of arcs for $T$ and,  for $i=2g+1$ or $2g+2$, let $\{ x_i, x_i'\}$ be the intersection points between $a_i$ and $\hh_T(a_i)$ on $\partial N_T$. On $\widetilde{S}$ we consider a basis of arcs $\widetilde{\mathbf{a}}= \{ \widetilde{a}_1, \ldots, \widetilde{a}_{2g+2} \}$ where $\widetilde{a}_i$, for $i=1, \ldots, 2g+2$, is obtained by extending $a_i$ up to $\partial \widetilde{S}= \partial_3 S$. The intersection points between $\widetilde{a}_i$ and $\widetilde{\hh}(\widetilde{a}_i)$ in $\widetilde{S}$ will be denoted by $\widetilde{x}_i$ and $\widetilde{x}_i'$. Observe that the boundary behavior of the monodromy $\hh_P$ forces one intersection point in $\op{int}(P)$ between $\widetilde{a}_i$ and $\widetilde{\hh}(\widetilde{a}_i)$ for each segment of $\widetilde{a}_i \setminus a_i$.

We define the stabilisation map
$$\mathfrak{S} \colon \widehat{CF}(S, \mathbf{a}, \hh(\mathbf{a})) \to  \widehat{CF}(\widetilde{S}, \widetilde{\mathbf{a}}, \widetilde{\hh}(\widetilde{\mathbf{a}})), \quad \mathfrak{S}(\mathbf{y})= \mathbf{y} \cup \{ \widetilde{x}_{2g+1}, \widetilde{x}_{2g+2} \}.$$
Similarly, we define the stabilisation map
$$\mathfrak{T} \colon PFC_{2g}(N) \to PFC_{2g+2}(\widetilde{N}), \quad \mathfrak{T}(\bs{\gamma})=e_3^2 \bs{\gamma}.$$
It is easy to see that $\mathfrak{S}$ and $\mathfrak{T}$ are chain maps.
\begin{lemma}\label{lemma: S isomorphism}
$\mathfrak{S}$ induces an isomorphism in homology.
\end{lemma}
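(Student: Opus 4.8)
The plan is to show that $\mathfrak{S}$ is a quasi-isomorphism by proving that it is essentially a tautological inclusion of chain complexes: the extra generators $\{\widetilde{x}_{2g+1},\widetilde{x}_{2g+2}\}$ sit ``at the boundary'' and the new intersection points forced in $\op{int}(P)$ by the monodromy $\hh_P$ do not introduce any genuinely new holomorphic curve information. Concretely, I would first analyse the intersection points between $\widetilde{\mathbf{a}}$ and $\widetilde{\hh}(\widetilde{\mathbf{a}})$ on the stabilised page $\widetilde{S}=P\cup S\cup T$. The arcs $\widetilde{a}_1,\dots,\widetilde{a}_{2g}$ extend the original $a_i$ across $P$ up to $\partial_3 S$; the extra piece $\widetilde{a}_i\setminus a_i$ crosses $P$ and, because of the boundary behaviour of $\hh_P$ prescribed in Figure~\ref{fig: sprime}, contributes exactly one interior intersection point in $\op{int}(P)$ for each such segment. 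The key claim is that any generator $\mathbf{z}\in\mathcal{S}_{\widetilde{\mathbf{a}},\widetilde{\hh}(\widetilde{\mathbf{a}})}$ that uses one of these ``spurious'' interior points in $P$, or one of the interior points $x''_i$ of $T$, cannot be connected by an $I=1$ HF-curve in $\R\times[0,1]\times\widetilde{S}$ to a generator of the form $\mathfrak{S}(\mathbf{y})$ except through a trivial strip; this is the analogue of \cite[Claim 4.9.2]{I} used already in Subsection~\ref{HFhat on a page}.

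The second step is to set up a filtration (or a direct-sum decomposition of the chain complex) by the ``$P$-part'' of a generator. Place the basepoint $z$ on $\partial_3 S\subset \widetilde S$ away from the thin strips, exactly as in Subsection~\ref{HFhat on a page}, so that no HF-curve counted in $\widehat\partial$ enters the $\widetilde S\setminus(S_0)$ region except for the thin strips. Then every HF-curve contributing to the differential of $\widehat{CF}(\widetilde{S},\widetilde{\mathbf{a}},\widetilde{\hh}(\widetilde{\mathbf{a}}))$ has its nontrivial part supported over $S$ (i.e.\ over the original page), and over $P$ and $T$ it consists only of trivial strips plus the thin-strip identifications. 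This should exhibit $\widehat{CF}(\widetilde S,\widetilde{\mathbf{a}},\widetilde{\hh}(\widetilde{\mathbf{a}}))$, after passing to the quotient by the thin-strip relations $\{\widetilde x_i\}\cup\mathbf y_0\sim\{\widetilde x_i'\}\cup\mathbf y_0$, as isomorphic as a chain complex to $\widehat{CF}(S,\mathbf{a},\hh(\mathbf{a}))$ tensored with a one-dimensional space spanned by $\{\widetilde x_{2g+1},\widetilde x_{2g+2}\}$ together with the forced $P$-intersection points; and then a small acyclicity computation (the forced $P$-points cancel in pairs under the differential coming from the pair-of-pants monodromy, just as the $x_i,x_i'$ do) collapses the extra factor, with $\mathfrak{S}$ realising the inclusion onto the surviving summand. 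Alternatively, and perhaps more cleanly, I would invoke the invariance of $\widehat{HF}$ under positive stabilisation of open book decompositions at the level of the model $\widehat{CF}(S,\mathbf a,\hh(\mathbf a))$: $(\widetilde S,\widetilde{\hh})$ is (isotopic to) a sequence of positive stabilisations of $(S,\hh)$ together with the $S^3$ open book $(T,\hh_T)$ connect-summed in via $P$, so that $\widehat{HF}(-\widetilde M)\cong\widehat{HF}(-M)$ via a map which on the page models is precisely $\mathfrak{S}$; the computation of $\widehat{HF}$ from a page in \cite{HKM} and Proposition~\ref{prop:hf} give that both sides compute $\widehat{HF}(-M)$, and one checks that $\mathfrak{S}$ induces the canonical identification by tracking the contact-class generator $\mathbf x\mapsto\mathbf x\cup\{\widetilde x_{2g+1},\widetilde x_{2g+2}\}$ and using naturality.

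The third step is just to record that $\mathfrak{S}$ is a chain map — this is asserted in the statement and follows from the above localisation of holomorphic curves — and to conclude that it induces an isomorphism on homology from the identification of complexes in step two.

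I expect the main obstacle to be step one: rigorously ruling out that the interior intersection points of $\widetilde{\mathbf a}$ and $\widetilde{\hh}(\widetilde{\mathbf a})$ created inside $P$ (and the point $x''_i$ inside $T$) can be entered by a nontrivial holomorphic curve, i.e.\ proving the appropriate trivial-strip lemma for the pair-of-pants region with its specific monodromy $\hh_P$. This requires controlling the holomorphic curves over $P$, which is genuinely a local model computation analogous to \cite[Claim 4.9.2]{I} but for the three-holed sphere rather than an annular collar; once that is in hand, the filtration argument and the cancellation of the forced $P$-points in pairs are routine, and the identification of $\mathfrak S$ with the standard stabilisation isomorphism follows by tracking generators.
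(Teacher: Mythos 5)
Your overall skeleton is the same as the paper's: decompose the stabilised complex according to the $S$-, $P$- and $T$-parts of a generator, identify the extra factor's homology with the span of $\{\widetilde{x}_{2g+1},\widetilde{x}_{2g+2}\}$, and use the interior intersection points in $\op{int}(P)$ to make $\mathbf{y}\cup\{\widetilde{x}_{2g+1},\widetilde{x}_{2g+2}\}$ homologous to $\mathbf{y}\cup\{x_{2g+1},x_{2g+2}\}$. However, there is a genuine error in your second step. You claim that, after placing the basepoint suitably, ``every HF-curve contributing to the differential has its nontrivial part supported over $S$'' and that the extra tensor factor is \emph{one-dimensional at the chain level}. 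This is false: $\hh_T$ is isotopic to a product of two right-handed Dehn twists, so $\widetilde{a}_i\cap\widetilde{\hh}(\widetilde{a}_i)$ for $i=2g+1,2g+2$ contains several interior points of $T$, and the complex $\widehat{CF}(T,\mathbf{a}_T,\hh_T(\mathbf{a}_T))$ carries a genuinely nontrivial differential supported over $T$. The basepoint only blocks curves from entering the $-S_1$ half of the Heegaard surface; it does nothing to prevent curves over the $T$-region of the page $S_0$. What is one-dimensional is not the chain group but the \emph{homology} $\widehat{HF}(T,\mathbf{a}_T,\hh_T(\mathbf{a}_T))\cong\widehat{HF}(S^3)\cong\Z/2\Z$, generated by the class of $\{x_{2g+1},x_{2g+2}\}$. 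This computation of the $T$-factor's homology (a K\"unneth-type splitting $\widehat{HF}(\widetilde{S},\widetilde{\mathbf{a}},\widetilde{\hh}(\widetilde{\mathbf{a}}))\cong\widehat{HF}(S,\mathbf{a},\hh(\mathbf{a}))\otimes\widehat{HF}(T,\mathbf{a}_T,\hh_T(\mathbf{a}_T))$ plus the identification of the generator of the second factor) is the crux of the paper's proof and is missing from yours: your ``small acyclicity computation'' only addresses the $P$-points, not the trefoil complex.

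A second, related issue: even if your step-one claim were established (that no $I=1$ curve connects $\mathfrak{S}(\mathbf{y})$ to a generator using interior points of $P$ or $T$ except via trivial strips), this only shows that $\mathfrak{S}$ is a chain map whose image sits inside a subcomplex or summand; it says nothing about the homology of the complementary generators, which is exactly where the nontrivial content lies. Your alternative route via invariance of $\widehat{HF}$ under positive stabilisation has the same problem in disguise: both sides compute $\widehat{HF}(-M)$, but identifying the abstract isomorphism with $\mathfrak{S}_*$ cannot be done merely by ``tracking the contact class'', since $\widehat{HF}(-M)$ is in general much larger than the span of the contact class. The fix is to replace your one-dimensional-factor claim by the localisation of holomorphic curves to the three pieces $S$, $P$, $T$ (the analogue for HF-curves of the blocking/trapping statement used for Lemma~\ref{lemma: iota is injective}), yielding a chain-level tensor decomposition, and then to compute the homology of the $T$-factor as $\widehat{HF}(S^3)$ generated by $[\{x_{2g+1},x_{2g+2}\}]$.
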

\begin{proof}
One can check that
$$\widehat{HF}(\widetilde{S}, \widetilde{\mathbf{a}}, \widetilde{\hh}(\widetilde{\mathbf{a}})) \cong \widehat{HF}(S, \mathbf{a}, \hh(\mathbf{a})) \otimes \widehat{HF}(T, \mathbf{a}_T, \hh_T(\mathbf{a}_T))$$
and $\widehat{HF}(T, \mathbf{a}_T, \hh_T(\mathbf{a}_T))$ is generated by the class of $\{\widetilde{x}_{2g+1}, \widetilde{x}_{2g+2} \}$. Since $\mathbf{y} \cup \{ \widetilde{x}_{2g+1}, \widetilde{x}_{2g+2} \}$ is homologous to $\mathbf{y} \cup \{ x_{2g+1}, x_{2g+2} \}$ because of the intersection points between $\widetilde{a}_i$ and $\widetilde{\hh}(\widetilde{a}_i)$ contained in $\op{int}(P)$, the lemma follows.
\end{proof}
\begin{lemma}\label{lemma: T isomorphism}
$\mathfrak{T}$ induces an isomorphism in homology.
\end{lemma}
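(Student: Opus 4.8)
The plan is to deduce the statement from the two isomorphisms already established — that $\Phi_*$ is an isomorphism (Theorem~\ref{Phi is an isomorphism}) and that $\mathfrak{S}_*$ is an isomorphism (Lemma~\ref{lemma: S isomorphism}) — together with a compatibility of the open-closed maps with stabilisation. Write $\widetilde{\Phi}$ for the chain map of Lemma~\ref{Phi and the contact class} built from the stabilised open book $(\widetilde{S},\widetilde{\hh})$ in place of $(S,\hh)$. The first step is to prove that the square
$$\xymatrix{
\widehat{CF}(S,\mathbf{a},\hh(\mathbf{a})) \ar[r]^{\mathfrak{S}} \ar[d]_{\Phi} & \widehat{CF}(\widetilde{S},\widetilde{\mathbf{a}},\widetilde{\hh}(\widetilde{\mathbf{a}})) \ar[d]^{\widetilde{\Phi}} \\
PFC_{2g}(N,\alpha_0,\omega) \ar[r]^{\mathfrak{T}} & PFC_{2g+2}(\widetilde{N})
}$$
commutes up to chain homotopy. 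Granting this, the conclusion is formal: $\Phi_*$ is an isomorphism by Theorem~\ref{Phi is an isomorphism} applied to $(S,\hh)$, which satisfies Property $(\dagger\dagger)_{2g}$ (a consequence of the standing Property $(\dagger\dagger)_{2g+2}$); $\widetilde{\Phi}_*$ is an isomorphism by the same theorem applied to $(\widetilde{S},\widetilde{\hh})$, which satisfies Property $(\dagger\dagger)_{2g+2}$ by the choices made for $\hh_T$ and $\hh_P$; and $\mathfrak{S}_*$ is an isomorphism by Lemma~\ref{lemma: S isomorphism}. Commutativity of the square then forces $\mathfrak{T}_*$ to be an isomorphism.

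To prove that the square commutes I would use the inclusion $S\subset\widetilde{S}$ to embed the cobordism $W_+$ attached to $(S,\hh)$, call it $W_+^S$, into the cobordism $\widetilde{W}_+$ attached to $(\widetilde{S},\widetilde{\hh})$, and then neck-stretch $\widetilde{W}_+$ along the hypersurfaces lying over $\partial_1 P$ and $\partial_2 P$, so that $\widetilde{W}_+$ degenerates into $W_+^S\cup W_+^P\cup W_+^T$ with symplectisation levels in between. For a generator $\mathbf{y}\in{\mathcal S}_{\mathbf{a},\hh(\mathbf{a})}$, an $I=0$ holomorphic curve in $\widetilde{W}_+$ positively asymptotic to $\mathfrak{S}(\mathbf{y})=\mathbf{y}\cup\{\widetilde{x}_{2g+1},\widetilde{x}_{2g+2}\}$ degenerates into such pieces. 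The two positive ends at the boundary intersection points $\widetilde{x}_{2g+1},\widetilde{x}_{2g+2}$, which lie on $\partial\widetilde{S}=\partial_3 P$, force — by the trivial-section argument of \cite[Lemma 6.2.3]{I} used in the proof of Lemma~\ref{Phi and the contact class} — two irreducible components, each a trivial section followed by a gradient trajectory to the boundary orbit $e_3$ of $\widetilde{N}$; these have $I=0$ and meet a fibre once apiece, so the curve is negatively asymptotic to an orbit set of the form $e_3^2\bs{\gamma}$. Positivity of intersection with the $J$-holomorphic sections over $\theta\in\partial_1 P\cup\partial_2 P$ (as in the proof of Lemma~\ref{paperino}), together with the control of the Reeb orbits in $\op{int}(N_P)$ (through the choice of $\hh_P$ close to the identity) and in $\op{int}(N_T)$ (through the model of Figure~\ref{fig: sprime}), then forces $\bs{\gamma}$ to be built from closed Reeb orbits of $N$ and the rest of the curve to lie in $W_+^S$, where it is an $I=0$ curve from $\mathbf{y}$ to $\bs{\gamma}$. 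Conversely, a gluing argument shows that every such $W_+^S$-curve glues uniquely with the two trivial sections over $\widetilde{x}_{2g+1},\widetilde{x}_{2g+2}$. This bijection of moduli spaces yields $\widetilde{\Phi}\circ\mathfrak{S}=\mathfrak{T}\circ\Phi$, possibly modulo a chain homotopy absorbing the non-section pieces over $P$ and $T$, organised as in the passage from $\overline{\Phi}$ to $\Phi$ in Lemma~\ref{Phi and the contact class}.

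The main obstacle is this neck-stretching and curve-splitting analysis, and within it the exclusion of holomorphic curves that enter $\op{int}(N_P)$ or the nontrivial part of $\op{int}(N_T)$ in the fibre-intersection range $\le 2g+2$ — precisely the point at which the special choices of $\hh_P$ (a $C^\infty$-small perturbation of the identity) and of $\hh_T$ (with the Morse data of Figure~\ref{fig: sprime}) are used — together with the bookkeeping of the matching at the stretched necks, including the intersection points in $\op{int}(P)$ created by the arc extensions. The remaining ingredients, namely the formal deduction above and the verification that $(\widetilde{S},\widetilde{\hh})$ inherits Property $(\dagger\dagger)_{2g+2}$, are routine given the results quoted.
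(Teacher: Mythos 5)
Your proposal is correct and follows essentially the same route as the paper: one establishes the commutativity of the square relating $\Phi_{(S,\hh)}$, $\Phi_{(\widetilde S,\widetilde\hh)}$, $\mathfrak{S}$ and $\mathfrak{T}$ via the identity $\Phi_{(\widetilde S,\widetilde\hh)}(\mathbf{y}\cup\{\widetilde x_{2g+1},\widetilde x_{2g+2}\})=e_3^2\,\Phi_{(S,\hh)}(\mathbf{y})$, and then concludes by two-out-of-three using Theorem~\ref{Phi is an isomorphism} and Lemma~\ref{lemma: S isomorphism}. The only (harmless) difference is technical: the paper obtains the splitting of curves in $\widetilde W_+$ into pieces over $S$, $P$ and $T$ directly from the confinement argument of \cite[Lemma 5.2.1]{II} (blocking/trapping and positivity of intersection with the holomorphic hypersurfaces over $\partial_1P$ and $\partial_2P$), with the square commuting on the nose at chain level, rather than by a neck-stretching degeneration and a chain homotopy.
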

\begin{proof}
By an argument similar to the proof of Lemma \cite[Lemma 5.2.1]{II},
$$\Phi_{(\widetilde{S}, \widetilde{\hh})}(\mathbf{y} \cup \{ \widetilde{x}_{2g+1},  \widetilde{x}_{2g+1} \})= e_3^2 \Phi_{(S, \hh)}(\mathbf{y}),$$
and therefore there is a commutative diagram
$$\xymatrix{
\widehat{CF}(S, \mathbf{a}, \hh(\mathbf{a})) \ar[r]^{\mathfrak{S}} \ar[d]_{\Phi_{(S, \hh)}} & \widehat{CF}(\widetilde{S}, \widetilde{\mathbf{a}}, \widetilde{\hh}(\widetilde{\mathbf{a}})) \ar[d]^{\Phi_{(\widetilde{S}, \widetilde{\hh})}} \\
PFC_{2g}(N) \ar[r]^{\mathfrak{T}} & PFC_{2g+2}(\widetilde{N}).
}$$
Then Theorem \ref{Phi is an isomorphism} and Lemma \ref{lemma: S isomorphism} imply that $\mathfrak{S}$ induces an isomorphism in homology.
\end{proof}
Let $\iota \colon PFC_{2g+2}(N) \to PFC_{2g+2}(\widetilde{N})$ be the inclusion of complexes induced by the inclusion $N \subset \widetilde{N}$.
\begin{lemma}\label{lemma: iota is injective}
The inclusion $\iota$ is a chain map and induces an injective map $\iota_* \colon PFH_{2g+2}(N) \to PFH_{2g+2}(\widetilde{N})$.
\end{lemma}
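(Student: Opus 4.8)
The plan is to show that $\iota$ is a chain map and that the induced map in homology is injective, using the fact that curves landing in $\widetilde N$ from orbit sets in $N$ are forced by positivity of intersection to stay in $N$, together with the factorisation $\mathfrak{T} = \iota \circ \mathfrak{j}_{\widetilde{N}}$ (or rather, a comparison with $\mathfrak{j}$).

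First I would check that $\iota$ is a chain map. The key point is that a $J$-holomorphic curve in $\R\times\widetilde N$ positively asymptotic to an orbit set $\bs\gamma_+$ built from closed Reeb orbits in $\op{int}(N)$ must, by positivity of intersection with the holomorphic surfaces $\R\times e$ (where $e$ is the elliptic orbit on $\partial N$) and with the Levi-flat hypersurface $\R\times\partial N$, have its negative ends also in $N$; this is the trapping lemma \cite[Lemma 5.3.2]{binding} applied to the negative Morse-Bott torus $\partial N\subset\widetilde N$. Hence the differential on $PFC_{2g+2}(\widetilde N)$ restricted to the subspace spanned by generators supported in $N$ agrees with the differential of $PFC_{2g+2}(N)$, so $\iota$ is an injective chain map onto a \emph{subcomplex}.

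For injectivity on homology the strategy is to produce a left inverse up to homotopy, or to argue directly via the commutative square from Lemma~\ref{lemma: T isomorphism}. Concretely, I would compare the two maps $\mathfrak{T}\colon PFC_{2g}(N)\to PFC_{2g+2}(\widetilde N)$ and $\iota\circ\mathfrak{j}\colon PFC_{2g}(N)\to PFC_{2g+2}(N)\to PFC_{2g+2}(\widetilde N)$, where $\mathfrak{j}(\bs\gamma)=e^2\bs\gamma$. Both send $\bs\gamma$ to $\bs\gamma$ multiplied by the square of an elliptic orbit near $\partial S$ inside $\widetilde N$; the orbits $e$ (the boundary orbit of $N$, pushed into $\widetilde N$) and $e_3$ (the boundary orbit of $\partial_3 P=\partial\widetilde S$) are isotopic through Reeb orbits in the region $P$, so a continuation/trapping argument identifies $e$ with $e_3$ in homology in $PFH_{2g+2}(\widetilde N)$, giving $\mathfrak{T}_* = \iota_*\circ\mathfrak{j}_*$. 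By Lemma~\ref{lemma: T isomorphism} $\mathfrak{T}_*$ is an isomorphism, so in particular $\mathfrak{T}_*$ is injective; since $\mathfrak{T}_* = \iota_*\circ\mathfrak{j}_*$ factors through $\iota_*$, the map $\iota_*$ is injective on the image of $\mathfrak{j}_*$. Combined with Proposition~\ref{prop: stabilisation} (which asserts $\mathfrak{j}_*$ is an isomorphism, hence surjective), this shows $\iota_*$ is injective on all of $PFH_{2g+2}(N)$.

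The main obstacle I anticipate is the identification of $e$ with $e_3$ in homology, i.e.\ controlling the holomorphic curves in $\R\times\widetilde N$ whose ends partially lie on $\partial N$ and partially involve the orbits coming from the pair of pants $P$. This requires the blocking and trapping lemmas of \cite{binding} applied in the enlarged mapping torus, and a careful bookkeeping of which orbit sets in $N_P$ can appear — exactly the kind of Morse-Bott building analysis carried out throughout \cite{binding} and \cite{II}. Everything else is formal homological algebra once the subcomplex structure and the factorisation $\mathfrak{T}_*=\iota_*\circ\mathfrak{j}_*$ are in place.
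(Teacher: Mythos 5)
Your argument for the chain map claim is essentially the paper's: the blocking and trapping lemmas (together with Property $(\dagger\dagger)_{2g+2}$) confine holomorphic curves with ends in $N$ to $\R\times N$, so the generators supported in $N$ span a subcomplex. That part is fine.

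The injectivity argument, however, is circular. You invoke Proposition~\ref{prop: stabilisation} (surjectivity of $\mathfrak{j}_*$) to pass from ``$\iota_*$ is injective on the image of $\mathfrak{j}_*$'' to ``$\iota_*$ is injective''. But the paper's proof of Proposition~\ref{prop: stabilisation} runs the other way: it uses the factorisation $\mathfrak{T}_*=\iota_*\circ\mathfrak{j}_*$ together with the \emph{present} lemma to conclude that $\iota_*$ is surjective (from $\mathfrak{T}_*$ iso) and hence bijective, and only then that $\mathfrak{j}_*=\iota_*^{-1}\circ\mathfrak{T}_*$ is an isomorphism. So the surjectivity of $\mathfrak{j}_*$ is not available when proving this lemma, and without it your factorisation argument only yields injectivity of $\iota_*$ on $\operatorname{im}\mathfrak{j}_*$, which is not enough.

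The missing idea is that the confinement result does more than exhibit $PFC_{2g+2}(N)$ as a subcomplex: since every irreducible component of a curve in $\R\times\widetilde N$ lies entirely in one of $\R\times N$, $\R\times N_T$, $\R\times N_P$, and each such component preserves the intersection number of its asymptotic orbit sets with the corresponding portion of the fibre, the whole complex decomposes as a direct sum of subcomplexes indexed by the distribution of the total intersection number $2g+2$ among $N$, $N_T$ and $N_P$. The image of $\iota$ is the summand where all $2g+2$ intersections occur in $N$; the inclusion of a direct summand of chain complexes is split injective on homology. This is exactly what the paper extracts from \cite[Lemma 5.2.1]{II} (``control \dots their relative homology classes''), and it requires no input from $\mathfrak{T}$ or $\mathfrak{j}$ at all.
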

\begin{proof}
Both claims are a consequence of \cite[Lemma 5.2.1]{II}, stating that each irreducible component of a holomorphic curve in $\R \times \widetilde{N}$ is contained in one of $\R \times N$, $\R \times N_T$ or $\R \times N_P$. This is a consequence of the blocking lemma \cite[Lemma 5.2.3]{binding}, the trapping lemma \cite[Lemma 5.3.2]{binding} and Property $(\dagger \dagger)_{2g+2}$, which help us control the behavior of the curves at the ends, and therefore their relative homology classes.
\end{proof}
\begin{proof}[Proof of Proposition \ref{prop: stabilisation}]
 We claim that the diagram
$$\xymatrix{
PFH_{2g}(N) \ar[r]^-{\mathfrak{T}_*} \ar[d]_{\mathfrak{j}_*} & PFH_{2g+2}(\widetilde{N}) \\ PFH_{2g+2}(N) \ar[ur]^{\iota_*}
}$$
commutes. In fact, by Morse-Bott theory, there are holomorphic cylinders from $h_{1P}$ to $e$ and $e_3$ coming from the unstable manifolds of $h_{1P}$ for the negative gradient flow of $f$ (see Figure \ref{fig: sprime} and \cite[Lemma 5.2.2]{II}) and therefore, if $\bs{\gamma}$ is a cycle in $PFC_{2g}(N)$, we have $\partial(h_{1P}(e+e_3) \bs{\gamma})=
e^2 \bs{\gamma} + e_3^2 \bs{\gamma}$.

The claim implies that $\iota_* \circ \mathfrak{j}_*$ is an isomorphism, and therefore $\iota_*$ is surjective. Thus it is invertible by Lemma \ref{lemma: iota is injective}, and this shows that $\mathfrak{j}_* = \iota_*^{-1} \circ \mathfrak{T}_*$ is invertible.
\end{proof}

In view of Lemma \ref{lemma: pretend i=infty} we can repeat the stabilisation argument arbitrarily many times and conclude that the natural map $PFH(N)_{2g} \to \widehat{PFH}(N, \partial N)$ is an isomorphism. This implies that $\widehat{\Phi}_*$ is an isomorphism and therefore finishes the proof of Theorem \ref{main}.

\end{document}